\DeclareSymbolFont{AMSb}{U}{msb}{m}{n}
\numberwithin{equation}{section}
\pgfplotsset{compat=1.15}
\newcommand{\RR}{\mathbb{R}}
\newcommand{\NN}{\mathbb{N}}
\newcommand{\1}{\mathbf{1}}
\newcommand{\cor}{\mathrm{cor}}
\newcommand{\E}{\mathbf{E}}
\newcommand{\m}{\mathbf{m}}
\newcommand{\M}{\mathbf{M}}
\newcommand{\h}{\mathbf{h}}
\DeclareMathOperator{\weaklystar}{\rightharpoonup\kern-2.2ex ^* \, \,}
\DeclareMathOperator{\sgn}{sign}
\DeclareMathOperator{\argmin}{argmin}
\DeclareMathOperator{\rel}{rel}
\newcommand{\ltheta}{\ell_{\theta}}
\newcommand{\ttheta}{t_{\theta}}
\newcommand{\ttau}{t_{\tau}}
\def\Xint#1{\mathchoice
{\XXint\displaystyle\textstyle{#1}}%
{\XXint\textstyle\scriptstyle{#1}}%
{\XXint\scriptstyle\scriptscriptstyle{#1}}%
{\XXint\scriptscriptstyle\scriptscriptstyle{#1}}%
\!\int}
\def\XXint#1#2#3{{\setbox0=\hbox{$#1{#2#3}{\int}$ }
\vcenter{\hbox{$#2#3$ }}\kern-.6\wd0}}
\def\dashint{\;\,  \Xint-}
\DeclareMathOperator{\diver}{div}
\DeclareMathOperator{\curl}{curl}
\theoremstyle{plain}
\newtheorem{theorem}{Theorem}[section]
\newtheorem{proposition}[theorem]{Proposition}
\newtheorem{corollary}[theorem]{Corollary}
\newtheorem{lemma}[theorem]{Lemma}
\newtheorem*{theorem*}{Theorem}
\theoremstyle{definition}
\newtheorem{remark}[theorem]{Remark}
\begin{document}
\title[Domain branching in micromagnetism]{Domain branching in micromagnetism: scaling law for the global and local energies}

\author[T.~Ried]{Tobias Ried}
\address[T.~Ried]{Max-Planck-Institut für Mathematik in den Naturwissenschaften, Inselstraße 22, 04103 Leipzig, Germany}
\email{tobias.ried@mis.mpg.de}

\author[C.~Rom\'an]{Carlos Rom\'an}
\address[C.~Rom\'an]{Facultad de Matem\'aticas e Instituto de Ingenier\'ia Matem\'atica y Computacional, Pontificia Universidad Cat\'olica de Chile, Vicu\~na Mackenna 4860, 7820436 Macul, Santiago, Chile}
\email{carlos.roman@uc.cl}
\keywords{domain branching, regularity, local energy estimates, microstructure, micromagnetism, pattern formation, scaling laws}
\subjclass[2020]{Primary 49N60; Secondary 82D40, 74N15}
\date{\today}
\thanks{\emph{Funding information}: ANID FONDECYT 11190130; ANID FONDECYT 1231593; DFG EXC-2047/1 – 390685813; EPSRC grant no EP/R014604/1.\\[1ex]
\textcopyright 2024 by the authors. Faithful reproduction of this article, in its entirety, by any means is permitted for noncommercial purposes.}


\begin{abstract}
We study the occurrence of \emph{domain branching} in a class of $(d+1)$-dimensional sharp interface models featuring the competition between an interfacial energy and a non-local field energy. Our motivation comes from branching in uniaxial ferromagnets corresponding to $d=2$, but our result also covers twinning in shape-memory alloys near an austenite-twinned-martensite interface (corresponding to $d=1$, thereby recovering a result of \cite{Con00}).  

We prove that the energy density of a minimising configuration in a large cuboid domain $Q_{L,T}=[-L,L]^d\times [0,T]$ scales like $T^{-\frac{2}{3}}$ (irrespective of the dimension $d$) if $L\gg T^{\frac{2}{3}}$. While this already provides a lot of insight into the nature of minimisers, it does not characterise their behaviour close to the top and bottom boundaries of the sample, i.e.\ in the region where the branching is concentrated.
More significantly, we show that minimisers exhibit a self-similar behaviour near the top and bottom boundaries in a statistical sense through local energy bounds: for any minimiser in $Q_{L,T}$, the energy density in a small cuboid $Q_{\ell,t}$ centred at the top or bottom boundaries of the sample, with side lengths $\ell \gg t^{\frac{2}{3}}$, satisfies the same scaling law, that is, it is of order $t^{-\frac{2}{3}}$. 
\end{abstract}
\maketitle
\tableofcontents
\setcounter{secnumdepth}{3}
\section{Introduction}

In this article we revisit the mathematical study of \emph{domain branching} in ferromagnets. We will do so working within the framework of \emph{micromagnetism}, a powerful continuum theory which successfully explains observations of phenomena in ferromagnetic materials on many length scales, ranging from nanometres to microns. All of these scales are large enough to neglect the description of the atomic structure of the material, hence allowing for the use of continuum physics.

A ferromagnet, like iron, is a material having a high susceptibility to magnetisation, that is, they are noticeably attracted to the magnetic fields generated by magnets. The strength of the magnetisation depends on that of the applied field, and may persist even if the external field is removed. At the atomic level, this is explained by parallel magnetic alignment of neighbouring atoms. 

The main quantity of interest in this theory is the \emph{magnetisation density} $\M$. It is defined as the magnetic dipole moment -- which may be thought of as a measure of a dipole's ability to turn itself into alignment with a given applied field -- per unit volume. Denoting by $\Omega\subset\RR^3$ the material sample, it is defined as a vector field in $\Omega$, which, far below the Curie temperature $T_C$, has constant length, that is,
$$
|\M|=M_s\quad \mbox{in }\Omega.
$$
Here, $M_s$ denotes the saturation magnetisation, a material constant (at fixed temperature). 

We will next consider the rescaled and extended magnetisation $\m:\RR^3\to \RR^3$ defined as $\m = \frac{\M}{M_s}$ in $\Omega$ and zero elsewhere, so that
$$
|\m|^2=
\left\{
\begin{array}{cl}
1&\mbox{in }\Omega\\
0&\mbox{elsewhere}.
\end{array}
\right.
$$ 
The magnetisation $\m$ induces a \emph{stray field} (demagnetisation field) $\h:\RR^3\to \RR^3$, which is obtained by solving the (normalised) Maxwell equations of magnetostatics
$$
\curl \h=0\quad \mbox{and}\quad \diver(\h+\m)=0\quad \mbox{in }\RR^3.
$$
Hence, $\h$ is the Helmholtz projection of (the extended) $\m$. 
These equations have to be understood in the sense of distributions, the latter equation thus means that
$$
\int_{\RR^3}\h\cdot \nabla \phi \,\mathrm{d}x  =-\int_\Omega \m\cdot \nabla \phi \,\mathrm{d}x  \quad \forall \phi\in C_c^\infty(\RR^3).
$$
In particular, there are two sources for $\h$, corresponding to the two components of the divergence of $\m$. For a (sufficiently) smooth $\m$, the densities of these components are given by $\diver \m$ at points of $\Omega$ and $-\m\cdot \nu$ at points of the boundary $\partial \Omega$, where $\nu$ denotes the outer unit normal to $\partial \Omega$. By an analogy with electrostatics, these are called (magnetic) volume and surface charges. 

\medskip
Landau and Lifschitz in \cite{LL44} introduced the so-called Landau--Lifschitz energy for micromagnetism, which has successfully predicted the behaviour of ferromagnets in a vast range of situations. In normalised form, in absence of an applied magnetic field and at fixed temperature (far below $T_C$), is given by
$$
\E(\m)= d^2\int_\Omega |\nabla \m|^2\,\mathrm{d}x +Q\int_\Omega \varphi(\m) \,\mathrm{d}x +\int_{\RR^3}|\h|^2 \,\mathrm{d}x .
$$
Here:
\begin{itemize}
\item The first term is the \emph{exchange energy}. It favours the alignment of the magnetisation along a common direction, that is, a uniform $\m$ in $\Omega$. It is of quantum mechanical origin and models a short range attraction between the spins. The material parameter $d$, called \emph{exchange length}, is its intrinsic length scale.

\item The second term is the \emph{anisotropy energy}. It comes from the interaction of the magnetisation $\m$ with the lattice structure of the material.\\
The non-negative function $\varphi:\mathbb S^2\to \RR$ enforces a preference for the direction of the magnetisation. We will be interested in the uniaxial case 
$$
\varphi(\m)=|\m'|^2,\quad \mbox{where } \m'=\begin{pmatrix}\m_1\\\m_2\end{pmatrix},
$$
which favours the third axis and thus the directions $\pm(0,0,1)$. \\
The dimensionless material parameter $Q$ is called the quality factor. It separates ferromagnetic materials into two broad classes: soft materials, for which $Q<1$ and hard materials, for which $Q>1$. We will be interested in strongly uniaxial materials, for which $Q$ is a large parameter.
\end{itemize}

From now on, we will be interested in the case of an idealised ferromagnet in the form of an infinite slab of thickness $t$ that is normal to the easy axis, i.e. $\RR^2\times [0,t]$. In order to deal with the unboundedness of the domain, we will impose some artificial periodicity $2\ell$ in the first two space coordinates, that is, $\Omega=[-\ell,\ell)^2\times [0,t]$ and 
$$
\m(x)=\m(x_1+2\ell,x_2,x_3)=\m(x_1,x_2+2\ell,x_3)\quad \forall x\in \RR^3.
$$
\begin{itemize}
\item The third term is the non-local \emph{stray field energy}. It favours magnetisations whose induced stray field is reduced as much as possible. 
One has that 
\small
\begin{align*}
	&\int_{[-\ell,\ell)^2\times\RR} |\h|^2\,\mathrm{d}x \\
	&\qquad= \min\left\{ \int_{[-\ell,\ell)^2\times\RR} |\widetilde{\h}|^2\,\mathrm{d}x : \widetilde{\h}: \RR^3 \to \RR^3 \text{ is } [-\ell,\ell)^2\text{-periodic in } (x_1,x_2),\, \nabla\cdot (\widetilde{\h} + \m) = 0 \right\} \\
	&\qquad= \int_{[-\ell,\ell)^2\times\RR} \left| |\nabla|^{-1}\nabla \cdot \m \right|^2\,\mathrm{d}x,
\end{align*}
\normalsize
see \cite{OV10}*{Appendix} for more details on the stray field energy. Hence, instead of minimising the non-local energy $E(\m)$, we can also include the field $\h$ in the minimisation, which makes the problem more local,\footnote{This is actually the point of view that we will take in the remainder of the article.} and is similar to the localisation of the fractional Laplacian via extension, see \cite{CafSil07}.
\end{itemize}

\medskip
We next heuristically explain why and when the parameters of the model allow for domain branching to occur. For a detailed explanation, we refer the reader to \cites{HS98,DKMO06} and references therein. Also, in these references the variety of microstructures that can be observed in ferromagnets is discussed, including more about the physics background of the model. The expert reader may want to skip this discussion and move on to the statement of the main results in Section~\ref{sec:main}.

Observe that the anisotropy and exchange energies favour the uniform magnetisations $\m=\pm(0,0,1)$. Nevertheless, the distributional divergence of $\pm(0,0,1)$ consist of surface charges of density $\pm1$ and $\mp 1$ respectively at the top ($x_3=t$) and bottom ($x_3=0$) boundaries, which generate a constant stray field $\h=\mp 1$ in $\Omega$. This leads to a stray field energy (and thus a Landau--Lifschitz energy) of $t$ per area in the cross section $x_1x_2$. 

However, the stray field energy can be reduced if the third component of the magnetisation (and therefore the surface charges) alternates between $\pm1$ and $\mp1$ at the top and bottom boundaries. 
This corresponds to \emph{screening}, the main driving principle in electrostatics: the minimising charge distribution has the property that on mesoscopic scales the charges try to arrange themselves in such a way that the macroscopic part of their induced stray field is reduced as much as possible. 

Indeed, for a magnetisation that only depends on $x'=(x_1,x_2)$ and horizontally alternates between $\pm(0,0,1)$ and $\mp(0,0,1)$ with a period $\omega\ll t$, the induced stray field concentrates in a neighbourhood (in the $x_3$-direction) of the top and bottom boundaries of size $\omega$, which leads to a stray field energy of order $\omega$ per area in the cross section $x_1x_2$. 

The exchange energy of course prevents that $\m$ jumps, which leads to the formation of \emph{domains}, i.e.\ (large) regions where $\m$ is nearly constant and equal to $\pm(0,0,1)$, separated by \emph{walls}, i.e.\ (small) smooth transition layers between $\pm (0,0,1)$ and $\mp(0,0,1)$. In strongly uniaxial materials (i.e. when $Q$ is large), the latter are the so-called \emph{Bloch walls}. In this case, $\m$ smoothly rotates in the $x_2x_3$ plane as one crosses the transition layer in the normal $x_1$ direction. 

For a magnetisation that only depends on $x_1$, by balancing the exchange and anisotropy energies, one finds that the width of the walls $\omega_{\mathrm{wall}}$ must be of order $dQ^{-\frac12}$ and that the \emph{specific} wall energy is of order $dQ^\frac12$. Letting $\omega_{\mathrm{domain}}$ denote the domain width, this leads to a wall energy contribution, per area in the cross section $x_1x_2$, of order $dQ^{\frac12}\omega_{\mathrm{domain}}^{-1}t$. Hence, combining with the stray field energy, we obtain a Landau--Lifschitz energy, per area in the cross section $x_1x_2$, of order $dQ^{\frac12}\omega_{\mathrm{domain}}^{-1}t+\omega_{\mathrm{domain}}$, leading to an optimal domain width $\omega_{\mathrm{domain}}$ and energy per area of order $(dQ^\frac12 t)^\frac12$. 

Let us observe that for this to be consistent, i.e. for $\omega_{\mathrm{wall}}\ll \omega_{\mathrm{domain}}\ll t$ to hold, the condition $dQ^\frac12\ll t$ is needed. Thus, in this regime, the uniform magnetisation is energetically beaten. It would then seem natural to think that the minimiser behaves as described above. Nevertheless, by varying the domain width in the $x_3$-direction, one can further reduce the energy. Intuitively, it is convenient to have a very small domain width near the top and bottom boundaries to reduce the stray field energy, but away from these surfaces, it is better to have a large domain width to reduce the wall energy. 

This is achieved by domain branching, see Figure \ref{ferro2}, firstly introduced by Lifschitz \cite{Lif35} (see also \cites{Hub67,HS98}). It is worth mentioning that a similar branching phenomenon, related on a mathematical level to optimal transportation, occurs in type-I superconductors; see \cites{CKO04,ChCKO08,COS16,CGOS18}.
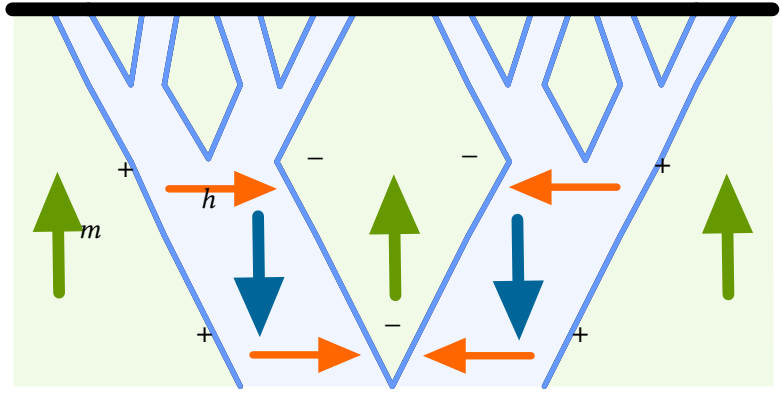
\begin{figure}[h]
\centering
\definecolor{ffzzqq}{rgb}{1.,0.6,0.}
\definecolor{qqwwzz}{rgb}{0.,0.4,0.6}
\definecolor{zzttqq}{rgb}{0.6,0.2,0.}
\definecolor{zzccqq}{rgb}{0.6,0.8,0.}
\definecolor{ffwwqq}{rgb}{1.,0.4,0.}
\definecolor{qqwwzz}{rgb}{0.,0.4,0.6}
\definecolor{wwzzqq}{rgb}{0.4,0.6,0.}
\definecolor{wwccqq}{rgb}{0.4,0.8,0.}
\definecolor{wwzzff}{rgb}{0.4,0.6,1.}
\definecolor{ttttff}{rgb}{0.,0.4,0.6}

\begin{tikzpicture}[line cap=round,line join=round,>=triangle 45,x=1cm,y=1cm]
\fill[line width=2.pt,color=wwzzff,fill=wwzzff,fill opacity=0.10000000149011612] (2.,0.) -- (1.,2.) -- (0.56,2.98) -- (0.,4.) -- (-0.48,5.) -- (0.,5.) -- (0.56,4.) -- (0.72,5.02) -- (1.18,5.) -- (1.,4.) -- (1.58,3.02) -- (2.,4.) -- (1.66,5.) -- (2.24,5.) -- (2.52,3.98) -- (3.,5.) -- (3.52,5.) -- (3.,4.) -- (2.48,2.98) -- (3.,2.) -- (4.,0.) -- cycle;
\fill[line width=2.pt,color=wwzzff,fill=wwzzff,fill opacity=0.10000000149011612] (4.,0.) -- (5.,2.) -- (5.54,2.98) -- (5.,4.) -- (4.52,5.) -- (5.,5.) -- (5.52,4.) -- (5.84,5.) -- (6.34,5.) -- (6.,4.) -- (6.54,3.) -- (7.,4.) -- (6.66,5.) -- (7.14,5.) -- (7.54,4.) -- (8.,5.) -- (8.54,5.) -- (8.,4.) -- (7.52,2.98) -- (7.,2.) -- (6.,0.) -- cycle;
\fill[line width=2.pt,color=wwccqq,fill=wwccqq,fill opacity=0.10000000149011612] (-0.9820282067160223,5.) -- (-0.9820282067160223,0.) -- (2.,0.) -- (1.,2.) -- (0.56,2.98) -- (0.,4.) -- (-0.4971783891120346,5.) -- cycle;
\fill[line width=2.pt,color=wwccqq,fill=wwccqq,fill opacity=0.10000000149011612] (0.03870825139763598,5.) -- (0.56,4.) -- (0.7021869491715138,5.) -- cycle;
\fill[line width=2.pt,color=wwccqq,fill=wwccqq,fill opacity=0.10000000149011612] (1.,4.) -- (1.58,3.02) -- (2.,4.) -- (1.6463681729266477,5.) -- (1.16151835532266,5.) -- cycle;
\fill[line width=2.pt,color=wwccqq,fill=wwccqq,fill opacity=0.10000000149011612] (2.52,3.98) -- (2.9733255684744035,5.) -- (2.24,5.) -- cycle;
\fill[line width=2.pt,color=wwccqq,fill=wwccqq,fill opacity=0.10000000149011612] (9.,5.) -- (9.016780211962905,0.) -- (6.,0.) -- (7.52,2.98) -- (8.,4.) -- (8.54,5.) -- cycle;
\fill[line width=2.pt,color=wwccqq,fill=wwccqq,fill opacity=0.10000000149011612] (8.,5.) -- (7.54,4.) -- (7.168363300657509,5.) -- cycle;
\fill[line width=2.pt,color=wwccqq,fill=wwccqq,fill opacity=0.10000000149011612] (6.345894029332739,5.) -- (6.659114684946519,5.) -- (7.,4.) -- (6.54,3.) -- (6.,4.) -- cycle;
\fill[line width=2.pt,color=wwccqq,fill=wwccqq,fill opacity=0.10000000149011612] (5.846434064975631,5.) -- (5.52,4.) -- (5.,5.) -- cycle;
\fill[line width=2.pt,color=wwccqq,fill=wwccqq,fill opacity=0.10000000149011612] (3.5239743328422217,5.) -- (4.548207381995823,5.) -- (5.,4.) -- (5.54,2.98) -- (5.,2.) -- (4.,0.) -- (3.,2.) -- (2.48,2.98) -- (3.,4.) -- cycle;
\draw [line width=2.pt] (2.,0.)-- (1.,2.);
\draw [line width=2.pt] (4.,0.)-- (3.,2.);
\draw [line width=2.pt] (4.,0.)-- (5.,2.);
\draw [line width=2.pt] (6.,0.)-- (7.,2.);
\draw [line width=2.pt] (1.58,3.02)-- (1.,4.);
\draw [line width=2.pt] (0.56,2.98)-- (0.,4.);
\draw [line width=2.pt] (1.58,3.02)-- (2.,4.);
\draw [line width=2.pt] (3.,2.)-- (2.48,2.98);
\draw [line width=2.pt] (2.48,2.98)-- (3.,4.);
\draw [line width=2.pt] (1.,2.)-- (0.56,2.98);
\draw [line width=2.pt] (5.,2.)-- (5.54,2.98);
\draw [line width=2.pt] (7.,2.)-- (7.52,2.98);
\draw [line width=2.pt] (7.52,2.98)-- (8.,4.);
\draw [line width=2.pt] (5.54,2.98)-- (5.,4.);
\draw [line width=2.pt] (6.54,3.)-- (6.,4.);
\draw [line width=2.pt] (6.54,3.)-- (7.,4.);
\draw [line width=2.pt] (5.,4.)-- (4.52,5.);
\draw [line width=2.pt] (5.52,4.)-- (5.,5.);
\draw [line width=2.pt] (5.52,4.)-- (5.84,5.);
\draw [line width=2.pt] (6.,4.)-- (6.34,5.);
\draw [line width=2.pt] (7.,4.)-- (6.66,5.);
\draw [line width=2.pt] (7.54,4.)-- (7.14,5.);
\draw [line width=2.pt] (7.54,4.)-- (8.,5.);
\draw [line width=2.pt] (8.,4.)-- (8.54,5.);
\draw [line width=2.pt] (3.,4.)-- (3.52,5.);
\draw [line width=2.pt] (2.52,3.98)-- (3.,5.);
\draw [line width=2.pt] (2.52,3.98)-- (2.24,5.);
\draw [line width=2.pt] (2.,4.)-- (1.66,5.);
\draw [line width=2.pt] (1.,4.)-- (1.18,5.);
\draw [line width=2.pt] (0.56,4.)-- (0.72,5.02);
\draw [line width=2.pt] (0.56,4.)-- (0.,5.);
\draw [line width=2.pt] (0.,4.)-- (-0.48,5.);
\draw [line width=2.pt,color=wwzzff] (2.,0.)-- (1.,2.);
\draw [line width=2.pt,color=wwzzff] (1.,2.)-- (0.56,2.98);
\draw [line width=2.pt,color=wwzzff] (0.56,2.98)-- (0.,4.);
\draw [line width=2.pt,color=wwzzff] (0.,4.)-- (-0.48,5.);
\draw [line width=2.pt,color=wwzzff] (-0.48,5.)-- (0.,5.);
\draw [line width=2.pt,color=wwzzff] (0.,5.)-- (0.56,4.);
\draw [line width=2.pt,color=wwzzff] (0.56,4.)-- (0.72,5.02);
\draw [line width=2.pt,color=wwzzff] (0.72,5.02)-- (1.18,5.);
\draw [line width=2.pt,color=wwzzff] (1.18,5.)-- (1.,4.);
\draw [line width=2.pt,color=wwzzff] (1.,4.)-- (1.58,3.02);
\draw [line width=2.pt,color=wwzzff] (1.58,3.02)-- (2.,4.);
\draw [line width=2.pt,color=wwzzff] (2.,4.)-- (1.66,5.);
\draw [line width=2.pt,color=wwzzff] (1.66,5.)-- (2.24,5.);
\draw [line width=2.pt,color=wwzzff] (2.24,5.)-- (2.52,3.98);
\draw [line width=2.pt,color=wwzzff] (2.52,3.98)-- (3.,5.);
\draw [line width=2.pt,color=wwzzff] (3.,5.)-- (3.52,5.);
\draw [line width=2.pt,color=wwzzff] (3.52,5.)-- (3.,4.);
\draw [line width=2.pt,color=wwzzff] (3.,4.)-- (2.48,2.98);
\draw [line width=2.pt,color=wwzzff] (2.48,2.98)-- (3.,2.);
\draw [line width=2.pt,color=wwzzff] (3.,2.)-- (4.,0.);
\draw [line width=2.pt,color=wwzzff] (4.,0.)-- (5.,2.);
\draw [line width=2.pt,color=wwzzff] (5.,2.)-- (5.54,2.98);
\draw [line width=2.pt,color=wwzzff] (5.54,2.98)-- (5.,4.);
\draw [line width=2.pt,color=wwzzff] (5.,4.)-- (4.52,5.);
\draw [line width=2.pt,color=wwzzff] (4.52,5.)-- (5.,5.);
\draw [line width=2.pt,color=wwzzff] (5.,5.)-- (5.52,4.);
\draw [line width=2.pt,color=wwzzff] (5.52,4.)-- (5.84,5.);
\draw [line width=2.pt,color=wwzzff] (5.84,5.)-- (6.34,5.);
\draw [line width=2.pt,color=wwzzff] (6.34,5.)-- (6.,4.);
\draw [line width=2.pt,color=wwzzff] (6.,4.)-- (6.54,3.);
\draw [line width=2.pt,color=wwzzff] (6.54,3.)-- (7.,4.);
\draw [line width=2.pt,color=wwzzff] (7.,4.)-- (6.66,5.);
\draw [line width=2.pt,color=wwzzff] (6.66,5.)-- (7.14,5.);
\draw [line width=2.pt,color=wwzzff] (7.14,5.)-- (7.54,4.);
\draw [line width=2.pt,color=wwzzff] (7.54,4.)-- (8.,5.);
\draw [line width=2.pt,color=wwzzff] (8.,5.)-- (8.54,5.);
\draw [line width=2.pt,color=wwzzff] (8.54,5.)-- (8.,4.);
\draw [line width=2.pt,color=wwzzff] (8.,4.)-- (7.52,2.98);
\draw [line width=2.pt,color=wwzzff] (7.52,2.98)-- (7.,2.);
\draw [line width=2.pt,color=wwzzff] (7.,2.)-- (6.,0.);
\draw [->,line width=4.4pt,color=wwzzqq] (4.041090798733158,1.208533609159039) -- (4.018509499202915,2.8118058758062467);
\draw [->,line width=4.4pt,color=wwzzqq] (-0.38484390919435324,1.242405558454403) -- (-0.40742520872459626,2.84567782510161);
\draw [->,line width=4.4pt,color=wwzzqq] (8.42186290760018,1.2198242589241604) -- (8.399281608069941,2.8230965255713683);
\draw [->,line width=4.4pt,color=qqwwzz] (2.2336858754460764,2.257443130032831) -- (2.2562671749763186,0.6541708633856227);
\draw [->,line width=4.4pt,color=qqwwzz] (5.645955122917289,2.211159623689868) -- (5.668536422447531,0.6078873570426597);
\draw [->,line width=2.8pt,color=ffwwqq] (2.1668429377230387,0.4181881256005565) -- (3.58946480812831,0.4181881256005565);
\draw [->,line width=2.8pt,color=ffwwqq] (5.825013461622307,0.4068974758354355) -- (4.402391591217036,0.4068974758354355);
\draw (1.2861722560435846,0.9149767152658884) node[anchor=north west] {$+$};
\draw (6.231476853166671,0.9149767152658884) node[anchor=north west] {$+$};
\draw (2.7313754259790985,3.1731066682901243) node[anchor=north west] {$-$};
\draw (3.7475339048400067,0.9601393143263732) node[anchor=north west] {$-$};
\draw [->,line width=2.8pt,color=ffwwqq] (1.0603592607411603,2.6198648297991873) -- (2.4829811311464316,2.6198648297991873);
\draw [->,line width=2.8pt,color=ffwwqq] (6.954078438134428,2.642446129329429) -- (5.531456567729157,2.642446129329429);
\draw (0.2474324776524341,3.1053627696993975) node[anchor=north west] {$+$};
\draw (7.315379230618307,3.150525368759882) node[anchor=north west] {$+$};
\draw (4.763692383700915,3.195687967820367) node[anchor=north west] {$-$};
\draw (-0.2493561120128988,2.26985468708043) node[anchor=north west] {$m$};
\draw (1.353916154634312,2.766643276745762) node[anchor=north west] {$h$};
\draw [line width=5.2pt] (0.,5.)-- (8.,5.);
\draw [line width=5.2pt] (8.,5.)-- (9.,5.);
\draw [line width=5.2pt] (0.,5.)-- (-1.,5.);
\end{tikzpicture}
\caption{Branched domain structure.}
\label{ferro2}
\end{figure}

The branching though comes at a price, due to the fact that the inter-facial layers between $\pm (0,0,1)$ and $\mp (0,0,1)$ are now tilted. They thus carry charges in the bulk, which generate a stray field. Letting $\omega_{\mathrm{bulk}}$ denote the domain width in the bulk, arguing similarly as above, one finds that the total (bulk) energy per area in the cross section $x_1x_2$ is of order $dQ^\frac12 \omega_{\mathrm{bulk}}^{-1}t+\omega_{\mathrm{bulk}}^2t^{-1}$, leading to an optimal $\omega_{\mathrm{bulk}}$ of order $(dQ^\frac12t^2)^\frac13$ and energy per area in the cross section $x_1x_2$ of order $((dQ^\frac12)^2t)^\frac13$. We immediately see that, provided $dQ^\frac12\ll t$, the branched configuration is energetically advantageous, and that we have consistency, i.e.
$$
\omega_{\mathrm{wall}}\ll \omega_{\mathrm{domain}}\ll \omega_{\mathrm{bulk}}\ll t
$$
holds.

Choksi, Kohn, and Otto \cite{CKO99}, were the first to mathematically prove that the above considerations are correct. More precisely, they proved that there exist universal constants $c_0,C_0>0$ such that
\begin{equation}\label{eq:globalscalingfullfunctional}
c_0((dQ^\frac12)^2t)^\frac13\leq \frac1{\ell^2}\min_{\m} \E(\m)\leq C_0((dQ^\frac12)^2t)^\frac13
\end{equation}
in the regime 
\begin{equation}\label{eq:parametersregime}
Q\gg 1,\quad dQ^\frac12\ll t,\quad \mbox{and}\quad \ell\gg ((dQ^\frac12)t^2)^\frac13.
\end{equation}
Notice that the last condition ensures that the artificially imposed period $2\ell$ is larger than $\omega_{\mathrm{bulk}}$, i.e. there is enough room for bulk domains to occur. Actually, the model considered in \cite{CKO99} is a sharp-interface reduction of micromagnetics, hence slightly simpler. The exchange energy is replaced by the term $d^2\int_\Omega |\nabla \m|$, which has to be understood as the total variation in $\Omega$ of the measure $\nabla \m$. This part of the energy essentially captures the total area of the domain walls (up to the factor $2d^2$). Later, in \cite{DKMO06}, an argument to obtain the lower bound for the minimal micromagnetics energy (instead of the sharp-interface model) with the same energy scaling was provided. 

Moreover, in \cite{CKO99}, the authors proved that if the domain structure is restricted to be independent of $x_3$, then the scaling law of the minimum is different, precisely as $\ell^2(dQ^\frac12 t)^\frac12$. This coincides with the above discussion and strongly suggests that domain branching is required for energy minimisation.

This work extended previous results by Choksi and Kohn \cite{CK98}, where analogous result for the 2D version of the same problem were obtained, to three dimensions. Both articles were motivated by the highly influential work by Kohn and Müller \cites{KM92,KM94} concerning branching of twins near an austenite-twinned-martensite interface. In these papers, the authors identified the scaling law of the minimum energy for a certain non-convex and non-local variational problem regularised by small surface energy. Conti \cite{Con00} was able to go beyond the scaling law of the minimum energy. Notice that even though the bounds on the minimum energy provide insight about the shape of minimisers, they do not give precise information about the local behaviour of minimisers near the interfaces where branching is expected to occur. The purpose of Conti's work was to address this question, introducing new ideas which allowed for proving that minimisers are self-similar in a statistical sense (a more precise description is given later).

Conti's result was extended to 3D by Viehmann in his Ph.D.\ thesis \cite{Vie09}.\footnote{In fact, our initial motivation was to understand some of the results contained in the Ph.D.\ thesis of Viehmann.} 
In order to get to it, let us start by describing a $\Gamma$-type convergence established by Otto--Viehmann in \cite{OV10}, in which the limiting energy turns out to be the 3D generalisation of the 2D functional proposed by Kohn--M\"{u}ller \cites{KM92,KM94}. 

Given any $\sigma, T>0$, we let 
$$
\gamma'\coloneqq \frac{(\sigma T^2)^\frac13}{(dQ^\frac12 t^2)^\frac13}\quad \mbox{and}\quad \gamma_3\coloneqq \frac Tt. 
$$
We then perform the change of variables
$$
\hat x'=\gamma'x',\quad \hat x_3=\gamma_3 x_3,
$$
and define
$$
\varepsilon\coloneqq \left(\frac{T}{\sigma}\right)^\frac13 \left(\frac{dQ^\frac12}t\right)^\frac13=\frac{\gamma_3}{\gamma'},\quad \delta\coloneqq \left(\frac{T}{\sigma}\right)^\frac23\frac{dQ^{-\frac12}}{(dQ^\frac12t^2)^\frac13}=\frac{dQ^{-\frac12}}{\sigma}\gamma',\quad L\coloneqq \frac{(\sigma T^2)^\frac13}{(dQ^\frac12 t^2)^\frac13} \ell=\gamma'\ell.
$$
Notice in particular that $\delta\varepsilon^{-2}=Q^{-1}$. Denoting $\hat x= (\hat x',\hat x_3)$ and $\h=(\h',\h_3)$, by rescaling the stray field as
$$
\hat \h'(\hat x)=\frac1\varepsilon \h'(x), \quad \hat \h_3 (\hat x)=\h_3(x)
$$
we see that $\widehat \diver\;  \hat \h=\gamma_3^{-1} \diver \h$, where $\widehat \diver$ denotes the divergence with respect to the $\hat x$ coordinates. We also set $\hat \m(\hat x)=\m(x)$. Finally, we let 
$$
\hat{\E}(\hat \m)=\frac12\frac{L^4}{\ell^4}\frac{t}{T}\E(\m)=\frac12\frac{\gamma'^4}{\gamma_3}\E(\m).
$$
A direct computation then shows that
\begin{equation*}
\hat{\E}(\hat \m)=\frac{1}{2}\left(\sigma^2 \delta \int\limits_{(-L,L)^2 \times (0, T)}\left| \binom{\hat\nabla '}{\varepsilon \frac{\partial}{\partial \hat x_3}} \hat \m\right|^2\mathrm{d}\hat x +\frac1\delta \int\limits_{(-L,L)^2 \times (0, T)}|\hat \m'|^2\mathrm{d}\hat x+\int\limits_{(-L,L)^2\times \RR}\left|\binom{\hat \h'}{\frac1\varepsilon \hat \h_3}\right|^2\mathrm{d}\hat x\right)
\end{equation*}
and that in the sense of distributions there holds 
$$
\hat \nabla'\cdot \left(\hat \h+\frac1\varepsilon \hat \m'\right)+\frac{\partial}{\partial \hat x_3}(\hat \h_3+\hat \m_3)=0\quad \mbox{in }\RR^3.
$$
Here, $\hat \nabla'$ denotes the gradient with respect to the first two spaces variables in the $\hat x$ coordinates.

Let us observe that the regime \eqref{eq:parametersregime} is characterised in the parameters $\varepsilon,\delta,L,\sigma,T$ by the (equivalent) conditions
$$
\delta\varepsilon^{-2}\ll 1,\quad \frac{\sigma}{T}\varepsilon^3\ll 1,\quad \frac{L}{(\sigma T^2)^\frac13}\gg1,
$$
which, for fixed $\sigma,T$, can be conveniently written as $\delta\ll \varepsilon^2\ll 1\ll L$. 

A main result in \cite{OV10} is to establish a $\Gamma$-type convergence\footnote{For some technical reasons their result does not provide a full $\Gamma$-convergence result, see \cite{OV10}*{Theorem 2}.} for the energy functional $\hat \E(\hat \m)$. More precisely, for any $L\gg (\sigma T^2)^\frac13$, as $\delta\varepsilon^{-2}\to 0$ and $\varepsilon\to 0$ (where no order of the limits has to be imposed), the functional $\hat \E(\hat \m)$ converges to the \emph{sharp interface} micromagnetics functional, in the case $d=2$,
$$
\tilde E(m) \coloneqq \sigma\int_{Q_{L,T}} |\nabla'm| + \frac{1}{2}\int_{[-L,L]^d\times \RR} |h|^2\mathrm{d}x.
$$
Here, $m\in\{-1,1\}$ a.e.\ in $Q_{L,T}\coloneqq [-L,L]^d\times [0,T]$ denotes the magnetisation. It is $2L$-periodic in the first $d$ space coordinates and equal to $0$ elsewhere (i.e. for $x_{d+1}\leq 0$ and $x_{d+1}\geq T$). $\nabla '$ denotes the gradient with respect to the first $d$ space coordinates, and the stray field $h: \RR^{d+1} \to \RR^d$ induced by $m$ satisfies
\begin{equation}\label{eq:admissible}
\nabla' \cdot h + \partial_{d+1} m = 0\quad \text{and} \quad \nabla' \times h = 0 \quad \text{in }\RR^{d+1}
\end{equation}
in the sense of distributions, where $\nabla'\cdot h$ and $\nabla '\times h$ respectively denote the in-plane divergence and the in-plane curl of the vector field $h$. This energy functional comes supplemented with the (weak) boundary condition $m=0$ on $\{x_{d+1}=0\}$ and $\{x_{d+1}=T\}$ in the sense that 
\begin{align*}
m(\cdot,x_{d+1}) \overset{\ast}{\rightharpoonup} 0 \quad \text{weakly-* in } L^{\infty}(\RR^d) \quad \text{as } x_{d+1} \to 0,T,
\end{align*} 
that is, \emph{infinite branching}. Let us emphasise that this condition ensures finiteness of the anisotropic stray field energy. Moreover, it implies that the generated stray field $h$ vanishes outside $Q_{L,T}$. 

Since the magnetisation in $Q_{L,T}$ takes only the values $\pm 1$, the first contribution in the energy has to be understood in the sense of BV functions, 
\small
\begin{align*}
	\int_{Q_{L,T}} |\nabla'm|\,\mathrm{d}x
	&= \sup \Bigg\{ \int_{Q_{L,T}} m\, \nabla' \cdot \xi \,\mathrm{d}x :\,  \xi \in \mathcal{C}^{\infty}(\RR^{d+1}, \RR^{d}), \xi \text{ is } [-L,L)^d\text{-periodic in } x', |\xi|\leq 1 \text{ in } Q_{L,T} \Bigg\}
\end{align*}
\normalsize
and can be interpreted as the slice-wise measure of the set where $m$ changes sign. More precisely,
$$
\int_{Q_{L,T}} |\nabla'm| = 2 \int_{0}^{T} \mathcal{H}^1\big(\partial\{m(\cdot, x_{d+1})=1\}\big)\mathrm{d}x_{d+1},
$$
where $\mathcal{H}^1\big(\partial\{m(\cdot, x_{d+1})=1\}\big)$ denotes the Hausdorff measure of the set where $m$ takes the value $1$ on the slice $x_{d+1}=const$ in $Q_{L,T}$, corresponding to the usual geometric interpretation of the gradient of the characteristic function of a set as perimeter. 

The constant $\sigma$ represents the interface energy per cross section area. In particular, this constant ensures that both terms in the energy have the same units.

\subsection{Main results}\label{sec:main}
We consider the energy functional
\begin{equation}\label{eq:energy-nonconvex}
E_{Q_{L,T}}(m,h)\coloneqq \sigma\int_{Q_{L,T}} |\nabla'm| + \frac{1}{2}\int_{Q_{L,T}} |h|^2\mathrm{d}x
\end{equation}
on the set 
\begin{align}\label{eq:admissible-periodic}
	\mathcal{A}^{\mathrm{per}}_{Q_{L,T}} \coloneqq &\bigg\{(m,h): m \in L^1_{x_{d+1}}([0,T]; BV_{x'}(Q_{L}'; \{\pm 1\})), h\in  L^2(Q_{L,T}; \RR^d), \text{ such that }  \nonumber\\ 
	&\qquad \nabla'\cdot h + \partial_{d+1} m = 0 \text{ distributionally in } \RR^{d+1}, \, m\stackrel{*}{\rightharpoonup} 0 \text{ as } x_{d+1} \to 0,T, \nonumber \\
	&\qquad \text{ and periodic lateral boundary conditions} \bigg\},
\end{align}
of admissible periodic configurations, where $Q_{L}' \coloneqq [-L,L]^d$. 
Note that the stray field energy can also be expressed as\footnote{The curl-free condition on $h$ is enforced by the minimisation, see Remark~\ref{rem:gradient}.}
\begin{align}\label{eq:representation-strayfield}
	&\min_{(m,h)\in \mathcal{A}^{\mathrm{per}}_{Q_{L,T}}} E_{Q_{L,T}}(m,h) \nonumber \\
	&= \min_{m}\left( \sigma\int_{Q_{L,T}} |\nabla'm| + \min \left\{ \frac{1}{2}\int_{Q_{L,T}} |h|^2\mathrm{d}x: \, h \text{ is } [-\ell,\ell)^2\text{-periodic in } x', \, \nabla'\cdot h + \partial_{d+1} m = 0 \right\}\right) \nonumber \\
	&= \min_{m} \left( \sigma\int_{Q_{L,T}} |\nabla'm| + \frac{1}{2} \int_{Q_{L,T}} \left| |\nabla'|^{-1} \partial_{d+1}m \right|^2\,\mathrm{d}x \right);
\end{align}
see \cite{OV10}*{Appendix} for more details. This is an important ingredient in the derivation of an ansatz-free lower bound on the minimal energy based on interpolation between $BV$ and $\dot{H}^{-1}$ with respect to the horizontal variables $x'$, see Section~\ref{sec:lower-bound-global} based on \cite{CO16}. 

We will also consider the case of zero-flux boundary conditions 
\begin{align}\label{eq:admissible-zero}
	\mathcal{A}^{0}_{Q_{L,T}} \coloneqq &\bigg\{(m,h): m \in L^1_{x_{d+1}}([0,T]; BV_{x'}(Q_{L}'; \{\pm 1\})), h\in  L^2(Q_{L,T}; \RR^d), \text{ such that }  \nonumber\\ 
	&\qquad \nabla'\cdot h + \partial_{d+1} m = 0 \text{ distributionally in } Q_{L,T}, \, m\stackrel{*}{\rightharpoonup} 0 \text{ as } x_{d+1} \to 0,T, \nonumber \\
	&\qquad \qquad \quad h\cdot \nu' = 0 \text{ on } \Gamma_{L,T} \bigg\},
\end{align}
where $\Gamma_{L,T} = \partial Q_{L}' \times [0,T]$ and the equation and boundary conditions are understood in a weak sense (as described more precisely in \eqref{eq:admissibility-relaxed}). Let us remark that even though the reduced functional was derived using periodic lateral boundary conditions, from a physics point of view, the zero-flux lateral boundary conditions are the most natural to impose in this context, since they account for the situation of a finite sample where the stray field generated by the magnetisation naturally vanishes outside the sample (recall that $m\stackrel{*}{\rightharpoonup} 0 \text{ as } x_{d+1} \to 0,T$).

\medskip 
The energy functional $E$ has the following important scaling property: for $\lambda>0$ let $m_{\lambda}(x) \coloneqq m(\lambda^{\frac{2}{3}} x', \lambda x_{d+1})$ and $h_{\lambda}(x) \coloneqq \lambda^\frac{1}{3} h(\lambda^{\frac{2}{3}} x', \lambda x_{d+1})$. Then $(m_{\lambda},h_{\lambda})\in\mathcal{A}^{\mathrm{per}/0}_{Q_{\lambda^{-2/3}L, \lambda^{-1}T}}$ and 
\begin{align}\label{eq:scalingofenergy}
	E_{Q_{\lambda^{-2/3}L, \lambda^{-1}T}}(m_{\lambda},h_{\lambda}) = \lambda^{-d}\lambda^{-\frac{1}{3}} E_{Q_{L,T}}(m,h).
\end{align}

We finally have all the ingredients to present our first result, which provides global scaling laws for this functional.\footnote{In the case $d=2$ with periodic boundary conditions this can already be found in \cite{Vie09}, while an analogous statement for $d=1$ is contained in the classic work \cite{KM94}.}
\begin{theorem}[Global scaling laws]\label{thm:globalbound}
There exist universal constants\footnote{By a universal constant we always mean a constant that only depends on the dimension $d$, but not on any system parameter (like $L,T$).} $C_{LT}<\infty$ and $0<c_{S} \leq C_{S}<\infty$ such that if $L \geq C_{LT} \sigma^{\frac{1}{3}} T^{\frac{2}{3}}$, then the minimal energy with respect to periodic or zero-flux boundary conditions satisfies 
$$
c_{S} \sigma^\frac23 L^d T^\frac13 \leq\min_{(m,h)\in\mathcal{A}^{\mathrm{per}/0}_{Q_{L,T}}} E_{Q_{L,T}}(m,h)\leq C_{S} \sigma^\frac23 L^dT^\frac13.
$$
\end{theorem}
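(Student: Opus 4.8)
The plan is to prove the upper and lower bounds separately. Throughout, the relevant horizontal scale is the bulk domain width $R\coloneqq(\sigma T^2)^{\frac13}$, and the point of the hypothesis $L\geq C_{LT}\sigma^{\frac13}T^{\frac23}$ is precisely that $Q_L'=[-L,L]^d$ then contains of order $(L/R)^d$ disjoint axis-parallel cubes of side length $R$; this is what makes the energy extensive, i.e.\ of order $L^d$ rather than a smaller power of $L$.

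For the \emph{upper bound} I would produce an explicit admissible pair. It is enough to take a configuration that branches only in the coordinate $x_1$ and is independent of $x_2,\dots,x_d$: then $\nabla'\cdot h=\partial_1 h_1=-\partial_{d+1}m$ is solved by an $x_2,\dots,x_d$-independent field $h=(h_1,0,\dots,0)$, which is automatically curl-free and, if the pattern is chosen reflection-symmetric across $\partial Q_L'$, satisfies $h\cdot\nu'=0$ on $\Gamma_{L,T}$; so only the case $d=1$ has to be solved, and a factor $L^{d-1}$ accounts for the remaining directions. In the $d=1$ problem I use the Kohn--M\"uller branching ansatz: the domain pattern has period $w(z)\sim a\,z^{\frac23}$ in $x_1$ at distance $z$ from the nearer of the two faces $\{x_{d+1}=0\},\{x_{d+1}=T\}$, realised by a self-similar tree of interfaces refining towards each face. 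Because $\nabla'm=\partial_1 m$ records only the slice-wise \emph{number} of interfaces (not their arclength), the interfacial energy per unit $x_1$ is $\sigma\int_0^{T/2}\frac{dz}{w(z)}\sim\sigma T^{\frac13}/a$; because the interfaces tilt at rate $\sim w'(z)\sim a\,z^{-\frac13}$, the charge $\partial_{d+1}m$ is a $w(z)$-periodic alternating array of that strength, and a one-line Fourier computation gives a stray field energy per unit $x_1$ of order $\int_0^{T/2}(w'(z))^2\,dz\sim a^2 T^{\frac13}$. Optimising over $a$ gives $a\sim\sigma^{\frac13}$, hence $w(T)\sim R$ and total energy per unit $x_1$ of order $\sigma^{\frac23}T^{\frac13}$; multiplying by $L^d$ yields the bound with constant $C_S$. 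Admissibility is fine: $w(z)\to 0$ gives $m(\cdot,x_{d+1})\overset{\ast}{\rightharpoonup}0$ as $x_{d+1}\to 0,T$, while both energy integrals converge (the integrand is $\sim z^{-\frac23}$ in each case); for periodic data one takes $w(T)=2L/N$ with $N\in\NN$, which is possible since $2L/R\gtrsim C_{LT}$.

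For the \emph{lower bound} I argue ansatz-free. Fix one of the $\sim(L/R)^d$ cubes $\omega'\subset Q_L'$ of side $R$ and write $\mathcal I$, $\mathcal F$ for the interfacial and stray field energies of $(m,h)$ in $\omega'\times[0,T]$; summing the cube estimate $\mathcal I+\mathcal F\gtrsim\sigma^{\frac23}R^d T^{\frac13}$ over the cubes gives $c_S\,\sigma^{\frac23}L^d T^{\frac13}$ (and it suffices to treat both boundary conditions this way, since each only strengthens the constraint used below). Two ingredients: (i) testing $\nabla'\cdot h+\partial_{d+1}m=0$ against $\phi(x')\psi(x_{d+1})$ with $\phi$ compactly supported in $\omega'$ and letting $x_{d+1}\to0$ (using $m\overset{\ast}{\rightharpoonup}0$ there) yields, for a.e.\ slice $s\in(0,T)$, $\|m(\cdot,s)\|_{\dot H^{-1}(\omega')}\lesssim(s\mathcal F)^{\frac12}$; (ii) a cube-localised $BV$--$\dot H^{-1}$ interpolation/relative-isoperimetric estimate: if the slice-wise relative perimeter $\int_{\omega'}|\nabla'm(\cdot,s)|$ is $\leq cR^{d-1}$, then $m(\cdot,s)$ is so close to one of the constants $\pm1$ on $\omega'$ that $\|m(\cdot,s)\|_{\dot H^{-1}(\omega')}\gtrsim R^{\frac{d}{2}+1}$. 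Setting $s_\ast\coloneqq\min\{T,\ R^{d+2}/(C_1\mathcal F)\}$ with $C_1$ chosen from the constants in (i)--(ii), for $s\leq s_\ast$ ingredient (i) forces $\|m(\cdot,s)\|_{\dot H^{-1}(\omega')}$ below the threshold in (ii), so $\int_{\omega'}|\nabla'm(\cdot,s)|>cR^{d-1}$ there and hence $\mathcal I\geq\sigma c R^{d-1}s_\ast$. If $s_\ast=T$ this is $\gtrsim\sigma R^{d-1}T\sim\sigma^{\frac23}R^d T^{\frac13}$; otherwise $\mathcal I\mathcal F\gtrsim\sigma R^{2d+1}$, so $\mathcal I+\mathcal F\gtrsim(\sigma R^{2d+1})^{\frac12}\sim\sigma^{\frac23}R^d T^{\frac13}$ for $R=(\sigma T^2)^{\frac13}$. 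Either way the cube estimate holds; the representation of the stray field energy in \eqref{eq:representation-strayfield} could equivalently replace $h$ here and run the same argument with $\|\,|\nabla'|^{-1}\partial_{d+1}m\,\|$.

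The main obstacle is the lower bound. A clean one-line global $BV$--$\dot H^{-1}$ interpolation inequality degenerates for $d\geq 2$, so ingredient (ii) really has to be established as a quantitative statement \emph{inside cubes of the critical size $R$} — this is the content imported from \cite{CO16} and developed in Section~\ref{sec:lower-bound-global} — and one must handle the weak formulations of the admissibility constraint and of the top/bottom boundary conditions carefully enough that the slice bound (i) survives the passage to the limit $x_{d+1}\to0$ in both classes $\mathcal{A}^{\mathrm{per}}_{Q_{L,T}}$ and $\mathcal{A}^{0}_{Q_{L,T}}$. On the upper-bound side, by contrast, the only real work is the elementary-but-fiddly bookkeeping needed to turn the self-similar ansatz into an honestly admissible $(m,h)$ realising the claimed energy.
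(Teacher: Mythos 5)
Your proposal is correct in outline, but it takes a genuinely different route from the paper on both halves, and one attribution deserves a flag. For the upper bound you realise the Kohn--M\"uller tree directly as a pattern depending only on $(x_1,x_{d+1})$ with width $w(z)\sim\sigma^{1/3}z^{2/3}$, whereas the paper extracts the upper bound from Corollary~\ref{cor:competitor} by feeding the trivial relaxed solution $(m_{\rm rel},h_{\rm rel})\equiv 0$ into the dyadic building-block machinery of Lemma~\ref{lem:buildingblock}; these are morally the same branching construction (see Figure~\ref{fig:example1-construction}, where the building blocks are constant in $y_2$ precisely because $M=\overline M=0$), but the paper's route reuses machinery that is needed anyway for Theorem~\ref{thm:localbounds}, while yours is self-contained and somewhat more elementary. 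For the lower bound the gap between routes is larger: you localise to $\sim(L/R)^d$ columns of horizontal side $R=(\sigma T^2)^{1/3}$ and run a Kohn--M\"uller/CKO-type dichotomy in each column, combining a near-boundary bound $\|m(\cdot,s)\|_{\dot H^{-1}(\omega')}\lesssim(s\mathcal F)^{1/2}$ (from the PDE together with $m\overset{\ast}{\rightharpoonup}0$) with a cube-localised relative-isoperimetric/$\dot H^{-1}$ lower bound, and then close via an AM--GM in the threshold time $s_*$; this computation is sound and even works simultaneously for periodic and zero-flux lateral data because only compactly supported test functions enter. The paper instead avoids localisation altogether: it applies the periodic $BV$--$\dot H^{-1}$ interpolation \eqref{eq:CO-interpolation} slice-wise on the \emph{full} torus $Q_L'$ (legitimate because $\int_{Q_L'}m(\cdot,x_{d+1})\,\mathrm{d}x'=0$), together with a Poincar\'e estimate in $x_{d+1}$, and handles zero-flux data by even reflection to a periodic configuration on $[-L,3L]^d\times[0,T]$. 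In particular, your ingredient (ii) is \emph{not} what is ``imported from [CO16] and developed in Section~\ref{sec:lower-bound-global}'': that inequality is the periodic zero-mean interpolation and is never applied on sub-cubes of $Q_L'$. The cube-scale relative-isoperimetric plus $\dot H^{-1}$ lower bound you invoke is a separate (standard and not hard, but distinct) ingredient that you would need to state and prove; once that is supplied, both lower-bound arguments give the theorem, with the paper's being shorter and yours buying a reflection-free treatment of the lateral boundary conditions.
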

Note that this is in perfect agreement with \eqref{eq:globalscalingfullfunctional} in the case $d=2$, and therefore with the heuristic computations performed when branching is expected to occur.  The lower bound follows almost directly from an interpolation inequality, see \cite{Vie09}*{Lemma 13} and \cite{CO16}*{Theorem 1.1}, while the upper bound is obtained via an explicit construction, which is inspired by \cite{Vie09} (in the case $d=2$).

Our main result, which has partially been obtained by T.~Viehmann in his (unpublished) PhD thesis \cite{Vie09}, goes beyond the global scaling law, capturing the self-similar behaviour of minimisers near the top and bottom boundaries. It shows that the energy within any cuboid $Q_{\ell,t}(a)\coloneqq Q_{\ell,t}+a$ for $a\in \RR^{d+1}$ sitting at the top or bottom boundaries, for lengths $\ell\ll L$ and $t\ll T$ which respect the relation $\ell \gg (\sigma t^2)^\frac13$, satisfies the same scaling laws, i.e. it is of order $\sigma^\frac23\ell^dt^\frac13$. 

\begin{theorem}[Local scaling laws] \label{thm:localbounds} 
 There exists a universal constant $C_{\ell t}<\infty$ such that the following holds: if $L \geq C_{\ell t} \sigma^{\frac{1}{3}} T^{\frac{2}{3}}$ and $\ell \geq C_{\ell t}\sigma^{\frac{1}{3}}t^\frac23$, then there exist universal constants\footnote{To be precise, the constant $c_{s}$ only depends on the dimension $d$, while $C_{s}$ depends on $d$ and $C_{\ell t}$.} $0<c_{s} \leq C_{s}<\infty$ such that any $(m,h)$ minimising \eqref{eq:energy-nonconvex} in $Q_{L,T}$ with respect to periodic or zero-flux boundary conditions satisfies
$$
c_{s} \sigma^\frac23 \ell^d t^\frac13 \leq E_{Q_{\ell,t}(a)}(m,h)\leq C_{s} \sigma^\frac23 \ell^dt^\frac13
$$
for any cuboid $Q_{\ell,t}(a)$ with $a\in \RR^d\times\{0\}$ or $a\in \RR^d\times \{T-t\}$ such that $Q_{\ell,t}(a)\subseteq Q_{L,T}$.
\end{theorem}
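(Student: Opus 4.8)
The plan is to prove the lower and upper bounds separately; the lower bound is soft, the upper bound is where the work lies. For the lower bound I would localise the interpolation estimate already used for Theorem~\ref{thm:globalbound}. Restricting the minimiser $(m,h)$ to $Q_{\ell,t}(a)$, the constraint $\nabla'\cdot h+\partial_{d+1}m=0$ still holds there and, on the face of $Q_{\ell,t}(a)$ lying in $\{x_{d+1}=0\}$ — or, after the reflection $x_{d+1}\mapsto T-x_{d+1}$ (under which the functional and admissibility are invariant), in $\{x_{d+1}=T\}$ — one has $m\stackrel{*}{\rightharpoonup}0$. These are exactly the hypotheses of the $BV$--$\dot H^{-1}$ interpolation inequality of \cite{CO16}*{Theorem 1.1} (equivalently \cite{Vie09}*{Lemma 13}), and the assumption $\ell\ge C_{\ell t}\sigma^{1/3}t^{2/3}$ puts us in the branching regime in which that inequality yields $E_{Q_{\ell,t}(a)}(m,h)\ge c_s\,\sigma^{2/3}\ell^d t^{1/3}$.

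For the upper bound I would combine a competitor argument with an iteration over scales. By the reflection above we may assume $a\in\RR^d\times\{0\}$, and since the local energy is monotone nondecreasing in the domain it suffices to treat boxes around which there is room inside $Q_{L,T}$ (when $\ell\approx L$ and $t\approx T$ one is essentially in the situation of Theorem~\ref{thm:globalbound}). Fix $\rho>1$, to be chosen later close to $1$, and let $Q^{(k)}$ be the box obtained from $Q_{\ell,t}(a)$ by applying $k$ times the scaling $(\ell,t)\mapsto(\rho^{2/3}\ell,\rho t)$ — the scaling under which the functional \eqref{eq:energy-nonconvex} (cf.\ \eqref{eq:scalingofenergy}) and the constraint $\ell\gtrsim\sigma^{1/3}t^{2/3}$ are invariant and the target quantity $\sigma^{2/3}\ell^dt^{1/3}$ is multiplied by $\rho^{(2d+1)/3}$. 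The aim is a recursion
\begin{equation}\label{eq:recursion-plan}
E_{Q^{(0)}}(m,h)\ \le\ \theta\,E_{Q^{(1)}}(m,h)\ +\ C\,\sigma^{2/3}\ell^d t^{1/3},\qquad 0<\theta<1,
\end{equation}
with $C$ depending only on $d$ and $C_{\ell t}$ and, crucially, \emph{not} on $\rho$; choosing $\rho$ so small that $\theta\,\rho^{(2d+1)/3}<1$ and iterating \eqref{eq:recursion-plan} up to the largest scale, where $\rho^kt\approx T$ and Theorem~\ref{thm:globalbound} — or, in the range where the box has already reached full horizontal width $L$, the strip version of the present argument (in which there are no lateral faces to match) — controls the top term, the geometric series sums and gives $E_{Q_{\ell,t}(a)}(m,h)\le C_s\,\sigma^{2/3}\ell^dt^{1/3}$.

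To prove \eqref{eq:recursion-plan} it suffices, by minimality, to produce an admissible competitor $(\tilde m,\tilde h)$ that agrees with $(m,h)$ outside $Q^{(1)}$ and satisfies $E_{Q^{(1)}}(\tilde m,\tilde h)\le C_0\,\sigma^{2/3}\ell^dt^{1/3}+C'\big(E_{Q^{(1)}}(m,h)-E_{Q^{(0)}}(m,h)\big)$ with $C_0,C'$ universal: combined with $E_{Q^{(1)}}(m,h)\le E_{Q^{(1)}}(\tilde m,\tilde h)$ this rearranges to \eqref{eq:recursion-plan} with $\theta=1-\tfrac{1}{1+C'}$ (the case $C'<1$ being even simpler). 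Admissibility forces $(\tilde m,\tilde h)$ to match the traces of $(m,h)$ on $\partial Q^{(1)}$, and since the sharp-interface constraint $\nabla'\cdot\tilde h=-\partial_{d+1}\tilde m$ with $\tilde h\in L^2$ rules out a jump of $\tilde m$ across a horizontal hyperplane, the slice $m(\cdot,\tau)$ must be matched \emph{exactly} at any height $\tau$ below which $\tilde m$ is altered. I would pick $\tau$ by Chebyshev's inequality in the thin vertical shell $\{t\le x_{d+1}\le\rho t\}$: choose $\tau\in[t,\rho t]$ with $m(\cdot,\tau)$ a genuine $BV$ slice and $\sigma\int|\nabla'm(\cdot,\tau)|\le\frac{2}{(\rho-1)t}\big(E_{Q^{(1)}}(m,h)-E_{Q^{(0)}}(m,h)\big)$, and — in the zero-flux / nonperiodic case — likewise a lateral sub-shell of thickness $\sim(\rho^{2/3}-1)\ell$ carrying a comparably controlled trace. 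Then: freeze $\tilde m(\cdot,x_{d+1})=m(\cdot,\tau)$ on $[t,\tau]$, whose energy is $\sigma(\tau-t)\int|\nabla'm(\cdot,\tau)|\le 2\big(E_{Q^{(1)}}(m,h)-E_{Q^{(0)}}(m,h)\big)$ — the factors $\rho-1$ cancel, which is why the resulting constant is $\rho$-independent; on $Q^{(0)}$ put a variant of the construction from Theorem~\ref{thm:globalbound} matching $m\stackrel{*}{\rightharpoonup}0$ at $\{x_{d+1}=0\}$ and $m(\cdot,\tau)$ at $\{x_{d+1}=t\}$, of energy $\le C_0\sigma^{2/3}\ell^dt^{1/3}$ by self-similarity (whether or not $m(\cdot,\tau)$ is coarser or finer than the natural width $(\sigma t^2)^{1/3}$); and fill the lateral collar by a configuration interpolating between $(m,h)$ on the selected outer shell and a standard periodic pattern on $\partial Q^{(0)}$ and carrying the prescribed lateral flux, at cost $\lesssim E_{Q^{(1)}}(m,h)-E_{Q^{(0)}}(m,h)+\sigma^{2/3}\ell^dt^{1/3}$.

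The main obstacle is exactly this competitor estimate: gluing a clean self-similar branching pattern onto the minimiser's a priori uncontrolled slice $m(\cdot,\tau)$ and lateral flux at a cost comparable to $\sigma^{2/3}\ell^dt^{1/3}$ plus the energy already present in the collar. This requires the explicit geometry of the branching construction, the rigidity of the admissibility constraint (exact trace matching), and — decisively for closing the iteration — the localisation of the frozen matching region to a thin shell so that the constant $C'$ in \eqref{eq:recursion-plan} stays bounded uniformly in $\rho$. The hypothesis $\ell\ge C_{\ell t}\sigma^{1/3}t^{2/3}$ enters precisely at the construction step: it is the condition that $Q^{(0)}$ be wide enough to accommodate one full branching period, without which the energy density $\sim\sigma^{2/3}t^{-2/3}$ cannot be realised and the upper bound is false. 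Finally, the periodic and zero-flux cases run in parallel, invoking $2L$-periodicity (respectively reflection across $\Gamma_{L,T}$) whenever an enlarged box $Q^{(k)}$ would otherwise protrude from $Q_{L,T}$, and the top and bottom cases are identified by the reflection $x_{d+1}\mapsto T-x_{d+1}$.
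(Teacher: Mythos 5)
The proposal gets the broad strategy right — a global scaling law at the largest scale driven down to small boundary cuboids by a geometric iteration fed by a competitor construction — but both halves contain genuine gaps that are not just ``details to fill in'' but are exactly where the paper has to introduce new ideas.

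For the lower bound, you cannot simply ``restrict the minimiser to $Q_{\ell,t}(a)$ and apply \cite{CO16}*{Theorem 1.1}''. That interpolation inequality is applied slice-wise and needs $\int_{Q_\ell'(a)}m(\cdot,x_{d+1})\,\mathrm{d}x'=0$ on each slice, so that $|\nabla'|^{-1}m$ makes sense and the Poincar\'e step goes through; this slice-mean-zero property is a global consequence of the lateral boundary conditions and is destroyed by restriction to a sub-plaquette. The paper fixes this by first reflecting evenly across $\{x_{d+1}=t\}$ (to regain a zero weak-star top trace) and then reflecting laterally across each face of $\Gamma_{\ell,t}(a)$ \emph{with a sign flip of $m$} — it is this oddly-reflected extension that restores $\int m\,\mathrm{d}x'=0$ slice-wise on the larger box, and it costs an extra interfacial term $C_d\ell^{d-1}t$ that has to be checked to be lower order. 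Your sketch mentions only the reflection $x_{d+1}\mapsto T-x_{d+1}$ and omits both the $x_{d+1}=t$-reflection and the lateral odd reflection, so as written the interpolation inequality is not applicable.

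For the upper bound, your recursion \eqref{eq:recursion-plan} has the form of the paper's Campanato step, but the claim that the competitor on $Q^{(0)}$ costs ``$\le C_0\sigma^{2/3}\ell^dt^{1/3}$ (whether or not $m(\cdot,\tau)$ is coarser or finer than the natural width)'' is not true for the full energy: the competitor you describe contains the cumulated (height-averaged) field $\overline h_t=\dashint_0^t h\,\mathrm{d}x_{d+1}$, and $\tfrac12\int_{Q^{(0)}}|\overline h_t|^2$ is in general only bounded by $E_{Q^{(0)}}$ (via Jensen), not by a universal multiple of $\sigma^{2/3}\ell^d t^{1/3}$. So the constant $C$ in \eqref{eq:recursion-plan} is not universal; the recursion does not close in $E$. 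The paper's way out is precisely here: by the orthogonality relation (Lemma~\ref{lem:orthogonality}) one works with the shifted quantity $f=\tfrac{t^2}{\ell^2}\big(\dashint|\nabla'm|+\tfrac12\dashint|h-\overline h_t|^2\big)$, and the construction is arranged so that $\overline{\widetilde h}_t=\overline h_t$ (eq.~\eqref{eq:htilde-average}) so the over-relaxed energies cancel exactly; and alongside $f$ one must track a second quantity $n=\tfrac{t}{\ell}\sup|\overline h_t|$ whose decay (rate $\theta^{1/2}$ rather than $\theta$) is what controls the $\overline h_t$-dependent terms in the competitor's energy via the boundary-layer condition \eqref{eq:boundary-layer-size} and via~\eqref{eq:boundenergywrtfandn}. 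You do not introduce $n$ or the monotonicity formula for $f$ (Lemma~\ref{lem:monotonicity}), and you also do not notice that what must be matched on $\Gamma_{\ell,t}(a)$ is the lateral flux $h\cdot\nu'$, not the magnetisation; the oscillatory part of that flux drives the boundary-layer construction of Proposition~\ref{prop:fine-properties-relaxed}, while the top-trace matching requires the building-block construction of Lemma~\ref{lem:buildingblock} applied to a \emph{relaxed} magnetisation $\tfrac{x_{d+1}}{t}m^t+m_r\in[-1,1]$, not directly to the $\pm1$-valued slice $m(\cdot,\tau)$. Finally, the direction of your iteration (bottom-up from $(\ell,t)$ to $(L,T)$) cannot be run as stated: the one-step improvement applies only where the smallness hypotheses on $f$ and $n$ hold, and these are verified first at the top scale by the global bound, then propagated downward; so in practice the iteration is necessarily top-down.
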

\begin{remark} Our results in particular show that the energy density 
$$
e_{Q_{\ell,t}}(m,h)\coloneqq \frac{1}{|Q_{\ell,t}|}E_{Q_{\ell,t}}(m,h)
$$
of any minimising configuration $(m,h)$ scales like $(\sigma t^{-1})^\frac23$ and is therefore independent of the dimension $d$. It is worth mentioning that the minimal energy density $\min_{(m,h)} e_{Q_{L,T}}(m,h)$ has a thermodynamic limit as $L\to \infty$, as shown by Otto and Viehmann \cite{OV10} (for $d=2$), therefore proving that the energy scaling is asymptotically exact.
\end{remark}

\begin{remark}
	In the following, we will set $\sigma = 1$ for simplicity; the general case can then be recovered by a simple scaling argument. 	
\end{remark}

\begin{remark} \label{rem:almost-min}
The main results can be extended to \emph{almost-minimisers} in the following sense: 
$(m_{\mathrm{almost}} ,h_{\mathrm{almost}})\in\mathcal{A}^{\mathrm{per}/0}_{Q_{L,T}}$ is an almost-minimiser at scale $(L,T)$ if $h_{\mathrm{almost}}$ is curl-free (hence a gradient field) and there exists a constant $C<\infty$ such that 
		\begin{align*}
			\int_{Q_{L,T}} |\nabla'm_{\mathrm{almost}}| + \frac{1}{2} \int_{Q_{L,T}} h_{\mathrm{almost}}^2\,\mathrm{d}x \leq \int_{Q_{L,T}} |\nabla'm| + \frac{1}{2} \int_{Q_{L,T}} h^2\,\mathrm{d}x + C L^d T^{\frac{1}{3}}
		\end{align*}
for all competitors $(m,h) \in \mathcal{A}^{\mathrm{per}/0}_{Q_{L,T}}$. 

Theorem~\ref{thm:globalbound} also holds for almost-minimisers at scale $(L,T)$ for $L \gg T^{\frac{2}{3}}$. In this sense, almost-minimisers are ``low-energy configurations'' \cite{CKO99} because their energy is within a certain factor of the minimum energy. 

The extension of Theorem~\ref{thm:localbounds} requires $(m_{\mathrm{almost}} ,h_{\mathrm{almost}})$ to be almost-minimising at any scale $\ell \leq L$, $t\leq T$ with $\ell \gg t^{\frac{2}{3}}$.
\end{remark}

For $d=1$, Theorem \ref{thm:localbounds} was first proved in the seminal work by Conti \cite{Con00} in the context of twinning in shape-memory alloys, and was recently extended by Conti, Diermeier, Koser, and Zwicknagl \cite{CDKZ21} in several directions, including the case in which there are two phases with different volume fractions, i.e. $u\in\{-\theta,2-\theta\}$, where $\frac{\theta}2\in (0,\frac12]$ represents the volume fraction of the minority phase\footnote{Notice that when $\theta=1$ we recover the case analysed in this paper.}, in both the regime where the energy scales like $\sigma^{\frac{2}{3}}$ and the regime where it scales like $\sigma^{\frac{1}{2}}$.  

In his unpublished Ph.D.\ thesis, Viehmann \cite{Vie09} extended Conti's result to $d=2$. In the process of understanding it, we realised that we could obtain the same local energy bounds via a simplified proof, that we will present in a structured way. While at the macroscopic level, our proof follows the one of Viehmann, it differs substantially in its details. Moreover, it has the advantage of working in any dimension and therefore also reproduces the result contained in \cite{Con00}. Moreover, our approach, which is inspired by \cites{ACO09,BJO22}, makes a clear distinction between the detailed study of convex relaxation(s) of the problem and constructions to transfer properties of the relaxed problems to the non-convex one. In particular, we break down the rather complicated proof in \cite{Vie09} into its basic building blocks. On that level the intricacies of several of the constructions are clearly revealed. Finally, the Campanato-type iterations used to transfer the global scaling law to small cuboids at the sample boundaries allow for some flexibility, e.g.\ the extension to almost-minimisers (at every scale) and the treatment of both periodic and zero-flux lateral boundary conditions.

Let us give a rough idea of our proof. We start by introducing the convex relaxation of the minimisation problem, that is, we minimise among functions $m$ that satisfy $m\in [-1,1]$ instead of $m\in \{-1,1\}$. This is what we call relaxed problem, which is of course simpler due to its convexity. Moreover, we consider an over-relaxation, which corresponds to an $x_{d+1}$-averaged problem, that is trivial to solve. From the over-relaxed problem we are able to construct a competitor for the relaxed one, via an explicit boundary layer construction. 
Finally, from the competitor for the relaxed problem, we construct a competitor for the non-convex problem, via a re-distribution of mass, which is compatible with the hard constraint $m\in\{-1,1\}$. Of course, in this process, we have to modify the stray field accordingly to satisfy the differential constraint relating them.

These constructions are an essential ingredient for the core argument of our proof: regularity theory in the form of a Campanato type iteration, that allows us to transfer the global scaling law to a local one. More precisely, we do two iterations\footnote{In the case of zero-flux boundary conditions, an extra iteration for cuboids centred at the boundary $\Gamma_{L,T}$ is needed.}: an initial one (which is rather standard for this kind of problems) to reduce to the case for which $L=c_{LT} T^\frac23$, where $c_{LT}$ is a coupling constant that has to be chosen large enough. Then, by monitoring two quantities at the same time, we are able to approach the top or bottom boundary both horizontally and vertically in one go, which is a major advantage and one of the main novelties of our iteration scheme. The competitor construction plays a key role in proving a \emph{one-step improvement}, which is then fed into a Campanato iteration, to obtain the local scaling law. 

A different approach, based on duality estimates, inspired by the highly influential work of Alberti, Choksi, and Otto \cite{ACO09}, will be presented in a forthcoming article \cite{RR23}. In our view, the ideas introduced in \cite{ACO09} to analyse a sharp interface limit of a model of microphase separation in diblock copolymers, are more robust in proving screening properties. In fact, they have been successfully used to treat problems where screening is a driving principle; see for instance \cites{GO20,BJO22}.

In a similar model related to domain branching in type-I superconductors derived in \cite{CGOS18}, which is actually of branched transport type, it has recently been proved in \cite{DGR23} that optimal local energy estimates characterise the conjectured Hausdorff dimension of the irrigated measure in the cross-over regime between uniform and non-uniform branching. The irrigation problem from a Dirac mass to Lebesgue measure in a two-dimensional analogue of the energy functional derived in \cite{CGOS18} has been solved by Goldman \cite{Gol17}, who proves that the minimiser is a self-similar tree.

\medskip
The rest of the article is organised as follows. In Section \ref{sec:relaxedandover} we introduce the relaxed and over-relaxed problems and construct a competitor of the former from the (unique) solution to the latter. In Section \ref{sec:constructionnonconvex} we construct a competitor for the non-convex problem based on a building block construction. In Section \ref{sec:proofglobalscalinglaw} we provide a proof for Theorem \ref{thm:globalbound}. In Section \ref{sec:prooflocalscalinglaws} we give the proof of Theorem \ref{thm:localbounds}. Finally, in Appendix \ref{sec:appendix} we establish an elliptic estimate which is needed for the building block construction of Section \ref{sec:constructionnonconvex}.
\addtocontents{toc}{\protect\setcounter{tocdepth}{1}}
\section{Relaxed and over-relaxed problems}\label{sec:relaxedandover}

In the study of minimisers of \eqref{eq:energy-nonconvex}, it is beneficial to study its convex relaxation and allow for general lateral flux boundary conditions $g\in L^2(\Gamma_{L,T})$, as well as general top and bottom magnetisations $m^T, m^B \in L^{\infty}(Q_{L}'; [-1,1])$ defined on $Q_{L}' = [-L,L]^d$. We therefore consider the following relaxed minimisation problem 
\begin{align*}
	E_{Q_{L,T}}^{\mathrm{rel}}(g; m^{B,T}) \coloneqq \inf_{h\in \widetilde{\mathcal{A}}_{Q_{L,T}}^{\mathrm{rel}}(g; m^{B,T})}  \frac{1}{2} \int_{Q_{L,T}} |h|^2\,\mathrm{d}x,
\end{align*}
where the set of admissible functions $\widetilde{\mathcal{A}}_{Q_{L,T}}^{\mathrm{rel}}(g; m^{B,T})$ consists of those functions $h \in L^2(Q_{L,T}; \RR^d)$ satisfying for all test functions $\varphi \in \mathcal{C}^1(Q_{L,T})$\footnote{That is, they satisfy in the sense of distributions
	$\partial_{d+1} m + \nabla'\cdot h = 0$ in $Q_{L,T}$, with lateral flux boundary conditions $h\cdot \nu = g$ on $\Gamma_{L,T}$ and top/bottom boundary conditions $m(\cdot, x_{d+1}) \stackrel{*}{\rightharpoonup} m^{B,T}$ as $x_{d+1} \to 0,T$.}
\begin{align}\label{eq:admissibility-relaxed}
	\int_{Q_{L,T}} (h\cdot \nabla'\varphi + m \partial_{d+1}\varphi)\,\mathrm{d}x = \int_{\Gamma_{L,T}} g \varphi + \int_{Q_{L}'} (m^T \varphi(x',T) - m^B \varphi(x',0))\,\mathrm{d}x',
\end{align}
for some $m\in L^2(Q_{L,T}; [-1,1])$. We will write 
\begin{align*}
	\mathcal{A}_{Q_{L,T}}^{\mathrm{rel}}(g; m^{B,T})\coloneq \left\{ (m,h) \in  L^2(Q_{L,T}; [-1,1]) \times L^2(Q_{L,T}; \RR^d): \eqref{eq:admissibility-relaxed} \text{ holds for all } \varphi \in \mathcal{C}^1(Q_{L,T})\right\}
\end{align*} 
and remark that 
\begin{align*}
	E_{Q_{L,T}}^{\mathrm{rel}}(g; m^{B,T}) = \inf_{(h,m) \in \mathcal{A}_{Q_{L,T}}^{\mathrm{rel}}(g; m^{B,T})}  \frac{1}{2} \int_{Q_{L,T}} |h|^2\,\mathrm{d}x.
\end{align*}
Note that the set of admissible functions $\mathcal{A}_{Q_{L,T}}^{\mathrm{rel}}(g; m^{B,T})$ is non-empty only if the boundary conditions are compatible, i.e.\
\begin{align}\label{eq:compatibility}
	\int_{\Gamma_{L,T}} g \,\mathrm{d}\mathcal{H}^{d-1} = \int_{Q_{L}'}(m^B - m^T)\,\mathrm{d}x',
\end{align}
which we shall assume henceforth. In particular $\left|\int_{\Gamma_{L,T}} g \,\mathrm{d}\mathcal{H}^{d-1} \right| \leq 2 L^d$. 

Since \eqref{eq:admissibility-relaxed} implies that the vector field $(h,m) \in L^2(Q_{L,T};\RR^{d+1})$ is divergence-free in $Q_{L,T}$, $h\cdot \nu'$ has a well-defined lateral trace\footnote{With $\nu'\in\RR^d$ we will always denote the outer unit normal to $\Gamma_{L,T}$ (respectively $\Gamma_{\ell,t}(a)$).}  $\left.h\cdot \nu'\right|_{\Gamma_{\ell,t}(a)}$ and top/bottom trace $m(\cdot, a_{d+1})$ and $m(\cdot, a_{d+1}+t)$ in almost any sub-cube $Q_{\ell,t}(a) = Q_{\ell, t}+a$ contained in $Q_{L,T}$. By Fubini's theorem, $\left.h\cdot \nu'\right|_{\Gamma_{\ell,t}(a)} \in L^2(\Gamma_{\ell, t}(a))$ for almost every $\ell \in (0, L)$ and $t\in (0,T)$.

\begin{lemma}\label{lem:gradient}
	Let $(m,h) \in \mathcal{A}_{Q_{L,T}}^{\mathrm{rel}}(g; m^{B,T})$ be a minimiser of $E_{Q_{L,T}}^{\mathrm{rel}}(g; m^{B,T})$. Then $h$ is a gradient field, i.e., there exists a potential $u\in H^1(Q_{L,T})$ such that $h= - \nabla'u$. This potential is a solution of 
	\begin{align}\label{eq:relaxed-gradient-equation}
	\begin{array}{rcll}
		-\Delta'u &=& -\partial_{d+1} m&\text{in} \; Q_{L,T}\\
		-\nabla'u\cdot \nu &=& g& \text{on} \; \Gamma_{L,T}.
	\end{array}
	\end{align}
\end{lemma}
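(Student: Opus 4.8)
The plan is to exploit the fact that the constraint \eqref{eq:admissibility-relaxed} only sees the in-plane divergence of $h$ (together with $\partial_{d+1}m$), so that adding any in-plane curl-free-complement, i.e.\ any field of the form $h + \psi$ with $\nabla'\cdot\psi = 0$ and $\psi\cdot\nu' = 0$ on $\Gamma_{L,T}$, keeps $(m,h+\psi)$ admissible. Minimality of $\frac12\int|h|^2$ then forces $h$ to be $L^2$-orthogonal to all such $\psi$, which is exactly the statement that $h$ lies in the orthogonal complement of the (closed) subspace of divergence-free tangential fields, i.e.\ that $h$ is a gradient. Concretely, I would first fix the minimiser $(m,h)$ (existence of a minimiser is a standard direct-method argument: the admissible set is convex and closed under $L^2$-weak convergence, since \eqref{eq:admissibility-relaxed} is linear in $(m,h)$ and the bound $|m|\le1$ is weakly-$*$ stable, and the functional is convex and coercive), and then carry out the variational argument.

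The key steps, in order: \textbf{(1)} Take an arbitrary $\varphi\in C^1(Q_{L,T})$ and consider the one-parameter family $h_s \coloneqq h - s\,\nabla'\varphi$ with $m$ unchanged. Check that $(m,h_s)\in\mathcal{A}_{Q_{L,T}}^{\mathrm{rel}}(g;m^{B,T})$: indeed, for any test function $\phi\in C^1(Q_{L,T})$, integration by parts (in $x'$ only, on each slice $x_{d+1}=\mathrm{const}$) gives $\int_{Q_{L,T}} \nabla'\varphi\cdot\nabla'\phi\,\mathrm{d}x = -\int_{Q_{L,T}}\varphi\,\Delta'\phi\,\mathrm{d}x + \int_{\Gamma_{L,T}}\varphi\,\partial_{\nu'}\phi$ — but actually the cleaner route is to note directly that replacing $h$ by $h-s\nabla'\varphi$ changes the left-hand side of \eqref{eq:admissibility-relaxed} (with test function $\phi$) by $-s\int_{Q_{L,T}}\nabla'\varphi\cdot\nabla'\phi\,\mathrm{d}x$, which is \emph{not} zero in general; so instead one should restrict to variations $\psi$ that genuinely preserve admissibility. \textbf{(2)} The correct variations are $\psi\in L^2(Q_{L,T};\RR^d)$ with $\int_{Q_{L,T}}\psi\cdot\nabla'\phi\,\mathrm{d}x = 0$ for all $\phi\in C^1(Q_{L,T})$ — i.e.\ $\psi$ is slice-wise divergence-free with vanishing normal trace on $\Gamma_{L,T}$. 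For such $\psi$, $(m,h+s\psi)$ is admissible for every $s\in\RR$, so $s\mapsto\frac12\int|h+s\psi|^2$ is minimised at $s=0$, giving $\int_{Q_{L,T}} h\cdot\psi\,\mathrm{d}x = 0$. \textbf{(3)} This says $h$ is $L^2$-orthogonal to the closed subspace $V \coloneqq \overline{\{\text{slice-wise div-free, tangential fields}\}}$. By the Helmholtz--Weyl decomposition (applied slice-wise, or directly on $Q_{L,T}$ using only the $x'$-variables), $L^2(Q_{L,T};\RR^d) = V \oplus V^\perp$ where $V^\perp = \overline{\{\nabla'\phi : \phi\in H^1(Q_{L,T})\}}$ — here one uses that the cuboid is Lipschitz and that derivatives are only taken in the $d$ horizontal directions, with $x_{d+1}$ playing the role of a parameter. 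Hence $h = -\nabla'u$ for some $u\in H^1(Q_{L,T})$ (the sign and an additive slice-wise constant are at our disposal). \textbf{(4)} Finally, substitute $h=-\nabla'u$ into \eqref{eq:admissibility-relaxed} and integrate by parts to identify the equation: for $\varphi\in C^1(Q_{L,T})$ depending only on $x'$ one reads off the weak form $\int_{Q_{L,T}}\nabla'u\cdot\nabla'\varphi\,\mathrm{d}x = -\int_{\Gamma_{L,T}}g\varphi - \int_{Q_{L,T}} m\,\partial_{d+1}\varphi\,\mathrm{d}x + (\text{top/bottom terms})$, and matching this against \eqref{eq:admissibility-relaxed} yields precisely the Neumann problem \eqref{eq:relaxed-gradient-equation}, with the equation $-\Delta'u = -\partial_{d+1}m$ holding distributionally in $Q_{L,T}$ and the boundary condition $-\nabla'u\cdot\nu' = g$ on $\Gamma_{L,T}$ in the weak sense.

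The main obstacle is \textbf{step (3)}: making the slice-wise Helmholtz decomposition rigorous and measurable in $x_{d+1}$, i.e.\ ensuring that the potential $u$ can be chosen to depend measurably (indeed in $H^1$) on all $d+1$ variables rather than just living slice-by-slice. The clean way around this is to avoid slicing altogether and argue directly: the orthogonality $\int h\cdot\psi = 0$ for all $\psi$ with $\nabla'\cdot\psi = 0$ (distributionally in $Q_{L,T}$, in the horizontal sense) and $\psi\cdot\nu'|_{\Gamma_{L,T}} = 0$ is exactly the de~Rham-type characterisation of gradients on the Lipschitz domain $Q_{L,T}$ with respect to the partial differential operator $\nabla'$; since $\nabla'$ has constant coefficients and the domain is a cuboid, the associated Poincaré/closed-range statement (the image of $\nabla'$ on $H^1$ modulo $x_{d+1}$-dependent constants is closed in $L^2$, equivalently a horizontal Poincaré inequality holds) gives $h=-\nabla'u$ with $u\in L^2_{x_{d+1}}H^1_{x'}$; the membership $u\in H^1(Q_{L,T})$ (i.e.\ also $\partial_{d+1}u\in L^2$) is then not needed for \eqref{eq:relaxed-gradient-equation} and one may simply state $h=-\nabla'u$ with $\nabla'u\in L^2$. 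I would therefore phrase the conclusion as: $h$ is a horizontal gradient field, $h=-\nabla'u$, and $u$ solves \eqref{eq:relaxed-gradient-equation} weakly; the regularity $u\in H^1(Q_{L,T})$ follows a posteriori from elliptic estimates for the Neumann problem (as developed in the appendix) once $m$ and $g$ are in the stated classes.
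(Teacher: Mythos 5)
Your argument is correct, but it takes a genuinely different route from the paper's proof. The paper proceeds in the reverse order: it first posits a solution $u\in H^1(Q_{L,T})$ of the Neumann problem \eqref{eq:relaxed-gradient-equation}, notes that $(m,-\nabla'u)\in\mathcal{A}^{\rel}_{Q_{L,T}}(g;m^{B,T})$ is then a competitor, and writes the Pythagoras-type identity
\[
\int_{Q_{L,T}} |h|^2\,\mathrm{d}x \;=\; \int_{Q_{L,T}} |\nabla'u|^2\,\mathrm{d}x \;+\; \int_{Q_{L,T}} |h+\nabla'u|^2\,\mathrm{d}x \;-\; 2\int_{Q_{L,T}} (h+\nabla'u)\cdot\nabla'u\,\mathrm{d}x,
\]
observing that the cross term vanishes because subtracting the two admissibility relations \eqref{eq:admissibility-relaxed} for $(m,h)$ and $(m,-\nabla'u)$ (same $g$ and $m^{B,T}$) and testing with $\varphi=u$ gives $\int (h+\nabla'u)\cdot\nabla'u = 0$. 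Thus $\int |h|^2 \geq \int |\nabla'u|^2$, while minimality of $h$ against the competitor $-\nabla'u$ gives the reverse inequality, forcing $h=-\nabla'u$. In other words, once $u$ is in hand the whole proof is one expansion and two inequalities, with no decomposition theorem. Your route instead extracts, from the first variation, the $L^2$-orthogonality of $h$ to all slice-wise divergence-free, tangential fields, and then invokes a Helmholtz/de~Rham (closed-range) characterization for the partial operator $\nabla'$ to identify $h$ as a horizontal gradient. What your approach buys is that $u$ is manufactured directly from $h$ rather than from an a~priori PDE solve; what it costs is exactly the closed-range/de~Rham ingredient you honestly flag as the crux. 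Both proofs face the same background subtlety about $x_{d+1}$-regularity (existence of $u\in H^1(Q_{L,T})$ for the paper, measurability of the slice-wise decomposition for you), and your remark that only $\nabla'u\in L^2(Q_{L,T})$ is what the proof genuinely uses is a fair observation about the statement.
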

While this is rather standard, we give a proof of Lemma~\ref{lem:gradient} for the convenience of the reader. 
\begin{proof}
	Let $(m,h) \in \mathcal{A}_{Q_{L,T}}^{\mathrm{rel}}(g; m^{B,T})$ be a minimiser of $E_{Q_{L,T}}^{\mathrm{rel}}(g; m^{B,T})$ and let $u\in H^1(Q_{L,T})$ be a solution of \eqref{eq:relaxed-gradient-equation}. Then 
	\begin{align*}
		\int_{Q_{L,T}} h^2\,\mathrm{d}x 
		&= \int_{Q_{L,T}} |\nabla'u|^2 \,\mathrm{d}x + \int_{Q_{L,T}} |h+\nabla'u|^2 \,\mathrm{d}x - 2 \int_{Q_{L,T}} (h+ \nabla'u)\cdot \nabla'u \,\mathrm{d}x \\
		&\kern-1em\stackrel{\eqref{eq:admissibility-relaxed}\&\eqref{eq:relaxed-gradient-equation}}{=} \int_{Q_{L,T}} |\nabla'u|^2 \,\mathrm{d}x + \int_{Q_{L,T}} |h+\nabla'u|^2 \,\mathrm{d}x
		\geq \int_{Q_{L,T}} |\nabla'u|^2 \,\mathrm{d}x.
	\end{align*}
	Since $(m, -\nabla'u) \in \mathcal{A}_{Q_{L,T}}^{\mathrm{rel}}(g; m^{B,T})$ by \eqref{eq:relaxed-gradient-equation}, minimality of $h$ implies that 
	\begin{align*}
		\int_{Q_{L,T}} h^2 \,\mathrm{d}x \leq \int_{Q_{L,T}} |\nabla'u|^2 \,\mathrm{d}x,
	\end{align*}
	hence $h = -\nabla' u$. 
\end{proof}
\begin{remark}\label{rem:gradient} The same argument shows that if $(m,h)$ minimises the non-convex energy \eqref{eq:energy-nonconvex} over $\mathcal{A}^{\mathrm{per}/0}_{Q_{L,T}}$, then $h$ is a gradient field.
\end{remark}

\begin{remark}
Note that any minimiser $(m,h)\in \mathcal{A}_{Q_{L,T}}^{\mathrm{rel}}(g; m^{B,T})$ of $E_{Q_{L,T}}^{\mathrm{rel}}(g; m^{B,T})$ in a cube $Q_{L,T}$ is \emph{locally minimising} in any sub-cube $Q_{\ell, t}(a) \subset Q_{L,T}$, i.e.\ minimising given its own boundary conditions. 

Indeed, let 
$$
\widetilde{g} \coloneqq \left.h\cdot \nu'\right|_{\Gamma_{\ell,t}(a)},\quad  \widetilde{m}^B \coloneqq m(\cdot, a_{d+1}),\quad \widetilde{m}^T \coloneqq m(\cdot, a_{d+1} +t),
$$ 
and 
$$(\widetilde{m},\widetilde{h})\in \mathcal{A}_{Q_{\ell,t}}^{\mathrm{rel}}(\widetilde{g}; \widetilde{m}^{B,T}).
$$
Then \[(\widehat{m}, \widehat{h}) \coloneqq (\widetilde{m}1_{Q_{\ell,t}} + m 1_{Q_{L,T} \setminus Q_{\ell,t}}, \widetilde{h}1_{Q_{\ell,t}} + h 1_{Q_{L,T} \setminus Q_{\ell,t}}) \in \mathcal{A}_{Q_{L,T}}^{\mathrm{rel}}(g; m^{B,T}),\] and we have that 
\begin{align*}
	E_{Q_{L,T}}^{\mathrm{rel}}(g; m^{B,T}) 
	= \frac{1}{2} \int_{Q_{L,T}} |h|^2\,\mathrm{d}x 
	\leq \frac{1}{2} \int_{Q_{L,T}} |\widehat{h}|^2\,\mathrm{d}x 
	= \frac{1}{2} \int_{Q_{L,T}\setminus Q_{\ell,t}} |h|^2\,\mathrm{d}x + \frac{1}{2} \int_{Q_{\ell,t}} |\widetilde{h}|^2\,\mathrm{d}x ,
\end{align*}
hence 
\begin{align*}
	\frac{1}{2} \int_{Q_{\ell,t}} |h|^2\,\mathrm{d}x \leq  \frac{1}{2} \int_{Q_{\ell,t}} |\widetilde{h}|^2\,\mathrm{d}x
\end{align*}
for any $(\widetilde{h},\widetilde{m})\in \mathcal{A}_{Q_{\ell,t}}^{\mathrm{rel}}(\widetilde{g}; \widetilde{m}^{B,T})$. 
\end{remark}

Further, given top and bottom magnetisations $m^{B,T} \in L^{\infty}(Q_L')$, we define the (curl-free) fields generating them via 
\begin{align*}
	H^{B,T} \coloneqq -\nabla' u^{B,T},
\end{align*}
where the potentials $u^{B,T}$ solve $-\Delta' u^{B,T} = - m^{B,T}$ in $Q_{L}'$. By \eqref{eq:compatibility} they have to satisfy the boundary condition $(H^T -H^B) \cdot \nu' = \int_0^T g\,\mathrm{d}x_{d+1} \eqqcolon  T \overline{g}_T$ on $\partial Q_L'$.

\subsection{Over-relaxed problem}\label{sec:ORP}
We will also look at the $x_{d+1}$-averaged problem corresponding to the linear interpolation $m_0(x) \coloneqq \frac{x_{d+1}}{T} m^T + \left(1-\frac{x_{d+1}}{T} \right) m^B$ between $m^T$ and $m^B$, which can be thought of as an  over-relaxed problem given by the energy functional 
\begin{align*}
	E_{Q_{L,T}}^0(g;m^{B,T}) = \frac{1}{2} \int_{Q_{L,T}} |\nabla' v_0|^2 \mathrm{d}x = \frac{T}{2} \int_{Q_{L}'} |\nabla' v_0|^2\,\mathrm{d}x',
\end{align*}
where $v_0$ is the unique solution\footnote{Note that the equation is solvable by \eqref{eq:compatibility}.} of 
\begin{align*}
	\begin{array}{rcll}
	-\Delta' v_0 &=& -\dfrac{m^T - m^B}{T}&\text{in}\, Q_{L,T} \\
	-\nabla' v_0 \cdot \nu &=& \overline{g}_T&\text{on} \, \Gamma_{L,T},
	\end{array}
\end{align*}
with $\int_{Q_{L,T}} v_0\,\mathrm{d}x = 0$. Let us stress that $v_0$ is constant w.r.t.\ $x_{d+1}$. 

It is easy to see that if $(m,h)$ denotes a minimiser of the relaxed problem $E_{Q_{L,T}}^{\mathrm{rel}}(g; m^{B,T})$, then the height average $\overline{h}_{T} \coloneqq \dashint_0^T h\,\mathrm{d}x_{d+1}$ is a solution of the over-relaxed problem, i.e.\ $\overline{h}_{T} = -\nabla'v_0$ and we have $E_{Q_{L,T}}^0(g;m^{B,T}) = \frac{1}{2} \int_{Q_{L,T}} |\overline{h}_T|^2\,\mathrm{d}x$. More generally, we have: 

\begin{lemma}\label{lem:cumulated-field}
	Let $(m,h) \in \mathcal{A}_{Q_{L,T}}^{\mathrm{rel}}(g, m^{B,T})$ and assume that $h$ is a gradient field. Then 
	\begin{align*}
		\int_0^T h\,\mathrm{d}x_{d+1} = H^T - H^B 	\quad \text{on} \quad Q_{L}'.
	\end{align*}
\end{lemma}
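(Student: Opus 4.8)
The plan is to identify $\int_0^T h\,\mathrm{d}x_{d+1}$ as the (curl-free) solution of the same weak Neumann problem on $Q_L'$ that defines $H^T-H^B$, and then conclude by uniqueness.

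First I would record that the cumulated field is itself a gradient on $Q_L'$. Writing $h=-\nabla'u$ with $u\in H^1(Q_{L,T})$ and setting $U(x')\coloneqq\int_0^T u(x',x_{d+1})\,\mathrm{d}x_{d+1}$, Cauchy--Schwarz gives $\int_{Q_L'}|\nabla'U|^2\,\mathrm{d}x'\leq T\int_{Q_{L,T}}|\nabla'u|^2\,\mathrm{d}x<\infty$, so $U\in H^1(Q_L')$; testing against $\varphi\in\mathcal{C}_c^\infty(Q_L')$, regarded as a function on $Q_{L,T}$ that is independent of $x_{d+1}$, and using Fubini shows $-\nabla'U=\int_0^T h\,\mathrm{d}x_{d+1}$ in $\mathcal{D}'(Q_L')$.

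Next I would test the admissibility identity \eqref{eq:admissibility-relaxed} with the $x_{d+1}$-independent test function $\varphi(x)=\psi(x')$, $\psi\in\mathcal{C}^1(Q_L')$. Since $\partial_{d+1}\varphi=0$, $\varphi(\cdot,T)=\varphi(\cdot,0)=\psi$, and $\int_0^T g\,\mathrm{d}x_{d+1}=T\overline{g}_T$, Fubini yields
\begin{align*}
\int_{Q_L'}\Big(\int_0^T h\,\mathrm{d}x_{d+1}\Big)\cdot\nabla'\psi\,\mathrm{d}x'=\int_{\partial Q_L'}T\overline{g}_T\,\psi\,\mathrm{d}\mathcal{H}^{d-1}+\int_{Q_L'}(m^T-m^B)\,\psi\,\mathrm{d}x'.
\end{align*}
On the other hand, from $H^{B,T}=-\nabla'u^{B,T}$ with $-\Delta'u^{B,T}=-m^{B,T}$ in $Q_L'$ and the boundary relation $(H^T-H^B)\cdot\nu'=T\overline{g}_T$ on $\partial Q_L'$, integration by parts gives, for the same $\psi$,
\begin{align*}
\int_{Q_L'}(H^T-H^B)\cdot\nabla'\psi\,\mathrm{d}x'=\int_{\partial Q_L'}T\overline{g}_T\,\psi\,\mathrm{d}\mathcal{H}^{d-1}+\int_{Q_L'}(m^T-m^B)\,\psi\,\mathrm{d}x',
\end{align*}
i.e.\ exactly the previous identity. (Equivalently, one may note that $u^T-u^B$ equals, up to an additive constant, $T$ times the over-relaxed potential $v_0$.) Subtracting, the gradient field $W\coloneqq\int_0^T h\,\mathrm{d}x_{d+1}-(H^T-H^B)=-\nabla'\big(U-(u^T-u^B)\big)$ satisfies $\int_{Q_L'}W\cdot\nabla'\psi\,\mathrm{d}x'=0$ for every $\psi\in\mathcal{C}^1(Q_L')$; by density this persists for all $\psi\in H^1(Q_L')$, and the choice $\psi=U-(u^T-u^B)$ forces $\nabla'\big(U-(u^T-u^B)\big)=0$, hence $W=0$, which is the claim.

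The argument is essentially a uniqueness statement for a scalar Neumann problem on a cube, so no serious obstacle arises. The only points requiring a little care are the elementary Fubini/approximation steps used to show $U\in H^1(Q_L')$ and to interchange $\nabla'$ with the $x_{d+1}$-average, together with the interpretation of the lateral trace $(H^T-H^B)\cdot\nu'$ on $\partial Q_L'$ — but the latter is already fixed by the construction preceding the statement (and the compatibility condition \eqref{eq:compatibility}), so it does not add any real difficulty.
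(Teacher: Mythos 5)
Your proof is correct and follows essentially the same route as the paper: both identify $\int_0^T h\,\mathrm{d}x_{d+1}=-\nabla'\big(\int_0^T u\,\mathrm{d}x_{d+1}\big)$ as a gradient field whose potential solves the same weak Neumann problem on $Q_L'$ as $u^T-u^B$, and conclude by uniqueness of the Neumann problem (the paper simply cites uniqueness, while you spell it out by testing the difference against its own potential). The additional Cauchy--Schwarz/Fubini bookkeeping you include is correct but is subsumed in the paper's one-line appeal to admissibility.
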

\begin{proof}
	Let $h=-\nabla'u$ for some $u\in H^1(Q_{L,T})$, then by admissibility $H = -\nabla' U \coloneqq \int_0^T -\nabla'u\,\mathrm{d}x_{d+1}$ is a weak solution of 
	\begin{align*}
		\begin{array}{rcll}
		-\Delta' U &=&m^B - m^T & \text{in} \; Q_{L}'\\
		-\nabla' U \cdot \nu' &=& \int_0^T h\cdot \nu' \,\mathrm{d}x_{d+1} = \int_0^T g \,\mathrm{d}x_{d+1}& \text{on} \; \partial Q_{L}'.
		\end{array}
	\end{align*}
	By definition of $H^{B,T}=-\nabla' u^{B,T}$, the function $u^T-u^B$ satisfies the same PDE (including boundary conditions). By uniqueness, we must have $H = H^T - H^B$. 
\end{proof}

\begin{lemma}[Orthogonality]\label{lem:orthogonality}
	Let $h\in L^2(Q_{\ell, t})$. Then
	\begin{align*}
		\frac{1}{2} \int_{Q_{\ell,t}} |h - \overline{h}_t|^2\,\mathrm{d}x &= \frac{1}{2} \int_{Q_{\ell,t}} |h|^2\,\mathrm{d}x - \frac{1}{2} \int_{Q_{\ell,t}} |\overline{h}_t|^2\,\mathrm{d}x.
	\end{align*} 
\end{lemma}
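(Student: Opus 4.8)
The plan is to prove this by the standard Pythagoras/orthogonality argument in $L^2(Q_{\ell,t})$, the point being that $\overline{h}_t = \dashint_0^t h\,\mathrm{d}x_{d+1}$ depends only on the horizontal variable $x'$ and is constant in $x_{d+1}$. First I would note that $\overline{h}_t \in L^2(Q_{\ell,t})$: for a.e.\ $x'$ the Cauchy--Schwarz (Jensen) inequality gives $|\overline{h}_t(x')|^2 \leq \dashint_0^t |h(x',x_{d+1})|^2\,\mathrm{d}x_{d+1}$, so integrating over $[-\ell,\ell]^d\times(0,t)$ yields $\int_{Q_{\ell,t}}|\overline{h}_t|^2\,\mathrm{d}x \leq \int_{Q_{\ell,t}}|h|^2\,\mathrm{d}x < \infty$. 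In particular all three integrals in the claimed identity are finite, and the pointwise expansion $|h-\overline{h}_t|^2 = |h|^2 - 2\,h\cdot\overline{h}_t + |\overline{h}_t|^2$ is integrable.

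Next I would integrate this expansion over $Q_{\ell,t}$ and apply Fubini's theorem, performing the $x_{d+1}$-integration over $(0,t)$ first, for fixed $x'$. Since $\overline{h}_t(x')$ is independent of $x_{d+1}$, the definition of the average gives
$$\int_0^t h(x',x_{d+1})\cdot\overline{h}_t(x')\,\mathrm{d}x_{d+1} = \overline{h}_t(x')\cdot\int_0^t h(x',x_{d+1})\,\mathrm{d}x_{d+1} = t\,|\overline{h}_t(x')|^2,$$
and likewise $\int_0^t |\overline{h}_t(x')|^2\,\mathrm{d}x_{d+1} = t\,|\overline{h}_t(x')|^2$. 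Hence the last two terms contribute
$$\int_{Q_{\ell,t}}\bigl(-2\,h\cdot\overline{h}_t + |\overline{h}_t|^2\bigr)\,\mathrm{d}x = \int_{[-\ell,\ell]^d}\bigl(-2t\,|\overline{h}_t|^2 + t\,|\overline{h}_t|^2\bigr)\,\mathrm{d}x' = -\int_{Q_{\ell,t}}|\overline{h}_t|^2\,\mathrm{d}x,$$
so that $\int_{Q_{\ell,t}}|h-\overline{h}_t|^2\,\mathrm{d}x = \int_{Q_{\ell,t}}|h|^2\,\mathrm{d}x - \int_{Q_{\ell,t}}|\overline{h}_t|^2\,\mathrm{d}x$, and dividing by $2$ gives the statement.

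There is essentially no obstacle here: the identity merely records that $h\mapsto\overline{h}_t$ is an orthogonal projection on $L^2(Q_{\ell,t})$ (onto the subspace of functions constant in $x_{d+1}$) with complementary projection $h\mapsto h-\overline{h}_t$. The only points requiring a word of care are the $L^2$-membership of $\overline{h}_t$ and the applicability of Fubini, both immediate for $h\in L^2$; everything else is the routine cancellation displayed above. The same computation of course applies verbatim on any translated cuboid $Q_{\ell,t}(a)$.
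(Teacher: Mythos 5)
Your proof is correct and follows essentially the same path as the paper's: expand the square of $|h-\overline{h}_t|$, integrate, and use Fubini together with the fact that $\overline{h}_t$ is independent of $x_{d+1}$ so the cross term collapses. The only cosmetic difference is that the paper rearranges the algebra to isolate a single cross term $\int \overline{h}_t\cdot(\overline{h}_t-h)$ that vanishes, whereas you compute both $x_{d+1}$-integrals directly; the substance is identical.
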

\begin{proof}
	Let $h\in L^2(Q_{\ell, t})$. Then 
	\begin{align*}
		\frac{1}{2} \int_{Q_{\ell, t}} |h - \overline{h}_t|^2 \,\mathrm{d}x 
		&= \frac{1}{2} \int_{Q_{\ell, t}} |h|^2 \,\mathrm{d}x - \frac{1}{2} \int_{Q_{\ell, t}} |\overline{h}_t|^2 \,\mathrm{d}x + \int_{Q_{\ell, t}} \overline{h}_t \cdot (\overline{h}_t - h)\,\mathrm{d}x,
	\end{align*}
	and since
	\begin{align*}
		\int_{Q_{\ell, t}} \overline{h}_t \cdot (\overline{h}_t - h)\,\mathrm{d}x
		&= \int_{Q_{\ell}'} \overline{h}_t \cdot \int_0^t (\overline{h}_t - h)\,\mathrm{d}x_{d+1}\,\mathrm{d}x' 
		= 0,
	\end{align*}
	the claim follows.
\end{proof}

\subsection{A competitor for the relaxed problem} 
Given the minimiser of the over-relaxed problem, we can use it to construct a competitor for the relaxed problem by  correcting the boundary data. This will be done in a boundary layer of size $r>0$. 

For $r>0$ we define the set $A_r(Q_{L,T}) \coloneq Q_{L}' \setminus Q_{L-r}' \times [0,T]$.
Let $\mathcal{X}_{r}(Q_{L,T})$ be the set of all $(m,h)\in L^{\infty}(A_r(Q_{L,T});[-1,1])\times L^2(A_r(Q_{L,T});\RR^d)$ satisfying $\partial_{d+1} m + \nabla'\cdot h = 0$ in $A_r(Q_{L,T})$ with boundary conditions $m(\cdot, 0) = m(\cdot, T) = 0$ and $h\cdot \nu = g-\overline{g}_T$ on $\partial Q_{L}' \times [0,T]$, $h\cdot \nu = 0$ on $\partial Q_{L-r}' \times [0,T]$.

\begin{proposition}\label{prop:fine-properties-relaxed}
Let $g\in L^2(\Gamma_{L,T})$ and  $m^{B,T} \in L^{\infty}(Q_{L}'; [-1,1])$ be such that \eqref{eq:compatibility} holds. There exists a universal constant $C<\infty$ such that if\,\footnote{The assumption may appear asymmetric in terms of $H^B$ and $H^T$. However, in view of Lemma~\ref{lem:cumulated-field}, control on $H^B$ and the cumulated field implies control on $H^T$ as well.}
\begin{align}\label{eq:boundary-layer-size}
\frac{r}{L} \geq C \max\left\{ \left( \frac{T^2}{L^2} \dashint_{\Gamma_{L,T}} (g-\overline{g}_T)^2 \right)^{\frac{1}{d+1}}, \frac{T}{L} \sup_{Q_L'} \left|\dashint_0^T h\,\mathrm{d}x_{d+1}\right|, \frac{1}{L}\sup_{Q_L'}|H^B|\right\},
\end{align}
then there exists $(m_{r},h_{r}) \in \mathcal{X}_{r}(L,T)$ with the following properties\footnote{Hereafter we use the symbol $\lesssim$ to indicate that the inequality holds up to a dimensional constant.}:
	\begin{align}
		\frac{T^2}{L^2} \frac{1}{L^d T} \int_{A_{L,T}(r)} |h_{r}|^2 &\lesssim \frac{r}{L} \frac{T^2}{L^2}\dashint_{\Gamma_{L,T}} (g-\overline{g}_T)^2, \label{eq:bound-hell} \\
		T \|\partial_{d+1} m_{r}\|_{L^2_{x_{d+1}}L^{\infty}_{x'}(A_{L,T}(r))}^2 &\lesssim \left(\frac{r}{L}\right)^{-(d+1)} \frac{T^2}{L^2} \dashint_{\Gamma_{L,T}} (g-\overline{g}_T)^2. \label{eq:bound-d3mell}
	\end{align}
\end{proposition}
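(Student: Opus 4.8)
The plan is to construct $(m_r,h_r)$ explicitly in the lateral boundary layer $A_r(Q_{L,T})$ by interpolating (in the transverse direction $x'$, across the layer of width $r$) between the prescribed nonzero flux $g-\overline{g}_T$ on $\partial Q_L'\times[0,T]$ and zero flux on $\partial Q_{L-r}'\times[0,T]$, while simultaneously interpolating (in $x_{d+1}$) to meet the condition $m_r(\cdot,0)=m_r(\cdot,T)=0$. Concretely, I would first handle a model problem on a slab $[0,r]\times[0,T]$ (one face of the layer at a time, gluing along edges in the standard way for a cuboidal shell): seek $h_r=-\nabla' u_r$ with $u_r$ solving a Neumann problem $-\Delta' u_r = -\partial_{d+1} m_r$ with $-\nabla' u_r\cdot\nu' = g-\overline{g}_T$ on the outer face and $=0$ on the inner face. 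The free choice is $m_r$; I would pick it of the form $m_r(x',x_{d+1}) = \phi\!\left(\tfrac{\mathrm{dist}(x',\partial Q_{L-r}')}{r}\right)\psi(x_{d+1})\,\Xi(x')$, where $\psi$ is a fixed smooth profile with $\psi(0)=\psi(T)=0$ enforcing the top/bottom condition, $\phi$ is a cutoff that is $0$ near the inner face and $1$ near the outer face, and $\Xi$ encodes the tangential dependence needed so that $\nabla'\cdot h_r$ balances $\partial_{d+1}m_r$; one natural choice is to let the tangential part of $h_r$ carry the prescribed flux via a primitive of $g-\overline{g}_T$ along the boundary and to let the normal part interpolate linearly across the layer. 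The compatibility constraint \eqref{eq:compatibility} together with $h_r\cdot\nu'=0$ on the inner face is exactly what makes the Neumann problem solvable on the layer.

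The constraint $|m_r|\le 1$ is the genuine obstacle, and this is precisely why the smallness hypothesis \eqref{eq:boundary-layer-size} is needed: the amplitude of $m_r$ produced by the construction is controlled by the three quantities on the right-hand side of \eqref{eq:boundary-layer-size} — the (rescaled) $L^2$ norm of $g-\overline{g}_T$, the size of the cumulated field $\dashint_0^T h\,\mathrm{d}x_{d+1}$, and $\sup|H^B|$ — each measured against the layer width $r$ via elliptic (or direct integral) estimates; the stated lower bound on $r/L$ is exactly the threshold at which these amplitudes drop below $1$. So after writing down the construction I would estimate $\|m_r\|_{L^\infty}$: the contribution from the flux is of order $(\tfrac{T}{r}\dashint_{\Gamma}(g-\overline g_T)^2)^{1/2}$ times a geometric factor, or more sharply the mixed norm that appears in \eqref{eq:bound-d3mell}; the contributions of $H^B$ and of the cumulated field enter because $m_r$ must also correct the mismatch between the over-relaxed boundary data and the desired data, and Lemma~\ref{lem:cumulated-field} is invoked (as the footnote indicates) to convert control of $H^B$ and the cumulated field into control of $H^T$. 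Choosing $C$ in \eqref{eq:boundary-layer-size} large enough forces $\|m_r\|_{L^\infty}\le 1$, giving admissibility $(m_r,h_r)\in\mathcal{X}_r(L,T)$.

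With the construction in hand, the two quantitative bounds \eqref{eq:bound-hell} and \eqref{eq:bound-d3mell} follow by direct computation. For \eqref{eq:bound-hell}: $h_r$ is supported in $A_r(Q_{L,T})$, whose volume is $\lesssim r L^{d-1} T$, and $|h_r|$ is pointwise of order the amplitude $(\tfrac{T}{r}\dashint_\Gamma(g-\overline g_T)^2)^{1/2}$ up to geometric factors — but one has to be slightly careful: the correct bound records that integrating $|h_r|^2$ over the layer gives $\lesssim r L^{d-1}\int_{\Gamma}(g-\overline g_T)^2\,\mathrm{d}\mathcal H^{d-1}$, which after multiplying by $T^2/L^2$ and dividing by $L^d T$ is exactly $\lesssim \tfrac{r}{L}\,\tfrac{T^2}{L^2}\dashint_{\Gamma_{L,T}}(g-\overline g_T)^2$. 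For \eqref{eq:bound-d3mell}: since $m_r=\phi\,\psi\,\Xi$ and only $\phi$ and $\psi$ depend on their variables with $\psi=\psi(x_{d+1})$ fixed smooth, $\partial_{d+1}m_r = \phi\,\psi'\,\Xi$, so $\|\partial_{d+1}m_r\|_{L^\infty_{x'}}$ is controlled by $\|\Xi\|_{L^\infty_{x'}}$ uniformly in $x_{d+1}$; squaring, integrating in $x_{d+1}$ over $[0,T]$, and multiplying by $T$ gives $T^2\|\Xi\|_{L^\infty}^2$, and the estimate $\|\Xi\|_{L^\infty}^2\lesssim (r/L)^{-(d+1)}\tfrac{T^2}{L^2}\dashint_\Gamma(g-\overline g_T)^2$ — which is the same amplitude estimate already used to check $|m_r|\le1$ — closes the argument. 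The elliptic estimate deferred to the appendix is what is used to make the amplitude bound sharp in the mixed $L^2_{x_{d+1}}L^\infty_{x'}$ norm rather than in a cruder norm; I would invoke it at the point where $\|\Xi\|_{L^\infty_{x'}}$ is bounded in terms of the boundary flux.
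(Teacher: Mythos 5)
Your plan is essentially the ``standard boundary layer construction'' that the paper explicitly warns against in the remark immediately following Proposition~\ref{prop:fine-properties-relaxed}. The paper observes that such a construction runs into a hard obstruction: in the application it is not enough to have $|m_r|\le 1$, one needs $m_0+m_r\in[-1,1]$, where $m_0$ is the linear interpolation between $m^B$ and $m^T$. A product ansatz $m_r=\phi\,\psi\,\Xi$ whose amplitude is only \emph{small} in a global sense cannot guarantee this, because $m_0$ may already be close to $\pm1$ at many points. The paper overcomes this with a \emph{convex combination} ansatz $m_r=\lambda(\overline{m}-m_0)$ with $\lambda\in[0,\tfrac12]$ and $\overline m\in\{\pm1\}$, so that $m_0+m_r=\lambda\overline m+(1-\lambda)m_0\in[-1,1]$ is automatic; assumption~\eqref{eq:boundary-layer-size} then only has to ensure the \emph{weak} smallness $\big|\dashint_{P_r'}m_0\big|\le\tfrac12$ (Remark~\ref{rem:bound-mb-cumulated-field}), not pointwise smallness of the correction. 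This is also where $H^B$ and the cumulated field enter the hypothesis, via the elliptic regularity argument in that remark, not via an appeal to Lemma~\ref{lem:cumulated-field} as you suggest.

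A second gap concerns the $L^\infty$ bound $\|\Xi\|_{L^\infty_{x'}}^2\lesssim (r/L)^{-(d+1)}\tfrac{T^2}{L^2}\dashint_\Gamma f^2$, which you assert but do not justify, and which does not follow from a single global Neumann problem on the cuboidal shell with merely $L^2$ data. The paper obtains this scale-critical estimate by \emph{localising the construction to boundary plaquettes $P_r'$ of side $r$} and, crucially, by \emph{splitting the flux} $f=g-\overline g_T$ on each plaquette into an oscillatory part $f-\dashint_{\partial P_r'\cap\partial Q_L'}f$ and an averaged part $\dashint_{\partial P_r'\cap\partial Q_L'}f$. The oscillatory part is absorbed by a divergence-free field $h_1=-\nabla'v_1$ that carries no charge and hence does not contribute to $\partial_{d+1}m_r$ at all; only the averaged flux forces a change of $m_r$, and its running $x_{d+1}$-integral is controlled by Cauchy--Schwarz with exactly the factor $(r/L)^{-(d+1)/2}$ coming from $|P_r'|$. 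Without this decomposition, a linear normal interpolation across the layer puts the full oscillation of $f$ into $\partial_{d+1}m_r$ and the $L^\infty$ bound fails. So the proposal, as written, has a genuine gap both in the choice of $m_r$ (the convex combination is missing) and in the claimed elliptic $L^\infty$ estimate (the plaquette-wise flux splitting is missing); these are not finishing details but the structural ideas of the proof.
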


\begin{remark}\label{rem:bound-mb-cumulated-field}
	Assumption \eqref{eq:boundary-layer-size} in particular implies the following estimate (in a weak topology) on the linearly interpolated magnetisation $m_0(\cdot, x_{d+1}) = \frac{x_{d+1}}{T} m^T + \left(1-\frac{x_{d+1}}{T} \right) m^B$: 
	\begin{align*}
		\sup_{P_{r}'}\left|\dashint_{P_{r}'} m_0 \,\mathrm{d}x'\right| \leq \frac{1}{2} \quad \text{for all} \quad x_{d+1} \in [0,T],
	\end{align*}
	where the supremum is taken over all plaquettes $P_{r}'$ of size $r$ in $Q_{L}' \setminus Q_{L-r}'$.

	Indeed, since $\partial_{x_{d+1}} m_0 = \frac{m^T - m^B}{T} = \Delta' v_0$, we have that 
	\begin{align*}
		\dashint_{P_{r}'} m_0 \,\mathrm{d}x' 
		&= \dashint_{P_{r}'} m^B \,\mathrm{d}x' + \dashint_{P_{r}'} \int_0^{x_{d+1}} \partial_{x_{d+1}} m_0(x', \xi_{d+1}) \,\mathrm{d}\xi_{d+1} \,\mathrm{d}x' \\
		&= \dashint_{P_{r}'} \nabla'\cdot H^B \,\mathrm{d}x' + \int_0^{x_{d+1}} \dashint_{P_{r}'} \Delta'v_0 \,\mathrm{d}x' \,\mathrm{d}\xi_{d+1} \\
		&= \frac{1}{|P_r'|} \int_{\partial P_r'} H^B \cdot \nu' \,\mathrm{d}\mathcal{H}^{d-1} + \frac{1}{|P_r'|} \int_0^{x_{d+1}}\int_{\partial P_r'} \nabla'v_0\cdot \nu' \,\mathrm{d}\mathcal{H}^{d-1} \,\mathrm{d}\xi_{d+1}\\
		&= \frac{1}{|P_r'|} \int_{\partial P_r'} H^B \cdot \nu' \,\mathrm{d}\mathcal{H}^{d-1} + \frac{1}{|P_r'|} x_{d+1}\int_{\partial P_r'} \nabla' v_0\cdot \nu' \,\mathrm{d}\mathcal{H}^{d-1}.
	\end{align*}
	Note that since $m^B, m^T \in L^{\infty}(Q_L')$, and therefore  $\partial_{x_{d+1}} m_0\in L^{\infty}(Q_L')$, the functions $H^B = -\nabla'u^B$ and $-\nabla'v_0$ are continuous by elliptic regularity\footnote{This follows from maximal $L^p$ regularity, which implies that $H^B, -\nabla'v_0 \in W^{1,p}(Q_L')$ for any $p<\infty$, and therefore $H^B, -\nabla'v_0 \in C^{0,\alpha}(Q_{L}')$ for any $\alpha \in(0,1)$ by Morrey's inequality.}, hence we may bound  
	\begin{align*}
		\left| \dashint_{P_{r}'} m_0 \,\mathrm{d}x'  \right| 
		&\leq \frac{|\partial P_r'|}{|P_r'|} \sup_{P_r'} |H^B| + \frac{|\partial P_r'|}{|P_r'|} T \sup_{P_r'} |\nabla' v_0|
		\lesssim \frac{L}{r} \left( \frac{1}{L} \sup_{Q_L'} |H^B| + \frac{T}{L} \sup_{Q_L'} \left| \dashint_0^T h\,\mathrm{d}x_{d+1} \right| \right) 
		\stackrel{\eqref{eq:boundary-layer-size}}{\lesssim} \frac{1}{C},
	\end{align*}
	where in the last step we also used that $-\nabla'v_0= \overline{h}_T = \dashint_0^T h\,\mathrm{d}x_{d+1}$. In particular, $\left| \dashint_{P_{r}'} m_0 \,\mathrm{d}x'  \right| \leq \frac{1}{2}$ if $C$ is chosen large enough.
\end{remark}

The proof of Proposition~\ref{prop:fine-properties-relaxed}, inspired by \cite{Vie09}, proceeds via localisation of the problem in small boundary plaquettes $P_{r}'$ of size $r$ and a splitting of the boundary flux $f \coloneq g-\overline{g}_T$ into two components:
\begin{enumerate}
	\item the oscillatory part of the boundary data $f - \dashint_{\partial P_{r}' \cap \partial Q_L'} f$, giving rise to a divergence-free field $h_1$ (carrying no ``charges'', therefore not influencing the magnetisation), and
	\item the average boundary flux $\dashint_{\partial P_{r}' \cap \partial Q_L'} f$, giving rise to a \emph{bounded} field $h_2$ compatible with the magnetisation $m_{r}$.
\end{enumerate}

\begin{remark} Here one would be tempted to do a standard boundary layer construction based on screening properties, like, for instance, the one performed in \cite{GO20}*{Lemma 2.4} for optimal transportation. Nevertheless, such a strategy has a major problem in our situation: one needs that $m_0 + m_{\text{boundary-layer}} \in [-1,1]$, which is a hard constraint that is not accounted for in such a type of construction. We overcome this difficulty by the passage to a convex combination, which essentially converts boundedness by one in $L^{\infty}$ to requiring smallness in a weaker topology, see Remark~\ref{rem:bound-mb-cumulated-field}.
\end{remark}

\begin{proof}
The construction is done locally in each boundary plaquette $P_{r}'$. We first take out the oscillatory part with a divergence-free field $h_1 = -\nabla' v_1$ by solving the elliptic PDE 
\begin{align*}
	\begin{array}{rcll}
	-\Delta' v_1 &=& 0 & \text{in } P_{r}' \times \{x_{d+1}\} \\
	-\nabla' v_1 \cdot \nu &=& f - \dashint_{\partial P_{r}' \cap \partial Q_L'} f(\cdot, x_{d+1})  & \text{on } (\partial P_{r}' \cap \partial Q_{L}') \times \{x_{d+1}\}\\
	-\nabla' v_1 \cdot \nu &=& 0 & \text{on } (\partial P_{r}' \setminus \partial Q_{L}') \times \{x_{d+1}\}.
	\end{array}
\end{align*}
Note that the solvability condition is fulfilled and by elliptic regularity we have the bound\footnote{It is important in the following estimates on the $L^2$ norm of the fields to keep the r.h.s.\ localised to a plaquette, because we have to sum over all boundary plaquettes in the end, in order to not pick up a term that cancels the small factor in the final estimate.}
\begin{align*}
	\int_{P_{r}'\times\{x_{d+1}\}} |h_1|^2  \lesssim r \int_{\partial P_{r}' \cap \partial Q_L'} \left( f(\cdot, x_{d+1}) - \dashint_{\partial P_{r}' \cap \partial Q_L'} f(\cdot, x_{d+1})  \right)^2  \lesssim r \int_{\partial P_{r}' \cap \partial Q_L'} f^2(\cdot, x_{d+1}).
\end{align*}
In particular, summing over all the boundary plaquettes and integrating over $[0,T]$, we get
\begin{align}\label{eq:bound-h1}
	\frac{1}{L^d T} \int_{A_{L,T}(r)} |h_1|^2\,\mathrm{d}x 
	= \frac{1}{L^d T} \int_0^T \sum_{P_{r}'}  \int_{P_{r}'} |h_1|^2\,\mathrm{d}x' \,\mathrm{d}x_{d+1} 
	\lesssim \frac{r}{L} \, \dashint_{\Gamma_{L,T}} f^2.
\end{align}
 
Recall the linearly interpolated magnetisation $m_0(\cdot, x_{d+1}) = \frac{x_{d+1}}{T} m^T + \left(1-\frac{x_{d+1}}{T} \right) m^B$.
For the averaged boundary flux we make the ansatz that $m_{r} = \lambda (\overline{m}- m_0)$ for some well-chosen $\overline{m}$ taking values in $[-1,1]$, and we allow $\lambda \equiv \lambda_{P_{r}'}(x_{d+1}) \in[0,\frac{1}{2}]$ to depend on $x_{d+1}$ in the given plaquette $P_{r}'$. Then ${m}_{r}(x) \in [-1,1]$ for all $x\in P_{r}'\times [0,T]$, and therefore 
\begin{align*}
	m_0 + m_r = \lambda \overline{m} + (1-\lambda)m_0 \in [-1,1] \quad \text{on} \quad Q_{L,T}.  	
\end{align*}
This magnetisation gives rise to a field $h_2 = -\nabla' v_2$, which is defined slice-wise (for fixed $x_{d+1}$) as the solution to
\begin{align*}
	-\Delta' v_2 &= -\partial_{d+1}m_{r} & & \text{in } P_{r}' \times \{x_{d+1}\} \\
	-\nabla' v_2 \cdot \nu &= \dashint_{\partial P_{r}' \cap \partial Q_L'} f(\cdot, x_{d+1})  & & \text{on } (\partial P_{r}' \cap \partial Q_{L}') \times \{x_{d+1}\}\\
	-\nabla' v_2 \cdot \nu &= 0  & & \text{on } (\partial P_{r}' \setminus \partial Q_{L}') \times \{x_{d+1}\}.
\end{align*}
Note that this elliptic PDE has a solution if 
\begin{align*}
-\int_{\partial P_{r}' \cap \partial Q_L'} f = \int_{P_{r}'} \partial_{d+1} m_{r} = \partial_{d+1} \left[\lambda\left(\int_{P_{r}'}\overline{m}\,\mathrm{d}x'-\int_{P_{r}'}m_0\,\mathrm{d}x'\right)\right],
\end{align*}
which is ensured by the choice
\begin{align*}
	\lambda(x_{d+1}) = \frac{-\frac{1}{|P_{r}'|} \int_0^{x_{d+1}} \int_{\partial P_{r}' \cap \partial Q_L'} f}{\dashint_{P_{r}'} \overline{m}\,\mathrm{d}x' - \dashint_{P_{r}'} m_0\,\mathrm{d}x'}.
\end{align*}
Note that $\lambda(0) = \lambda(T) = 0$ since $\int_0^T f\,\mathrm{d}x_{d+1} = 0$ for almost every $x' \in \partial Q_L'$. Since we need to ensure that $\lambda \in [0,\frac{1}{2}]$, we select $\overline{m}\in[-1,1]$ in such a way that for a given sign of the enumerator the denominator is the largest possible, i.e.\footnote{In fact, we have that $\overline{m} = \argmin\left\{\frac{-\frac{1}{|P_{r}'|} \int_0^{x_{d+1}} \int_{\partial P_{r}' \cap \partial Q_L'} f}{\dashint_{P_{r}'} \widetilde{m}\,\mathrm{d}x' - \dashint_{P_{r}'} m_0\,\mathrm{d}x'}: \widetilde{m}\in[-1,1] \text{ such that } \lambda \geq 0\right\}$.}
\begin{align*}
	\overline{m}(x) \coloneq \sgn\left(- \frac{1}{|P_{r}'|}\int_0^{x_{d+1}} \int_{\partial P_{r}' \cap \partial Q_L'} f \right).
\end{align*}
Note that $\overline{m}$ only depends on $x_{d+1}$ in $P_r'\times[0,T]$.  
Indeed, with this choice, by Remark~\ref{rem:bound-mb-cumulated-field}, we can estimate 
\begin{align*}
	\left|\dashint_{P_{r}'} \overline{m}\,\mathrm{d}x' - \dashint_{P_{r}'} m_0\,\mathrm{d}x' \right| \geq 1 - \sup_{P_{r}'}\left|\dashint_{P_{r}'} m_0\,\mathrm{d}x'\right| \geq \frac{1}{2},
\end{align*}
so that $\lambda\leq \frac{1}{2}$ if $\left|-\frac{1}{|P_{r}'|} \int_0^{x_{d+1}} \int_{\partial P_{r}' \cap \partial Q_L'} f\right|\leq \frac{1}{4}$. We next argue that this is implied by our assumption \eqref{eq:boundary-layer-size} on the size of the boundary layer. Indeed, 
\begin{align*}
	\left|-\frac{1}{|P_{r}'|} \int_0^{x_{d+1}} \int_{\partial P_{r}' \cap \partial Q_L'} f\right| 
	&\lesssim \frac{|\partial P_{r}' \cap \partial Q_{L}'|^{\frac{1}{2}}}{|P_{r}'|} T^{\frac{1}{2}} \left( \int_0^{x_{d+1}} \int_{\partial P_{r}' \cap \partial Q_{L}'} f^2 \right)^{\frac{1}{2}} \\
	&\lesssim \left(\frac{r}{L}\right)^{-\frac{d+1}{2}}\left(\frac{T^2}{L^2} \dashint_{\Gamma_{L,T}} f^2 \right)^{\frac{1}{2}} 
	\stackrel{\eqref{eq:boundary-layer-size}}{\leq} \frac{1}{4},
\end{align*}
if the constant $C$ in \eqref{eq:boundary-layer-size} is chosen large enough.
Notice that we have the finer estimate
\begin{align}\label{eq:lambda-bound}
	\lambda \lesssim \left(\frac{r}{L}\right)^{-\frac{d+1}{2}}\left(\frac{T^2}{L^2} \frac{1}{L^{d-1} T} \int_0^T \int_{\partial P_{r}' \cap \partial Q_{L}'} f^2 \right)^{\frac{1}{2}},
\end{align}
which will be important when summing up the estimates over all plaquettes. 

Next, we show that $\partial_{d+1} m_{r} \in L^{\infty}$ and satisfies the estimate \eqref{eq:bound-d3mell}. To this end, notice that 
\begin{align*}
\partial_{d+1} m_{r} = \partial_{d+1}( \lambda (\overline{m} - m_0)) = \lambda' (\overline{m}-m_0) + \lambda (\partial_{d+1} \overline{m} - \partial_{d+1} m_0) = 	\lambda' (\overline{m}-m_0) - \lambda \partial_{d+1} m_0,
\end{align*}
where we used that $\lambda$ vanishes where $\overline{m}$ changes sign, so that $\lambda \partial_{d+1} \overline{m} = 0$.\footnote{This argument is a bit formal, since strictly speaking $\overline{m}$ is only a function of bounded variation. However, it can be made rigorous with little effort.}

Using that $|m^{B,T}|\leq 1$, the latter term is easily bounded by 
\begin{align*}
	|\lambda \partial_{d+1} m_0| \leq \lambda \left|\frac{m^T - m^B}{T} \right| \lesssim \frac{\lambda}{T} \stackrel{\eqref{eq:lambda-bound}}{\lesssim} \frac{1}{T}  \left(\frac{r}{L}\right)^{-\frac{d+1}{2}}\left(\frac{T^2}{L^2} \frac{1}{L^{d-1} T} \int_0^T \int_{\partial P_{r}' \cap \partial Q_{L}'} f^2 \right)^{\frac{1}{2}}.
\end{align*}
For the other term, we calculate
\begin{align*}
\lambda'(x_{d+1}) = \frac{- \frac{1}{|P_{r}'|} \int_{\partial P_{r}' \cap \partial Q_L'} f(\cdot, x_{d+1})}{\dashint_{P_{r}'} \overline{m}\,\mathrm{d}x' - \dashint_{P_{r}'} m_0\,\mathrm{d}x'} - \lambda \frac{\dashint_{P_{r}'} \partial_{d+1} m_0\,\mathrm{d}x'}{\dashint_{P_{r}'} \overline{m}\,\mathrm{d}x' - \dashint_{P_{r}'} m_0\,\mathrm{d}x'}.
\end{align*}
In particular, bounding the first term in a similar way as $\lambda$ in \eqref{eq:lambda-bound} (without the $x_{d+1}$-integral) and the second term as before, we obtain 
\begin{align*}
	|\lambda'(x_{d+1})| 
	\lesssim \left(\frac{r}{L}\right)^{-\frac{d+1}{2}}\left(\frac{1}{L^{d+1}} \int_{\partial P_{r}' \cap \partial Q_{L}'} f^2(\cdot, x_{d+1}) \right)^{\frac{1}{2}}  
	+  \left(\frac{r}{L}\right)^{-\frac{d+1}{2}}\left(\frac{1}{L^{d+1} T} \int_0^T \int_{\partial P_{r}' \cap \partial Q_{L}'} f^2 \right)^{\frac{1}{2}}.
\end{align*}
and so, since $\|\overline{m}(\cdot, x_{d+1}) - m_0(\cdot, x_{d+1})\|_{L^{\infty}(Q_{L}')} \leq 2$, it follows that 
\begin{align*}
	\sup_{P_{r}'} |\partial_{d+1}m_{r}(\cdot, x_{d+1})|^2 
	&\lesssim \left(\frac{r}{L}\right)^{-(d+1)} \frac{1}{L^{d+1}} \int_{\partial P_{r}' \cap \partial Q_{L}'} f^2(\cdot, x_{d+1}) \\
	&\quad +  \left(\frac{r}{L}\right)^{-(d+1)} \frac{1}{L^{d+1} T} \int_0^T \int_{\partial P_{r}' \cap \partial Q_{L}'} f^2.
\end{align*}
By standard elliptic regularity\footnote{Indeed, this follows from maximal $L^p$ regularity for $v_2$ using that $\partial_{d+1} m_{r}(\cdot, x_{d+1}) \in L^p(P_{r}')$ for a.e.\ $x_{d+1} \in [0,T]$ and $p\geq 1$, combined with the Sobolev embedding $h_2(\cdot, x_{d+1}) =-\nabla'v_2 (\cdot, x_{d+1}) \in W^{1,p}(P_{r}') \hookrightarrow L^{\infty}(P_{r}')$.}, we then have the following estimate for $h_2=-\nabla'v_2$:
\begin{align*}
	\sup_{P_{r}'} |h_2(\cdot, x_{d+1})|^2 
	&\lesssim r^2 \sup_{P_{r}'} |\partial_{d+1}m_{r}(\cdot, x_{d+1})|^2  + \dashint_{\partial P_{r}' \cap \partial Q_L'} f^2(\cdot, x_{d+1}) \\
	&\lesssim  \left(\frac{r}{L}\right)^{-(d-1)} \frac{1}{L^{d-1}} \int_{\partial P_{r}' \cap \partial Q_{L}'} f^2(\cdot, x_{d+1}) 
		+  \left(\frac{r}{L}\right)^{-(d-1)} \frac{1}{L^{d-1} T} \int_0^T \int_{\partial P_{r}' \cap \partial Q_{L}'} f^2.
\end{align*}
In particular, we get that 
\begin{align*}
	\frac{1}{L^d T} \int_0^T \int_{P_{r}'} |h_2|^2\,\mathrm{d}x 
	\lesssim \frac{1}{T}\int_0^T \left(\frac{r}{L}\right)^d \sup_{P_{r}'} |h_2|^2 \,\mathrm{d}x_{d+1}
	\lesssim \frac{r}{L} \frac{1}{L^{d-1}T} \int_0^T \int_{\partial P_{r}' \cap \partial Q_{L}'} f^2.
\end{align*}
Summing over all the boundary plaquettes we then obtain the bound
\begin{align}\label{eq:bound-h2}
	\frac{1}{L^d T} \int_{A_{L,T}(r)} |h_2|^2\,\mathrm{d}x 
	= \frac{1}{L^d T} \int_0^T \sum_{P_{r}'} \int_{P_{r}'} |h_2|^2\,\mathrm{d}x 
	\lesssim \frac{r}{L} \dashint_{\Gamma_{L,T}} f^2.
\end{align}

Putting the two contributions (i.e. $h_1$ and $h_2$) together, we have constructed an admissible pair $(m_{r}, h_{r}) \in \mathcal{X}_{r}(L,T)$, where $h_{r} = h_1 + h_2$, which by \eqref{eq:bound-h1} and \eqref{eq:bound-h2} satisfies the estimate
\begin{align*}
	\frac{1}{L^d T} \int_{A_{L,T}(r)} |h_{r}|^2\,\mathrm{d}x \lesssim \frac{r}{L} \dashint_{\Gamma_{L,T}} f^2.
\end{align*}
\end{proof}
\section{Construction: From relaxed to non-convex}\label{sec:constructionnonconvex}
In this section we construct a competitor for the non-convex energy \eqref{eq:energy-nonconvex} from a competitor for the relaxed problem. 
This construction will play an essential role in the proofs of our main results, as we shall see in Section~\ref{sec:proofglobalscalinglaw} and Section~\ref{sec:prooflocalscalinglaws}.
We start with a building block for the full branching construction.
\begin{lemma}[Building block for the construction]\label{lem:buildingblock}
Let $Q = [0,1]^{d+1}$. Given an admissible pair $(M,H) \in L^{\infty}(Q; [-1,1]) \times L^2(Q; \RR^d)$ such that $\partial_{d+1} M + \nabla'\cdot H = 0$ and 
$\|\partial_{d+1} M\|_{L^2_{y_{d+1}}L^{\infty}_{y'}(Q)} \lesssim 1$, there exist functions $\widetilde{M}\in L^{1}_{y_{d+1}}([0,1]; BV_{y'}([0,1]^d; \pm 1))$ and $\widetilde{H} \in L^2(Q; \RR^d)$ such that $\partial_{d+1} \widetilde{M} + \nabla'\cdot\widetilde{H} = 0$ in $Q$ and $\widetilde{H}\cdot \nu' = H\cdot\nu'$ on $\Gamma = \partial[0,1]^d \times [0,1]$, and satisfying the estimates 
\begin{align}
	\int_Q |\nabla' \widetilde{M}| &\lesssim 1, \label{eq:bound-mtilde} \\
	\int_Q |\widetilde{H}-H|^2\,\mathrm{d}y &\lesssim 1 \label{eq:bound-htilde}.
\end{align}
\end{lemma}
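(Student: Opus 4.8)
The plan is to build $\widetilde M$ by hand as an explicit $\pm1$ laminate and then obtain $\widetilde H$ by adding to the given field $H$ a slice‑wise potential correction, whose $L^2$‑size is controlled by the elliptic estimate of Appendix~\ref{sec:appendix}. Since the hypotheses and both estimates are invariant under permutations of the horizontal variables, I would look for $\widetilde M$ depending only on $(y_1,y_{d+1})$ (constant in $y_2,\dots,y_d$). Put $\mu(y_{d+1}) \coloneqq \int_{[0,1]^d} M(\cdot,y_{d+1})\,\dd y'$. Then $\mu$ takes values in $[-1,1]$ and, since $|\mu'(y_{d+1})| \le \|\partial_{d+1}M(\cdot,y_{d+1})\|_{L^\infty_{y'}}$, the hypothesis gives $\mu \in H^1([0,1])$ with $\|\mu'\|_{L^2([0,1])} \lesssim 1$. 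I would take $\widetilde M(\cdot,y_{d+1})$ to be a $\pm1$ laminate in $y_1$ with boundedly many interfaces $\{y_1 = a_i(y_{d+1})\}$, $a_i \in H^1([0,1];(0,1))$, chosen so that its $y_1$‑average equals $\mu(y_{d+1})$ for every $y_{d+1}$; to make the block useful for the full branching iteration one additionally performs a symmetric period‑halving split of the laminate near $y_{d+1} = \tfrac12$, which is standard (Kohn--M\"uller) and preserves the average. Moving only the interface not involved in the split to track $\mu$, and the remaining ones only on a fixed mid‑height sub‑interval, gives the pointwise bound $\sum_i |a_i'(y_{d+1})| \lesssim |\mu'(y_{d+1})| + C\,\1_{\{|y_{d+1}-1/2| \le c\}}$, hence $\int_0^1 \sum_i |a_i'|^2 \,\dd y_{d+1} \lesssim 1$. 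Since each slice of $\widetilde M$ carries boundedly many interfaces, each of $\mathcal{H}^{d-1}$‑measure at most $1$, estimate \eqref{eq:bound-mtilde} is immediate. Keeping the $a_i$ inside $(0,1)$ and arranging the split geometry needs some elementary bookkeeping, made rigorous in the $BV$ sense as in \cite{Vie09}.

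For the field I would set $\widetilde H \coloneqq H + G$, where $G \in L^2(Q;\RR^d)$ is defined slice‑wise in $y_{d+1}$ by $G(\cdot,y_{d+1}) = -\nabla' w(\cdot,y_{d+1})$ with $w$ the Neumann solution of $-\Delta' w(\cdot,y_{d+1}) = \partial_{d+1}M(\cdot,y_{d+1}) - \partial_{d+1}\widetilde M(\cdot,y_{d+1})$ in $[0,1]^d$ and $\partial_{\nu'} w = 0$ on $\partial[0,1]^d$. This is solvable for a.e.\ $y_{d+1}$ because, by the choice of $\widetilde M$, $\int_{[0,1]^d}(\partial_{d+1}M - \partial_{d+1}\widetilde M)\,\dd y' = \mu' - \mu' = 0$. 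One then checks directly that $\partial_{d+1}\widetilde M + \nabla'\cdot\widetilde H = (\partial_{d+1}\widetilde M - \partial_{d+1}M) + \nabla'\cdot H + \nabla'\cdot G = 0$ distributionally in $Q$ (using $\partial_{d+1}M + \nabla'\cdot H = 0$), and that $\widetilde H\cdot\nu' = H\cdot\nu' + G\cdot\nu' = H\cdot\nu'$ on $\Gamma$. For the $L^2$ bound I would split the right‑hand side as $(\partial_{d+1}M - \mu') - (\partial_{d+1}\widetilde M - \mu')$ and correspondingly $G = G_1 + G_2$. The datum of $G_1$ has zero $y'$‑mean, so by Poincar\'e and elliptic regularity on the unit cube $\|G_1(\cdot,y_{d+1})\|_{L^2_{y'}} \lesssim \|\partial_{d+1}M(\cdot,y_{d+1}) - \mu'\|_{L^2_{y'}} \lesssim \|\partial_{d+1}M(\cdot,y_{d+1})\|_{L^\infty_{y'}}$, whence $\int_Q |G_1|^2 \lesssim \int_0^1 \|\partial_{d+1}M(\cdot,y_{d+1})\|_{L^\infty_{y'}}^2\,\dd y_{d+1} \lesssim 1$. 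The datum of $G_2$ is $\mu'(y_{d+1})$ minus $\partial_{d+1}\widetilde M(\cdot,y_{d+1})$, which is a sum of $\pm 2 a_i'(y_{d+1})$ times the surface measure on $\{y_1 = a_i(y_{d+1})\}$ (plus the mean‑zero correction): a charge supported on boundedly many hypersurfaces with $L^\infty$‑density $\lesssim \sum_i |a_i'(y_{d+1})|$. By the elliptic estimate of Appendix~\ref{sec:appendix} its potential field satisfies $\|G_2(\cdot,y_{d+1})\|_{L^2_{y'}} \lesssim \sum_i |a_i'(y_{d+1})|$, so $\int_Q |G_2|^2 \lesssim \int_0^1 \big(\sum_i |a_i'|\big)^2\,\dd y_{d+1} \lesssim 1$. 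Together these give \eqref{eq:bound-htilde}, and $\widetilde H = H + G \in L^2(Q;\RR^d)$.

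The main obstacle is the estimate for $G_2$: because $\partial_{d+1}\widetilde M$ is only a surface measure, one cannot use the naive bound $\|G\|_{L^2} \lesssim \|\text{charge}\|_{L^2}$, and one genuinely needs the appendix estimate controlling the $L^2$‑norm of the Neumann field by an $L^\infty$‑type norm of a charge spread over a hypersurface -- and, for the construction to survive the rescaling‑and‑stacking iteration with curved or branching interfaces, uniformly in the geometry of that hypersurface. A secondary, routine point is that $\partial_{d+1}\widetilde M$ is only a measure, so the chain rule for $\partial_{d+1}\1_{\{y_1 < a_i(y_{d+1})\}}$ and the cancellations at the split must be read in the $BV$ sense.
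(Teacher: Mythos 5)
Your corrector construction is a genuine and appealing simplification of the paper's. Where the paper assembles $H_{\cor}$ as an explicit list of six pieces (\textsc{Correctors 1.1--2.3}), you solve a single slice-wise Neumann problem for the excess charge $\partial_{d+1}(M-\widetilde M)$ and split its datum into the mean-zero bulk part $\partial_{d+1}M-\mu'$ (handled by Poincar\'e on the unit cube) and a finite sum of hyperplane charges on the moving interfaces (handled by Lemma~\ref{lem:elliptic-line}, after writing $\mu'-\partial_{d+1}\widetilde M=\sum_i\lambda_i\bigl(1-\mathcal{H}^{d-1}\lfloor_{\{y_1=a_i\}}\bigr)$ with $\sum_i\lambda_i=\mu'$ and applying the lemma to each term with $f\equiv\lambda_i$, $|H|=1$). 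With a little care about uniformity of the constant in the interface positions $a_i$, this does give \eqref{eq:bound-htilde} and avoids the paper's intermediate averaged field $\overline{M}$ in the corrector part. As a proof of the lemma \emph{in isolation} this is fine, and a more streamlined route.

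The genuine gap is a missing idea in the design of $\widetilde M$, and it is precisely the idea that makes the lemma usable downstream. Your $\widetilde M$ is a $y_1$-laminate whose slicewise average matches only the full-cube average $\mu(y_{d+1})=\int_{[0,1]^d}M\,\mathrm{d}y'$, with period halving only in $y_1$; the construction is constant in $y_2,\dots,y_d$. But in Corollary~\ref{cor:competitor} the cuboid $Q_{L,T}$ is decomposed so that at each level $k$ \emph{all} $d$ horizontal side lengths are halved, and the blocks are rescaled isotropically in $x'$ by $S_k=\mathrm{diag}(\sigma',\dots,\sigma',\sigma_{d+1})$. For $\partial_{d+1}\widetilde m\in L^2_{x_{d+1}}L^\infty_{x'}$ and the weak boundary condition $\widetilde m\stackrel{*}{\rightharpoonup}0$, one needs the trace of each block at $y_{d+1}=0$ to match --- sub-plaquette by sub-plaquette --- the top traces of the $2^d$ finer blocks below it, and those are laminates built from the \emph{sub-plaquette} averages $\dashint_{S_{i_1\dots i_d}}M$, not the full average. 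This is exactly what \textsc{Step 1} (averaging) of the paper's proof arranges: $\overline{M}$ interpolates in $y_{d+1}$ between the sub-plaquette averages at $y_{d+1}=0$ and the full average at $y_{d+1}=\tfrac12$, and \textsc{Step 2} shifts mass accordingly, so the bottom trace of $\widetilde M$ realises the $2^d$ refined averages. Your construction, tracking only $\mu$ and splitting only in $y_1$, produces a bottom trace that is constant across the $2^{d-1}$ sub-plaquettes in $y_2,\dots,y_d$ and carries the wrong sub-averages even in $y_1$: the two halves you produce each average to $\mu(0)$, not to $\dashint_{[0,1/2]\times[0,1]^{d-1}}M$ and $\dashint_{[1/2,1]\times[0,1]^{d-1}}M$. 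You would either have to reinstate the sub-plaquette averaging (thus essentially reintroducing the paper's $\overline M$ and the $2^d$-fold split), or switch to an anisotropic decomposition refining only in $y_1$ --- but then $S_k$ no longer rescales the horizontal variables uniformly, and the change-of-variables computation for the stray-field energy and the invariance of the constraint $\partial_{d+1}M+\nabla'\cdot H=0$ in Corollary~\ref{cor:competitor} break down as written. For $d=1$ there is no such issue and your proposal is indeed close to \cite{Con00}.
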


\begin{remark}
	Since the construction for $d\geq 2$ will be done in such a way that it is constant in the directions $y_3, \dots, y_d$, for simplicity and to not overburden the reader with notation, we will give the proof just for $d=2$. The generalisation to higher dimensions is then immediate. 
	
	Moreover, the proof can be adapted to the case $d=1$ (and simplified considerably), thus recovering a construction similar to \cite{Con00}.
\end{remark}

\begin{proof}[Proof for $d=2$]The construction of a $\pm1$-valued function $\widetilde{M}$ from $M$ is done in two steps: a \emph{geometric refinement} in the lower half of the cube and a \emph{local averaging} with respect to the refined subdomains. It is worth remarking that the geometric refinement will be crucial when constructing a competitor for the non-convex energy, in view of the weak boundary conditions that the magnetisation needs to satisfy on the top and bottom boundaries of the domain sample.

\begin{enumerate}[label=\textsc{\bf Step \arabic*},leftmargin=0pt,labelsep=*,itemindent=*,itemsep=10pt,topsep=10pt]
	\item 	\emph{(Averaging)}.
			Let $S_{ij} = \left[0,\frac{1}{2}\right]^2 + \left(\frac{i}{2},\frac{j}{2}\right)$ for $i,j \in\{0,1\}$, and define 
			\begin{align*}
				\overline{M}(y) \coloneqq \begin{cases}
 									\int_{[0,1]^2} M(x',y_3)\,\mathrm{d}x' & \text{for } y_3 \in\left[\frac{1}{2},1\right], \\	
 									(1-2y_3) \dashint_{S_{ij}} M(x',y_3)\,\mathrm{d}x' + 2y_3 \int_{[0,1]^2} M(x',y_3)\,\mathrm{d}x' & \text{for } y_3 \in\left[0,\frac{1}{2}\right) \text{ and } y' \in S_{ij}.
								  \end{cases}
			\end{align*}

			Let us observe that our construction refines from a single plaquette for $y_3 \in \left[\frac{1}{2},1\right]$ to four identical plaquettes for $y_3\in\left[0,\frac{1}{2}\right)$. The main idea here is that for $y_3\in \{0,1\}$, in each plaquette $\overline{M}$ is equal to the average of the magnetisation over the corresponding plaquette. 
			In the upper half of the cube, i.e. for $y_3\in\left[\frac12,1\right)$, since there is only one plaquette, we define $\overline{M}$ as the average of the magnetisation over the corresponding plaquette. 
			Then, in the lower half of the cube, i.e. for $y_3\in\left(0,\frac12\right)$, on each plaquette at height $y_3$ we take a convex combination between $\dashint_{S_{i,j}}M(y',y_3)\, \mathrm{d}y'$ and $\int_{[0,1]^2}M(y',y_3)\, \mathrm{d}y'$, where the coefficients of the convex combination are taken to be linear with respect to $y_3$ and so that 
			$\overline{M}$ does not jump at $y_3=\frac12$. 

			We remark that the constructed $\overline M$ is such that $\overline M\in [-1,1]$ in $Q$ and $\int_{[0,1]^2}M(y',y_3)\ \mathrm{d}y'=\int_{[0,1]^2}\overline{M}(y',y_3)\ \mathrm{d}y'$ for every $y_3\in[0,1]$, i.e. the average per $y_3$-slice is preserved. 
			
	\item 	\emph{(Construction of a $\pm1$-valued magnetisation)}. 
			With the locally averaged $\overline{M}$ we can now proceed to define a $\pm 1$-valued magnetisation $\widetilde M$ with the property that in each horizontal plaquette $S_{ij} \times \{y_3\}$ there is exactly one interface curve, 
			by shifting the magnetisation according to the volume fraction in each horizontal slice.
			
			For $y_3 \in \left[\frac{1}{2},1\right]$, we set 
			\begin{align*}
				\widetilde{M}(y) \coloneqq \begin{cases}
 									+ 1 & \text{if } y_1 \in \left[0, \eta_1(y_3) \right] \cup \left[ \frac{1}{2}, \eta_2(y_3)\right], \\
 									-1 & \text{else},
 								\end{cases}
			\end{align*}
			where
			\begin{align}\label{eq:eta-12}
				\eta_1(y_3) \coloneqq \min\left(y_3 \frac{1+\overline{M}(y)}{2}, \frac{1}{2}\right) \quad \text{and} \quad \eta_2(y_3) \coloneqq \frac{1}{2} + \max\left( (1-y_3)\frac{1+ \overline{M}(y)}{2}, \frac{\overline{M}(y)}{2}\right),
			\end{align}
			and we recall that $\overline{M}(y) = \int_{[0,1]^2} M(y)\,\mathrm{d}y' \in [-1,1]$ depends only on $y_3$. 
		
			For $y_3 \in \left[0, \frac{1}{2}\right)$ the local averaging is refined 
			to ensure that the construction is compatible with the top boundary conditions of four cubes with corresponding volume fractions: we define 
			\begin{align*}
				\widetilde{M}(y) \coloneqq \begin{cases}
 									+ 1 & \text{if } y' \in S_{ij} \text{ and } y_1 \in \left[\frac{i}{2}, \frac{i}{2} + \frac{1}{2} \frac{1+\overline{M}(y)}{2} \right], \\
 									-1 & \text{elsewhere on } S_{ij}.
 								\end{cases}
			\end{align*}
			Note that the magnetisation $\widetilde{M}$ is piecewise constant in $y_2$.
			
			In words, the main idea here is that for $y_3\in \{0,1\}$, all the ``positive'' mass of $M$ in each plaquette, that is the Lebesgue measure of the set where $M(\cdot,y_3)=+1$ in the corresponding plaquette, which is equal to $\frac{1+\overline M(y)}{2}$, is shifted to the left in the $y_1$-direction, which preserves the average. 
			
			In the lower half of the cube, we do exactly the same. However, recall that for $y_3\in\left(0,\frac12\right)$, $\overline M$ is not longer equal to the average of $M$ over the corresponding plaquette, but a convex combination between the average over each plaquette and the average over the four of them. This in particular ensures that for $y_3=\frac12$ the ``positive'' mass in $[0,1]^2\times \left\{\frac12\right\}$ is equally distributed (and shifted to the left) in the 2 plaquettes $\left[0,\frac12\right]\times [0,1]\times \left\{\frac12\right\}$ and $\left[\frac12,1\right]\times[0,1]\times \left\{\frac12\right\}$. Hence, $\widetilde M$ is constant in the $y_2$-direction at $y_3=\frac{1}{2}$.

			For $y_3\in\left(\frac12,1\right)$, the ``positive'' mass in $[0,1]^2\times \{y_3\}$ is unequally distributed (and shifted to the left) in the 2 plaquettes $\left[0,\frac12\right]\times [0,1]\times \{y_3\}$ and $\left[\frac12,1\right]\times[0,1]\times \{y_3\}$. More precisely, we put a fraction $y_3$ in the first plaquette, and the remaining $(1-y_3)$ fraction in the latter. A key observation is that when one reaches $y_3=1$, then $(1-y_3)$ vanishes and therefore all the ``positive'' mass is shifted to the left in the single plaquette $[0,1]^2\times \{1\}$.

			For the reader's convenience, in Figure~\ref{fig:example1-construction} and Figure~\ref{fig:example2-construction} we depict how $\widetilde M$ looks like in two simple situations.

			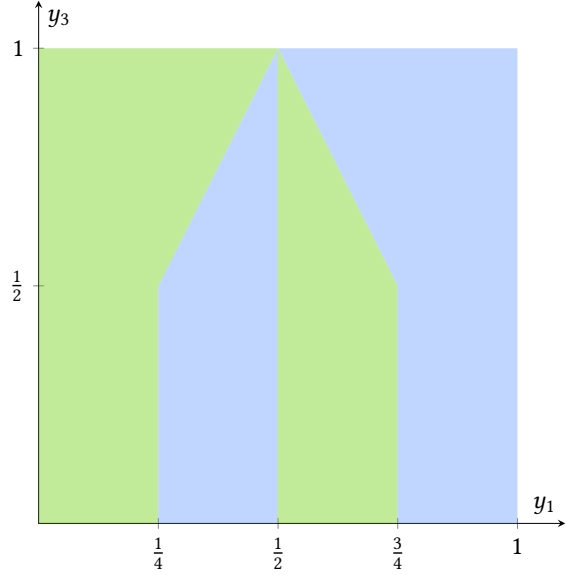
\begin{figure}[ht]
			\centering
			\scalebox{0.9}{\definecolor{ffffww}{rgb}{0.4,0.6,1.}
\definecolor{ffqqqq}{rgb}{0.4,0.8,0.}
\begin{tikzpicture}[line cap=round,line join=round,>=triangle 45,x=1cm,y=1cm]
\begin{axis}[
x=7cm,y=7cm,
axis lines=middle,
xlabel = \(y_1\),
ylabel = {\(y_3\)},
xmin=0,
xmax=1.1,
ymin=0,
ymax=1.1,
xtick={0,1/4,...,1},
xticklabels={0,$\frac14$,$\frac12$,$\frac34$,$1$},
ytick={0,0.5,...,1},
yticklabels={0,$\frac12$,$1$}]
\clip(-0.1,-0.1) rectangle (1.1,1.1);
\fill[line width=0pt,color=ffqqqq,fill=ffqqqq,fill opacity=0.4] (0,0) -- (0.25,0) -- (0.25,0.5) -- (0.5,1) -- (0,1) -- cycle;
\fill[line width=0pt,color=ffffww,fill=ffffww,fill opacity=0.4] (0.25,0) -- (0.5,0) -- (0.5,1) -- (0.25,0.5) -- cycle;
\fill[line width=0pt,color=ffqqqq,fill=ffqqqq,fill opacity=0.4] (0.5,0) -- (0.75,0) -- (0.75,0.5) -- (0.5,1) -- cycle;
\fill[line width=0pt,color=ffffww,fill=ffffww,fill opacity=0.4] (0.75,0) -- (1,0) -- (1,1) -- (0.5,1) -- (0.75,0.5) -- cycle;
\end{axis}
\end{tikzpicture}}
			\caption{Given $y_2\in[0,1]$, this depicts $\widetilde M(\cdot,y_2,\cdot)$ in the case when $\int_{[0,1]^2}M\,\mathrm{d}y'=0$ and $\overline{M}=0$ for every $y_3\in[0,1]$. The set $\{\widetilde M(\cdot,y_2,\cdot)=+1\}$ is coloured in green, whereas $\{\widetilde M(\cdot,y_2,\cdot)=-1\}$ is coloured in blue. Notice that in this special case $\widetilde{M}$ is constant in $y_2$ direction.}
			\label{fig:example1-construction}
			\end{figure}

			\begin{figure}
			\centering
			\begin{minipage}{.5\textwidth}
  			\centering
 			\scalebox{0.8}{\definecolor{ffffww}{rgb}{0.4,0.6,1.}
\definecolor{ffqqqq}{rgb}{0.4,0.8,0.}
\begin{tikzpicture}[line cap=round,line join=round,>=triangle 45,x=1cm,y=1cm]
\begin{axis}[
x=7cm,y=7cm,
axis lines=middle,
xlabel = \(y_1\),
ylabel = {\(y_3\)},
xmin=0,
xmax=1.1,
ymin=0,
ymax=1.1,
xtick={0,1/8,...,1},
xticklabels={0,$\frac18$,$\frac14$,$\frac38$,$\frac12$,$\frac58$,$\frac34$,$\frac78$,$1$},
ytick={0,0.5,...,1},
yticklabels={0,$\frac12$,$1$}]
\clip(-0.1,-0.1) rectangle (1.1,1.1);
\fill[line width=0pt,color=ffqqqq,fill=ffqqqq,fill opacity=0.4] (0,0) -- (0.5,0) -- (0.3125,0.5) -- (0.5,1) -- (0,1) -- cycle;
\fill[line width=0pt,color=ffffww,fill=ffffww,fill opacity=0.4] (0.5,0) -- (0.5,1) -- (0.3125,0.5) -- cycle;
\fill[line width=0pt,color=ffqqqq,fill=ffqqqq,fill opacity=0.4] (0.5,0) -- (0.6875,0) -- (0.8125,0.5) -- (0.625,1) -- (0.5,1) -- cycle;
\fill[line width=0pt,color=ffffww,fill=ffffww,fill opacity=0.4] (0.6875,0) -- (1,0) -- (1,1) -- (0.625,1) -- (0.8125,0.5) -- cycle;
\draw [dashed, opacity=0.7,line width=0.2mm] (5/16,0) -- (5/16,1/2);
\draw [dashed, opacity=0.7,line width=0.2mm] (5/8,0) -- (5/8,1);
\draw [dashed, opacity=0.7,line width=0.2mm] (13/16,0) -- (13/16,1/2);
\end{axis}
\end{tikzpicture}}
			\end{minipage}%
			\begin{minipage}{.5\textwidth}
  			\centering
			\scalebox{0.8}{\definecolor{ffffww}{rgb}{0.4,0.6,1.}
\definecolor{ffqqqq}{rgb}{0.4,0.8,0.}
\begin{tikzpicture}[line cap=round,line join=round,>=triangle 45,x=1cm,y=1cm]
\begin{axis}[
x=7cm,y=7cm,
axis lines=middle,
xlabel = \(y_1\),
ylabel = {\(y_3\)},
xmin=0,
xmax=1.1,
ymin=0,
ymax=1.1,
xtick={0,1/8,...,1},
xticklabels={0,$\frac18$,$\frac14$,$\frac38$,$\frac12$,$\frac58$,$\frac34$,$\frac78$,$1$},
ytick={0,0.5,...,1},
yticklabels={0,$\frac12$,$1$}]
\clip(-0.1,-0.1) rectangle (1.1,1.1);
\fill[line width=0pt,color=ffqqqq,fill=ffqqqq,fill opacity=0.4] (0.1875,0) -- (0.3125,0.5) -- (0.5,1) -- (0,1) -- (0,0) -- cycle;
\fill[line width=0pt,color=ffqqqq,fill=ffqqqq,fill opacity=0.4] (0.5,0) -- (0.875,0) -- (0.8125,0.5) -- (0.625,1) -- (0.5,1) -- cycle;
\fill[line width=0pt,color=ffffww,fill=ffffww,fill opacity=0.4] (0.1875,0) -- (0.5,0) -- (0.5,1) -- (0.3125,0.5) -- cycle;
\fill[line width=0pt,color=ffffww,fill=ffffww,fill opacity=0.4] (0.875,0) -- (1,0) -- (1,1) -- (0.625,1) -- (0.8125,0.5) -- cycle;
\draw [dashed, opacity=0.7,line width=0.2mm] (5/16,0) -- (5/16,1/2);
\draw [dashed, opacity=0.7,line width=0.2mm] (5/8,0) -- (5/8,1);
\draw [dashed, opacity=0.7,line width=0.2mm] (13/16,0) -- (13/16,1/2);
\end{axis}
\end{tikzpicture}}
			\end{minipage}
			\caption{The picture on the left depicts $\widetilde M(\cdot,y_2,\cdot)$ for $y_2\in\left[0,\frac12\right]$ and the one on the right for $y_2\in\left[\frac12,1\right]$, in the case  $\int_{[0,1]^2}M\,\mathrm{d}y'=\frac14$ for every $y_3\in[0,1]$, and $\dashint_{S_{0,0}}M\,\mathrm{d}y'=1$, $\dashint_{S_{1,0}}M\,\mathrm{d}y'=\dashint_{S_{0,1}}M\,\mathrm{d}y'=-\frac14$, and $\dashint_{S_{1,1}}M\,\mathrm{d}y'=\frac12$ for every $y_3\in\left[0,\frac12\right]$. The set $\{\widetilde M(\cdot,y_2,\cdot)=+1\}$ is coloured in green, whereas $\{\widetilde M(\cdot,y_2,\cdot)=-1\}$ is coloured in blue. Observe that $\widetilde M$ is not constant in $y_2$ direction for $y_3\in \left[0,\frac12\right)$.}
			\label{fig:example2-construction}
			\end{figure}
			
			We remark that the constructed $\widetilde M$ is such that $\widetilde M\in [-1,1]$ in $Q$ and $\int_{[0,1]^2}\widetilde M(y',y_3)\ \mathrm{d}y'=\int_{[0,1]^2}\overline{M}(y',y_3)\ \mathrm{d}y'=\int_{[0,1]^2} M(y',y_3)\ \mathrm{d}y'$ for every $y_3\in[0,1]$, i.e. the average per slice in $y_3$ direction is preserved.				 
			\begin{remark}\label{rem:continuity}
				The refinement is done in such a way that the constructed $\pm1$-valued magnetisation in Corollary~\ref{cor:competitor} is continuous in the vertical direction.
			\end{remark}
			Since this construction creates at most 3 interfaces for $y_3 \in \left[\frac{1}{2},1\right]$ and at most $8$ interfaces for $y_3 \in \left[0,\frac{1}{2}\right)$ (at most 6 in $y_1$ direction and at most 2 in $y_2$ direction), the total surface energy created is bounded by 
			\begin{align*}
				\int_{Q} |\nabla'\widetilde{M}| \lesssim 1.
			\end{align*}			
				
	\item	\emph{(Construction of corrector fields)}.
			Since the local rearrangement of the magnetisation creates ``excess charges'' $\partial_3 (\widetilde{M}-M)$, we have to define a corrector field $H_{\cor}$ such that 
			\begin{align}\label{eq:corrector-field}
			\begin{split}
				\partial_3(\widetilde{M} - M) + \nabla'\cdot H_{\cor} = 0 \quad \text{in} \quad Q,\\
				H_{\cor}\cdot \nu' = 0 \quad \text{on} \quad \Gamma,
			\end{split}
			\end{align}
			in order to obtain a competitor for the non-convex problem satisfying the right boundary conditions (since the corrector field satisfies Neumann boundary conditions). 
				
			\medskip We treat the cases $y_3 \in \left[0,\frac{1}{2}\right)$ and $y_3 \in \left[\frac{1}{2},1\right]$ separately. 

			\begin{enumerate}[label=\textsc{\bf Case \arabic*.},leftmargin=0pt,labelsep=*,itemindent=*,itemsep=10pt,topsep=10pt]
				\item For $y_3 \in \left[\frac{1}{2},1\right]$ we have 
					\begin{align*}
						\partial_3 \widetilde{M} = 2 \eta_1'(y_3) \mathcal{H}^1\lfloor_{\{y_1 = \eta_1(y_3)\}} + 2 \eta_2'(y_3) \mathcal{H}^1\lfloor_{\{y_1 = \eta_2(y_3)\}},
					\end{align*}
					with $\eta_{1,2}$ defined in \eqref{eq:eta-12}. There holds
					\begin{align*}
						\eta_1'(y_3) &= \1_{\left\{y_3 \frac{1+\overline{M}(y)}{2} \leq \frac{1}{2}\right\}} \left( \frac{1+\overline{M}(y)}{2} + \frac{y_3}{2} \partial_3 \overline{M}(y)\right), \\
						\eta_2'(y_3) &= \1_{\left\{(1-y_3) \frac{1+\overline{M}(y)}{2} \geq \frac{\overline{M}(y)}{2}\right\}} \left( -\frac{1+\overline{M}(y)}{2} + \frac{1-y_3}{2} \partial_3 \overline{M}(y)\right) + \1_{\left\{(1-y_3) \frac{1+\overline{M}(y)}{2} < \frac{\overline{M}(y)}{2}\right\}} \frac{\partial_3 \overline{M}(y)}{2} \\
						&= \1_{\left\{y_3 \frac{1+\overline{M}(y)}{2} \leq \frac{1}{2}\right\}} \left( - \frac{1+\overline{M}(y)}{2} - \frac{y_3}{2} \partial_3 \overline{M}(y) \right) + \frac{\partial_3 \overline{M}}{2},
					\end{align*}
					and we construct $H_{\cor}$ as superposition of (slice-wise defined) corrector fields corresponding to the decomposition\footnote{Notice that $\overline{M}$ does not depend on $y'$ for $y_3\in[\frac{1}{2},1]$.} 
					\begin{align*}
						\partial_3(\widetilde{M} - M)
						&= \1_{\left\{y_3 \frac{1+\overline{M}(y)}{2} \leq \frac{1}{2}\right\}} \left( (1+\overline{M})  \mathcal{H}^1\lfloor_{\{y_1 = \eta_1(y_3)\}} - (1+\overline{M})  \mathcal{H}^1\lfloor_{\{y_1 = \eta_2(y_3)\}} \right) \\
						&\quad+ \1_{\left\{y_3 \frac{1+\overline{M}(y)}{2} \leq \frac{1}{2}\right\}} \left( y_3 (\partial_3 \overline{M}) \mathcal{H}^1\lfloor_{\{y_1 = \eta_1(y_3)\}} - y_3 (\partial_3 \overline{M}) \mathcal{H}^1\lfloor_{\{y_1 = \eta_2(y_3)\}} \right) \\
						&\quad+ \left((\partial_3 \overline{M}) \mathcal{H}^1\lfloor_{\{y_1 = \eta_2(y_3)\}} - \partial_3 M \right).
					\end{align*}
 
					\begin{enumerate}[label=\textsc{\bf Corrector 1.\arabic*},leftmargin=0pt,labelsep=*,itemindent=*,itemsep=10pt,topsep=10pt]
						\item For $y_3 \frac{1+\overline{M}(y)}{2} \leq \frac{1}{2}$ we solve 
							\begin{align*}
								-\nabla' \cdot H_{\cor}^{(1)} = (1+\overline{M})  \mathcal{H}^1\lfloor_{\{y_1 = \eta_1(y_3)\}} - (1+\overline{M})  \mathcal{H}^1\lfloor_{\{y_1 = \eta_2(y_3)\}} \quad \text{in } [0,1]^2 \times \{y_3\}
							\end{align*}
							with boundary condition $H_{\cor}^{(1)} \cdot \nu' = 0$ on $\partial[0,1]^2 \times \{y_3\}$. A solution is easily found to be 
							\begin{align*}
								H_{\cor}^{(1)} = -(1+\overline{M}) e_1 \1_{\{y_1\geq \eta_1\}} + (1+\overline{M})e_1 \1_{\{y_1\geq \eta_2\}} = -(1+\overline{M})e_1 \1_{\{\eta_1 \leq y_1\leq \eta_2\}},
							\end{align*}
							where $e_1=(1,0)$ denotes the unit vector in $y_1$ direction. For $y_3 \frac{1+\overline{M}(y)}{2} > \frac{1}{2}$ we set $H_{\cor}^{(1)} = 0$. Note that this field satisfies $\|H_{\cor}^{(1)}\|_{L^{\infty}([0,1]^2\times \{y_3\})} \leq 2$, hence
							\begin{align}\label{eq:corrector-bound-1}
								\|H_{\cor}^{(1)}\|_{L^{2}([0,1]^2 \times [\frac{1}{2},1])} \leq 2.
							\end{align}
						\item For the next correction field we first note that 
							\begin{align*}
								\partial_3 \overline{M}
								= \int_{[0,1]^2} \partial_3 M \,\mathrm{d}y' 
							\end{align*}
							so that as for the first corrector we can set 
							\begin{align*}
								H_{\cor}^{(2)} = -y_3 (\partial_3 \overline{M}) e_1 \1_{\{\eta_1 \leq y_1\leq \eta_2\}}
							\end{align*}
							for $y_3 \frac{1+\overline{M}(y)}{2} \leq \frac{1}{2}$ and $H_{\cor}^{(2)}=0$ otherwise,  which solves 
							\begin{align*}
								-\nabla' \cdot H_{\cor}^{(2)} = y_3 (\partial_3 \overline{M}) \mathcal{H}^1\lfloor_{\{y_1 = \eta_1(y_3)\}} - y_3 (\partial_3 \overline{M}) \mathcal{H}^1\lfloor_{\{y_1 = \eta_2(y_3)\}}
							\end{align*}
							in $[0,1]^2 \times \{y_3\}$ with boundary condition $H_{\cor}^{(1)} \cdot \nu' = 0$ on $\partial[0,1]^2 \times \{y_3\}$ for $y_3 \frac{1+\overline{M}(y)}{2} \leq \frac{1}{2}$. 
							We then have the bound 
							\begin{align}\label{eq:corrector-bound-2}
								\|H_{\cor}^{(2)}\|_{L^{2}\left([0,1]^2\times\left[\frac{1}{2},1\right]\right)}^2 
								&\leq \int_{\frac{1}{2}}^1 \|H_{\cor}^{(2)}\|_{L^{\infty}([0,1]^2\times\{y_3\})}^2 \,\mathrm{d}y_3 
								\leq \int_0^1 \left( \partial_3 \overline{M} \right)^2 \,\mathrm{d}y_3 \nonumber\\
								&\leq \|\partial_3 M\|_{L^2_{y_3} L^{\infty}_{y'}(Q)}^2.
							\end{align}

						\item For the remaining correction we make the ansatz $H_{\cor}^{(3)}= \nabla' v_{\cor}^{(3)}$ where $v_{\cor}^{(3)}$ is a solution of the elliptic PDE 
							\begin{align*}
								\begin{array}{rcll}
								\Delta' v_{\cor}^{(3)} &=& \partial_3 M - (\partial_3 \overline{M}) \mathcal{H}^1\lfloor_{\{y_1 = \eta_2(y_3)\}} & \text{in }
								[0,1]^2 \times \{y_3\}\\
								\nabla'v_{\cor}^{(3)} \cdot \nu' &=& 0  & \text{on } \partial[0,1]^2 \times \{y_3\}
								\end{array}
							\end{align*}
							for  $y_3 \in \left[\frac{1}{2},1\right]$. 
							Note that by Lemma~\ref{lem:elliptic-line} (for $d=2$) this PDE is solvable and the solution satisfies the bound
							\begin{align*}
								\|H_{\cor}^{(3)}\|_{L^2([0,1]^2\times\{y_3\})} \lesssim \|\partial_3 M\|_{L^2([0,1]^2\times\{y_3\})} \lesssim \|\partial_3 M\|_{L^{\infty}([0,1]^2\times\{y_3\})},
							\end{align*}
							hence 
							\begin{align}\label{eq:corrector-bound-3}
								\|H_{\cor}^{(3)}\|_{L^2\left([0,1]^2\times\left[\frac{1}{2},1\right]\right)} \lesssim \|\partial_3 M\|_{L^2_{y_3} L^{\infty}_{y'}(Q)}.
							\end{align}
					\end{enumerate}
				\item For $y_3 \in \left[0,\frac{1}{2}\right)$, where the local averaging is refined, there is a change also in the $y_2$ direction. This is why we do the construction of a comparison field on the four plaquettes $S_{i,j}$, $i,j=0,1$. 
				
				To this end, define 
				 $\eta_{(i,j)}(y_3) = \frac{i}{2} + \frac{1}{2} \frac{1+\overline{M}(y)}{2}$ for $(i,j) \in \{0,1\}$, recalling that 
				 \begin{align*}
					\overline{M}(y) = (1-2y_3) \dashint_{S_{ij}} M(x',y_3)\,\mathrm{d}x' + 2y_3 \int_{[0,1]^2} M(x',y_3)\,\mathrm{d}x',\quad \mbox{for }y' \in S_{ij}.
				\end{align*} 
				Then
					\begin{align*}
						\partial_3 \widetilde{M} 
						&= 2 \eta_{(0,0)}'(y_3) \mathcal{H}^1\lfloor_{\{y_1 = \eta_{(0,0)}(y_3)\}\cap S_{00}}
						+ 2 \eta_{(1,0)}'(y_3) \mathcal{H}^1\lfloor_{\{y_1 = \eta_{(1,0)}(y_3)\}\cap S_{10}}\\
						&\quad + 2 \eta_{(0,1)}'(y_3) \mathcal{H}^1\lfloor_{\{y_1 = \eta_{(0,1)}(y_3)\}\cap S_{01}}
						+ 2 \eta_{(1,1)}'(y_3) \mathcal{H}^1\lfloor_{\{y_1 = \eta_{(1,1)}(y_3)\}\cap S_{11}},
					\end{align*}
					with 
					\begin{align*}
						\eta_{(i,j)}'(y_3) &= 
						\frac{1}{2} \left( \int_{[0,1]^2} M\,\mathrm{d}y' - \dashint_{S_{ij}} M\,\mathrm{d}y'\right) 
						 + \frac{y_3}{2} \left(\int_{[0,1]^2} \partial_3 M\,\mathrm{d}y' - \dashint_{S_{ij}} \partial_3 M\,\mathrm{d}y'\right)+\frac12  \dashint_{S_{ij}} \partial_3 M\,\mathrm{d}y'.
					\end{align*}
					
					\begin{enumerate}[label=\textsc{\bf Corrector 2.\arabic*},leftmargin=0pt,labelsep=*,itemindent=*,topsep=10pt]
						\item We first solve 
							\begin{align}\label{eq:cor-4}
								-\nabla'\cdot H_{\cor}^{(4)} 
								&= \sum_{i,j\in\{0,1\}} \left( \int_{[0,1]^2} M(y',y_3)\,\mathrm{d}y' - \dashint_{S_{ij}} M(y',y_3)\,\mathrm{d}y'\right) \mathcal{H}^1\lfloor_{\{y_1 = \eta_{(i,j)}(y_3)\}\cap S_{ij}}
							\end{align}
							in $[0,1]^2\times\{y_3\}$ with boundary condition $H_{\cor}^{(4)} \cdot \nu'=0$ on $\partial[0,1]^2\times\{y_3\}$. 
							We proceed in two steps: Consider the field 
							\begin{align*}
								H_{\cor}^{(4,\alpha)}(y) 
								&= -\sum_{j=0}^1 \left[ \left( \int_{[0,1]^2} M\,\mathrm{d}y' - \dashint_{S_{0j}} M\,\mathrm{d}y'\right) \1_{\{y_1 \geq \eta_{(0,j)}(y_3)\}\cap S_{0j}} \right.\\
								&\qquad\qquad- \left.\left( \int_{[0,1]^2} M\,\mathrm{d}y' - \dashint_{S_{1j}} M\,\mathrm{d}y'\right) \1_{\{y_1 \leq \eta_{(1,j)}(y_3)\}\cap S_{1j}} \right] e_1,
							\end{align*}
							which has zero normal flux through the boundary. 
							Its horizontal divergence has an extra contribution from surface charges at the interface $\left\{y_1 = \frac{1}{2}\right\}$, given by
							\begin{align*}
								\sum_{j=0}^1 \left(2 \int_{[0,1]^2} M\,\mathrm{d}y' - \dashint_{S_{0,j}}M \,\mathrm{d}y' - \dashint_{S_{1,j}}M \,\mathrm{d}y' \right) \mathcal{H}^1\lfloor_{\left\{y_1=\frac{1}{2}\right\}\cap(S_{0,j}\cup S_{1,j})},
							\end{align*}
							which we correct with the field
							\begin{align*}
								H_{\cor}^{(4,\beta)}(y) = \begin{cases}
																-\left(2 \int_{[0,1]^2} M\,\mathrm{d}y' - \dashint_{S_{0,0}}M \,\mathrm{d}y' - \dashint_{S_{1,0}}M \,\mathrm{d}y' \right) (e_1 + e_2) & \text{in } S_{10} \cap \left\{y_2 \geq y_1-\frac{1}{2}\right\}, \\
																-\left(2 \int_{[0,1]^2} M\,\mathrm{d}y' - \dashint_{S_{0,1}}M \,\mathrm{d}y' - \dashint_{S_{1,1}}M \,\mathrm{d}y' \right) (e_1 - e_2) & \text{in } S_{11} \cap \left\{y_2 \leq \frac{3}{2} - y_1\right\}, \\
																0 & \text{otherwise}.  
															\end{cases}
							\end{align*}
							Note that $H_{\cor}^{(4,\beta)}$ is continuous in $y_2$ across the sector boundary between $S_{10}$ and $S_{11}$ because 
							\begin{align}\label{eq:mass-balance}
								4 \int_{[0,1]^2} M \,\mathrm{d}y' = \sum_{i,j \in \{0,1\}} \dashint_{S_{ij}} M\,\mathrm{d}y',
							\end{align}
							and that $H_{\cor}^{(4,\beta)}\cdot \nu' = 0$ on $\partial[0,1]^2\times\{y_3\}$. 
							
							Hence, $H_{\cor}^{(4)} = H_{\cor}^{(4,\alpha)}+ H_{\cor}^{(4,\beta)}$ is a solution of \eqref{eq:cor-4} with the right boundary conditions. By construction, it satisfies $\|H_{\cor}^{(4)}\|_{L^{\infty}([0,1]^2\times\{y_3\})} \lesssim 1$ for $y_3 \in \left[0,\frac{1}{2}\right)$, hence
							\begin{align}\label{eq:corrector-bound-4}
								\|H_{\cor}^{(4)}\|_{L^{\infty}\left([0,1]^2\times\left[0,\frac{1}{2}\right)\right)} \lesssim 1.
							\end{align}
						
						\item Next, we solve 
							\begin{align*}
								-\nabla'\cdot H_{\cor}^{(5)} = \sum_{i,j \in \{0, 1\}} y_3 \left( \int_{[0,1]^2} \partial_3 M\,\mathrm{d}y' - \dashint_{S_{ij}} \partial_3 M\,\mathrm{d}y'\right) \mathcal{H}^1\lfloor_{\{y_1 = \eta_{(i,j)}(y_3)\}\cap S_{ij}}
							\end{align*}
							in $[0,1]^2$ with boundary condition $H_{\cor}^{(5)} \cdot \nu' = 0$ on $\partial [0,1]^2$ in an analogous way. More precisely, we set $H_{\cor}^{(5)} = H_{\cor}^{(5,\alpha)}+ H_{\cor}^{(5,\beta)}$ with 
							\begin{align*}
								H_{\cor}^{(5,\alpha)}(y) 
								&= -\sum_{j=0,1}  y_3 \left[\left(\int_{[0,1]^2} \partial_3 M\,\mathrm{d}y' - \dashint_{S_{0j}} \partial_3 M\,\mathrm{d}y'\right) \1_{\{y_1 \geq \eta_{(0,j)}(y_3)\}\cap S_{0j}} \right.\\
								&\qquad\qquad- \left.\left( \int_{[0,1]^2} \partial_3 M\,\mathrm{d}y' - \dashint_{S_{1j}} \partial_3 M\,\mathrm{d}y'\right) \1_{\{y_1 \leq \eta_{(1,j)}(y_3)\}\cap S_{1j}} \right] e_1,
							\end{align*}
							and a corresponding correction for the discontinuities at the interface $\{y_1 = \frac{1}{2}\}$ given by 
							\small
							\begin{align*}
								H_{\cor}^{(5,\beta)}(y) = \begin{cases}
																-y_3\left(2 \int_{[0,1]^2} \partial_3 M\,\mathrm{d}y' - \dashint_{S_{0,0}} \partial_3M \,\mathrm{d}y' - \dashint_{S_{1,0}} \partial_3M \,\mathrm{d}y' \right) (e_1 + e_2) & \text{in } S_{10} \cap \left\{y_2 \geq y_1-\frac{1}{2}\right\}, \\
																-y_3\left(2 \int_{[0,1]^2} \partial_3 M\,\mathrm{d}y' - \dashint_{S_{0,1}}\partial_3M \,\mathrm{d}y' - \dashint_{S_{1,1}} \partial_3M \,\mathrm{d}y' \right) (e_1 - e_2) & \text{in } S_{11} \cap \left\{y_2 \leq \frac{3}{2} - y_1\right\}, \\
																0 & \text{otherwise}.  
															\end{cases}
							\end{align*}
							\normalsize
							In particular, we have the bound $\|H_{\cor}^{(5)}\|_{L^{\infty}([0,1]^2\times \{y_3\})} \lesssim \|\partial_3 M\|_{L^{\infty}([0,1]^2 \times \{y_3\})}$ for $y_3 \in \left[0,\frac{1}{2}\right)$, hence
							\begin{align}\label{eq:corrector-bound-5}
								\|H_{\cor}^{(5)}\|_{L^2\left([0,1]^2\times\left[0,\frac{1}{2}\right)\right)} \lesssim \|\partial_3 M\|_{L^2_{y_3}L^{\infty}_{y'}(Q)}.
							\end{align}
						\item Finally, we solve the elliptic PDE
							\begin{align*}
								\begin{array}{rcll}
								\Delta' v_{\cor}^{(6)} &=& \partial_3 M - \sum_{i,j\in\{0,1\}} \left(\frac{1}{2} \dashint_{S_{ij}} \partial_3 M \,\mathrm{d}y'\right) \mathcal{H}^1\lfloor_{\{y_1 = \eta_{(i,j)}(y_3)\}\cap S_{ij}} & \text{in } [0,1]^2 \times \{y_3\}, \\
								\nabla'  v_{\cor}^{(6)} \cdot \nu' & =& 0 &\text{on } \partial [0,1]^2 \times \{y_3\}.
								\end{array} 
							\end{align*}
							Note that by \eqref{eq:mass-balance},
							\begin{align*}
								\int_{[0,1]^2} \partial_3 M \,\mathrm{d}y' - \frac{1}{2} \sum_{i,j\in\{0,1\}} \left(\frac{1}{2} \dashint_{S_{ij}} \partial_3 M \,\mathrm{d}y'\right) 
								= \partial_3 \left( \int_{[0,1]^2} M  \,\mathrm{d}y' - \frac{1}{4} \sum_{i,j\in\{0,1\}} \dashint_{S_{ij}} M \,\mathrm{d}y' \right) = 0.
							\end{align*}
							Hence the equation is solvable and with Lemma~\ref{lem:elliptic-line} (for $d=2$) we obtain a correction field $H_{\cor}^{(6)} = \nabla' v_{\cor}^{(6)}$ satisfying $\|H_{\cor}^{(6)}\|_{L^2([0,1]^2 \times \{y_3\})} \lesssim \|\partial_3 M\|_{L^2([0,1]^2\times\{y_3\})} \lesssim \|\partial_3 M\|_{L^{\infty}([0,1]^2\times\{y_3\})}$,
							which gives the bound 
							\begin{align}\label{eq:corrector-bound-6}
								\|H_{\cor}^{(6)}\|_{L^2\left([0,1]^2\times\left[0,\frac{1}{2}\right)\right)} \lesssim \|\partial_3 M\|_{L^2_{y_3} L^{\infty}_{y'}(Q)}.
							\end{align}
					\end{enumerate}
			Putting together the different parts of the corrector field, i.e.\ 
			\begin{align*}
				H_{\cor} = \begin{cases}
 							H_{\cor}^{(1)} + H_{\cor}^{(2)} + H_{\cor}^{(3)} & \text{for } y_3 \in \left[\frac{1}{2},1\right], \\
 							H_{\cor}^{(4)} + H_{\cor}^{(5)} + H_{\cor}^{(6)} & \text{for } y_3 \in \left[0, \frac{1}{2}\right), 
 						\end{cases}
			\end{align*}
			we obtain a field that solves \eqref{eq:corrector-field}.
			\end{enumerate}
		\item \emph{(Bound on the energy of the corrector field)}.
			Combining the estimates \eqref{eq:corrector-bound-1}, \eqref{eq:corrector-bound-2}, \eqref{eq:corrector-bound-3}, \eqref{eq:corrector-bound-4}, \eqref{eq:corrector-bound-5}, and \eqref{eq:corrector-bound-6}, we get, by the assumption $\|\partial_{3} M\|_{L^2_{y_{3}}L^{\infty}_{y'}(Q)} \lesssim 1$, that
			\begin{align*}
				\int_Q |H_{\cor}|^2\,\mathrm{d}y \lesssim 1 + \|\partial_3 M\|_{L^2_{y_3} L^{\infty}_{y'}(Q)} \lesssim 1.
			\end{align*}			
	\end{enumerate}
\end{proof}

With the building block at hand, we can now proceed with the construction of a competitor for the non-convex problem.
\begin{corollary}[A competitor for the non-convex problem] \label{cor:competitor}
Let $(m_{\rel},h_{\rel}) \in \mathcal{A}_{Q_{L,T}}^{\rel}(g, m^{B,T})$ and assume that $\|\partial_{d+1} m_{\rel}\|_{L^2_{x_{d+1}}L^{\infty}_{x'}(Q_{L,T})}^2\lesssim T^{-1}$. 
Then for any $N\in\NN$ there exist functions $\widetilde{m}\in L^1_{x_{d+1}}([0,T];BV_{x'}(Q_{L}'; \{\pm 1\}))$ and $\widetilde{h} \in L^2(Q_{L,T}; \RR^d)$ such that $\partial_{d+1} \widetilde{m} + \nabla'\cdot\widetilde{h} = 0$ in $Q_{L,T}$, $\widetilde{h}\cdot \nu' = g$ on $\Gamma_{L,T}$, $\widetilde{m}\stackrel{*}{\rightharpoonup} m^{B,T}$ as $x_{d+1} \to 0,T$, and with the property that
\begin{align}
	\dashint_{Q_{L,T}} |\nabla' \widetilde{m}| &\lesssim \left(\frac{L}{N}\right)^{-1} , \label{eq:bound-mtilde-competitor} \\
	\dashint_{Q_{L,T}} |\widetilde{h}-h_{\rel}|^2\,\mathrm{d}x &\lesssim \frac{L^2}{N^2} \frac{1}{T^2}. \label{eq:bound-htilde-competitor}
\end{align}
\end{corollary}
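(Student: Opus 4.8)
The plan is to cover $Q_{L,T}$ by a self-similar branching tree of cuboids and to run the building block of Lemma~\ref{lem:buildingblock}, applied to $(m_{\rel},h_{\rel})$ restricted to each cuboid, on every node of the tree. By the reflection $x_{d+1}\mapsto T-x_{d+1}$ it suffices to carry this out on the bottom half $\{x_{d+1}\leq T/2\}$ and mirror it on the top half; since the two halves will share the same coarse ``one slab per cell'' profile at $x_{d+1}=T/2$, they will glue there without creating an interface. First I would fix the scales. Partition $Q_L'$ into $N^d$ congruent cubes of side $2L/N$; the root generation $n=0$ consists of these cubes crossed with the slab $x_{d+1}\in(\tau_1,\tau_0]$, $\tau_0=T/2$. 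Generation-$n$ footprints are obtained by splitting each generation-$(n-1)$ footprint into $2^d$ congruent sub-cubes, so a generation-$n$ box has horizontal side $s_n=(2L/N)2^{-n}$ and occupies a slab $x_{d+1}\in(\tau_{n+1},\tau_n]$ of height $h_n\coloneqq\tau_n-\tau_{n+1}$, which I would take to be $h_n=cT2^{-3n/2}$ with $c$ fixed so that $\sum_{n\geq0}h_n=T/2$. The decisive feature is $h_n\propto s_n^{3/2}$, i.e.\ the tiling is invariant under the anisotropic rescaling \eqref{eq:scalingofenergy}; this is what makes the two geometric series below converge with the correct powers of $L,N,T$.

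On each generation-$n$ box $B$, rescaling $B$ to the unit cube $Q=[0,1]^{d+1}$ turns $(m_{\rel},h_{\rel})|_B$ into an admissible pair $(M,H)$ on $Q$ with $\partial_{d+1}M+\nabla'\cdot H=0$. Since $L^2_{x_{d+1}}$- and $L^{\infty}_{x'}$-norms only decrease under restriction and $h_n\leq T/2$, a short computation shows that the hypothesis $\|\partial_{d+1}m_{\rel}\|_{L^2_{x_{d+1}}L^{\infty}_{x'}(Q_{L,T})}^2\lesssim T^{-1}$ of the corollary transforms into exactly the hypothesis $\|\partial_{d+1}M\|_{L^2_{y_{d+1}}L^{\infty}_{y'}(Q)}^2\lesssim1$ of Lemma~\ref{lem:buildingblock}. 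Applying the lemma produces $(\widetilde M,\widetilde H)$; undoing the rescaling defines $(\widetilde m,\widetilde h)$ on $B$, and \eqref{eq:bound-mtilde}--\eqref{eq:bound-htilde} become
\begin{align*}
\int_B|\nabla'\widetilde m|\lesssim s_n^{d-1}h_n,\qquad \int_B|\widetilde h-h_{\rel}|^2\,\mathrm{d}x\lesssim \frac{s_n^{d+2}}{h_n}.
\end{align*}

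Next I would verify that the pieces glue into an admissible global configuration. Lemma~\ref{lem:buildingblock} yields $\widetilde H\cdot\nu'=H\cdot\nu'$ on the \emph{whole} lateral boundary of each box, internal faces included, hence $\widetilde h\cdot\nu'=h_{\rel}\cdot\nu'$ there; since $h_{\rel}$ is a single field these normal traces are consistent across adjacent boxes and equal $g$ on $\Gamma_{L,T}$, so $\partial_{d+1}\widetilde m+\nabla'\cdot\widetilde h=0$ holds in all of $Q_{L,T}$ with $\widetilde h\cdot\nu'=g$ on $\Gamma_{L,T}$. For the magnetisation one uses that the building block preserves slice averages over each sub-plaquette: the bottom face of a generation-$n$ box carries, on each of its $2^d$ sub-cubes (these are exactly the generation-$(n+1)$ footprints), a single contiguous $\{+1\}$-slab whose volume fraction is fixed by the slice average of $m_{\rel}$ over that sub-cube, which is precisely the top-face profile that the generation-$(n+1)$ building block prescribes for it; together with the vertical continuity encoded in \eqref{eq:eta-12} (cf.\ Remark~\ref{rem:continuity}) this produces $\widetilde m\in L^1_{x_{d+1}}([0,T];BV_{x'}(Q_L';\{\pm1\}))$ with no interface charges across any slab boundary, and the same matching at $x_{d+1}=T/2$ glues the two halves. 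Finally, as $x_{d+1}\to0$ we pass through generations $n\to\infty$, so $s_n\to0$, while the slice average of $\widetilde m$ over any fixed plaquette agrees with that of $m_{\rel}$; since $m_{\rel}(\cdot,x_{d+1})\stackrel{*}{\rightharpoonup}m^{B}$ we obtain $\widetilde m\stackrel{*}{\rightharpoonup}m^{B}$, and symmetrically $\widetilde m\stackrel{*}{\rightharpoonup}m^{T}$ as $x_{d+1}\to T$.

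It remains to sum. There are $N^d2^{nd}$ generation-$n$ boxes with $s_n=(2L/N)2^{-n}$ and $h_n=cT2^{-3n/2}$, so
\begin{align*}
\int_{Q_{L,T}}|\nabla'\widetilde m| &\lesssim \sum_{n\geq0}N^d2^{nd}\,s_n^{d-1}h_n \lesssim L^{d-1}T\,N\sum_{n\geq0}2^{-n/2}\lesssim NL^{d-1}T,\\
\int_{Q_{L,T}}|\widetilde h-h_{\rel}|^2\,\mathrm{d}x &\lesssim \sum_{n\geq0}N^d2^{nd}\,\frac{s_n^{d+2}}{h_n} \lesssim \frac{L^{d+2}}{N^2T}\sum_{n\geq0}2^{-n/2}\lesssim \frac{L^{d+2}}{N^2T},
\end{align*}
and dividing by $|Q_{L,T}|\sim L^dT$ gives \eqref{eq:bound-mtilde-competitor} and \eqref{eq:bound-htilde-competitor}. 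I expect the main obstacle to be precisely this calibration: one must use the aspect ratios $h_n\sim s_n^{3/2}$ forced by \eqref{eq:scalingofenergy} so that the surface-energy series and the field-discrepancy series \emph{both} converge to the right order (too fast a vertical refinement blows up the field correction, too slow a one blows up the surface energy), while keeping the rescaled hypothesis of Lemma~\ref{lem:buildingblock} valid at every node; the bookkeeping of how the building block's two estimates transform under anisotropic rescaling, and the vertical-continuity matching between consecutive generations, are the remaining points that need care.
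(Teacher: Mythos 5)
Your proposal is correct and follows essentially the same strategy as the paper's proof: the same anisotropic dyadic decomposition with $2^d$-fold horizontal refinement and vertical slab heights scaling like (horizontal side)$^{3/2}$, followed by rescaling to the unit cube, invoking Lemma~\ref{lem:buildingblock} on each block, and summing two geometric series. The only cosmetic difference is that you handle the top half $\{x_{d+1}\ge T/2\}$ by reflection while the paper decomposes both halves directly via $I_k=I_k^0\cup I_k^T$; you also spell out the admissibility/weak-boundary matching in a bit more detail, but the content is the same.
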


\begin{remark}\label{rem:competitornonconvex}
	At this stage, including the additional parameter $N\in\NN$ seems artificial. However, it will play an important role in balancing the two energy terms. Indeed, the right-hand sides of \eqref{eq:bound-mtilde-competitor} and \eqref{eq:bound-htilde-competitor} are of the same order if and only if 
	\begin{align*}
		\frac{N}{L} \sim \frac{L^2}{N^2} \frac{1}{T^2}, \quad \text{that is,} \quad N \sim \frac{L}{T^{\frac{2}{3}}},
	\end{align*}
	in which case 
	\begin{align*}
		\dashint_{Q_{L,T}} |\nabla' \widetilde{m}| + \dashint_{Q_{L,T}} |\widetilde{h}-h_{\rel}|^2\,\mathrm{d}x \lesssim T^{-\frac{2}{3}}.
	\end{align*}
	Of course, since $N$ has to be an integer, this is only possible if $L\gtrsim T^{\frac{2}{3}}$.
\end{remark}

\begin{proof} We proceed in several steps.

\begin{enumerate}[label=\textsc{\bf Step \arabic*},leftmargin=0pt,labelsep=*,itemindent=*,itemsep=10pt,topsep=10pt]
	\item 	\emph{Decomposition of $Q_{L,T}$.}
			We divide the cuboid $Q_{L,T}$ into smaller cuboids, where in order to treat the top and bottom boundary condition $m \weaklystar m^{B,T}$ we do this in such a way that the $x_{d+1}$-scale gets finer and finer towards the boundaries $x_{d+1}=0,T$.
			To this end, we dyadically decompose $Q_{L,T}$, taking into account the natural anisotropic scaling between horizontal and vertical directions. More precisely, fix an integer $N\in\NN$, let $k \in \NN$ and split
			\begin{align*}
				Q_{L,T} = \bigcup_{k\in\NN} \bigcup_{i_1, \dots, i_d \in \{-2^k N, \dots, 2^k N-1\}} Q_{i_1, \dots, i_d}^k,
			\end{align*}
			where $Q_{i_1, \dots, i_d}^k \coloneq P_{i_1, \dots, i_d}^k \times I_k$ with dyadic horizontal plaquettes 
			\begin{align*}
				\textstyle P_{i_1, \dots, i_d}^k = \bigtimes_{n=1}^d \left[i_n 2^{-k} \frac{L}{N}, (i_n+1) 2^{-k}\frac{L}{N}\right],
			\end{align*}
			and dyadic vertical intervals $I_k = I_k^0 \cup I_k^T$, 
			\begin{align*}\textstyle
				I_k^0 = \left[(2^{\frac{3}{2}})^{-k}\frac{T}{2},(2^{\frac{3}{2}})^{-(k-1)}\frac{T}{2}\right], \quad
				I_k^T = \left[T-(2^{\frac{3}{2}})^{-(k-1)}\frac{T}{2}, T-(2^{\frac{3}{2}})^{-k}\frac{T}{2}\right].
			\end{align*}
			Note that $\bigcup_{k\in\NN} I_k^0 = \left[0, \frac{T}{2}\right]$ and $\bigcup_{k\in\NN} I_k^T =  \left[\frac{T}{2},T\right]$, so we are refining towards the top and bottom boundaries. 
	\item 	\emph{Reduction to building blocks.}
			According to the decomposition of $Q_{L,T}$ we decompose 
			\begin{align*}
				m = \sum_{k} \sum_{i_1, \dots, i_d} m_{i_1 \dots i_d}^k, \quad h_{\rel} = \sum_{k} \sum_{i_1, \dots, i_d} h_{i_1 \dots i_d}^k,
			\end{align*}
			where $m_{i_1 \dots i_d}^k = m \1_{Q_{i_1 \dots i_d}^k}$, $h_{i_1 \dots i_d}^k = h \1_{Q_{i_1 \dots i_d}^k}$.
			We then rescale each of the cubes $Q_{i_1, \dots, i_d}^k$ with the matrix
			\begin{align*}
			S_k = \mathrm{diag}\left(\sigma', \dots, \sigma', \sigma_{d+1}\right), \quad \text{with} \quad  \sigma'=2^{-k}\frac{L}{N}, \, \sigma_{d+1}=(2^{\frac{3}{2}})^{-k} \frac{T}{2} (2^{\frac{3}{2}}-1),
			\end{align*} 
			and shift by the vectors $b=b_{i_1 \dots i_d} = (i_1, \dots, i_d, (2^{\frac{3}{2}}-1)^{-1})\in\RR^{d+1}$, to obtain 
			\begin{align*}
				Q^k_{i_1 \dots i_d} = S_k  \left( [0,1]^{d+1} + b \right).
			\end{align*}
			Based on this transformation, we define the new variables $y = (S_k )^{-1}x - b \in [0,1]^{d+1}$ and the transformed functions 
			\begin{align*}
				M_{i_1 \dots i_d}^k(y) &= m_{i_1 \dots i_d}^k(S_k (y + b)), \quad \text{and} \quad
				H_{i_1 \dots i_d}^k(y) = \frac{\sigma_{d+1}}{\sigma'} h_{i_1 \dots i_d}^k(S_k (y + b)).
			\end{align*}
			Note that $M_{i_1 \dots i_d}^k \in L^{\infty}(Q;[-1,1])$, $H_{i_1 \dots i_d}^k \in L^2(Q; \RR^d)$, and $\partial_{d+1} M_{i_1 \dots i_d}^k + \nabla'\cdot H_{i_1 \dots i_d}^k = 0$. 
			Furthermore,
			\begin{align}\label{eq:d3mtilde-rescaled}
				\|\partial_{d+1} M_{i_1 \dots i_d}^k\|_{L^2_{y_{d+1}}L^{\infty}_{y'}(Q)}^2 = \sigma_{d+1} \|\partial_{d+1} m_{i_1 \dots i_d}^k\|_{L^2_{x_{d+1}}L^{\infty}_{x'}\left(Q_{i_1 \dots i_d}^k\right)}^2.
			\end{align}
	\item 	\emph{(Putting together the building blocks).}
			Appealing to Lemma~\ref{lem:buildingblock}, there exist functions $\widetilde{M}_{i_1 \dots i_d}^k\in BV(Q; \pm 1)$ and $\widetilde{H}_{i_1 \dots i_d}^k \in L^2(Q; \RR^d)$ such that 
			\begin{align*}
				\begin{array}{rcll}
				\partial_{d+1} \widetilde{M}_{i_1 \dots i_d}^k + \nabla'\cdot\widetilde{H}_{i_1 \dots i_d}^k &=& 0 & \text{in } Q \\
				\widetilde{H}_{i_1 \dots i_d}^k\cdot \nu' &=& H_{i_1 \dots i_d}^k\cdot\nu' & \text{on } \Gamma,
				\end{array}
			\end{align*}
			such that
			\begin{align*}
				\int_Q |\nabla' \widetilde{M}_{i_1 \dots i_d}^k| &\lesssim 1, \\
				\int_Q |\widetilde{H}_{i_1 \dots i_d}^k-H_{i_1 \dots i_d}^k|^2\,\mathrm{d}x &\lesssim 1 + \|\partial_{d+1} M_{i_1 \dots i_d}^k\|_{L^2_{x_{d+1}}L^{\infty}_{x'}(Q)}^2.
			\end{align*}
			Define 
			\begin{align*}
				\widetilde{m} &= \sum_{k} \sum_{i_1 \dots i_d} \widetilde{m}_{i_1 \dots i_d}^k \1_{Q_{i_1 \dots i_d}^k}, \quad \text{with} \quad \widetilde{m}_{i_1 \dots i_d}^k(x) = \widetilde{M}_{i_1 \dots i_d}^k((S_k )^{-1}x - b), \\
				\widetilde{h} &= \sum_{k} \sum_{i_1 \dots i_d} \widetilde{h}_{i_1 \dots i_d}^k \1_{Q_{i_1 \dots i_d}^k}, \quad \text{with} \quad \widetilde{h}_{i_1 \dots i_d}^k(x) = \frac{\sigma'}{\sigma_{d+1}} \widetilde{H}_{i_1 \dots i_d}^k((S_k )^{-1}x - b).
			\end{align*}
			Then $\widetilde{m} \in BV(Q_{L,T}; \pm 1)$, $\widetilde{h} \in L^2(Q_{L,T}; \RR^d)$, $\partial_{d+1}\widetilde{m} + \nabla'\cdot \widetilde{h} = 0$, and $\partial_{d+1} \widetilde{m} \in L^2_{x_{d+1}}L^{\infty}_{x'}$ since the constructed $\widetilde{m}$ is continuous across the top and bottom boundaries of $Q_{i_1 \dots i_d}^k$ by construction, see Remark~\ref{rem:continuity}.

	\item	\emph{Estimate on the energies.}
			For the energies, we have 
			\begin{align*}
				\int_{Q_{L,T}} |h_{\rel} - \widetilde{h}|^2\,\mathrm{d}x 
				&= \sum_k \sum_{i_1, \dots, i_d} \int_{Q_{i_1 \dots i_d}^k} |h_{i_1 \dots i_d}^k(x) - \widetilde{h}_{i_1 \dots i_d}^k(x)|^2\,\mathrm{d}x \\
				&= \sum_k \sum_{i_1, \dots, i_d} \sigma'^d \sigma_{d+1} \int_{Q} |h_{i_1 \dots i_d}^k(S_k (y + b)) - \widetilde{h}_{i_1 \dots i_d}^k(S_k (y + b))|^2\,\mathrm{d}y \\
				&= \sum_k \sum_{i_1, \dots, i_d} \frac{\sigma'^{d+2}}{\sigma_{d+1}} \int_Q |H_{i_1 \dots i_d}^k(y) - \widetilde{H}_{i_1 \dots i_d}^k(y)|^2\,\mathrm{d}y.
			\end{align*}
			It follows with $\|\partial_{d+1} m\|_{L^2_{x_{d+1}}L^{\infty}_{x'}(Q_{L,T})}^2 \lesssim \frac{1}{T}$ and \eqref{eq:d3mtilde-rescaled}, that $\|\partial_{d+1} M_{i_1 \dots i_d}^k\|_{L^2_{y_{d+1}}L^{\infty}_{y'}(Q)}^2 \lesssim \frac{\sigma_{d+1}}{T} \lesssim 1$ for any $k \in \NN$, hence by \eqref{eq:bound-htilde}, 
			\begin{align*}
				\int_{Q_{L,T}} |h_{\rel} - \widetilde{h}|^2\,\mathrm{d}x 
				&\lesssim \sum_k \sum_{i_1, \dots, i_d} \frac{\sigma'^{d+2}}{\sigma_{d+1}} 
				\lesssim \sum_k (2^k N)^d \left(2^{-k} \frac{L}{N}\right)^{d+2} \frac{(2^k)^{\frac{3}{2}}}{T} \\
				&\lesssim \left(\frac{L}{N}\right)^2 \frac{L^d}{T} \sum_k \left(\frac{1}{\sqrt{2}}\right)^k 
				\lesssim \left(\frac{L}{N}\right)^2 \frac{L^d}{T}.
			\end{align*}
			For the surface energy created in the construction of $\widetilde{m}$ we obtain
			\begin{align*}
				\int_{Q_{L,T}} |\nabla'\widetilde{m}| 
				\leq \sum_{k} \sum_{i_1 \dots i_d} \sigma'^{d-1} \sigma_{d+1} \left(\int_{Q} |\nabla' \widetilde{M}_{i_1 \dots i_d}^k| + 8 \right),
			\end{align*}
			where the additional constant term accounts for the jumps between neighbouring building blocks. 
					
			Hence, by \eqref{eq:bound-mtilde}, it follows that 
			\begin{align*}
				\int_{Q_{L,T}} |\nabla'\widetilde{m}| 
				&\lesssim \sum_{k} \sum_{i_1 \dots i_d} \sigma'^{d-1} \sigma_{d+1} 
				\lesssim \sum_k (2^k N)^d \left(2^{-k} \frac{L}{N}\right)^{d-1} (2^{-k})^{\frac{3}{2}} T \\
				&\lesssim \left(\frac{L}{N}\right)^{-1} L^{d} T \sum_k 2^{-\frac{k}{2}} 
				\lesssim \left(\frac{L}{N}\right)^{-1} L^{d} T. \qedhere
			\end{align*}
\end{enumerate}
\end{proof}
\section{Global energy bounds: Proof of Theorem~\ref{thm:globalbound}}\label{sec:proofglobalscalinglaw}
\subsection{Lower bound on the global energy}\label{sec:lower-bound-global}
The lower bound on the global scaling of the energy of a periodic minimising configuration has been proved in \cite{CO16} based on the interpolation inequality 
		\begin{align}\label{eq:CO-interpolation}
			\|u\|_{L^{\frac{4}{3}}} \lesssim \|\nabla'u\|_{L^1}^{\frac{1}{2}} \||\nabla'|^{-1} u\|_{L^2}^{\frac{1}{2}}
		\end{align}
		for any periodic function $u: [0,\Lambda]^d \to \RR$ with $\int_{[0,\Lambda]^d} u \,\mathrm{d}x' = 0$, with an implicit constant that only depends on $d$. A similar proof based on a weak-$L^{\frac{4}{3}}$ interpolation inequality can also be found in \cite{Vie09}. 

\begin{proof}[Proof of Theorem \ref{thm:globalbound} (Lower bound)]
	Note that if $(m,h) \in \mathcal{A}^{\mathrm{per}/0}_{Q_{L,T}}$, in particular $m^{B,T} \equiv 0$, then $\int_{Q_L'} m(x', x_{d+1})\,\mathrm{d}x' = 0$ for any $x_{d+1} \in [0,T]$. Indeed, by the first equation in \eqref{eq:admissible} and integration by parts (using periodic boundary conditions or $h \cdot \nu' = 0$ on $\partial Q_{L}'$), it follows that 
		\begin{align*}
			\int_{Q_L'} m(x', x_{d+1})\,\mathrm{d}x' 
			&= \int_{Q_L'} \int_0^{x_{d+1}} \partial_{x_{d+1}} m(x', y_{d+1})\,\mathrm{d}y_{d+1}\,\mathrm{d}x'
			= -\int_0^{x_{d+1}} \int_{Q_L'} \nabla'\cdot h\,\mathrm{d}x' \,\mathrm{d}y_{d+1} \\
			&= -\int_0^{x_{d+1}} \int_{\partial Q_L'} h\cdot\nu' \,\mathrm{d}\mathcal{H}^{d-1}\,\mathrm{d}y_{d+1}
			=0.
		\end{align*}
		By admissibility, we can bound 
		\begin{align*}
			E_{Q_{L,T}}(m,h) \geq E_{Q_{L,T}}(m^*, h^*)
		\end{align*}
		for any minimiser $(m^*, h^*)$ in $\mathcal{A}^{\mathrm{per}}_{Q_{L,T}}$ and $\mathcal{A}^{0}_{Q_{L,T}}$, respectively. 
		By \eqref{eq:representation-strayfield} and Poincaré's inequality (in $x_{d+1}$; recall that $m^{B,T}\equiv 0$) we can further bound 
		\begin{align*}
			\int_{Q_{L,T}} |h^*|^2 \,\mathrm{d}x 
			&= \int_{Q_L'}\int_0^T |\partial_{x_{d+1}}|\nabla'|^{-1}m^*|^2 \,\mathrm{d}x_{d+1}\,\mathrm{d}x'
			\geq \frac{1}{T^2} \int_{Q_L'}\int_0^T ||\nabla'|^{-1}m^*|^2 \,\mathrm{d}x_{d+1}\,\mathrm{d}x'.
		\end{align*}

		In the periodic case, we can now apply Young's inequality and \eqref{eq:CO-interpolation} to estimate
		\begin{align}
			E_{Q_{L,T}}(m^*,h^*) &= \int_{Q_{L,T}} |\nabla' m^*|\,\mathrm{d}x + \frac{1}{2}\int_{Q_{L,T}} |h^*|^2 \,\mathrm{d}x \nonumber \\ 
			&\geq \int_0^T \left(\int_{Q_L'} |\nabla' m^*|\,\mathrm{d}x' + \frac{1}{2T^2} \int_{Q_L'} ||\nabla'|^{-1}m^*|^2\,\mathrm{d}x' \right)\,\mathrm{d}x_{d+1} \nonumber \\
			&\gtrsim \int_0^T \left(\int_{Q_L'} |\nabla' m^*|\,\mathrm{d}x' \right)^{\frac{2}{3}} \left( \frac{1}{T^2} \int_{Q_L'} ||\nabla'|^{-1}m^*|^2\,\mathrm{d}x' \right)^{\frac{1}{3}} \,\mathrm{d}x_{d+1} \nonumber \\
			&\gtrsim T^{-\frac{2}{3}} \int_0^T \int_{Q_L'} |m^*|^{\frac{4}{3}}\,\mathrm{d}x'\,\mathrm{d}x_{d+1} 
			\gtrsim L^d T^{\frac{1}{3}}, \label{eq:lower-bound-global}
		\end{align}
		since $|m^*| = 1$. 
		
		\medskip
		For the case of zero-flux boundary conditions on $\Gamma_{L,T}$ we extend $(m,h)$ to $[-L, 3L]^d \times [0,T]$ by a sequence of $d$ even reflections of $m$ and corresponding reflections of $h$ across each face of $\Gamma_{L,T}$ to obtain an admissible configuration in $\mathcal{A}^{\mathrm{per}}_{Q_{[-L,3L]^d\times [0,T]}}$. More precisely, given $(m,h)$ on $Q_{L,T}$ we first define 
		\small
		\begin{align*}
			m^{(1)}(x) &\coloneqq \begin{cases}
				m(x), & x\in [-L,L] \times [-L,L]^{d-1}\times[0,T] \\
				m(2L-x_1, x_2, \dots, x_{d+1}), & x \in [L,3L]\times [-L,L]^{d-1} \times [0,T],
			\end{cases}\\ 
			h^{(1)}(x) &\coloneqq \begin{cases}
				h(x), & x\in [-L,L] \times [-L,L]^{d-1}\times[0,T] \\
				(-h_1, h_2, \dots, h_d)(2L-x_1, x_2, \dots, x_{d+1}), & x \in [L,3L]\times [-L,L]^{d-1} \times [0,T],
			\end{cases}
		\end{align*}
		\normalsize
		so that $\partial_{d+1} m^{(1)} + \nabla' \cdot h^{(1)} = 0$ distributionally in $[-L, 3L] \times [-L,L]^{d-1}\times[0,T]$ with lateral boundary conditions $h^{(1)}\cdot \nu' = 0$ on $\partial ([-L, 3L] \times [-L,L]^{d-1}) \times [0,T]$. Note that thanks to the zero-flux boundary conditions satisfied by $h$, the field $h^{(1)}$ has no jump across the surface $\{L\}\times[-L,L]^{d-1}\times [0,T]$; similarly, by construction, the magnetisation $m^{(1)}$ has no additional jumps on the reflection surface $\{L\}\times[-L,L]^{d-1}\times [0,T]$. 
		We then set 
		\footnotesize
		\begin{align*}
			m^{(2)}(x) &\coloneqq \begin{cases}
				m^{(1)}(x), & x\in [-L, 3L] \times [-L,L] \times [-L,L]^{d-2}\times[0,T] \\
				m^{(1)}(x_1, 2L-x_2,x_3, \dots, x_{d+1}), & x \in [-L,3L]\times [L,3L] \times[-L,L]^{d-2} \times [0,T],
			\end{cases}\\ 
			h^{(2)}(x) &\coloneqq \begin{cases}
				h^{(1)}(x), & x\in [-L, 3L] \times [-L,L] \times [-L,L]^{d-2}\times[0,T] \\
				\left(h_1^{(1)}, -h_2^{(1)}, h_3^{(1)},\dots, h_d^{(1)}\right)(x_1, 2L-x_2, x_3,\dots, x_{d+1}), & x \in [-L,3L]\times [L,3L] \times[-L,L]^{d-2} \times [0,T],
			\end{cases}
		\end{align*}
		\normalsize
		which satisfies $\partial_{d+1} m^{(2)} + \nabla' \cdot h^{(2)} = 0$ distributionally in $[-L, 3L]^2 \times [-L,L]^{d-2}\times[0,T]$ with lateral boundary conditions $h^{(2)}\cdot \nu' = 0$ on $\partial ([-L, 3L]^2 \times [-L,L]^{d-2}) \times [0,T]$, and proceed inductively to define 
		\footnotesize
		\begin{align*}
			m^{(d)}(x) &\coloneqq \begin{cases}
				m^{(d-1)}(x), & x\in [-L, 3L]^{d-1} \times [-L,L]\times[0,T] \\
				m^{(d-1)}(x_1,\dots, x_{d-1}, 2L-x_d, x_{d+1}), & x \in  [-L, 3L]^{d-1} \times [L,3L]\times[0,T],
			\end{cases}\\ 
			h^{(d)}(x) &\coloneqq \begin{cases}
				h^{(d-1)}(x), & x\in [-L, 3L]^{d-1} \times [-L,L]\times[0,T] \\
				\left(h_1^{(d-1)}, \dots, h_{d-1}^{(d-1)}, -h_d^{(d-1)}\right)(x_1,\dots, x_{d-1}, 2L-x_d, x_{d+1}), & x \in  [-L, 3L]^{d-1} \times [L,3L]\times[0,T].
			\end{cases}
		\end{align*}
		\normalsize
		By construction, $(m^{(d)}, h^{(d)}) \in \mathcal{A}^{\mathrm{per}}_{[-L,3L]^d \times [0,T]}$, and since we have not added any extra inter-facial energy, we have 
		\begin{align*}
			E_{[-L,3L]^d \times [0,T]}(m^{(d)}, h^{(d)}) = 2^d E_{Q_{L,T}}(m,h).
		\end{align*}
		Applying the lower bound \eqref{eq:lower-bound-global}, which is valid in the periodic case, to $E_{[-L,3L]^d \times [0,T]}(m^{(d)}, h^{(d)})$, it follows that there exists a universal constant $c_s<\infty$ such that 
		\begin{align*}
			E_{Q_{L,T}}(m,h) 
			= 2^{-d} E_{[-L,3L]^d \times [0,T]}(m^{(d)}, h^{(d)})
			\geq c_s L^d T^{\frac{1}{3}}.
		\end{align*}
\end{proof}

\subsection{Upper bound on the global energy} The upper bound is mainly a consequence of the construction in Corollary~\ref{cor:competitor}.

\begin{proof}[Proof of Theorem \ref{thm:globalbound} (Upper bound)]
Since $m^{B,T}=0$, in the case of zero-flux boundary conditions we have that $(m_{\rel},h_{\rel})\equiv 0$ is a solution of the relaxed problem. Therefore, Corollary~\ref{cor:competitor} provides the existence of a competitor for the non-convex problem, that is a pair $(\widetilde m,\widetilde h)\in \mathcal{A}^{0}_{Q_{L,T}}$ (recall \eqref{eq:admissible-zero}). In particular, $\widetilde h$ satisfies zero-flux boundary conditions on $\Gamma_{L,T}$ and  $\widetilde{m}\stackrel{*}{\rightharpoonup}0$ as $x_{d+1} \to 0,T$. Moreover, $(\widetilde{m},\widetilde{h})$ satisfies \eqref{eq:bound-mtilde-competitor} and \eqref{eq:bound-htilde-competitor} (with $h_{\rel}\equiv 0$), which by Remark \ref{rem:competitornonconvex}\footnote{Recall that this requires the constant $C_{LT}$ in the relation $L \geq C_{LT} T^{\frac{2}{3}}$ to be large enough.} lead us to
$$
\min_{(m,h)\in \mathcal{A}^{0}_{Q_{L,T}}} E_{Q_{L,T}}(m,h)\leq |Q_{L,T}|\left(\dashint_{Q_{L,T}} |\nabla' \widetilde{m}| + \dashint_{Q_{L,T}} |\widetilde{h}-h_{\rel}|^2\,\mathrm{d}x\right) \lesssim  |Q_{L,T}|T^{-\frac{2}{3}}\lesssim L^dT^\frac13.
$$ 

Let us remark that in this case the competitor is built out of building blocks looking exactly as in Figure~\ref{fig:example1-construction} (since $M=\overline M=0$ everywhere). 
Also, at this point one sees the importance of the refinement in our construction, since it is the key point that allows for $\widetilde m$ to satisfy the weak boundary condition at $x_{d+1}=0,T$.

\medskip 
In the case of periodic boundary conditions, we do exactly the same, except that this time the constructed pair $(\widetilde m,\widetilde h)$ does not satisfy the correct boundary conditions. Arguing as in the proof of the lower bound, we can extend $(\widetilde m,\widetilde h)$ to  $[-L,3L]^d\times [0,T]$ by a sequence of $d$ even reflections of $\widetilde m$ and corresponding reflections of $\widetilde h$ across each face of $\Gamma_{L,T}$ to obtain a periodic configuration $(\widetilde m^{(d)},\widetilde h^{(d)})\in \mathcal{A}^{\mathrm{per}}_{Q_{[-L,3L]^d\times [0,T]}}$ that satisfies 
$$
E_{[-L,3L]^d \times [0,T]}(\widetilde m^{(d)}, \widetilde h^{(d)})=2^d E_{Q_{L,T}}(\widetilde m,\widetilde h) \lesssim L^dT^\frac13.
$$
Then, by suitably contracting this pair in the horizontal directions (keeping the height fixed), we obtain a competitor for the non-convex energy \eqref{eq:energy-nonconvex} in $\mathcal{A}^{\mathrm{per}}_{Q_{L,T}}$ (recall \eqref{eq:admissible-periodic}), with the same energy (up to a larger dimensional factor). More precisely, setting
\begin{align*}
\widetilde{\widetilde{m}}(x) \coloneqq \widetilde{m}^{(d)}\left(2 x' +(\ell, \dots, \ell) , x_{d+1} \right), \quad 
\widetilde{\widetilde{h}}(x) \coloneqq \frac12 \widetilde{h}^{(d)}\left( 2 x' +(\ell, \dots, \ell) , x_{d+1} \right), \quad  \text{for } x\in  Q_{L,T},
\end{align*}
we get the desired periodic competitor, which satisfies 
$$
E_{Q_{L,T}}(\widetilde{\widetilde{m}},\widetilde{\widetilde{h}})\lesssim E_{[-L,3L]^d \times [0,T]}(\widetilde m^{(d)}, \widetilde h^{(d)})\lesssim L^dT^\frac13,
$$
hence
$$
\min_{(m,h)\in\mathcal{A}^{\mathrm{per}}_{Q_{L,T}}} E_{Q_{L,T}}(m,h)\lesssim L^dT^\frac13.
$$ 
This concludes the proof.
\end{proof}
\section{Local energy bounds: Proof of Theorem~\ref{thm:localbounds}}\label{sec:prooflocalscalinglaws}
We are now ready to present the proofs of the local energy bounds. Without loss of generality, we will prove these estimates for small cuboids at the bottom boundary of the sample. The proofs can easily be adjusted for cuboids at the top boundary.

\subsection{Lower bound on the local energy}
The lower bound on the local energy in cuboids at the bottom sample boundary is obtained by a suitable extension of the localised minimiser $(m_{\ell,t}, h_{\ell,t}) \coloneqq (m|_{Q_{\ell,t}(a)}, h|_{Q_{\ell,t}(a)})$ and rescaling to obtain an admissible pair $(\tilde{m}, \tilde{h})$ in a large cuboid with periodic lateral boundary conditions and zero top/bottom magnetisation, for which the global lower bound can be applied. 

In contrast to the periodic extension in the global case (see Section~\ref{sec:lower-bound-global}), we now have to take into account that the normal component of $h$ along $\Gamma_{\ell,t}(a)$ is non-vanishing. Since jumps of $h$ at the boundary have to be compensated by an according change in $m$ in order to preserve admissibility, the extension will be performed differently. 

\begin{proof}[Proof of Theorem~\ref{thm:localbounds} (Lower bound)]
Let us assume w.l.o.g.\ that $a=0$. We proceed in several steps.

\begin{enumerate}[label=\textsc{\bf Step \arabic*},leftmargin=0pt,labelsep=*,itemindent=*,itemsep=10pt,topsep=10pt]
	\item \emph{(Extension to 	$Q_{\ell, 2t}$)}. We first extend $m_{\ell,t}$ to a magnetisation in $Q_{\ell, 2t}$ by an even reflection across the top surface $[-\ell, \ell]^d \times\{t\}$,
	\begin{align*}
		\tilde{m}^{(0)}(x) \coloneqq	 \begin{cases}
			m_{\ell,t}(x), & x \in [-\ell, \ell]^d \times [0,t] \\
			m_{\ell,t}(x', 2t-x_{d+1}), & x\in [-\ell, \ell]^d \times [t,2t].
		\end{cases}
	\end{align*}
	Note that the extension $\tilde{m}^{(0)}$ is continuous along $[-\ell, \ell]^d \times\{t\}$. Since
	\begin{align*}
		\partial_{d+1}\tilde{m}^{(0)}(x) = \begin{cases}
			\partial_{d+1} m_{\ell,t}(x), & x \in [-\ell, \ell]^d \times [0,t] \\
			- \partial_{d+1} m_{\ell,t}(x', 2t-x_{d+1}), & x\in [-\ell, \ell]^d \times (t,2t],
		\end{cases}
	\end{align*}
	we define the extension of $h_{\ell,t}$ to $Q_{\ell, 2t}$ via odd reflection, 
	\begin{align*}
		\tilde{h}^{(0)}(x) \coloneqq	 \begin{cases}
			h_{\ell,t}(x), & x \in [-\ell, \ell]^d \times [0,t] \\
			-h_{\ell,t}(x', 2t-x_{d+1}), & x\in [-\ell, \ell]^d \times (t,2t],
		\end{cases}		
	\end{align*}
	so that 
	\begin{align*}
		\nabla'\cdot \tilde{h}^{(0)}(x) = \begin{cases}
			\nabla' \cdot h_{\ell,t}(x), & x \in [-\ell, \ell]^d \times [0,t] \\
			-\nabla'\cdot h_{\ell,t}(x', 2t-x_{d+1}), & x\in [-\ell, \ell]^d \times (t,2t].
		\end{cases}	
	\end{align*}
	In particular, $\partial_{d+1} \tilde{m}^{(0)} + \nabla'\cdot \tilde{h}^{(0)} = 0$ in $Q_{\ell, 2t}$, with top/bottom boundary conditions $\tilde{m}^{(0)}(\cdot, 0) = \tilde{m}^{(0)}(\cdot, 2t) = 0$ and lateral boundary conditions $\tilde{h}^{(0)}\cdot\nu' = h_{\ell,t}\cdot \nu'$ on $\Gamma_{\ell, t}$ and $\tilde{h}^{(0)}\cdot\nu' = -h_{\ell,t}\cdot \nu'$ on $\partial[-\ell, \ell]^d \times (t, 2t]$.
	
	\item\emph{(Extension to $[-\ell, 3\ell]^d \times [0,2t]$)}. Now that the magnetisation has been extended in such a way that the top boundary condition is zero, we extend $(\tilde{m}^{(0)}, \tilde{h}^{(0)})$ to a periodic (w.r.t.\ $x'$) configuration in $[-\ell, 3\ell]^d \times [0,2t]$. To this end, define for $j=1, \dots, d$\footnote{With the convention that $[a,b]^0=\emptyset$.}
	\begin{align*}
		\tilde{h}^{(j)}(x) \coloneqq \tilde{h}^{(j-1)}(x), \quad \text{if} \quad x \in [-\ell, 3\ell]^{j-1} \times [-\ell, \ell] \times [-\ell, \ell]^{d-j} \times [0,2t]
	\end{align*}
	and
	\small 
	\begin{align*}
		\tilde{h}^{(j)}(x) \coloneqq \left(-\tilde{h}^{(j-1)}_1, \dots, -\tilde{h}^{(j-1)}_{j-1}, \tilde{h}^{(j-1)}_j, -\tilde{h}^{(j-1)}_{j+1}, \dots, -\tilde{h}^{(j-1)}_d\right)(x_1,\dots, x_{j-1}, 2\ell-x_j, x_{j+1}, \dots, x_d, x_{d+1}),
	\end{align*}
	\normalsize
	if $x \in [-\ell, 3\ell]^{j-1} \times [\ell, 3\ell] \times [-\ell, \ell]^{d-j} \times [0,2t]$. Then $\nabla'\cdot \tilde{h}^{(j)}(x) = \nabla'\cdot \tilde{h}^{(j-1)}(x)$ for $x \in [-\ell, 3\ell]^{j-1} \times [-\ell, \ell] \times [-\ell, \ell]^{d-j} \times [0,2t]$,  
	$$\nabla'\cdot \tilde{h}^{(j)}(x)=-\nabla'\cdot \tilde{h}^{(j-1)}(x_1,\dots,x_{j-1}, 2\ell-x_j,x_{j+1}, \dots, x_d, x_{d+1})$$
	for $x \in [-\ell, 3\ell]^{j-1} \times [\ell, 3\ell] \times [-\ell, \ell]^{d-j} \times [0,2t]$, and $\tilde{h}^{(j)}\cdot \nu'$ is continuous across the $j$\textsuperscript{th} reflection surface $[-\ell, 3\ell]^{j-1} \times \{\ell\} \times [-\ell, \ell]^{d-j} \times [0,2t]$. Hence, setting 
	\begin{align*}
		\tilde{m}^{(j)}(x) \coloneqq \tilde{m}^{(j-1)}(x), \quad \text{if} \quad x \in [-\ell, 3\ell]^{j-1} \times [-\ell, \ell] \times [-\ell, \ell]^{d-j} \times [0,2t]
	\end{align*}
	and 
	\begin{align*}
		\tilde{m}^{(j)}(x) \coloneqq  -\tilde{m}^{(j-1)}(x_1,\dots, x_{j-1}, 2\ell-x_j, x_{j+1}, \dots, x_d, x_{d+1}),
	\end{align*}
	if $x \in [-\ell, 3\ell]^{j-1} \times [\ell, 3\ell] \times [-\ell, \ell]^{d-j} \times [0,2t]$, we have that $\partial_{d+1} \tilde{m}^{(j)} + \nabla'\cdot \tilde{h}^{(j)} = 0$ in $[-\ell, 3\ell]^{j} \times [-\ell, \ell]^{d-j} \times [0,2t]$ for all $j=0, \dots, d$. 
	Since we flipped the sign of the magnetisation in the extension, we also have that 
	\begin{align*}
		\int_{[-\ell, 3\ell]^j\times [-\ell, \ell]^{d-j}} \tilde{m}^{(j)}(x',x_{d+1}) \,\mathrm{d}x'=0 \quad \text{for all} \quad x_{d+1}\in[0,t], \quad  j=1, \dots, d.	
	\end{align*}
	
	\item \emph{(Rescaling and global lower bound)}. We now shift and rescale $(\tilde{m}^{(d)},  \tilde{h}^{(d)})$ to obtain an admissible configuration in $Q_{\tilde{L},T}$ with $\tilde{L} = \left(\frac{T}{2t}\right)^{\frac{2}{3}} 2\ell$. To this end,
	set 
	\begin{align*}
		\tilde{m}(x) &\coloneqq \tilde{m}^{(d)}\left( \left(\frac{2t}{T}\right)^{\frac{2}{3}} \left(x' + (\ell, \dots, \ell) \right), \frac{2t}{T} x_{d+1} \right) \quad \text{and} \\
		\tilde{h}(x) &\coloneqq \left(\frac{2t}{T}\right)^{\frac{1}{3}} \tilde{h}^{(d)}\left( \left(\frac{2t}{T}\right)^{\frac{2}{3}} \left(x' + (\ell, \dots, \ell) \right), \frac{2t}{T} x_{d+1} \right), \quad \text{for} \quad  x \in Q_{\tilde{L}, T}.
	\end{align*}
	Then $(\tilde{m}, \tilde{h}) \in \mathcal{A}^{\mathrm{per}}_{Q_{\tilde{L},T}}$ is admissible and by the global lower bound \eqref{eq:lower-bound-global} there exists a universal constant $c_{S}>0$ such that  
	\begin{align}\label{eq:lower-bound-local-rescaled}
		E_{Q_{\tilde{L},T}}(\tilde{m}, \tilde{h}) \geq c_{S} \tilde{L}^d T^{\frac{1}{3}}. 
	\end{align}
	
	\item \emph{(Energy bound)}. 
	Note that in the construction of $(\tilde{m}^{(d)}, \tilde{h}^{(d)})$ we changed the sign of the magnetisation along the reflection surface and therefore pick up an extra interfacial energy of twice the area of the surface along which we reflect, i.e.\ a total energy of $C_d \ell^{d-1} t$ for some dimensional constant $C_d<\infty$. 
	It follows that 
	\begin{align*}
		E_{Q_{\tilde{L},T}}(\tilde{m},\tilde{h}) 
		&\stackrel{\eqref{eq:scalingofenergy}}{=} \left(\frac{2t}{T}\right)^{-\frac{2}{3}d-\frac{1}{3}} E_{[-\ell, 3\ell]^d \times [0,2t]}(\tilde{m}^{(d)}, \tilde{h}^{(d)}) \\
		&= \left(\frac{2t}{T}\right)^{-\frac{2}{3}d-\frac{1}{3}} \left( 2^d E_{Q_{\ell,t}}(m_{\ell,t}, h_{\ell,t}) + C_d \ell^{d-1} t \right) \\
		&= \left(\frac{2t}{T}\right)^{-\frac{2}{3}d-\frac{1}{3}} \left( 2^d \ell^d t e_{Q_{\ell,t}}(m_{\ell,t}, h_{\ell,t}) + C_d \ell^{d-1} t \right),
	\end{align*}
	hence
	\begin{align*}
		e_{Q_{\ell,t}}(m_{\ell,t}, h_{\ell,t}) 
		&= 2^{-d} \ell^{-d} t^{-1} \left(\frac{2t}{T}\right)^{\frac{2}{3}d+\frac{1}{3}} E_{Q_{\tilde{L},T}}(\tilde{m},\tilde{h}) -  2^{-d} C_d \ell^{-1}\\
		&\stackrel{\eqref{eq:lower-bound-local-rescaled}}{\geq} 2^{-d} \ell^{-d} t^{-1} \left(\frac{2t}{T}\right)^{\frac{2}{3}d+\frac{1}{3}} c_{S} \tilde{L}^d T^{\frac{1}{3}} - 2^{-d} C_d \ell^{-1} 
		= 2^{\frac{1}{3}} c_{S} t^{-\frac{2}{3}} - 2^{-d} C_d \ell^{-1}.
	\end{align*}
	Therefore, there exists a universal constant $c_{\ell t} <\infty$ such that if $\ell \geq c_{\ell t} t^{\frac{2}{3}}$, then 
	\begin{align*}
		e_{Q_{\ell,t}}(m_{\ell,t}, h_{\ell,t}) \geq c_{s} t^{-\frac{2}{3}}
	\end{align*}
	with $c_s = 2^{\frac{1}{3}} c_{S} - \frac{C_d}{2^d c_{\ell t}} >0$. \qedhere
\end{enumerate}
\end{proof}

\subsection{Upper bound on the local energy}
As in \cites{Con00,Vie09}, instead of directly looking at the energy density in a smaller cuboid $Q_{\ell,t}(a) \coloneqq Q_{\ell,t} + a$ for $a \in \RR^d \times \{0\}$,
\begin{align*}
	e(Q_{\ell,t}(a)) \coloneqq \dashint_{Q_{\ell,t}(a)} |\nabla'm| + \dashint_{Q_{\ell,t}(a)} \frac{1}{2} |h|^2\,\mathrm{d}x,
\end{align*}
we will keep track of the local energy with field shifted by its height average\footnote{Recall that $\overline{h}_t$ is the solution of the over-relaxed problem introduced in Section~\ref{sec:ORP}.} (corresponding to a linear approximation of the magnetisation), i.e.\
\begin{align*}
	\dashint_{Q_{\ell,t}(a)} |\nabla'm| + \dashint_{Q_{\ell,t}(a)} \frac{1}{2} |h-\overline{h}_{t}|^2\,\mathrm{d}x.
\end{align*}
In order to obtain the desired local scaling law of the energy of a minimiser, we are therefore led to also study the decay of the cumulated field strength 
\begin{align*}
	n(Q_{\ell,t}(a)) \coloneqq \frac{1}{\ell} \sup_{Q_{\ell}'(a)} \left| \int_0^t h\,\mathrm{d}x_{d+1} \right| = \frac{t}{\ell} \sup_{Q_{\ell}'(a)} |\overline{h}_t|,
\end{align*}
which, combined with the orthogonality relation (Lemma~\ref{lem:orthogonality}), allows us to pass the local energy bound from the energy density shifted by $\overline{h}_t$ to the unshifted energy density
\begin{align}\label{eq:boundenergywrtfandn}
	e(Q_{\ell, t}(a))
	&= \dashint_{Q_{\ell,t}(a)} |\nabla'm| + \dashint_{Q_{\ell,t}(a)} \frac{1}{2} |h-\overline{h}_{t}|^2 \,\mathrm{d}x + \dashint_{Q_{\ell,t}(a)} \frac{1}{2}|\overline{h}_t|^2\,\mathrm{d}x \nonumber\\
	&\leq \dashint_{Q_{\ell,t}(a)} |\nabla'm| + \dashint_{Q_{\ell,t}(a)} \frac{1}{2} |h-\overline{h}_{t}|^2 \,\mathrm{d}x + \frac{1}{2}\frac{\ell^2}{t^2} n(Q_{\ell,t}(a))^2.
\end{align}

We start with the outer loop of the argument, which proves the upper bound on the local energy. The core ingredients are the one-step improvement results presented in Section~\ref{sec:onestep-inner} (in the interior) and Section~\ref{sec:onestep-boundary} (at the boundary), which drive the Campanato-type iterations in Section~\ref{sec:iterations}.
\begin{proof}[Proof of Theorem~\ref{thm:localbounds} (Upper bound)]
Let $C_{LT}< \infty$ (to be chosen later). 
We start in a cuboid $Q_{L,T}$ with $L \geq C_{LT} T^{\frac{2}{3}}$ and let $(m,h)\in \mathcal{A}^{\mathrm{per}/0}_{Q_{L,T}}$ be a minimiser of \eqref{eq:energy-nonconvex} with top and bottom boundary conditions $m^B = m^T \equiv 0$. By Theorem~\ref{thm:globalbound} there exists a constant $C_{S}<\infty$ such that the energy density satisfies
\begin{align*}
	e(Q_{L,T}) \leq C_{S} T^{-\frac{2}{3}}.
\end{align*}

In order to transfer the energy scaling from the large scale to smaller cuboids at the sample boundary, we run an iteration based on a one-step improvement for the renormalised
energy density 
\begin{align*}
	f(Q_{\ell,t}(a)) \coloneqq \frac{t^2}{\ell^2}\left( \dashint_{Q_{\ell,t}(a)} |\nabla'm| + \frac{1}{2} \dashint_{Q_{\ell,t}(a)} (h-\overline{h}_t)^2\,\mathrm{d}x \right).
\end{align*}
We will also denote $f_0(Q_{\ell,t}(a)) \coloneqq \frac{t^2}{\ell^2} e(Q_{\ell,t}(a))$. 
Note that if $e(Q_{L,T}) \leq C_{S} T^{-\frac{2}{3}}$, then 
\begin{equation}\label{eq:globalboundenergyf_0}
f_0(Q_{L,T}) \leq C_{S} \frac{T^{\frac{4}{3}}}{L^2} \leq C_S C_{LT}^{-2} \leq \epsilon
\end{equation}
for any $\epsilon >0$ if $C_{LT}$ is chosen large enough. 
Therefore $f_0$ (and $f$, as we will see in the next step) provides a good (non-dimensional) small quantity which is suitable for an iteration down to smaller scales. For this iteration it is convenient to work with cuboids $Q_{\tilde{L},T}$ such that $\tilde{L} = c_{LT} T^{\frac{2}{3}}$ with $\frac{C_{LT}}{8} \leq c_{LT} \leq \frac{C_{LT}}{4}$.
Thus, the first step in our proof consists of transferring the energy scaling from $Q_{L,T}$ to $Q_{\tilde{L},T}$ (note that in this step the height of the cuboid remains fixed).

\begin{enumerate}[label=\textsc{\bf Step \arabic*},leftmargin=0pt,labelsep=*,itemindent=*,itemsep=10pt,topsep=10pt]
	\item\label{step:local-strategy-step-1} Choose $C_{LT}$ so large that the assumption of Proposition~\ref{prop:iteration-A} holds. Then there exists a constant $c_{LT}$ with the property that $\frac{C_{LT}}{8} \leq c_{LT} \leq \frac{C_{LT}}{4}$ and such that for $\tilde{L} = c_{LT} T^{\frac{2}{3}}$ there holds
	\begin{align*}
		f_0(Q_{\tilde{L},T}(a))  \lesssim f_0(Q_{L,T}) + \frac{T^{\frac{4}{3}}}{\tilde{L}^2},
	\end{align*}
	for any $a \in \RR^d \times\{0\}$ satisfying $Q_{\tilde{L},T}(a) \subseteq Q_{\frac{3}{4}L,T}$. 
	
	\item\label{step:local-strategy-step-2} 
	We now consider the cumulated field $H(x')\coloneqq \int_0^T h\,\mathrm{d}x_{d+1}$, which by the first equation in \eqref{eq:admissible} satisfies $\nabla' \cdot H = -\int_0^T \partial_{d+1} m \,\mathrm{d}x_{d+1} = - (m^T-m^B) = 0$ with periodic or zero-flux boundary conditions on $\partial Q_{L}'$. 
	Since an energy-minimising $h$ is a gradient field, $h=-\nabla' u$ (see Remark~\ref{rem:gradient}), we have $H(x') = -\nabla' \left(\int_0^T u \,\mathrm{d}x_{d+1}\right) = -T \nabla' \overline{u}_T$, hence the function $\overline{u}_T$ solves $-\Delta' \overline{u}_T = 0$ on $Q_{L}'$ with periodic or zero-flux boundary conditions. 
	But that means $\overline{u}_T$ is constant, implying that the cumulated field $H \equiv 0$ on $Q_{L}'$, in particular $n(Q_{L,T}) = 0$ and $n(Q_{\tilde{L},T}(a)) = 0$. 

	Hence, we have that 
	\begin{align}\label{eq:local-strategy-step-2f}
		f(Q_{\tilde{L},T}(a)) = f_0(Q_{\tilde{L},T}(a)) \lesssim f_0(Q_{L,T}) + \frac{T^{\frac{4}{3}}}{\tilde{L}^2},
	\end{align}
	and therefore, inserting \eqref{eq:globalboundenergyf_0}, we get $f(Q_{\tilde{L},T}(a)) \leq \epsilon$ for any $\epsilon >0$ if $C_{LT}$ is chosen large enough.
	Due to good screening properties, the quantity $f$ has the right decay on smaller length scales: for any $\tilde{\ell} \leq \frac{\tilde{L}}{8}$ and $t\leq T$ such that $\tilde{\ell} = c_{LT} t^{\frac{2}{3}}$, and $b$ such that $Q_{\tilde{\ell},t}(b) \subseteq Q_{\frac{3}{4}\tilde{L},T}(a)$ there holds
	\begin{align*}
		f(Q_{\tilde{\ell},t}(b)) 
		&\lesssim f(Q_{\tilde{L},T}(a)) + \frac{T^{\frac{4}{3}}}{\tilde{L}^2} \quad \text{and} \\ 
		n(Q_{\tilde{\ell},t}(b)) 
		&\lesssim n(Q_{\tilde{L},T}(a)) + \left(f(Q_{\tilde{L},T}(a)) + \frac{T^{\frac{4}{3}}}{\tilde{L}^2}\right)^{\frac{1}{d+2}} 
		\lesssim \left(f(Q_{\tilde{L},T}(a)) + \frac{T^{\frac{4}{3}}}{\tilde{L}^2}\right)^{\frac{1}{d+2}};
	\end{align*}
	see Proposition~\ref{prop:iteration-B} (increasing $C_{LT}$, hence $c_{LT}$, if necessary).
	
	Combining this with \eqref{eq:local-strategy-step-2f}, recalling that $f(Q_{L,T})=f_0(Q_{L,T})$, we are led to
	\begin{align*}
		f(Q_{\tilde{\ell},t}(b)) \lesssim f(Q_{L,T}) + \frac{T^{\frac{4}{3}}}{\tilde{L}^2}, \quad \text{and} \quad 
		n(Q_{\tilde{\ell},t}(b)) \lesssim \left( f(Q_{L,T}) + \frac{T^{\frac{4}{3}}}{\tilde{L}^2} \right)^{\frac{1}{d+2}},
	\end{align*}
	for any $b$ such that $Q_{\tilde{\ell},t}(b) \subseteq Q_{\frac{3}{4}\tilde{L},T}(a)$.

	\item
	By a covering argument, we can then extend the estimate from \ref{step:local-strategy-step-2} to cuboids $Q_{\ell,t}(b)$ with $\ell \geq c_{LT} t^{\frac{2}{3}}$:
	we divide $Q_{\ell, t}(b)$ into $J=\left(\frac{\ell}{\tilde{\ell}}\right)^d$ disjoint cuboids $Q_{\tilde{\ell},t}(b_j)$, $j=1, \dots, J$ of lateral side length $\tilde{\ell} = c_{LT} t^{\frac{2}{3}}$ centred at points $b_j$ on the bottom boundary of $Q_{L,T}$ (w.l.o.g.\ the constant $c_{LT}$ is chosen such that $J\in\NN$); see Figure~\ref{picture-strategy}. 	
	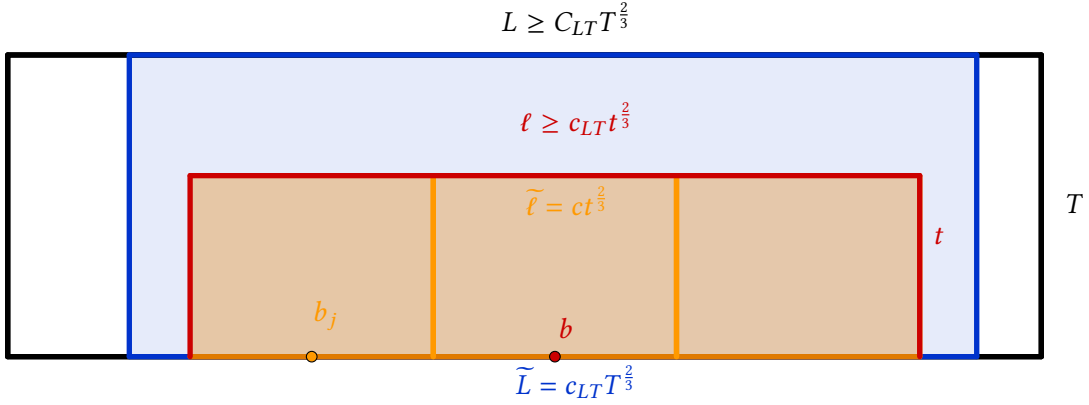
\begin{figure}[ht]
	\centering
	\definecolor{ffzzqq}{rgb}{1,0.6,0}
\definecolor{ccqqqq}{rgb}{0.8,0,0}
\definecolor{zzttqq}{rgb}{0.6,0.2,0}
\definecolor{qqttcc}{rgb}{0,0.2,0.8}
\definecolor{cqcqcq}{rgb}{0.7529411764705882,0.7529411764705882,0.7529411764705882}
\begin{tikzpicture}[line cap=round,line join=round,>=triangle 45,scale=.8]
\draw[line width=2pt] (-9,0) -- (8,0) -- (8,5) -- (-9,5) -- cycle;
\fill[line width=2pt,color=qqttcc,fill=qqttcc,fill opacity=0.1] (-7,5) -- (-7,0) -- (6.9375,0) -- (6.9375,5) -- cycle;
\fill[line width=2pt,color=zzttqq,fill=zzttqq,fill opacity=0.10000000149011612] (-6,3) -- (-6,0) -- (6,0) -- (6,3) -- cycle;
\fill[line width=2pt,color=ffzzqq,fill=ffzzqq,fill opacity=0.26] (-6,3) -- (-6,0) -- (-2,0) -- (-2,3) -- cycle;
\fill[line width=2pt,color=ffzzqq,fill=ffzzqq,fill opacity=0.25] (-2,3) -- (-2,0) -- (2,0) -- (2,3) -- cycle;
\fill[line width=2pt,color=ffzzqq,fill=ffzzqq,fill opacity=0.24] (2,0) -- (6,0) -- (6,3) -- (2,3) -- cycle;
\draw [line width=2pt] (-9,0)-- (8,0);
\draw [line width=2pt] (8,0)-- (8,5);
\draw [line width=2pt] (8,5)-- (-9,5);
\draw [line width=2pt] (-9,5)-- (-9,0);
\draw [line width=2pt,color=qqttcc] (-7,5)-- (-7,0);
\draw [line width=2pt,color=qqttcc] (-7,0)-- (6.9375,0);
\draw [line width=2pt,color=qqttcc] (6.9375,0)-- (6.9375,5);
\draw [line width=2pt,color=qqttcc] (6.9375,5)-- (-7,5);
\draw [line width=2pt,color=zzttqq] (-6,0)-- (6,0);
\draw [line width=2pt,color=ffzzqq,opacity=.7] (-6,0)-- (6,0);
\draw [line width=2pt,color=ffzzqq] (-2,0)-- (-2,3);
\draw [line width=2pt,color=ffzzqq] (-2,3)-- (-2,0);
\draw [line width=2pt,color=ffzzqq] (2,0)-- (2,3);
\draw [line width=2pt,color=ffzzqq] (6,0)-- (6,3);
\draw [line width=2pt,color=ffzzqq] (2,3)-- (2,0);
\draw [line width=2pt,color=ccqqqq] (-6,3)-- (-6,0);
\draw [line width=2pt,color=ccqqqq] (6,0)-- (6,3);
\draw [line width=2pt,color=ccqqqq] (6,3)-- (-6,3);
\draw (-1.0380305529055522,6.047232765719198) node[anchor=north west] {$L \geq C_{LT} T^{\frac{2}{3}}$};
\draw (8.240639511623115,2.8711598829954084) node[anchor=north west] {$T$};
\draw (6.062760963469688,2.304004011080446) node[anchor=north west] {{\color{ccqqqq}$t$}};
\draw (-0.81116820413957,0.03538052342059554) node[anchor=north west] {{\color{qqttcc}$\widetilde{L} = c_{LT} T^{\frac{2}{3}}$}};
\draw (-0.7431094995097755,4.391137619727508) node[anchor=north west] {{\color{ccqqqq}$\ell \geq c_{LT} t^{\frac{2}{3}}$}};
\draw (-0.65,3) node[anchor=north west] {{\color{ffzzqq}$\widetilde{\ell}= c t^{\frac{2}{3}}$}};
\draw [fill=ffzzqq] (-4,0) circle (2.5pt);
\draw[color=ffzzqq] (-3.7603787380973377,0.6819382174036528) node {$b_j$};
\draw [fill=ccqqqq] (0,0) circle (2.5pt);
\draw[color=ccqqqq] (0.18702613043075117,0.47776210351426635) node {$b$};
\end{tikzpicture}
	\caption{Depiction of the different cuboids and scales used throughout the proof.}
	\label{picture-strategy}
	\end{figure}

	In the boxes $Q_{\tilde{\ell},t}(b_j)$ we can apply \ref{step:local-strategy-step-2}, from which it follows that	
	\begin{align*}
		f(Q_{\ell, t}(b)) 
		&= \frac{t^2}{\ell^2} \frac{1}{\ell^d t} \sum_{j=1}^J \left( \int_{Q_{\tilde{\ell},t}(b_j)} |\nabla'm| + \frac{1}{2} \int_{Q_{\tilde{\ell},t}(b_j)} |h-\overline{h}_t|^2\,\mathrm{d}x \right) 
		= \left(\frac{\tilde{\ell}}{\ell} \right)^{d+2} \sum_{j=1}^J f(Q_{\tilde{\ell},t}(b_j)),
	\end{align*}
	and using that $f(Q_{\tilde{\ell},t}(b_j)) \lesssim f(Q_{L,T}) + \frac{T^{\frac{4}{3}}}{\tilde{L}^2}$, we obtain 
	\begin{align}\label{eq:localbound-f}
		f(Q_{\ell, t}(b)) 
		\lesssim \left(\frac{\tilde{\ell}}{\ell} \right)^{d+2} J \left( f(Q_{L,T}) + \frac{T^{\frac{4}{3}}}{\tilde{L}^2} \right)
		\lesssim \left(\frac{\tilde{\ell}}{\ell} \right)^{2} \left( f(Q_{L,T}) + \frac{T^{\frac{4}{3}}}{\tilde{L}^2} \right).
	\end{align}

	Similarly, we may estimate
	\begin{align}\label{eq:localbound-n}
		n(Q_{\ell, t}(b)) 
		= \frac{t}{\ell} \sup_{j} \sup_{Q_{\tilde{\ell}}'(b_j)} | \overline{h}_t| 
		= \frac{\tilde{\ell}}{\ell} \sup_j n(Q_{\tilde{\ell},t}(b_j)) 
		\lesssim \frac{\tilde{\ell}}{\ell} \left( f(Q_{L,T}) + \frac{T^{\frac{4}{3}}}{\tilde{L}^2} \right)^{\frac{1}{d+2}}.
	\end{align}

	\item Finally, recalling \eqref{eq:boundenergywrtfandn}, the local energy bound follows for any cuboid $Q_{\ell,t}(b)\subseteq Q_{\frac34 \tilde L,T}(a)$ (with $\ell \geq c_{LT} t^{\frac{2}{3}}$ and $a$ such that $Q_{\tilde L,T}(a)\subseteq Q_{\frac34 L,T}$), since 
	\begin{align*}
		e(Q_{\ell,t}(b)) 
		\leq \frac{\ell^2}{t^2} f(Q_{\ell,t}(b)) +  \frac{\ell^2}{t^2} n(Q_{\ell,t}(b))^2,
	\end{align*}
	so using \eqref{eq:localbound-f}, \eqref{eq:localbound-n}, and the fact that $f(Q_{L,T}) = f_0(Q_{L,T}) \leq C_S \frac{T^{\frac{4}{3}}}{L^2}$,
	we obtain with $\tilde{\ell} = c_{LT} t^{\frac{2}{3}}$ and $\tilde{L} = c_{LT} T^{\frac{2}{3}}$, that 
	\begin{align*}
		e(Q_{\ell,t}(b)) 
		&\lesssim \frac{\tilde{\ell}^2}{t^2} \left( f(Q_{L,T}) + \frac{T^{\frac{4}{3}}}{\tilde{L}^2} \right) + \frac{\tilde{\ell}^2}{t^2} \left(f(Q_{\tilde{L},T}) + \frac{T^{\frac{4}{3}}}{\tilde{L}^2} \right)^{\frac{2}{d+2}} 
		\lesssim c_{LT}^2 t^{-\frac{2}{3}} \left( \frac{T^{\frac{4}{3}}}{\tilde{L}^2} \right)^{\frac{2}{d+2}} 
		\lesssim c_{LT}^{\frac{2d}{d+2}} t^{-\frac{2}{3}}.
	\end{align*}
	
	\item \emph{(The role of the lateral boundary conditions).} This proves the upper bound in the theorem for small cuboids contained in $Q_{\frac{3}{4} \tilde{L},T}$. We now give an argument how to extend this to any cuboid contained in $Q_{L,T}$ sitting at the bottom boundary.

	If $(m,h)$ satisfies periodic boundary conditions, then it also minimises the energy in $Q_{2L,T}$. Therefore, applying \ref{step:local-strategy-step-1} of the proof, we obtain 
	$$
	f_0(Q_{\tilde{L},T}(a))  \lesssim f_0(Q_{2L,T}) + \frac{T^{\frac{4}{3}}}{\tilde{L}^2}\lesssim f_0(Q_{L,T}) + \frac{T^{\frac{4}{3}}}{\tilde{L}^2},
	$$
	for any $a \in \RR^d \times\{0\}$ such that $Q_{\tilde{L},T}(a) \subseteq Q_{\frac{3}{2}L,T}$ (and in particular for any $a$ such that $Q_{\tilde{L},T}(a)\subseteq Q_{L,T}$). We can then apply \ref{step:local-strategy-step-2}, which holds for any $b$ such that $Q_{\tilde \ell,t}(b)\subseteq Q_{\frac34\tilde L,T}(a)$. Combining the two, we deduce that \ref{step:local-strategy-step-2} holds for any $b$ such that $Q_{\tilde \ell,t}(b)\subseteq Q_{L,T}$. Hence, the local energy upper bound is extended to any cuboid contained in $Q_{L,T}$ sitting at the bottom boundary.

	\medskip
	If $(m,h)$ satisfies zero-flux boundary conditions, then by arguing as in the proof of the global scaling laws, we can obtain a new configuration $(\widetilde m,\widetilde h)$, which is periodic in $Q_{2L,T}$ and that has the same energy up to a dimensional constant. More precisely, we can extend $(m,h)$ to  $[-L,3L]^d\times [0,T]$ by a sequence of $d$ even reflections of $m$ and corresponding reflections of $h$ across each face of $\Gamma_{L,T}$ to obtain an admissible configuration in $\mathcal{A}^{\mathrm{per}}_{Q_{[-L,3L]^d\times [0,T]}}$. Finally, by translating the extended configuration to $Q_{[-2L,2L]^d\times [0,T]}$, we get a periodic pair $(\widetilde m,\widetilde h)$ in $\mathcal{A}^{\mathrm{per}}_{Q_{2L,T}}$ that satisfies 
	$$
	E_{Q_{2L,T}}(\widetilde{m},\widetilde{h})=2^d E_{Q_{L,T}}(m,h) \lesssim L^dT^\frac13.
	$$
	Since $h$ is a gradient field, so is $\widetilde{h}$, therefore we deduce that $(\widetilde m,\widetilde h)$ is an almost-minimising configuration at scale $(2L,T)$. 
	Hence, \ref{step:local-strategy-step-1} (which still holds true for almost-minimisers at scale $(2L,T)$) implies that for $C_{LT}$ large enough we can find a constant $\frac{C_{LT}}{8} \leq c_{LT} \leq \frac{C_{LT}}{4}$ such that for $\tilde{L} = c_{LT} T^{\frac{2}{3}}$ we have 
	\begin{align*}
		f_0(Q_{\tilde{L},T}(a)) \lesssim \widetilde{f}_0(Q_{2L,T}) + \frac{T^{\frac{4}{3}}}{\tilde{L}^2} \lesssim f_0(Q_{L,T}) + \frac{T^{\frac{4}{3}}}{\tilde{L}^2},
	\end{align*}
	for any $a \in Q_{L}' \times\{0\}$ such that $Q_{\tilde{L},T}(a) \subseteq Q_{\frac32 L,T}$. 
	However, at this stage we do not know whether $(\widetilde m,\widetilde h)$ is almost-minimising on smaller scales, so that when iterating down to smaller cuboids with Proposition~\ref{prop:iteration-B}, we have to leave some space to the lateral boundary $\Gamma_{L,T}$. Therefore, in contrast to the periodic case, we can benefit from this estimate only for $a$'s such that $Q_{\tilde{L},T}(a) \subseteq Q_{L,T}$.

	\medskip 
	Applying \ref{step:local-strategy-step-2}, we infer that the outcome of that step holds for any $b$ such that $Q_{\tilde \ell,t}(b)\subseteq Q_{\frac34\tilde L,T}(a)$, for any $a$ such that $Q_{\tilde{L},T}(a)\subseteq Q_{L,T}$. Moreover, as a careful look at the proof reveals, the factor $\frac{3}{4}$ was arbitrarily chosen (for concreteness purposes), and can actually be replaced by any factor $\gamma<1$. This reflects the fact that the argument holds for any cuboid located at a positive distance from $\Gamma_{L,T}$, leaving enough space to benefit from interior regularity.

	Hence, it only remains to extend the previous local energy bounds to cuboids centred at the lateral boundary $\Gamma_{L,T}$. In order to do so, we appeal to an iteration at the boundary. Letting $Q^*_{\ell,t}(b) \coloneqq Q_{\ell,t}(b) \cap Q_{L,T}$ be a cuboid at the boundary of $Q_{L,T}$, centred in a point $b\in \Gamma_{L,T} \cap \{x_{d+1}=0\}$, and arguing almost verbatim as in \ref{step:local-strategy-step-2}, replacing the use of Proposition \ref{prop:iteration-B} by Proposition \ref{prop:iteration-C}, we deduce that 
	\begin{align*}
		f(Q^*_{\tilde{\ell},t}(b)) \lesssim f(Q_{L,T}) + \frac{T^{\frac{4}{3}}}{\tilde{L}^2}, \quad \text{and} \quad 
		n(Q^*_{\tilde{\ell},t}(b)) \lesssim \left( f(Q_{L,T}) + \frac{T^{\frac{4}{3}}}{\tilde{L}^2} \right)^{\frac{1}{d+2}},
	\end{align*}
	for any $b\in \Gamma_{L,T} \cap \{x_{d+1}=0\}$. 

	Thus, combining the estimates on cuboids located in the bulk of $Q_{L,T}$ with the ones centred on $\Gamma_{L,T}\cap \{x_{d+1}=0\}$, we deduce that \ref{step:local-strategy-step-2} holds for any $b$ such that $Q_{\tilde \ell,t}(b)\subseteq Q_{L,T}$. Hence, the local energy upper bound is extended to any cuboid contained in $Q_{L,T}$ sitting at the bottom boundary.

	This concludes the proof. \qedhere
\end{enumerate}
\end{proof}

The rest of this section is devoted to the proof of the one-step improvement results and the iteration to smaller scales.  
In our forthcoming article \cite{RR23} we will give a simpler proof by a closer study of the convex relaxation of the problem, which is based on duality and avoids some of the more complicated constructions (especially the boundary layer construction). It also shows from a somewhat different angle that the shift of the energy by $\overline{h}_t$ is quite natural in view of the convex relaxation.\footnote{In fact, $\overline{h}_t$ is the solution to the \emph{over-relaxed} problem introduced in Section~\ref{sec:ORP}, which makes this approach similar to \cite{BJO22}.}

\subsubsection{One-step improvements in the interior}\label{sec:onestep-inner}
We start with a first one-step improvement result, which allows us to go to smaller widths $\ell \leq L$, while keeping the height $T$ fixed. This one-step improvement result is somewhat simpler, because it does not need decay of the cumulated field encoded in the quantity $n$ defined above. However, this will be a crucial ingredient in the second one-step improvement result.

\begin{lemma}[One-step improvement -- Version A for fixed height]\label{lem:onestep-1}
	Let $(m,h)$ be a minimiser of the non-convex energy \eqref{eq:energy-nonconvex} in $Q_{L,T}$ with periodic or zero-flux lateral boundary conditions and $m^{B} = m^T \equiv 0$. 
	
	For any $\theta \in (0,\frac{1}{2}]$ there exists an $\epsilon>0$ such that the following holds: 
	If $\ell\leq L$ is such that $f_0(Q_{\ell, T})\leq \epsilon$,
	then 
	\begin{align}
		f_0(Q_{\theta\ell, T}(a)) - \theta f_0(Q_{\ell, T}) &\lesssim_{\theta} \frac{T^{\frac{4}{3}}}{\ell^2} \label{eq:osi-f-1}
	\end{align}
	for any $a \in \RR^d \times \{0\}$ with $|a|\leq (1-2\theta) \ell$ (so that $Q_{2\theta\ell, T}(a) \subseteq Q_{\ell,T}$).
\end{lemma}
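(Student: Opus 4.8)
The plan is to run the standard Campanato-type comparison argument: compare the minimiser on $Q_{\theta\ell,T}(a)$ against a competitor that is built by freezing the boundary data of $(m,h)$ on $Q_{\ell,T}$ outside a suitable sub-cube, and inside using the construction machinery developed in Sections~\ref{sec:relaxedandover} and \ref{sec:constructionnonconvex}. Since $h$ is a gradient field (Remark~\ref{rem:gradient}) and $m^{B,T}\equiv 0$, one first passes to the \emph{relaxed} problem: let $g\coloneqq h\cdot\nu'|_{\Gamma_{\ell,T}}$ be the lateral flux induced by the minimiser on $\Gamma_{\ell,T}$, and let $(m_{\rel},h_{\rel})$ solve $E^{\mathrm{rel}}_{Q_{\ell,T}}(g;0)$. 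By the local-minimality remark, the restriction of $(m,h)$ to $Q_{\ell,T}$ dominates the relaxed energy, so $\frac12\int_{Q_{\ell,T}}|h_{\rel}|^2 \le \frac12\int_{Q_{\ell,T}}|h|^2 \le \ell^d T\, e(Q_{\ell,T}) = \frac{\ell^{d+2}}{T}f_0(Q_{\ell,T})$.

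**Key steps.** First I would estimate the over-relaxed energy: by Lemma~\ref{lem:cumulated-field} the height-averaged field $\overline{h}_T$ equals $H^T-H^B$, which vanishes here because $m^{B,T}\equiv 0$ forces $H^{B,T}$ to be constant; hence $\overline{h}_T\equiv 0$ on $Q_\ell'$, so $n(Q_{\ell,T})=0$ and, by orthogonality (Lemma~\ref{lem:orthogonality}), $f=f_0$ throughout. Second, since the relaxed minimiser has zero cumulated field and small $L^2$-norm, the boundary-layer Proposition~\ref{prop:fine-properties-relaxed} applies on $Q_{\ell,T}$ at scale $r\sim \ell\,(f_0(Q_{\ell,T}))^{1/(d+1)}$ plus the contribution from the $\dot H^{-1/2}$-type oscillation of $g$ on the inner cube $Q_{\theta\ell,T}(a)$; this produces a configuration that matches the outer boundary data while deviating from $h_{\rel}$ only in a thin layer, and with $\|\partial_{d+1}m_{\rel}\|_{L^2_{x_{d+1}}L^\infty_{x'}}$ controlled. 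Third, feed this into Corollary~\ref{cor:competitor} with the optimal choice $N\sim \theta\ell/T^{2/3}$ (valid because $\theta\ell\gtrsim T^{2/3}$ whenever $f_0(Q_{\ell,T})\le\epsilon$ and we are past the coupling scale) to obtain a $\pm1$-valued competitor $(\widetilde m,\widetilde h)\in \mathcal A^{\mathrm{per}/0}$ on $Q_{\theta\ell,T}(a)$ whose energy density is $\lesssim_\theta T^{-2/3} + (\text{boundary terms})$. Glue this competitor into $(m,h)$ on $Q_{\ell,T}\setminus Q_{\theta\ell,T}(a)$; admissibility is preserved because the fluxes match on $\Gamma_{\theta\ell,T}(a)$. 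Minimality of $(m,h)$ on $Q_{\ell,T}$ then gives
\begin{align*}
	\ell^d T\, f_0(Q_{\ell,T})/\tfrac{\ell^2}{\ell^{?}}\ \text{---}\
\end{align*}
more precisely, $E_{Q_{\ell,T}}(m,h)\le E_{Q_{\ell,T}}(\text{glued competitor})$ yields $E_{Q_{\theta\ell,T}(a)}(m,h) \le E_{Q_{\theta\ell,T}(a)}(\widetilde m,\widetilde h) \lesssim_\theta (\theta\ell)^d T^{1/3} + (\theta\ell)^d T\cdot\frac{\ell^2}{T^2}f_0(Q_{\ell,T})$, where the second term comes from the boundary-layer deviation of $h_{\rel}$ from $h$ plus the sub-cube oscillation term, all of which are bounded by a $\theta$-independent multiple of $f_0(Q_{\ell,T})$. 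Dividing by $(\theta\ell)^d T/(\theta\ell)^2 = \theta^{d-2}\ell^{d-2}T$ and regrouping gives $f_0(Q_{\theta\ell,T}(a)) \lesssim_\theta \frac{T^{4/3}}{\ell^2} + \theta^{2}f_0(Q_{\ell,T})$; absorbing the $\theta^2$ versus the required $\theta$ (using $\theta\le\frac12$) yields \eqref{eq:osi-f-1}. The role of $\epsilon$ is exactly to guarantee that $\theta\ell \gtrsim T^{2/3}$ so that the integer $N$ in Corollary~\ref{cor:competitor} can be chosen and that the boundary-layer size $r$ in Proposition~\ref{prop:fine-properties-relaxed} is $\ll\ell$.

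**Main obstacle.** The delicate point is controlling the lateral-flux oscillation term: the competitor on $Q_{\theta\ell,T}(a)$ must inherit the \emph{actual} flux $g=h\cdot\nu'$ of the minimiser on $\Gamma_{\theta\ell,T}(a)$, which a priori is only in $L^2$ and could carry energy comparable to $f_0(Q_{\ell,T})$ itself — so naively matching it gives no improvement. The resolution, as in \cite{Con00,Vie09}, is that one does \emph{not} match the full flux but passes through the relaxed/over-relaxed decomposition: one splits $h = \overline h_T + (h-\overline h_T)$, uses that $\overline h_T\equiv 0$, and exploits that the remaining part integrates to zero in $x_{d+1}$, so that the $\dot H^{-1/2}$-norm of its trace (which is what enters the boundary-layer estimate \eqref{eq:bound-hell}) is controlled by $\ell\cdot f_0(Q_{\ell,T})$ rather than by $f_0(Q_{\ell,T})$ without the gain; the extra factor of $\ell/T^{2/3}$ (equivalently, the $\frac{T^{4/3}}{\ell^2}$ on the right side after renormalisation) is precisely the room that makes the iteration contract. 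Handling periodic versus zero-flux boundary conditions simultaneously requires, in the zero-flux case, first reflecting to reduce to the periodic situation exactly as in the proof of Theorem~\ref{thm:globalbound}, which is routine and does not affect the scaling.
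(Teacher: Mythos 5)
Your overall scaffolding---relaxation, boundary-layer correction via Proposition~\ref{prop:fine-properties-relaxed}, conversion to a $\pm1$ competitor via Corollary~\ref{cor:competitor}, gluing, and comparison with the minimiser---matches the paper's strategy, and you correctly observe that $\overline{h}_T\equiv 0$ (hence $n\equiv 0$ and $f=f_0$) in this fixed-height situation. However, there is a genuine gap in the power counting that closes the argument, and it is exactly the heart of the lemma.

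You claim the comparison yields $E_{Q_{\theta\ell,T}(a)}(m,h)\lesssim_\theta(\theta\ell)^dT^{1/3}+(\theta\ell)^dT\cdot\frac{\ell^2}{T^2}f_0(Q_{\ell,T})$ and that, after renormalisation, the second term gives $\theta^2 f_0(Q_{\ell,T})$ which can be absorbed since $\theta^2\le\theta$. The renormalisation actually gives
\[
	f_0(Q_{\theta\ell,T}(a))=\frac{T}{(\theta\ell)^{d+2}}E_{Q_{\theta\ell,T}(a)}\lesssim_\theta\frac{T^{4/3}}{(\theta\ell)^2}+\theta^{-2}f_0(Q_{\ell,T}),
\]
i.e.\ the coefficient in front of $f_0(Q_{\ell,T})$ is $\theta^{-2}$, which blows up as $\theta\to 0$ and cannot be absorbed into $\theta f_0(Q_{\ell,T})$. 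So a \emph{linear} estimate of the boundary-layer contribution in terms of $f_0(Q_{\ell,T})$ can never produce a contracting coefficient. What actually saves the argument---and this is the point you miss---is that the boundary-layer energy is controlled \emph{superlinearly} in the small quantity: Proposition~\ref{prop:fine-properties-relaxed} gives $\frac{T^2}{\rho^2}\dashint_{A_{r}}|h_r|^2\lesssim\frac{r}{\rho}\cdot\frac{T^2}{\rho^2}\dashint_{\Gamma_{\rho,T}(a)}(g-\overline g_T)^2$, and the admissible layer width $\frac{r}{\rho}$ is itself $\sim\big(\frac{T^2}{\rho^2}\dashint_{\Gamma}(g-\overline g_T)^2\big)^{1/(d+1)}$; after the good-width and monotonicity estimates this yields $f_0(Q_{\theta\ell,T}(a))\lesssim\frac{T^{4/3}}{(\theta\ell)^2}+\theta^{-(d+2)}\big(\theta^{-(d+2)}f_0(Q_{\ell,T})\big)^{1/(d+1)}f_0(Q_{\ell,T})$. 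The extra factor $f_0(Q_{\ell,T})^{1/(d+1)}$ is precisely what lets one choose $\epsilon$ small so that the whole coefficient in front of $f_0(Q_{\ell,T})$ drops below $\theta$. Consequently your characterisation of the role of $\epsilon$ (``to guarantee $\theta\ell\gtrsim T^{2/3}$'') is also incorrect: $\epsilon$ is not a constraint on the aspect ratio (that enters through the $T^{4/3}/\ell^2$ error term and the constant $c_{LT}$ in the iteration), but is what makes the nonlinear boundary-layer term contract. Without this superlinear gain the one-step improvement does not close.
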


\begin{remark}
We have the followings observations: 

\begin{enumerate}
	\item In order to iterate this one-step improvement, \eqref{eq:osi-f-1} shows that we need $\ell \gg T^{\frac{2}{3}}$, which is consistent with the prediction that branching should be observed on cuboids at the sample boundary that are wide enough to capture the oscillations of the magnetisation. 	
	More precisely, fix $\theta\in(0,\frac{1}{2}]$ and let $f_0(Q_{L,T}) \leq \epsilon$ with the $\epsilon$ given by Lemma~\ref{lem:onestep-1}. Further, let $c_{LT} \geq (2 C_{\theta}\epsilon^{-1})^{\frac{1}{2}}$, where $C_{\theta}$ is the implicit constant in \eqref{eq:osi-f-1}. Then  
	\begin{align*}
		f_0(Q_{\theta\ell, T}(a)) 
		\leq \theta f_0(Q_{L,T}) + C_{\theta} \frac{T^{\frac{4}{3}}}{\ell^2} 
		\leq \theta \epsilon + C_{\theta} c_{LT}^{-2}
		\leq \frac{\epsilon}{2} + \frac{\epsilon}{2} \leq \epsilon,
	\end{align*} 
	hence we can iterate the one-step improvement. 
	\item As the proof of Lemma~\ref{lem:onestep-1} reveals, the assumption $m^T \equiv 0$ can be replaced by the assumption $\dashint_0^T h\,\mathrm{d}x_{d+1} \lesssim T^{-\frac{1}{3}}$ on the cumulated field.
	\item Since $a\in\RR^d \times \{0\}$ is required to be such that $Q_{2\theta \ell,T}(a) \subseteq Q_{L,T}$, this implies that $Q_{\theta \ell, T}(a) \subseteq Q_{(1-\theta)L,T}$, in particular, the smaller cuboid has to stay away from the boundary $\Gamma_{L,T}$.
\end{enumerate}
\end{remark}

\begin{lemma}[One-step improvement -- Version B for fixed aspect ratio]\label{lem:onestep-2}
	Let $(m,h)$ be a minimiser of the non-convex energy \eqref{eq:energy-nonconvex} in $Q_{L,T}$ with periodic or zero-flux lateral boundary conditions,  $m^{B} \equiv 0$, and $|m^{T}|\leq 1$. 
	
	For any $\theta \in (0, \frac{1}{3}]$ and $A<\infty$, there exist constants $\delta, \epsilon \in (0,1)$ with the property that
	\begin{align}\label{eq:epsilon-delta-relation}
		\epsilon \leq \frac{\delta^{d+2}}{A},
	\end{align}
	such that the following holds: 

	\noindent If $\ell \leq L$ and $t \leq T$ are such that
	\begin{align*}
		f(Q_{\ell, t})\leq \epsilon 
		\quad \text{and} \quad
		n(Q_{\ell, t})\leq \delta,
	\end{align*}
	then for any $a \in \RR^d \times \{0\}$ with $|a|\leq (1-3\theta) \ell$ (so that $Q_{3\theta\ell, t}(a) \subseteq Q_{\ell,t}$) there holds
	\begin{align}
		f(Q_{\theta \ell, \theta^{\frac{3}{2}} t}(a)) - \theta f(Q_{\ell, t}) &\lesssim_\theta \frac{t^{\frac{4}{3}}}{\ell^2}, \label{eq:osi-f}\\
		n(Q_{\theta \ell, \theta^{\frac{3}{2}} t}(a)) - \theta^{\frac{1}{2}} n(Q_{\ell, t}) &\lesssim_\theta f(Q_{\ell, t})^{\frac{1}{d+2}}. \label{eq:osi-n}
	\end{align} 
\end{lemma}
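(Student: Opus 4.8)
\emph{Strategy.} I would prove the two one-step estimates by constructing a competitor for $(m,h)$ that is cheap on the small cuboid, and then invoking local minimality. The competitor is produced along the chain \emph{over-relaxed solution} $\to$ \emph{relaxed competitor} (Proposition~\ref{prop:fine-properties-relaxed}) $\to$ \emph{non-convex competitor} (Corollary~\ref{cor:competitor}). Fix $\theta\in(0,\tfrac13]$, $A<\infty$; at the end fix $\delta\in(0,1)$ small in terms of $\theta$ and the universal constants that appear, then $\epsilon\in(0,1)$ with $\epsilon\le\delta^{d+2}/A$ and $\epsilon$ small. Translate so $a=0$. As in the remark following Lemma~\ref{lem:gradient}, $(m,h)$ minimises $E$ in every sub-cuboid of $Q_{\ell,t}$ given its own lateral flux and top/bottom traces, up to an additive error bounded by the area of the lateral wall, which after renormalisation is of lower order than $t^{4/3}/\ell^2$ whenever $\ell\gtrsim t^{2/3}$ (the only regime of interest --- in the complementary one $t^{4/3}/\ell^2\gtrsim1$ and there is nothing to prove).

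\emph{Passage to a good sub-cuboid and relaxed competitor.} Using $f(Q_{\ell,t})\le\epsilon$, $n(Q_{\ell,t})\le\delta$ and Fubini, I pick a radius $\rho\in[2\theta\ell,3\theta\ell]$ so that on $\Gamma:=\partial Q_\rho'\times[0,t]$ the shifted lateral flux is controlled by its average, $\dashint_\Gamma\big((h-\overline h_t)\cdot\nu'\big)^2\lesssim_\theta\tfrac{\ell^2}{t^2}f(Q_{\ell,t})$, and I set $Q^*:=Q_{\rho,t}$; note $Q_{\theta\ell,\theta^{3/2}t}\subseteq Q^*$. In $Q^*$, take the over-relaxed field $\overline h_t=-\nabla'v_0$ (Section~\ref{sec:ORP}) and the interpolant $m_0=\tfrac{x_{d+1}}{t}m^T$ with $m^T:=m(\cdot,t)$, and apply Proposition~\ref{prop:fine-properties-relaxed}: the size condition \eqref{eq:boundary-layer-size} holds with $r/\rho\sim_\theta\max\{f(Q_{\ell,t})^{1/(d+1)},\delta\}$, because the oscillatory-flux term is controlled just above, the cumulated-field term is $n(Q^*)\le n(Q_{\ell,t})\le\delta$ (as $\overline h_t$ is $x_{d+1}$-independent and $Q_\rho'\subseteq Q_\ell'$), and $H^B=0$ since $m^B\equiv0$. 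This produces $(m_\rel,h_\rel)=(m_0+m_r,\overline h_t+h_r)\in\mathcal A_{Q^*}^\rel(g;m^{B,T})$ with $h_\rel\equiv\overline h_t$ outside the boundary layer $A_r(Q^*)$, and, by \eqref{eq:bound-hell}--\eqref{eq:bound-d3mell}, $\int_{A_r(Q^*)}|h_r|^2\lesssim_\theta\tfrac r\ell\tfrac{\ell^{d+2}}{t}f(Q_{\ell,t})$ and $\|\partial_{d+1}m_\rel\|_{L^2_{x_{d+1}}L^\infty_{x'}(Q^*)}^2\lesssim\tfrac1t+(r/\rho)^{-(d+1)}\tfrac1tf(Q_{\ell,t})\lesssim_\theta\tfrac1t$; the last step is exactly where \eqref{eq:epsilon-delta-relation} enters, since $r/\rho\gtrsim_\theta\delta$ and $\epsilon\le\delta^{d+2}/A$ give $(r/\rho)^{-(d+1)}f(Q_{\ell,t})\lesssim_\theta\delta^{-(d+1)}\epsilon\le1$.

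\emph{Non-convex competitor, comparison, and the bound on $f$.} Applying Corollary~\ref{cor:competitor} to $(m_\rel,h_\rel)$ with $N\sim\rho/t^{2/3}\sim_\theta\ell/t^{2/3}\in\NN$ yields a $\pm1$-valued admissible $(\widetilde m,\widetilde h)$ in $Q^*$ with the same data, $\widetilde m\weaklystar m^{B,T}$ at top/bottom, $\int_{Q^*}|\nabla'\widetilde m|\lesssim_\theta\ell^dt^{1/3}$ and $\int_{Q^*}|\widetilde h-h_\rel|^2\lesssim_\theta\ell^dt^{1/3}$ (Remark~\ref{rem:competitornonconvex}). The key observation is that $\widetilde h-\overline h_t=(\widetilde h-h_\rel)+h_r$ is $L^2(Q^*)$-orthogonal to $\overline h_t=-\nabla'v_0$: integrating by parts in $x'$, the bulk term vanishes because $\widetilde m,m_\rel,m_r$ share the same top trace $m^T$ and the same (weak) bottom trace $0$, and the boundary term vanishes because $\widetilde h-h_\rel$ has zero lateral flux and $\int_0^t(g-\overline g_t)\,\mathrm{d}x_{d+1}=0$. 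Hence, combining local minimality of $(m,h)$ in $Q^*$ with Lemma~\ref{lem:orthogonality} on the minimiser's side and this orthogonality on the competitor's side, the $\tfrac12\int|\overline h_t|^2$ terms cancel and one obtains that $\tfrac12\int_{Q^*}|h-\overline h_t|^2+\int_{Q^*}|\nabla'm|$ is at most $\tfrac12\int_{Q^*}|\widetilde h-\overline h_t|^2+\int_{Q^*}|\nabla'\widetilde m|+C\ell^{d-1}t\lesssim_\theta\ell^dt^{1/3}+\tfrac r\ell\tfrac{\ell^{d+2}}{t}f(Q_{\ell,t})$. Restricting to $Q_{\theta\ell,\theta^{3/2}t}\subseteq Q^*$ (using monotonicity and that $\overline h_{\theta^{3/2}t}$ is the optimal $x_{d+1}$-independent $L^2$-approximation of $h$ there) and renormalising gives $f(Q_{\theta\ell,\theta^{3/2}t})\lesssim_\theta\tfrac{t^{4/3}}{\ell^2}+\tfrac r\ell f(Q_{\ell,t})+\tfrac{t^2}{\ell^3}$; since $\tfrac{t^2}{\ell^3}\le\tfrac{t^{4/3}}{\ell^2}$ and $\tfrac r\ell\lesssim_\theta\max\{f(Q_{\ell,t})^{1/(d+1)},\delta\}$, choosing $\epsilon,\delta$ small makes the middle term $\le\theta f(Q_{\ell,t})$, which is \eqref{eq:osi-f}.

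\emph{The bound on $n$, and the main obstacle.} Writing $\overline h_{\theta^{3/2}t}=\overline h_t+\dashint_0^{\theta^{3/2}t}(h-\overline h_t)\,\mathrm{d}x_{d+1}$ and using $Q_{\theta\ell}'\subseteq Q_\ell'$ gives $n(Q_{\theta\ell,\theta^{3/2}t})\le\theta^{1/2}n(Q_{\ell,t})+\tfrac1{\theta\ell}\big\|\int_0^{\theta^{3/2}t}(h-\overline h_t)\,\mathrm{d}x_{d+1}\big\|_{L^\infty(Q_{\theta\ell}')}$, the first term being exactly the contraction in \eqref{eq:osi-n}. From the comparison above and Jensen, $\int_{Q_{\theta\ell}'}\big|\int_0^{\theta^{3/2}t}(h-\overline h_t)\,\mathrm{d}x_{d+1}\big|^2\le\theta^{3/2}t\int_{Q_{\theta\ell,\theta^{3/2}t}}|h-\overline h_t|^2\lesssim_\theta\ell^dt^{4/3}$, so the $L^2$-average of this field is $\lesssim_\theta t^{2/3}/\ell$, of the intrinsic branching order. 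The delicate point --- and the main obstacle --- is to upgrade this to an $L^\infty$-in-$x'$ bound of the same order: writing $\int_0^{\theta^{3/2}t}(h-\overline h_t)\,\mathrm{d}x_{d+1}=-\nabla'\big(|\nabla'|^{-2}\big(m(\cdot,\theta^{3/2}t)-\theta^{3/2}m(\cdot,t)\big)\big)$, a naive elliptic/Sobolev estimate only sees $|m|\le1$ and is hopelessly lossy, so one must instead exploit the $BV$-structure (the small slice-perimeters, controlled by the interfacial energy) through a maximal-$L^p$-regularity plus embedding argument in the spirit of Remark~\ref{rem:bound-mb-cumulated-field} and the elliptic estimate of the Appendix; this interpolation between the $L^2$ (negative-Sobolev) and $BV$/$L^\infty$ scales is what produces the exponent $\tfrac1{d+2}$. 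Finally, the branching-order term $t^{2/3}/\ell$ is absorbed into $f(Q_{\ell,t})^{1/(d+2)}$ because $f(Q_{\ell,t})\gtrsim t^{4/3}/\ell^2$ --- a lower bound proved, as for the lower bound of Theorem~\ref{thm:localbounds} but keeping the shift by the height average, by a slice-wise application of \eqref{eq:CO-interpolation} together with Poincaré's inequality in $x_{d+1}$ (the field $\int_0^{x_{d+1}}(h-\overline h_t)$ vanishes at both ends) and optimisation between the two energy contributions --- so that $t^{2/3}/\ell\lesssim f(Q_{\ell,t})^{1/2}\le f(Q_{\ell,t})^{1/(d+2)}$; this closes \eqref{eq:osi-n}.
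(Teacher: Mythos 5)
Your outline of the route from the over-relaxed solution through Proposition~\ref{prop:fine-properties-relaxed} and Corollary~\ref{cor:competitor} to a non-convex competitor, and the ensuing optimality comparison in a good sub-cuboid, is the same macro-strategy the paper follows, and your derivation of \eqref{eq:osi-f} is in the right spirit (modulo some unproved bookkeeping about the orthogonality of $\widetilde h - \overline h_t$ to $\overline h_t$, which the paper handles via \eqref{eq:htilde-average} and Lemma~\ref{lem:orthogonality}). The gap is in your treatment of \eqref{eq:osi-n}.

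You identify the discrepancy $H=\int_0^{\theta^{3/2}t}(h-\overline h_t)\,\mathrm{d}x_{d+1}$ correctly and note that the first part of the splitting gives the contraction $\theta^{1/2}n(Q_{\ell,t})$, but then you decide that a ``naive'' elliptic estimate which only sees $|\nabla'\cdot H|\le 2$ is ``hopelessly lossy'' and reach for a $BV$/maximal-regularity interpolation that you do not carry out. This is exactly backwards. The paper's Step~5.B obtains the $L^\infty$ bound directly from two ingredients: (i) Cauchy--Schwarz applied to the \emph{minimiser's own} $L^2$-bound, $|H(x')|^2\le \theta^{3/2}t^2\dashint_0^t(h-\overline h_t)^2\,\mathrm{d}x_{d+1}$, whose $x'$-average over $Q_\ell'$ is precisely $\ell^2\,\theta^{3/2}f(Q_{\ell,t})$, and (ii) an interior Campanato-type estimate for the potential $U$ of $H=-\nabla'U$, namely
\begin{align*}
 |H(x_*')|^2 \lesssim \dashint_{B_{\alpha\ell}(x_*')}|H|^2\,\mathrm{d}x' + (\alpha\ell)^2\sup|\nabla'\cdot H|^2 \lesssim \alpha^{-d}\ell^2\,\theta^{3/2}f(Q_{\ell,t}) + (\alpha\ell)^2,
\end{align*}
which after optimizing in the ball radius $\alpha$ yields $\sup_{Q_\rho'}|H|\lesssim \ell\,(\theta^{3/2}f(Q_{\ell,t}))^{1/(d+2)}$, hence \eqref{eq:osi-n}. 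The exponent $1/(d+2)$ is produced exactly by balancing the $\alpha^{-d}$ loss against the $(\alpha\ell)^2$ term coming from the bounded source $|\nabla'\cdot H|\le 2$; no $BV$-structure is needed at all.

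Because you instead derived an $L^2$ bound on $H$ from the \emph{competitor} comparison, you pick up a parasitic term of order $t^{2/3}/\ell$ that you then need to absorb into $f(Q_{\ell,t})^{1/(d+2)}$, and your proposed absorption relies on a claimed lower bound $f(Q_{\ell,t})\gtrsim t^{4/3}/\ell^2$ for minimisers. That inequality is neither proved in your sketch nor readily available: for a general sub-cuboid of a minimiser, $h-\overline h_t=|\nabla'|^{-1}\partial_{d+1}(m-m_0)$ with a zero-mean $m-m_0$ whose perimeter is not controlled by $\int|\nabla'm|$ alone (one would have to control $\nabla'm_0$ as well), so the slice-wise interpolation \eqref{eq:CO-interpolation} does not apply in the form you suggest; and if such a universal lower bound on $f$ were available it would essentially give the local energy lower bound of Theorem~\ref{thm:localbounds} for free, bypassing the reflection argument the paper uses. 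So this step of the proposal does not close, and Step~5.B of the paper is the piece you are missing.
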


Note that we are going down with a different exponent in the $x'$ and $x_{d+1}$ directions. This ensures that the ratio $t^{\frac{2}{3}} \ell^{-1}$ stays invariant when going down to smaller scales and later ensures the right scaling relation on the smaller scales. 
\begin{remark}\label{rem:onestep-2}
	Fixing $\theta \in (0, \frac{1}{4}]$, we can iterate this one-step improvement. Indeed, let $f(Q_{\ell, t})\leq \epsilon$ and $n(Q_{\ell, t})\leq \delta$ with $\epsilon$ and $\delta$ from Lemma~\ref{lem:onestep-2} (which are fixed by the choice of $\theta$). If $C_n, C_f <\infty$ denote the implicit constants in \eqref{eq:osi-n} and \eqref{eq:osi-f}, then
	\begin{align*}
		f(Q_{\theta \ell, \theta^{\frac{3}{2}} t}(a)) 
		\leq \theta f(Q_{\ell, t}) + C_f \frac{t^{\frac{4}{3}}}{\ell^2}
		\leq \frac{\epsilon}{4} + C_f \frac{t^{\frac{4}{3}}}{\ell^2},
	\end{align*}
	and
	\begin{align*}
		n(Q_{\theta \ell, \theta^{\frac{3}{2}} t}(a)) 
		\leq \theta^{\frac{1}{2}} n(Q_{\ell, t}) + C_n f(Q_{\ell, t})^{\frac{1}{d+2}} 
		\leq \frac{\delta}{2} + C_n \epsilon^{\frac{1}{d+2}},
	\end{align*}
	as long as $a \in \RR^d \times \{0\}$ is such that $Q_{3\theta\ell, t}(a) \subseteq Q_{\ell,t}$. 
	By \eqref{eq:epsilon-delta-relation} for the choice $A = 2 C_n$ it then follows that $n(Q_{\theta \ell, \theta^{\frac{3}{2}} t}(a)) \leq \delta$. Hence, if $C_f \frac{t^{\frac{4}{3}}}{\ell^2} \leq \frac{3\epsilon}{4}$, which is guaranteed if $\ell \geq C_{\ell t} t^{\frac{2}{3}}$ with $C_{\ell t} \geq \left(\frac{4 C_f}{3\epsilon}\right)^{\frac{1}{2}}$, we also have $f(Q_{\theta \ell, \theta^{\frac{3}{2}} t}(a)) \leq \epsilon$. 
	Let us mention that the condition on $a$ implies that $Q_{\theta \ell, \theta^{\frac{3}{2}}t}(a) \subseteq Q_{(1-2\theta)\ell, \theta^{\frac{3}{2}} t}$, so that the iteration (Proposition~\ref{prop:iteration-B}) based on Lemma~\ref{lem:onestep-2} only gives information on $f$ and $n$ on smaller cuboids at the top/bottom boundary that have positive distance from $\Gamma_{L,T}$.
\end{remark}

Before giving the proof of Lemma~\ref{lem:onestep-1} and Lemma~\ref{lem:onestep-2}, let us start with some technical preliminaries that will be useful. The first one concerns the choice of a good cuboid with $\ell$ on which the boundary trace is well-behaved. The second one concerns a monotonicity formula for the renormalised energy density $f$, which seems to be new in this context and plays an important role in Lemma~\ref{lem:onestep-2}.

\begin{lemma}[Choice of good width]\label{lem:good} Let $h\in L^2(Q_{L}')$ for some $L>0$. Then the set of $\ell \in  \left[\frac{L}{2},L\right]$ such that 
	\begin{align*}
		\dashint_{\partial Q_{\ell}'}(h\cdot\nu)^2\,\mathrm{d}\mathcal{H}^{d-1} \lesssim \dashint_{Q_{L}'} |h|^2 \,\mathrm{d}x'
	\end{align*}
	has positive Lebesgue measure.
\end{lemma}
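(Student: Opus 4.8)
The plan is to use a simple averaging (mean value / pigeonhole) argument over the one-parameter family of cube boundaries $\partial Q_\ell'$ for $\ell \in [L/2, L]$. First I would note that the key identity is the coarea-type formula: if we foliate the annular region $Q_L' \setminus Q_{L/2}'$ by the boundaries $\partial Q_\ell'$, then integrating the boundary integrals over $\ell$ recovers (a piece of) the bulk integral. Concretely, for each $\ell$ the boundary $\partial Q_\ell'$ consists of $2d$ faces, each a translate/dilate of a $(d-1)$-dimensional cube of side $2\ell$; parametrising points on these faces by the ``free'' coordinates together with $\ell$ itself gives a Lipschitz change of variables whose Jacobian is bounded above and below by dimensional constants on the range $\ell \in [L/2, L]$. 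Hence
\begin{align*}
	\int_{L/2}^{L} \int_{\partial Q_\ell'} (h\cdot \nu)^2\, \mathrm{d}\mathcal{H}^{d-1}\, \mathrm{d}\ell \lesssim \int_{Q_L' \setminus Q_{L/2}'} |h|^2\, \mathrm{d}x' \leq \int_{Q_L'} |h|^2\, \mathrm{d}x'.
\end{align*}

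Next I would divide by the length of the interval, which is $L/2$, to obtain
\begin{align*}
	\dashint_{L/2}^{L} \left( \int_{\partial Q_\ell'} (h\cdot \nu)^2\, \mathrm{d}\mathcal{H}^{d-1} \right) \mathrm{d}\ell \lesssim \frac{1}{L} \int_{Q_L'} |h|^2\, \mathrm{d}x' = \frac{L^{d-1}}{L^d}\int_{Q_L'}|h|^2\,\mathrm{d}x' \sim L^{d-1} \dashint_{Q_L'}|h|^2\,\mathrm{d}x'.
\end{align*}
On the other hand $|\partial Q_\ell'| \sim \ell^{d-1} \sim L^{d-1}$ uniformly for $\ell \in [L/2, L]$, so dividing through by this factor turns the left-hand side into the average over $\ell$ of $\dashint_{\partial Q_\ell'}(h\cdot\nu)^2\,\mathrm{d}\mathcal{H}^{d-1}$, and the right-hand side into $\dashint_{Q_L'}|h|^2\,\mathrm{d}x'$ up to a dimensional constant. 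By the mean value property of the (Lebesgue) integral, the set of $\ell \in [L/2, L]$ for which the integrand does not exceed (twice, say) its average over that interval has positive measure — in fact measure at least $L/4$. This gives the claimed conclusion.

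The only mildly technical point — and what I'd flag as the ``main obstacle'', though it is really just bookkeeping — is making the change of variables on $\partial Q_\ell'$ precise and checking the Jacobian bounds are uniform. One clean way is to treat the $2d$ faces separately: e.g. the face $\{x_1' = \ell\} \times [-\ell,\ell]^{d-1}$ can be rescaled to $\{x_1' = \ell\}\times[-1,1]^{d-1}$ and then the map $(\ell, z) \mapsto (\ell, \ell z)$ for $z \in [-1,1]^{d-1}$, $\ell\in[L/2,L]$ has Jacobian $\ell^{d-1} \in [(L/2)^{d-1}, L^{d-1}]$; pulling back $(h\cdot\nu)^2\,\mathrm{d}\mathcal{H}^{d-1}\,\mathrm{d}\ell$ gives a measure comparable to Lebesgue measure on a subset of $Q_L'\setminus Q_{L/2}'$, and summing the $2d$ contributions covers (with bounded overlap) that annulus. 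Everything in sight is a measurable nonnegative function of $h$, so no regularity of $h$ beyond $h \in L^2(Q_L')$ is needed, and Fubini applies directly.
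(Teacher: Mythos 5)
Your proof is correct and follows essentially the same route as the paper: a Fubini/coarea identity bounding $\int_{L/2}^L \int_{\partial Q_\ell'}(h\cdot\nu)^2\,\mathrm{d}\mathcal{H}^{d-1}\,\mathrm{d}\ell$ by $\int_{Q_L'}|h|^2\,\mathrm{d}x'$, followed by a Markov/pigeonhole step. The rescaling you flag as the main technical point is actually avoidable: parametrising each face of $\partial Q_\ell'$ directly by the $d-1$ free coordinates, which range over $[-\ell,\ell]^{d-1}$, makes the Fubini identity hold with Jacobian $1$ (the $\ell$-family of faces foliates a wedge in $Q_L'\setminus Q_{L/2}'$), which is the route the paper takes.
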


\begin{proof} 
	By Fubini's theorem, 
	\begin{align*}
		\int_{\frac{L}{2}}^{L} \int_{\partial Q_{\ell}'} (h\cdot\nu)^2\,\mathrm{d}\mathcal{H}^{d-1}\,\mathrm{d}\ell 
		= \int_{\left[\frac{L}{2},L\right]^{d}} (h\cdot \nu)^2\,\mathrm{d}x' 
		\leq \int_{Q_L} |h|^2\,\mathrm{d}x' < \infty,
	\end{align*}
 	so that $\int_{\partial Q_{\ell}'} (h\cdot\nu)^2\,\mathrm{d}\mathcal{H}^{d-1}$ is finite for Lebesgue-a.e.\ $\ell\in\left[\frac{L}{2},L\right]$. 
 	Given $A>1$, we let 
 	\begin{align*}
 		\Lambda \coloneq \left\{ {\textstyle \ell\in\left[\frac{L}{2}, L\right]:} \int_{\partial Q_{\ell}'} (h\cdot\nu)^2\,\mathrm{d}\mathcal{H}^{d-1} \leq \frac{2A}{L} \int_{Q_{L}'} |h|^2 \,\mathrm{d}x'\right\}.
 	\end{align*} 
	By Markov's inequality, for any $A>1$ it holds that $\left|\Lambda^c\right|
		\leq \frac{L}{2A}$, 
	hence 
	$\left|\Lambda \right| \geq \frac{L}{2}\left(1-\frac{1}{A}\right)>0$,
	and for all $\ell \in \Lambda$ we have 
	\begin{align*}
		\dashint_{\partial Q_{\ell}'}(h\cdot\nu)^2\,\mathrm{d}\mathcal{H}^{d-1} 
		\leq 2 A \left(\frac{L}{\ell}\right)^{d-1} \dashint_{Q_{L}'} |h|^2 \,\mathrm{d}x'	
		\leq 2^d A \dashint_{Q_{L}'} |h|^2 \,\mathrm{d}x'.
	\end{align*}
	The results thus follows.
\end{proof}

\begin{lemma}[Monotonicity formula]\label{lem:monotonicity}
	Let $h \in L^2(Q_{\ell,T})$ for some $\ell, T>0$. Then the map
	\begin{align*}
		t \mapsto \frac{t^2}{\ell^2} \dashint_{Q_{\ell,t}} (h-\overline{h}_t)^2\,\mathrm{d}x
	\end{align*}
	is non-decreasing in $[0,T]$.
\end{lemma}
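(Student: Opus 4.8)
The plan is to reduce the statement to a one-dimensional computation in the vertical variable via Fubini's theorem, and then to verify the monotonicity for scalar $L^2$ functions of one variable by an elementary differentiation. Since $\ell$ is fixed and $\left|Q_{\ell,t}\right| = (2\ell)^d t$, one has
\begin{align*}
	\frac{t^2}{\ell^2} \dashint_{Q_{\ell,t}} (h-\overline{h}_t)^2\,\mathrm{d}x = \frac{1}{2^d \ell^{d+2}}\, t \int_{[-\ell,\ell]^d} \int_0^t \big( h(x',s) - \overline{h}_t(x')\big)^2\,\mathrm{d}s\,\mathrm{d}x',
\end{align*}
so it suffices to prove that, for almost every $x'$, the map $t\mapsto t\int_0^t (g - \overline{g}_t)^2\,\mathrm{d}s$ is non-decreasing, where $g(s) \coloneqq h(x',s) \in L^2([0,T])$ (by Fubini) and $\overline{g}_t \coloneqq \dashint_0^t g\,\mathrm{d}s$; integrating in $x'$ then yields the claim, a pointwise-in-$x'$ non-decreasing family of integrands having a non-decreasing integral.

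For the one-dimensional statement, set $G(t) \coloneqq \int_0^t g\,\mathrm{d}s$ and $H(t) \coloneqq \int_0^t g^2\,\mathrm{d}s$, which are absolutely continuous on $[0,T]$ since $g\in L^2$. Expanding the square gives $\int_0^t (g-\overline{g}_t)^2\,\mathrm{d}s = H(t) - t\,\overline{g}_t^2 = H(t) - G(t)^2/t$, hence
\begin{align*}
	\Phi(t) \coloneqq t\int_0^t (g-\overline{g}_t)^2\,\mathrm{d}s = t\,H(t) - G(t)^2
\end{align*}
is absolutely continuous, and for almost every $t$ (namely at Lebesgue points of $g$ and $g^2$, where $G'=g$ and $H'=g^2$) one computes
\begin{align*}
	\Phi'(t) = H(t) + t\, g(t)^2 - 2\, G(t)\, g(t) = \int_0^t \big( g(s)^2 + g(t)^2 - 2 g(s) g(t) \big)\,\mathrm{d}s = \int_0^t \big( g(s) - g(t)\big)^2\,\mathrm{d}s \geq 0 .
\end{align*}
An absolutely continuous function with a.e.\ non-negative derivative is non-decreasing, so $\Phi$ is non-decreasing on $[0,T]$, which is exactly what is needed.

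There is no serious obstacle here; the only point requiring a little care is that $h$ is merely $L^2$, so the differentiation must be justified through absolute continuity of $G$ and $H$ and the Lebesgue differentiation theorem. Alternatively, one may first establish the identity $\Phi(t_2) - \Phi(t_1) = \int_{t_1}^{t_2}\int_0^s \big(g(\tau)-g(s)\big)^2\,\mathrm{d}\tau\,\mathrm{d}s \geq 0$ for smooth $g$ (by the computation above) and then pass to the limit using density of smooth functions in $L^2([0,T])$ together with the continuity of both sides in the $L^2$ topology.
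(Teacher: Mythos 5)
Your proof is correct and takes essentially the same approach as the paper: differentiate $t\mapsto t^2\ell^{-2}\dashint_{Q_{\ell,t}}(h-\overline{h}_t)^2$, and observe that the derivative is (up to normalisation) $\int_0^t\bigl(h(x',s)-h(x',t)\bigr)^2\,\mathrm{d}s$ integrated over $x'$, hence nonnegative. The main difference is that you slice in $x'$ first and justify the differentiation via absolute continuity and Lebesgue points, whereas the paper carries out the same computation directly in the multidimensional setting without spelling out the regularity details; your version is a bit more rigorous but the identity is the same.
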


\begin{proof}
	Since $h\in L^2(Q_{\ell,T})$, by Fubini's theorem $h(\cdot,t)$ is well-defined and finite for Lebesgue-a.e.\ $t>0$. 
	We calculate
	\begin{align*}
		\frac{\mathrm{d}}{\mathrm{d}t} \overline{h}_t 
		= \frac{\mathrm{d}}{\mathrm{d}t} \frac{1}{t} \int_0^t h\,\mathrm{d}x_{d+1}
		= \frac{1}{t}\left(h(\cdot, t) - \overline{h}_t\right),
	\end{align*}
	and obtain with this
	\begin{align*}
		&\frac{\mathrm{d}}{\mathrm{d}t} \frac{t^2}{\ell^2} \dashint_{Q_{\ell,t}} (h-\overline{h}_t)^2\,\mathrm{d}x 
		= \frac{1}{\ell^2} \frac{\mathrm{d}}{\mathrm{d}t} \left( t \int_0^t \dashint_{Q_{\ell}'} (h-\overline{h}_t)^2\,\mathrm{d}x'\,\mathrm{d}x_{d+1}\right)\\
		&\quad= \frac{1}{\ell^2} \left( \int_0^t \dashint_{Q_{\ell}'} (h-\overline{h}_t)^2\,\mathrm{d}x + t \dashint_{Q_{\ell}'} (h(\cdot, t)-\overline{h}_t)^2\,\mathrm{d}x' - 2 t^2 \dashint_{Q_{\ell,t}} (h-\overline{h}_t)\cdot \frac{\mathrm{d}}{\mathrm{d}t} \overline{h}_t\,\mathrm{d}x \right)\\
		&\quad= \frac{1}{\ell^2} \left( t \dashint_{Q_{\ell,t}} (h-\overline{h}_t)^2\,\mathrm{d}x + t \dashint_{Q_{\ell,t}} (h(\cdot, t)-\overline{h}_t)^2\,\mathrm{d}x - 2 t \dashint_{Q_{\ell,t}} (h-\overline{h}_t)\cdot (h(\cdot, t)-\overline{h}_t)\,\mathrm{d}x \right) \\
		&\quad= \frac{t}{\ell^2} \dashint_{Q_{\ell,t}} (h-h(\cdot,t))^2\,\mathrm{d}x 
		\geq 0,
	\end{align*}
	which proves the claim.
\end{proof}

The proofs of both Lemma~\ref{lem:onestep-1} and Lemma~\ref{lem:onestep-2} have the same first steps and only deviate at the end when it comes to estimating the cumulated field strength $n$. We therefore start with the common part first and conclude the proofs separately.
\begin{proof}[Proof of Lemma~\ref{lem:onestep-1} and Lemma~\ref{lem:onestep-2} (common part)]
Let $\theta$ and $a \in \RR^d \times \{0\}$ be as in the assumptions of Lemma~\ref{lem:onestep-1} and Lemma~\ref{lem:onestep-2}, respectively, and denote $\ltheta = \theta \ell$, $\ttau = \tau t$ for $\tau\leq 1$.
\begin{enumerate}[label=\textsc{\bf Step \arabic*},leftmargin=0pt,labelsep=*,itemindent=*,itemsep=10pt,topsep=10pt]
	\item \emph{(Choice of good width)}.
		Let $\rho\in [\theta\ell, 2\theta\ell]$ be a good width in the sense of Lemma~\ref{lem:good} (in an $x_{d+1}$-integrated version). Then for $g \coloneqq h\cdot \nu |_{\Gamma_{\rho,\ttau}(a)}$ and its height average $\overline{g}_{\ttau} = \int_0^{\ttau} g\,\mathrm{d}x_{d+1}$ there holds
		\begin{align*}
			\dashint_{\Gamma_{\rho,\ttau}(a)} (g-\overline{g}_{\ttau})^2 \,\mathrm{d}\mathcal{H}^{d-1}\,\mathrm{d}x_{d+1} 
			&\lesssim \dashint_{Q_{2\theta\ell, \ttau}(a)} (h-\overline{h}_{\ttau})^2 \,\mathrm{d}x 
			\lesssim \theta^{-d} \dashint_{Q_{\ell, \ttau}} (h-\overline{h}_{\ttau})^2 \,\mathrm{d}x,
		\end{align*}
		and applying the monotonicity formula from Lemma~\ref{lem:monotonicity}, we obtain  
		\begin{align*}
			\frac{\ttau^2}{\ell^2} \dashint_{Q_{\ell, \ttau}} (h-\overline{h}_{\ttau})^2 \,\mathrm{d}x
			\leq \frac{t^2}{\ell^2} \dashint_{Q_{\ell, t}} (h-\overline{h}_t)^2\,\mathrm{d}x 
			\leq f(Q_{\ell, t}),
		\end{align*}
		hence
		\begin{align}\label{eq:good-width-height}
			\frac{\ttau^2}{\rho^2} \dashint_{\Gamma_{\rho,\ttau}(a)} (g-\overline{g}_{\ttau})^2 \,\mathrm{d}\mathcal{H}^{d-1} \, \mathrm{d}x_{d+1}
			\lesssim \theta^{-(d+2)} f(Q_{\ell, t}).
		\end{align}

	\item \emph{(Construction of a competitor)}. 
		Let $(m_r,h_r)\in\mathcal{X}_r\left(Q_{\rho, \ttau}(a)\right)$ be the admissible pair constructed in Proposition~\ref{prop:fine-properties-relaxed} with $g = h\cdot \nu$, i.e., $h_r \cdot \nu = (h - \overline{h}_{\ttau})\cdot \nu$ on $\Gamma_{\rho, \ttau}(a)$ (extended by zero outside $A_r(Q_{\rho,\ttau}(a))\coloneqq (Q'_{\rho}(a)\setminus Q'_{\rho-r}(a)) \times[0,\ttau]$). Further, set $m^{\ttau} = -\nabla' \cdot \int_0^{\ttau} h\,\mathrm{d}x_{d+1}$. Then $|m^{\ttau}| \leq 1$ in $Q_{\ell}'$, and  $(m_{\mathrm{rel}}, h_{\mathrm{rel}})\coloneqq(\frac{x_{d+1}}{\ttau} m^{\ttau} + m_r, \overline{h}_{\ttau} + h_r) \in \mathcal{A}^{\mathrm{rel}}_{Q_{\rho,\ttau}(a)}$ is admissible for the relaxed problem in $Q_{\rho,\ttau}(a)$ with boundary conditions given by the non-convex minimiser $(m,h)$.
		Let us observe that 
		$$
		\partial_{d+1}m_{\mathrm{rel}}=\frac1{\ttau}m^{\ttau}+\partial_{d+1}m_r
		$$
		and therefore
		$$
		\ttau
		\|\partial_{d+1} m_{\mathrm{rel}}\|^2_{L^2_{x_{d+1}}L^{\infty}_{x'}(Q_{\rho,\ttau}(a))}\leq 1+\ttau\|\partial_{d+1} m_r\|^2_{L^2_{x_{d+1}}L^{\infty}_{x'}(A_{\rho,\ttau}(r;a))}
		\stackrel{\eqref{eq:boundary-layer-size}\&\eqref{eq:bound-d3mell}}{\lesssim}1.
		$$
		We can thus apply Corollary~\ref{cor:competitor}, which provides the existence of an admissible pair $(\widetilde{m}, \widetilde{h}) \in \mathcal{A}_{Q_{\rho,\ttau}(a)}(g;m^{B,T})$ such that \eqref{eq:bound-mtilde-competitor} and \eqref{eq:bound-htilde-competitor} hold in $Q_{\rho,\ttau}(a)$.
		
		We claim that
		\begin{align}\label{eq:htilde-average}
			\overline{\widetilde h}_{\ttau}=\dashint_{0}^{\ttau} \widetilde{h}\,\mathrm{d}x_{d+1} = \dashint_{0}^{\ttau} h\,\mathrm{d}x_{d+1}=\overline{h}_{\ttau}.
		\end{align}
		In fact, let us observe that 
		$$
		-\nabla'\cdot \overline{\widetilde h}_{\ttau}=-\nabla '\cdot \dashint_{0}^{\ttau} \widetilde{h}\,\mathrm{d}x_{d+1} =\dashint_{0}^{\ttau} \partial_{d+1}\widetilde m\,\mathrm{d}x_{d+1}=\frac{m^{\ttau}-m^B}{\ttau}.
		$$
		Moreover, on $\Gamma_{\rho,\ttau}(a)$ it holds that
		$$
		\dashint_{0}^{\ttau}\widetilde h\cdot \nu \,\mathrm{d}x_{d+1}=\dashint_{0}^{\ttau} g \,\mathrm{d}x_{d+1}=\dashint_{0}^{\ttau} h\cdot \nu \,\mathrm{d}x_{d+1}.
		$$
		By construction, $\widetilde h$ is a gradient field, hence $\widetilde h=-\nabla'\widetilde u$, where $\widetilde u$ is a solution to
		\begin{align*}
		\begin{array}{rll}
		-\Delta' u &= -\frac{m^{\ttau} - m^B}{\ttau} &\text{in} \quad  Q_{\rho,{\ttau}}(a), \\
		-\nabla' u \cdot \nu &= \overline{g}_{\ttau} &\text{on} \quad \Gamma_{\rho,\ttau}(a).
		\end{array}
		\end{align*}
		Finally, by observing that $u_0=-\nabla '\overline{h}_{\ttau}$ is also a solution, by uniqueness we deduce the validity of the claim.

		On the other hand, by (local) minimality of $(m,h)$ in $Q_{\rho,\ttau}(a)$, we have 
		\begin{align}\label{eq:competitor}
			\int_{Q_{\rho,\ttau}(a)} |\nabla' m| + \frac{1}{2} \int_{Q_{\rho,\ttau}(a)} h^2\,\mathrm{d}x 
			\leq \int_{Q_{\rho,\ttau}(a)} |\nabla' \widetilde{m}| + \frac{1}{2} \int_{Q_{\rho,\ttau}(a)} \widetilde{h}^2\,\mathrm{d}x.
		\end{align}
		
	\item \emph{(Orthogonality and optimality)}. 
		To bound $f(Q_{\ltheta, \ttau}(a))$ we start by estimating 
			\begin{align*}
				f(Q_{\ltheta, \ttau}(a)) 
				&= \frac{\ttau^2}{\ltheta^2} \frac{1}{\ltheta^d \ttau}  \left( \int_{Q_{\ltheta, \ttau}(a)} |\nabla'm| + \frac{1}{2} \int_{Q_{\ltheta, \ttau}(a)} (h-\overline{h}_{\ttau})^2 \right)\\
				&\leq \frac{\ttau^2}{\ltheta^2} \frac{1}{\ltheta^d \ttau}  \left( \int_{Q_{\rho,\ttau}(a)} |\nabla'm| + \frac{1}{2} \int_{Q_{\rho,\ttau}(a)} (h-\overline{h}_{\ttau})^2 \right).
		\end{align*}
		By Lemma~\ref{lem:orthogonality} we further have that 
		\begin{align*}
			\int_{Q_{\rho,\ttau}(a)} |\nabla'm| + \frac{1}{2} \int_{Q_{\rho,\ttau}(a)} (h-\overline{h}_{\ttau})^2
			&= \int_{Q_{\rho,\ttau}(a)} |\nabla'm| + \frac{1}{2} \int_{Q_{\rho,\ttau}(a)} h^2 - \frac{1}{2} \int_{Q_{\rho,\ttau}(a)} \overline{h}_{\ttau}^2 \,\mathrm{d}x \\
			&\stackrel{\eqref{eq:competitor}}{\leq} \int_{Q_{\rho,\ttau}(a)} |\nabla' \widetilde{m}| + \frac{1}{2} \int_{Q_{\rho,\ttau}(a)} \widetilde{h}^2\,\mathrm{d}x - \frac{1}{2} \int_{Q_{\rho,\ttau}(a)} \overline{h}_{\ttau}^2 \,\mathrm{d}x \\
			&\stackrel{\eqref{eq:htilde-average}}{=} \int_{Q_{\rho,\ttau}(a)} |\nabla' \widetilde{m}| + \frac{1}{2} \int_{Q_{\rho,\ttau}(a)} (\widetilde{h} - \overline{\widetilde h}_{\ttau})^2\,\mathrm{d}x,
		\end{align*}
		hence 
		\begin{align}\label{eq:boundf}
			f(Q_{\ltheta, \ttau}(a)) 
			&\leq \frac{\ttau^2}{\ltheta^2} \frac{1}{\ltheta^d \ttau} \left( \int_{Q_{\rho,\ttau}(a)} |\nabla' \widetilde{m}| + \frac{1}{2} \int_{Q_{\rho,\ttau}(a)} (\widetilde{h} - \overline{\widetilde h}_{\ttau})^2\,\mathrm{d}x \right) \nonumber\\
			&\stackrel{\eqref{eq:bound-mtilde-competitor}}{\lesssim} \frac{\ttau^2}{\rho^2} \left( \frac{N}{\rho} + \dashint_{Q_{\rho,\ttau}(a)} (\widetilde{h} - \overline{\widetilde h}_{\ttau})^2\,\mathrm{d}x \right).
		\end{align}
	\item \emph{(Estimate of the correction fields in the interior and in the boundary layer)}.
		Note that since $m^B \equiv 0$, we have $H^B \equiv 0$, so that by Proposition~\ref{prop:fine-properties-relaxed} we may bound
		\begin{align*}
			&\frac{\ttau^2}{\rho^2} \dashint_{Q_{\rho,\ttau}(a)} (\widetilde{h} - \overline{\widetilde h}_{\ttau})^2\,\mathrm{d}x 
			\lesssim \frac{\ttau^2}{\rho^2} \dashint_{Q_{\rho,\ttau}(a)} (\widetilde{h} - h_{\mathrm{rel}})^2\,\mathrm{d}x +  \frac{\ttau^2}{\rho^2}  \dashint_{Q_{\rho,\ttau}(a)} (h_{\mathrm{rel}} - \overline{\widetilde h}_{\ttau})^2\,\mathrm{d}x \\
			&\quad\stackrel{\eqref{eq:bound-htilde-competitor}}{\lesssim} \frac{\ttau^2}{\rho^2} \frac{\rho^2}{N^2 \ttau^2} + \frac{\ttau^2}{\rho^2} \frac{1}{\rho^d \ttau} \int_{A_{\rho,\ttau}(r;a)} h_r^2\,\mathrm{d}x \\
			&\quad\stackrel{\eqref{eq:bound-hell}\&\eqref{eq:boundary-layer-size}}{\lesssim} \frac{1}{N^2} + \left(\left(\frac{\ttau^2}{\rho^2} \dashint_{\Gamma_{\rho,\ttau}(a)} (g-\overline{g}_{\ttau})^2 \right)^{\frac{1}{d+1}} + \frac{\ttau}{\rho} \sup_{Q_{\rho}'(a)} \left|\dashint_0^{\ttau} h\,\mathrm{d}x_{d+1}\right| \right) \frac{\ttau^2}{\rho^2} \dashint_{\Gamma_{\rho,\ttau}(a)} (g-\overline{g}_{\ttau})^2 \\
			&\quad\stackrel{\eqref{eq:good-width-height}}{\lesssim} \frac{1}{N^2} + \left(\theta^{-(d+2)} f(Q_{\ell, t})\right)^{\frac{d+2}{d+1}} + n(Q_{\rho,\ttau}(a)) \theta^{-(d+2)} f(Q_{\ell, t}).
		\end{align*}
		Inserting this in \eqref{eq:boundf}, we are led to
		\begin{align*}
			f(Q_{\ltheta, \ttau}(a)) \lesssim \left( \frac{N \ttau^2}{\rho^3} + \frac{1}{N^2} + \left(\theta^{-(d+2)} f(Q_{\ell, t})\right)^{\frac{d+2}{d+1}} + n(Q_{\rho,\ttau}(a)) \theta^{-(d+2)} f(Q_{\ell, t}) \right).
		\end{align*}
		Recalling that $\ltheta = \theta \ell$, optimising in $N\in\NN$ (i.e. choosing $N=\lceil\ttau^{-\frac23}\rho\rceil$), we find with $\ltheta\leq \rho \leq 2 \ltheta$,
		\begin{align}\label{eq:estimateonf}
			f(Q_{\ltheta, \ttau}(a)) \lesssim \frac{\ttau^{\frac{4}{3}}}{\ltheta^2} + \left( \theta^{-(d+2)} f(Q_{\ell, t})\right)^{\frac{d+2}{d+1}} + n(Q_{\rho,\ttau}(a)) \theta^{-(d+2)} f(Q_{\ell, t}).
		\end{align}
\end{enumerate}
\begin{remark}
	Since minimality of $(m,h)$ in $Q_{L,T}$ is only used in \eqref{eq:competitor} (in the form of local minimality of $(m,h)$ in $Q_{\rho,\ttau}(a)$, it can be replaced by almost-minimality in $Q_{\ell,t}$ in the sense of Remark~\ref{rem:almost-min}. This leads to an extra term $C \frac{t^{\frac{4}{3}}}{\ell^2}$ in the estimate, and can therefore be absorbed in the right-hand side of \eqref{eq:osi-f-1} and \eqref{eq:osi-f}.
\end{remark}
At this point, the proofs of Lemma~\ref{lem:onestep-1} and Lemma~\ref{lem:onestep-2} deviate: to bound the cumulated field strength $n(Q_{\rho, \ttau}(a))$, we have to distinguish the cases $\tau = 1$ (for $t=T$) and $\tau = \theta^{\frac{3}{2}}$ (for $t<T$).

\begin{proof}[Proof of Lemma~\ref{lem:onestep-1}]
We proceed in two steps.

\begin{enumerate}[label=\textsc{\bf Step \arabic*.A},start=5,leftmargin=0pt,labelsep=*,itemindent=*,itemsep=10pt,topsep=10pt]
\item \emph{(Treatment of the cumulated field)}. 
	Note that for $t=T$ and $\tau = 1$, we have $\int_0^T h\,\mathrm{d}x_{d+1} = 0$, c.f.\ \ref{step:local-strategy-step-2} of the proof of Theorem~\ref{thm:localbounds}.
	In particular, $f(Q_{\ell, T})= f_0(Q_{\ell, T})$ and 
	\begin{align*}
		n(Q_{\rho,T}) = \frac{1}{\rho} \sup_{Q_{\rho}'(a)} \left|\int_0^T h\,\mathrm{d}x_{d+1}\right| = 0,
	\end{align*}
	which simplifies estimate \eqref{eq:estimateonf} to 
	\begin{align}\label{eq:onestep-1-final}
		f_0(Q_{\ltheta, T}(a)) 
		\lesssim \frac{T^{\frac{4}{3}}}{\ltheta^2} + \theta^{-(d+2)} \left(\theta^{-(d+2)}f_0(Q_{\ell, T})\right)^{\frac{1}{d+1}} f_0(Q_{\ell,T}).
	\end{align}
\item 
\emph{(Choice of $\epsilon$)}. Let $C>0$ be the implicit constant in \eqref{eq:onestep-1-final}. In view of \eqref{eq:osi-f-1} we choose 
\begin{align*}
	\epsilon = \theta^{d+2} \left(\frac{\theta^{d+3}}{C}\right)^{d+1}.
\end{align*} 
Then 
	\begin{align*}
		C \theta^{-(d+2)} \left(\theta^{-(d+2)} f_0(Q_{\ell,T})\right)^{\frac{1}{d+1}} 
		\leq C \theta^{-(d+2)} \left( \theta^{-(d+2)} \epsilon\right)^{\frac{1}{d+1}} 
		\leq \theta,
	\end{align*}
	from which it follows that  
	\begin{align*}
		f(Q_{\ltheta, T}(a)) - \theta f(Q_{\ell,T}) \lesssim \frac{T^{\frac{4}{3}}}{\ltheta^2}. 
	\end{align*}
	\qedhere
\end{enumerate}
\end{proof}
\begin{proof}[Proof of Lemma~\ref{lem:onestep-2}] We proceed in two steps.

\begin{enumerate}[label=\textsc{\bf Step \arabic*.B},start=5,leftmargin=0pt,labelsep=*,itemindent=*,itemsep=10pt,topsep=10pt]
\item\label{step:one-step-improvement5B} \emph{(Estimate on the cumulated field strength $n$)}. As we shall next show, for $\tau<1$, the cumulated field strength $n(Q_{\rho,\ttau}(a))$ decays w.r.t.\ the vertical direction. Let us observe that
	\begin{align}\label{eq:n-initial}
		n(Q_{\ltheta, \ttau}(a)) \leq \frac{\rho}{\ltheta} n(Q_{\rho, \ttau}(a)), 
	\end{align}
	so it suffices to estimate $n(Q_{\rho, \ttau}(a))$. 
		
	By the triangle inequality and since $Q_{\rho}'(a) \subseteq Q_{2\theta\ell}'(a) \subseteq Q_{\ell}'$, we have 
	\begin{align*}
		n(Q_{\rho, \ttau}(a))
		&\leq \frac{\ttau}{\rho} \sup_{Q_{\rho}'(a)} \left| \dashint_{0}^{t} h\,\mathrm{d}x_{d+1} \right| 
		+ \frac{\ttau}{\rho} \sup_{Q_{\rho}'(a)} \left| \dashint_{0}^{\ttau} h\,\mathrm{d}x_{d+1} - \dashint_{0}^{t} h\,\mathrm{d}x_{d+1} \right| \\
		&\leq \tau \frac{\ell}{\rho} n(Q_{\ell, t}) + \frac{\ttau}{\rho} \sup_{Q_{\rho}'(a)} \left| \dashint_{0}^{\ttau} h\,\mathrm{d}x_{d+1} - \dashint_{0}^{t} h\,\mathrm{d}x_{d+1} \right|.
	\end{align*}
	We claim that the latter term can be estimated by 
	\begin{align}\label{eq:CS-elliptic}
		\frac{\ttau}{\rho} \sup_{Q_{\rho}'(a)} \left| \dashint_{0}^{\ttau} h\,\mathrm{d}x_{d+1} - \dashint_{0}^{t} h\,\mathrm{d}x_{d+1} \right|
		\lesssim \frac{\ell}{\rho} (\tau f(Q_{\ell, t}))^{\frac{1}{d+2}},
	\end{align}
	thus
	\begin{align}\label{eq:n-intermidiate}
		n(Q_{\rho, \ttau}(a)) - \frac{\ell}{\rho} \tau n(Q_{\ell, t}) \lesssim \frac{\ell}{\rho}  (\tau f(Q_{\ell, t}))^{\frac{1}{d+2}}.
	\end{align}
    Before providing a proof for \eqref{eq:CS-elliptic}, let us observe that, by inserting \eqref{eq:n-intermidiate} in \eqref{eq:n-initial}, we find
	\begin{align}\label{eq:n-onestep-1}
		n(Q_{\ltheta, \ttau}(a)) - \frac{\tau}{\theta} n(Q_{\ell, t}) \lesssim \theta^{-1} (\tau f(Q_{\ell, t}))^{\frac{1}{d+2}}
	\end{align}
	and that, by plugging \eqref{eq:n-intermidiate} into \eqref{eq:estimateonf}, we obtain
	\begin{align}\label{eq:f-onestep-1}
		f(Q_{\ltheta, \ttau}(a)) &\lesssim \frac{\ttau^{\frac{4}{3}}}{\ltheta^2} + \theta^{-(d+2)} \left( (\theta^{-(d+2)} f(Q_{\ell, t}))^{\frac{1}{d+1}} + \frac{\tau}{\theta} n(Q_{\ell,t})  + \frac{1}{\theta} (\tau f(Q_{\ell, t}))^{\frac{1}{d+2}}\right) f(Q_{\ell, t}).
	\end{align}

	In order to prove \eqref{eq:CS-elliptic}, let us consider the function 
\begin{align*}
	H(x') \coloneqq \int_{0}^{\ttau} h\,\mathrm{d}x_{d+1} - \frac{\ttau}{t} \int_{0}^{t} h\,\mathrm{d}x_{d+1}
	= \int_0^{\ttau} ( h - \overline{h}_t)\,\mathrm{d}x_{d+1}, \quad x' \in Q_{\rho}'(a).
\end{align*}
By the Cauchy-Schwarz inequality, we have that 
\begin{align}\label{eq:boundH}
	|H(x')|^2 \leq \ttau \int_0^{\ttau} ( h - \overline{h}_t)^2\,\mathrm{d}x_{d+1} 
	\leq \ttau t \dashint_0^{t} ( h - \overline{h}_t)^2\,\mathrm{d}x_{d+1}.
\end{align}
We now apply interior elliptic regularity to estimate $\sup_{x' \in Q_{\rho}'} |H(x')|^2$. To this end, note that $H$ solves the equation 
\begin{align*}
	\nabla'\cdot H 
	= \nabla'\cdot \int_{0}^{\ttau}  h\,\mathrm{d}x_{d+1} - \frac{\ttau}{t} \int_{0}^{t} \nabla'\cdot h\,\mathrm{d}x_{d+1} 
	= -m^{\ttau} + \frac{\ttau}{t} \int_{0}^{t} \partial_{d+1}m\, \mathrm{d}x_{d+1} 
	= \frac{\ttau}{t}m^t - m^{\ttau} \eqqcolon M.
\end{align*}
Further, by minimality, $h$ is a gradient, so that also $H=-\nabla' U$ is a gradient field whose potential solves $-\Delta' U = M$, with $|M| \leq 2$. Let $x_*'$ be an arbitrary point in $Q_{\rho}'(a)$. Then, by elliptic regularity, we can bound 
\begin{align*}
	|H(x_*')|^2 \lesssim \dashint_{B_{\alpha\ell}(x_*')} |H|^2\,\mathrm{d}x' + (\alpha\ell)^2 \sup_{B_{\alpha\ell}(x_*')}|\nabla'\cdot H|^2 
	\stackrel{\eqref{eq:boundH}}{\lesssim} \alpha^{-d} \ttau t \dashint_{Q_{\ell,t}} ( h - \overline{h}_t)^2\,\mathrm{d}x + (\alpha\ell)^2,
\end{align*}
where the last inequality holds as long as $B_{\alpha\ell}(x_*') \subseteq Q_{\ell}'$, which is guaranteed for any $\alpha\leq\theta$, since $x_*' \in Q_{\rho}'(a) \subseteq Q_{2\theta\ell}'(a)$, hence $B_{\alpha\ell}(x_*') \subseteq Q_{3\theta\ell}'(a) \subseteq Q_{\ell}'$ by our assumption on $a$.  
Optimising in $\alpha$ then yields 
\begin{align}\label{eq:choice-alpha}
	\alpha = \left(\frac{\ttau t}{\ell^2} \dashint_{Q_{\ell,t}} ( h - \overline{h}_t)^2\,\mathrm{d}x \right)^{\frac{1}{d+2}} = \left( \tau f(Q_{\ell,t}) \right)^{\frac{1}{d+2}},
\end{align}
in particular the condition $\alpha \leq \theta$ is fulfilled if $\epsilon$ is chosen small enough (see the next step). From this, we deduce that
\begin{align*}
	\sup_{x' \in Q_{\rho}'(a)} |H(x')|^2 
	\lesssim \ell^2(\tau f(Q_{\ell, t}))^{\frac{2}{d+2}},
\end{align*}
which by definition of $H$ proves the claimed estimate \eqref{eq:CS-elliptic}.

\item 
\emph{(Choice of $\epsilon$ and $\delta$)}.
We can now fix the parameters $\epsilon$ and $\delta$ for $\tau=\theta^\frac32$.
	Let $C$ be the maximum of the implicit constants in inequalities \eqref{eq:n-onestep-1} and \eqref{eq:f-onestep-1}.  
	Given $\theta \in (0, \frac{1}{2}]$ we choose $\delta$ so small  that 
	\begin{align*}
		C \theta^{-(d+2)} \frac{\theta^{\frac{3}{2}}}{\theta} n(Q_{\ell, t}) \leq \frac{\theta}{3}, 	
	\end{align*}
	which is ensured by the choice $\delta = \frac{\theta^{d+\frac{5}{2}}}{3C}$. We then choose $\epsilon$ so small such that
	\begin{align*}
		C \theta^{-1} (\theta^{\frac{3}{2}}f(Q_{\ell, t}))^{\frac{1}{d+2}} \leq \frac{\delta}{2},
	\end{align*}
	and 
	\begin{align*}
		C \theta^{-(d+2)} \max\left\{(\theta^{-(d+2)} f(Q_{\ell, t}))^{\frac{1}{d+1}}, \theta^{-1}(\theta^{\frac{3}{2}}f(Q_{\ell,t}))^{\frac{1}{d+2}}\right\} &\leq \frac{\theta}{3},
	\end{align*}
	which is ensured by the choice\footnote{In fact, given any constant $A<\infty$ we may choose $\epsilon$ to satisfy $\epsilon \leq \frac{\delta^{d+2}}{A}$.} $\epsilon = \min\left\{ \theta^{d+\frac{1}{2}} \left(\frac{\delta}{2C}\right)^{d+2}, \theta^{d+2} \left(\frac{\theta^{d+3}}{3C}\right)^{d+1}, \theta^{-\frac{3}{2}}\left(\frac{\theta^{d+4}}{3C}\right)^{d+2} \right\}$. 
	Notice that in particular $\epsilon \leq \theta^{d+\frac{1}{2}}$, which implies that $\alpha \leq \left(\theta^{\frac{3}{2}} \epsilon \right)^{\frac{1}{d+2}} \leq \theta$, see \eqref{eq:choice-alpha}.\qedhere
\end{enumerate}
\end{proof}
\noindent This completes the proofs of Lemma~\ref{lem:onestep-1} and Lemma~\ref{lem:onestep-2}.
\end{proof}

\subsubsection{One-step improvement on the lateral boundary}\label{sec:onestep-boundary}
In this subsection we extend the previous one-step improvements to small cuboids $Q^*_{\ell,s}(a) \coloneqq Q_{\ell,s}(a)\cap Q_{L,T}$ with center $a \in \Gamma_{L,T} \cap \{x_{d+1}=0\}$ at the boundary\footnote{As before, we will write $Q^*_{\ell}(a)' \coloneqq Q_{\ell}'(a)\cap Q_{L}'$ for the projection of $Q^*_{\ell,s}(a)$ to the first $d$ coordinates, and $\Gamma^*_{\ell,s}(a)\coloneqq \partial Q^*_{\ell}(a)' \times [0,s]$ for the lateral boundary.}. This requires a few modifications compared to the previous two one-step improvements, in particular an adapted boundary layer construction and the appeal to boundary regularity for the elliptic PDE satisfied by the cumulated field. 

\begin{lemma}[One-step improvement -- Version C for fixed aspect ratio on the lateral boundary]\label{lem:onestep-3}
	Let $(m,h)$ be a minimiser of the non-convex energy \eqref{eq:energy-nonconvex} in $Q_{L,T}$ with zero-flux lateral boundary conditions,  $m^{B} \equiv 0$, $|m^{T}|\leq 1$. 
	
	For any $\theta \in (0, \frac{1}{2}]$ and $A<\infty$, there exist constants $\delta, \epsilon \in (0,1)$ with the property that
	\begin{align*}
		\epsilon \leq \frac{\delta^{d+2}}{A},
	\end{align*}
	such that the following holds: 

	\noindent For any $a \in \Gamma_{L,T}\cap \{x_{d+1}=0\}$, we have that if $\ell \leq L$ and $t \leq T$ are such that
	\begin{align*}
		f(Q^*_{\ell, t}(a))\leq \epsilon 
		\quad \text{and} \quad
		n(Q^*_{\ell, t}(a))\leq \delta,
	\end{align*}
	then there holds
	\begin{align*}
		f(Q^*_{\theta \ell, \theta^{\frac{3}{2}} t}(a)) - \theta f(Q^*_{\ell, t}(a)) &\lesssim_\theta \frac{t^{\frac{4}{3}}}{\ell^2}, 
		\\
		n(Q^*_{\theta \ell, \theta^{\frac{3}{2}} t}(a)) - \theta^{\frac{1}{2}} n(Q^*_{\ell, t}(a)) &\lesssim_\theta f(Q^*_{\ell, t}(a))^{\frac{1}{d+2}}. 
	\end{align*}
\end{lemma}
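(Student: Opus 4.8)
The plan is to follow the proof of Lemma~\ref{lem:onestep-2} almost verbatim, adapting the two places where the interior geometry was used: the boundary-layer construction behind the competitor, and the elliptic-regularity estimate for the cumulated field. Throughout, $Q^*_{\ell,t}(a)$, $Q^*_{\ell}(a)'$, $\Gamma^*_{\ell,s}(a)$ play the roles of $Q_{\ell,t}(a)$, $Q_{\ell}'(a)$, $\Gamma_{\ell,t}(a)$, and the key structural fact we exploit is that on the part of $\partial Q^*_{\rho}(a)'$ lying in $\partial Q_{L}'$ the minimiser satisfies $h\cdot\nu'=0$, hence so does $\overline{h}_s$ and, more generally, $\int_0^s h\,\mathrm{d}x_{d+1}$ for every $s\in[0,t]$. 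As in Section~\ref{sec:onestep-inner} we write $\ltheta=\theta\ell$, $\ttau=\tau t$, and set $\tau=\theta^{\frac{3}{2}}$ at the end.

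First I would choose a good width $\rho\in[\theta\ell,2\theta\ell]$ essentially as in Step~1 of the common part, observing that Lemma~\ref{lem:good} only slices in the ``interior'' lateral directions (the face of $\partial Q^*_{\rho}(a)'$ lying in $\partial Q_L'$ being fixed): for $g\coloneqq h\cdot\nu'|_{\Gamma^*_{\rho,\ttau}(a)}$, which vanishes on the portion of $\Gamma^*_{\rho,\ttau}(a)$ contained in $\Gamma_{L,T}$, and its height average $\overline{g}_{\ttau}$, the monotonicity formula (Lemma~\ref{lem:monotonicity}) gives $\frac{\ttau^2}{\rho^2}\dashint_{\Gamma^*_{\rho,\ttau}(a)}(g-\overline{g}_{\ttau})^2\lesssim\theta^{-(d+2)}f(Q^*_{\ell,t}(a))$. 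Next, for the competitor I would run an adapted version of Proposition~\ref{prop:fine-properties-relaxed} in which the boundary plaquettes $P_r'$ are placed only along the lateral faces of $\partial Q^*_{\rho}(a)'$ that are \emph{not} contained in $\partial Q_L'$; along the faces lying in $\Gamma_{L,T}$ the oscillatory trace $(h-\overline{h}_{\ttau})\cdot\nu'$ already vanishes, so no correction is needed there, and the resulting pair, when glued into the global minimiser across $\partial Q^*_{\rho}(a)'$, automatically respects the zero-flux condition on $\Gamma_{L,T}$. This yields $(m_r,h_r)\in\mathcal{X}_r(Q^*_{\rho,\ttau}(a))$ obeying the analogues of \eqref{eq:bound-hell} and \eqref{eq:bound-d3mell} (with $H^B\equiv0$ since $m^B\equiv0$); feeding $(m_{\rel},h_{\rel})=(\tfrac{x_{d+1}}{\ttau}m^{\ttau}+m_r,\overline{h}_{\ttau}+h_r)$ into Corollary~\ref{cor:competitor} produces a $\pm1$-valued competitor $(\widetilde{m},\widetilde{h})$ on $Q^*_{\rho,\ttau}(a)$ with $\overline{\widetilde{h}}_{\ttau}=\overline{h}_{\ttau}$. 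Local minimality of $(m,h)$ in $Q^*_{\rho,\ttau}(a)$, orthogonality (Lemma~\ref{lem:orthogonality}), and optimisation in $N\in\NN$ then give, exactly as in \eqref{eq:estimateonf},
\begin{align*}
	f(Q^*_{\ltheta,\ttau}(a))\lesssim\frac{\ttau^{4/3}}{\ltheta^2}+\bigl(\theta^{-(d+2)}f(Q^*_{\ell,t}(a))\bigr)^{\frac{d+2}{d+1}}+n(Q^*_{\rho,\ttau}(a))\,\theta^{-(d+2)}f(Q^*_{\ell,t}(a)).
\end{align*}

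For the cumulated field I would reproduce Step~\ref{step:one-step-improvement5B}, setting $H(x')=\int_0^{\ttau}(h-\overline{h}_t)\,\mathrm{d}x_{d+1}$ on $Q^*_{\rho}(a)'$. By minimality $h$ is a gradient, so $H=-\nabla'U$ with $-\Delta'U=\tfrac{\ttau}{t}m^t-m^{\ttau}\eqqcolon M$, $|M|\leq2$, and $H\cdot\nu'=0$ on the faces of $\partial Q^*_{\ell}(a)'$ lying in $\partial Q_L'$. The Cauchy--Schwarz bound $|H(x')|^2\leq\ttau t\dashint_0^t(h-\overline{h}_t)^2$ is unchanged; the only new point is that a base point $x_*'\in Q^*_{\rho}(a)'$ may sit on $\partial Q_L'$, so the interior estimate is replaced by boundary (Neumann) regularity, obtained by even reflection of $U$ across the one or (if $a$ lies on an edge of $\Gamma_{L,T}$) finitely many faces of $\partial Q_L'$ meeting at $x_*'$ — the zero-flux condition becomes the reflection condition, reducing matters to the interior bound already used. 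With the analogue of the radius choice \eqref{eq:choice-alpha} (the constraint $\alpha\leq\theta$ being ensured by $\epsilon$ small, after the obvious adjustment of the intermediate length scales so that the reflected balls fit into $Q^*_{\ell}(a)'$), we obtain $n(Q^*_{\rho,\ttau}(a))-\tfrac{\ell}{\rho}\tau n(Q^*_{\ell,t}(a))\lesssim\tfrac{\ell}{\rho}(\tau f(Q^*_{\ell,t}(a)))^{\frac{1}{d+2}}$, hence the analogues of \eqref{eq:n-onestep-1} and \eqref{eq:f-onestep-1}. Finally, with $\tau=\theta^{\frac{3}{2}}$ I would fix $\epsilon$ and $\delta$ with $\epsilon\leq\delta^{d+2}/A$ by exactly the computation in the closing step of Lemma~\ref{lem:onestep-2}. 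The main obstacle is the competitor step: one must check that the localised boundary-layer construction of Proposition~\ref{prop:fine-properties-relaxed} can indeed be run only on the ``interior'' faces, and that the glued configuration is genuinely admissible — in particular that it still satisfies the zero-flux condition on $\Gamma_{L,T}$ and, via the convex-combination trick of Remark~\ref{rem:bound-mb-cumulated-field}, the hard constraint $\widetilde{m}\in\{\pm1\}$. The elliptic-regularity adaptation is routine, since zero-flux is a Neumann condition and hence amenable to reflection.
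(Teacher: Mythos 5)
Your proposal follows essentially the same route as the paper's proof: adapt the good-width choice and monotonicity to the half-cuboid $Q^*_{\ell,t}(a)$, run the boundary-layer construction of Proposition~\ref{prop:fine-properties-relaxed} only along the faces of $\partial Q^*_{\rho}(a)'$ not lying in $\partial Q_L'$ (the zero-flux condition making the remaining faces trivial), and replace interior elliptic regularity for the cumulated field by reflection across $\Gamma_{L,T}$; your even reflection of the potential $U$ is equivalent to the paper's odd reflection of the normal component of $H$. The only small slip is the choice $\rho\in[\theta\ell,2\theta\ell]$ carried over from the interior argument: with $\theta$ allowed up to $\tfrac12$ this leaves no room for the reflected ball $B_{\alpha\ell}(x_*')$ to fit inside the extended domain, which is why the paper instead takes $\rho\in[\theta\ell,\tfrac34\ell]$ here — a harmless adjustment, but worth making explicit.
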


In order to provide a proof for Lemma~\ref{lem:onestep-3}, as before we start with some technical preliminaries that will be useful.

\begin{lemma}[Choice of good width]\label{lem:good2} Let  $a \in \Gamma_{L,T}\cap \{x_{d+1}=0\}$. If $h\in L^2(Q^*_{\ell}(a)')$ for some $\ell\in(0,L]$, with $h\cdot \nu=0$ on $Q^*_\ell(a)'\cap \Gamma_{L,T}$, then the set of $\rho \in  \left[\frac{\ell}{2},\ell\right]$ such that 
	\begin{align*}
		\dashint_{\partial Q^*_{\rho}(a)'}(h\cdot\nu)^2\,\mathrm{d}\mathcal{H}^{d-1} \lesssim \dashint_{Q_{\ell}(a)'} |h|^2 \,\mathrm{d}x'
	\end{align*}
	has positive Lebesgue measure.
\end{lemma}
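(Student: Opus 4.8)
The plan is to mimic the proof of Lemma~\ref{lem:good} (and Lemma~\ref{lem:gradient} played no role here; it is purely Fubini plus Markov), adapting it to the half-cube geometry near the lateral boundary. First I would reduce to a single model configuration: up to a rotation and translation we may assume that $a = (a', 0)$ with $a'$ lying on one face of $\partial Q_L'$, say $a'_1 = L$, so that $Q^*_\rho(a)' = Q'_\rho(a) \cap Q_L' = \{x_1 \in [L-\rho, L]\} \times \prod_{n=2}^{d}[a'_n - \rho/2, a'_n + \rho/2]$ is itself a $d$-dimensional box whose lateral boundary $\partial Q^*_\rho(a)'$ consists of two types of faces: the single ``outer'' face lying on $\Gamma_{L,T}$, where by hypothesis $h\cdot\nu = 0$ and hence contributes nothing, and the remaining $2d-1$ ``inner'' faces which vary with $\rho$. (If $a'$ lies on an edge or corner of $\partial Q_L'$, the same argument applies with $h\cdot\nu=0$ on all faces of $Q^*_\rho(a)'$ that lie on $\Gamma_{L,T}$; the box $Q^*_\rho(a)'$ is still an honest box, just smaller in several coordinate directions.)

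\begin{proof}
Without loss of generality assume $a = (a',0)$ with $a'$ on the face $\{x_1 = L\}$ of $\partial Q_L'$; the case of an edge or corner of $\partial Q_L'$ is analogous (and easier, since more of the boundary carries zero flux). For $\rho \in \left[\frac{\ell}{2},\ell\right]$ write $Q^*_\rho(a)' = [L-\rho, L] \times R_\rho$, where $R_\rho = \prod_{n=2}^d [a'_n - \frac{\rho}{2}, a'_n + \frac{\rho}{2}]$. Its boundary decomposes as
\begin{align*}
	\partial Q^*_\rho(a)' = \big(\{L\} \times R_\rho\big) \cup \big(\{L-\rho\}\times R_\rho\big) \cup \big([L-\rho,L]\times \partial R_\rho\big),
\end{align*}
and since $\{L\}\times R_\rho \subseteq Q^*_\ell(a)' \cap \Gamma_{L,T}$, on this portion $h\cdot\nu = 0$. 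Hence
\begin{align*}
	\int_{\partial Q^*_\rho(a)'} (h\cdot\nu)^2 \,\mathrm{d}\mathcal{H}^{d-1}
	= \int_{\{L-\rho\}\times R_\rho} (h\cdot\nu)^2 \,\mathrm{d}\mathcal{H}^{d-1}
	+ \int_{[L-\rho,L]\times\partial R_\rho} (h\cdot\nu)^2 \,\mathrm{d}\mathcal{H}^{d-1}.
\end{align*}
By Fubini's theorem, integrating over $\rho \in \left[\frac{\ell}{2},\ell\right]$,
\begin{align*}
	\int_{\frac{\ell}{2}}^{\ell}\int_{\partial Q^*_\rho(a)'} (h\cdot\nu)^2 \,\mathrm{d}\mathcal{H}^{d-1}\,\mathrm{d}\rho
	\lesssim \int_{Q^*_\ell(a)'} |h|^2 \,\mathrm{d}x' < \infty,
\end{align*}
using that every point of $Q^*_\ell(a)'$ is covered a bounded number of times by the faces $\{L-\rho\}\times R_\rho$ and $[L-\rho,L]\times\partial R_\rho$ as $\rho$ ranges over $\left[\frac{\ell}{2},\ell\right]$. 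In particular $\int_{\partial Q^*_\rho(a)'}(h\cdot\nu)^2\,\mathrm{d}\mathcal{H}^{d-1}$ is finite for Lebesgue-a.e.\ $\rho\in\left[\frac{\ell}{2},\ell\right]$. Fix $A>1$ and set
\begin{align*}
	\Lambda \coloneqq \left\{ \textstyle \rho\in\left[\frac{\ell}{2},\ell\right]: \int_{\partial Q^*_\rho(a)'}(h\cdot\nu)^2\,\mathrm{d}\mathcal{H}^{d-1} \leq \frac{2A}{\ell}\int_{Q^*_\ell(a)'}|h|^2\,\mathrm{d}x'\right\}.
\end{align*}
By Markov's inequality, $|\Lambda^c| \leq \frac{\ell}{2A}$, so $|\Lambda|\geq \frac{\ell}{2}\left(1-\frac{1}{A}\right)>0$. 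For $\rho\in\Lambda$, using $\mathcal{H}^{d-1}(\partial Q^*_\rho(a)') \sim \rho^{d-1}$ and $|Q^*_\ell(a)'| \sim \ell^d$,
\begin{align*}
	\dashint_{\partial Q^*_\rho(a)'}(h\cdot\nu)^2\,\mathrm{d}\mathcal{H}^{d-1}
	\lesssim A \left(\frac{\ell}{\rho}\right)^{d-1} \dashint_{Q^*_\ell(a)'}|h|^2\,\mathrm{d}x'
	\leq 2^{d-1} A \dashint_{Q^*_\ell(a)'}|h|^2\,\mathrm{d}x',
\end{align*}
which is the claimed estimate.
\end{proof}

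The main (and only mild) obstacle here is purely bookkeeping: making sure that the lateral boundary $\partial Q^*_\rho(a)'$ really decomposes cleanly into a $\rho$-independent ``flux-free'' portion on $\Gamma_{L,T}$ and the remaining faces, and checking the finite-overlap property needed so that the Fubini step still produces an integrable right-hand side; once the geometry is set up the argument is verbatim Lemma~\ref{lem:good}. There is nothing deep — the only thing to be careful about is the case where $a$ sits on a lower-dimensional stratum of $\partial Q_L'$, but there the box $Q^*_\rho(a)'$ simply degenerates to a box of the same type with even more faces lying on $\Gamma_{L,T}$ (hence carrying zero flux), so the estimate only improves.
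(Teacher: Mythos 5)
Your proof is correct and is exactly the ``minor modification of the proof of Lemma~\ref{lem:good}'' that the paper leaves to the reader: you split $\partial Q^*_{\rho}(a)'$ into the flux-free portion on $\Gamma_{L,T}$ (which drops out by the hypothesis $h\cdot\nu=0$ there) and the sweeping faces, apply Fubini with bounded overlap, and conclude via Markov exactly as in Lemma~\ref{lem:good}. (Two cosmetic remarks: with the paper's convention $Q_{\rho}'=[-\rho,\rho]^d$ your transverse box is $R_{\rho}=\prod_{n\geq 2}[a_n'-\rho,a_n'+\rho]$ rather than half that, and the right-hand side $\dashint_{Q_{\ell}(a)'}|h|^2$ in the statement is surely a typo for $\dashint_{Q^*_{\ell}(a)'}|h|^2$, which is what you correctly use and also what the application in the proof of Lemma~\ref{lem:onestep-3} requires.)
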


\begin{proof} 
It is a minor modification of the proof of Lemma \ref{lem:good}.
\end{proof}

\begin{lemma}[Monotonicity formula]\label{lem:monotonicity2}
	Let $h \in L^2(Q^*_{\ell,T}(a))$ for some $\ell, T>0$, and $a \in \Gamma_{L,T}\cap \{x_{d+1}=0\}$. Then the map
	\begin{align*}
		t \mapsto \frac{t^2}{\ell^2} \dashint_{Q^*_{\ell,t}(a)} (h-\overline{h}_t)^2\,\mathrm{d}x
	\end{align*}
	is non-decreasing in $[0,T]$
\end{lemma}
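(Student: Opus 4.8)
\textbf{Proof plan for Lemma~\ref{lem:monotonicity2}.} The plan is to run exactly the argument from the proof of Lemma~\ref{lem:monotonicity}, the only new point being to observe that the boundary cuboid still has product structure in the relevant variables. Since $a \in \Gamma_{L,T}\cap\{x_{d+1}=0\}$ we have $a_{d+1}=0$, so $Q_{\ell,t}(a)$ occupies exactly $[0,t]$ in the vertical direction, and because $t\le T$ the intersection with $Q_{L,T}$ does not truncate anything vertically. Hence
\begin{align*}
Q^*_{\ell,t}(a) = Q^*_\ell(a)' \times [0,t], \qquad Q^*_\ell(a)' = Q_\ell'(a)\cap Q_L',
\end{align*}
where the horizontal base $Q^*_\ell(a)'$ is independent of $t$. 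In particular $\dashint_{Q^*_{\ell,t}(a)} \cdot\,\mathrm{d}x = \frac1t\int_0^t \dashint_{Q^*_\ell(a)'} \cdot\,\mathrm{d}x'\,\mathrm{d}x_{d+1}$, which is all that is used below.

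First, since $h\in L^2(Q^*_{\ell,T}(a))$, by Fubini the slice $h(\cdot,t)$ is well-defined and finite for Lebesgue-a.e.\ $t\in(0,T]$, and $t\mapsto \overline h_t = \frac1t\int_0^t h\,\mathrm{d}x_{d+1}$ is locally absolutely continuous with $\frac{\mathrm d}{\mathrm dt}\overline h_t = \frac1t\big(h(\cdot,t)-\overline h_t\big)$. Then I would write the quantity in question as $\frac{t}{\ell^2}\int_0^t \dashint_{Q^*_\ell(a)'}(h-\overline h_t)^2\,\mathrm{d}x'\,\mathrm{d}x_{d+1}$, differentiate in $t$ (boundary term from the moving upper limit, plus the $t$-dependence through $\overline h_t$), substitute the formula for $\frac{\mathrm d}{\mathrm dt}\overline h_t$, and complete the square exactly as in Lemma~\ref{lem:monotonicity}, landing on
\begin{align*}
\frac{\mathrm d}{\mathrm dt}\left(\frac{t^2}{\ell^2}\dashint_{Q^*_{\ell,t}(a)}(h-\overline h_t)^2\,\mathrm{d}x\right) = \frac{t}{\ell^2}\dashint_{Q^*_{\ell,t}(a)}\big(h-h(\cdot,t)\big)^2\,\mathrm{d}x \ge 0,
\end{align*}
which gives the claim.

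There is essentially no obstacle here: the product structure $Q^*_{\ell,t}(a)=Q^*_\ell(a)'\times[0,t]$ is precisely what makes the one-dimensional differentiation-in-$t$ legitimate, and this is guaranteed by $a\in\{x_{d+1}=0\}$ together with $t\le T$. The only mild care needed is the standard justification of differentiating under the integral and using the a.e.-defined trace $h(\cdot,t)$, for which one may (as in the proof of Lemma~\ref{lem:monotonicity}) argue by an $L^2$ approximation or simply note that the map is absolutely continuous on $[0,T]$ with the nonnegative derivative computed above, hence non-decreasing. In the write-up I would simply say that the proof is that of Lemma~\ref{lem:monotonicity} verbatim, with $Q_{\ell,t}$ and $Q_\ell'$ replaced by $Q^*_{\ell,t}(a)$ and $Q^*_\ell(a)'$.
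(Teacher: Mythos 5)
Your proposal is correct and matches the paper's approach: the paper simply states that the proof is a minor modification of Lemma~\ref{lem:monotonicity}, and your observation that $Q^*_{\ell,t}(a)=Q^*_\ell(a)'\times[0,t]$ retains the product structure is exactly the point that makes the verbatim repetition of that argument legitimate.
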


\begin{proof}
It is a minor modification of the proof of Lemma \ref{lem:monotonicity}.
\end{proof}
We are now ready to give a proof for Lemma \ref{lem:onestep-3}.

\begin{proof}
Let $\theta\in\left(0,\frac{1}{2}\right]$, $a \in \Gamma_{L,T}\cap \{x_{d+1}=0\}$, and denote $\ltheta = \theta \ell$, $\ttheta = \theta^{\frac{3}{2}} t$. The first three steps of the proofs of Lemma~\ref{lem:onestep-1} and \ref{lem:onestep-2} can be applied almost verbatim. Let us therefore only point some of the adaptations. 
\begin{enumerate}[label=\textsc{\bf Step \arabic*},leftmargin=0pt,labelsep=*,itemindent=*,itemsep=10pt,topsep=10pt]
	\item \emph{(Choice of good width)}. 
		Let $\rho\in [\theta\ell, \frac{3}{4}\ell]$ be a good width in the sense of Lemma~\ref{lem:good2}. Then 
		\begin{align*}
			\dashint_{\Gamma^*_{\rho,\ttheta}(a)} (g-\overline{g}_{\ttheta})^2 \,\mathrm{d}\mathcal{H}^{d-1}\,\mathrm{d}x_{d+1} 
			&\lesssim \dashint_{Q^*_{\frac{3}{4}\ell, \ttheta}(a)} (h-\overline{h}_{\ttheta})^2 \,\mathrm{d}x 
			\lesssim \dashint_{Q^*_{\ell, \ttheta}} (h-\overline{h}_{\ttheta})^2 \,\mathrm{d}x,
		\end{align*}
		and applying the monotonicity formula from Lemma~\ref{lem:monotonicity2}, we obtain  
		\begin{align*}
			\frac{\ttheta^2}{\ell^2} \dashint_{Q^*_{\ell, \ttheta}(a)} (h-\overline{h}_{\ttheta})^2 \,\mathrm{d}x
			\leq \frac{t^2}{\ell^2} \dashint_{Q^*_{\ell, t}(a)} (h-\overline{h}_t)^2\,\mathrm{d}x 
			\leq f(Q^*_{\ell, t}(a)),
		\end{align*}
		hence
		\begin{align*}
			\frac{\ttheta^2}{\ell^2} \dashint_{\Gamma^*_{\rho,\ttheta}(a)} (g-\overline{g}_{\ttheta})^2 \,\mathrm{d}\mathcal{H}^{d-1} \, \mathrm{d}x_{d+1}
			\lesssim f(Q^*_{\ell, t}(a)).
		\end{align*}

	\item \emph{(Construction of a competitor)}. 
	As before, we start with the over-relaxed solution in $Q^*_{\rho, \ttheta}(a)$, i.e.\ $\left(\frac{x_{d+1}}{\ttheta} m^{\ttheta}, \overline{h}_{\ttheta}\right)$ and modify it using Proposition~\ref{prop:fine-properties-relaxed} in a boundary layer to a competitor $(m_{\mathrm{rel}}, h_{\mathrm{rel}})$ for the relaxed problem with boundary conditions $g = h\cdot \nu$ on $\Gamma^*_{\rho, \ttheta}(a)$, bottom magnetisation $m^B = 0$ and top magnetisation $m^{\ttheta} = -\nabla'\cdot \int_0^{\ttheta} h \,\mathrm{d}x_{d+1}$ coming from the minimiser $(m,h)$ in $Q_{L,T}$. 
	By means of Corollary~\eqref{cor:competitor}, $(m_{\mathrm{rel}}, h_{\mathrm{rel}})\in \mathcal{A}^{\mathrm{rel}}_{Q^*_{\rho, \ttheta}(a)}(g;0,m^{\ttheta})$ can now be massaged into a competitor for the non-convex problem.
	Appealing to (local) minimality of $(m,h)$ in $Q^*_{\rho,\ttheta}(a)$ then allows us to estimate 
	\begin{align*}
		f(Q^*_{\ltheta, \ttheta}(a)) \lesssim  \frac{\ttheta^{\frac{4}{3}}}{\ltheta^2} + \theta^{-(d+2)} \left( \left(\theta^{-(d+2)} f(Q^*_{\ell, t}(a))\right)^{\frac{1}{d+1}} + n(Q^*_{\rho,\ttheta}(a)) \right) f(Q^*_{\ell, t}(a)).
	\end{align*}
	
	\item \emph{(Estimate on $n$)}.
	We now estimate 
	\begin{align*}
		n(Q^*_{\rho, \ttheta}(a))
		&\leq \frac{\ttheta}{\rho} \sup_{Q^*_{\rho}(a)'} \left| \dashint_{0}^{t} h\,\mathrm{d}x_{d+1} \right| 
		+ \frac{\ttheta}{\rho} \sup_{Q^*_{\rho}(a)'} \left| \dashint_{0}^{\ttheta} h\,\mathrm{d}x_{d+1} - \dashint_{0}^{t} h\,\mathrm{d}x_{d+1} \right| \\
		&\leq \frac{\ttheta}{t} \frac{\ell}{\rho} n(Q^*_{\ell, t}(a)) + \frac{\ttheta}{\rho} \sup_{Q^*_{\rho}(a)'} \left| \dashint_{0}^{\ttheta} h\,\mathrm{d}x_{d+1} - \dashint_{0}^{t} h\,\mathrm{d}x_{d+1} \right|.
	\end{align*}
\end{enumerate}
	As in the previous one-step improvement, let us consider the function 
	\begin{align*}
		H(x') \coloneqq \int_{0}^{\ttheta} h\,\mathrm{d}x_{d+1} - \frac{\ttheta}{t} \int_{0}^{t} h\,\mathrm{d}x_{d+1} 
		= \int_0^{\ttheta} (h-\overline{h}_t) \mathrm{d}x_{d+1}, \quad x' \in Q^*_{\ell}(a)'.
	\end{align*}
	Since this time we need to control the maximum of $H$ over $Q^*_{\rho}(a)'$ up to the boundary, we extend $H$ across the lateral boundaries $\partial Q^*_{\ell}(a)' \cap \partial Q_L'$ in the following way: $\partial Q^*_{\ell}(a)' \cap \partial Q_L'$ may consist of up to $d$ interfaces. We present the argument for the case that the number of such interfaces is $k \in \{1, \dots, d\}$, and these interfaces are situated at $x_1=L, x_2=L, \dots, x_k=L$, respectively. In that case, as in the proof of the global lower bound on the energy (for zero-flux boundary conditions), we define successively extensions $H^{(j)}$ on the extended (by reflections along the axes $x_1, \dots, x_j$) domain  $Q_{\ell}^{*,(j)}(a)$ as follows:  
	\begin{align*}
		H^{(1)}(x') \coloneqq \begin{cases}
			H(x'), & \text{if } x' \in Q^*_{\ell}(a)' \\
			(-H_1, H_2, \dots, H_d)(2L-x_1, x_2, \dots, x_d), & \text{if } x' \in Q_{\ell}^{*,(1)}(a)'\setminus Q^*_{\ell}(a)',
		\end{cases}
	\end{align*}
	with $\nabla'\cdot H^{(1)}(x') = \nabla'\cdot H(x')$ for $x' \in Q^*_{\ell}(a)'$ and $\nabla'\cdot H^{(1)}(x') = (\nabla'\cdot H)(2L-x_1, x_2, \dots, x_d)$ for $x' \in Q_{\ell}^{*,(j)}(a)'\setminus Q^*_{\ell}(a)'$. Note that $H\cdot \nu' = 0$ on the interface $\{x_1=L\}$, so we can set 
	\begin{align*}
		m^{(1)}(x') \coloneqq \begin{cases}
			\frac{\ttheta}{t} m(x', t) - m(x', \ttheta), & \text{if } x' \in Q^*_{\ell}(a)', \\
			\frac{\ttheta}{t} m(2L-x_1, x_2, \dots, x_d, t) - m(2L-x_1, x_2, \dots, x_d, \ttheta), & \text{if } x' \in Q_{\ell}^{*,(1)}(a)'\setminus Q^*_{\ell}(a)',
		\end{cases}
	\end{align*}
	to obtain $\nabla'\cdot H^{(1)} = m^{(1)}$ with $|m^{(1)}| \leq 2$ on $Q_{\ell}^{*,(1)}(a)'$. Define inductively, for $j=1,\dots, k$,
	\begin{align*}
		H^{(j)}(x') \coloneqq \begin{cases}
		H^{(j-1)}(x'), & \text{if } x' \in Q_{\ell}^{*,(j-1)}(a)', \\
		(H_1, \dots, -H_{j-1}, \dots, H_d)(x_1, \dots, 2L-x_j, \dots, x_d), & \text{if } x' \in  Q_{\ell}^{*,(j)}(a)' \setminus  Q_{\ell}^{*,(j-1)}(a)',
	\end{cases}
	\end{align*}
	and, correspondingly,
	\begin{align*}
		m^{(j)}(x') \coloneqq \begin{cases}
			\frac{\ttheta}{t} m^{(j-1)}(x', t) - m^{(j-1)}(x', \ttheta), & \text{if } x' \in Q^{*, (j-1)}_{\ell}(a)', \\
			\frac{\ttheta}{t} m^{(j-1)}(x_1, \dots, 2L-x_{j}, \dots, x_d, t) & \\ \qquad - m^{(j-1)}(x_1, \dots, 2L-x_{j}, \dots, x_d, \ttheta), & \text{if } x' \in Q_{\ell}^{*,(j)}(a)'\setminus Q^{*,(j-1)}_{\ell}(a)'.
		\end{cases}
	\end{align*}
	Finally, we set $\widetilde{H} \coloneqq H^{(k)}$ and $\widetilde{m} \coloneqq m^{(k)}$ on $Q_{\ell}^{*,(k)}(a)'$. Then $\widetilde{H}$ satisfies $\nabla'\cdot \widetilde{H} = \widetilde{m}$, in particular $|\nabla'\cdot\widetilde{H}| = |\widetilde{m}| \leq 2$. 
	Note that this construction crucially uses that $H\cdot \nu' = 0$ across $\Gamma^*_{\ell,t}(a) \cap \Gamma_{L,T}$. By minimality of $h$ and the construction of the extension, $\widetilde{H}$ is a gradient. 
	
	Now for any $x_*' \in Q^*_{\rho}(a)'$, by interior elliptic regularity for the potential of $\widetilde{H}$, we have that 
	\begin{align*}
		|H(x_*')| 
		&\lesssim \dashint_{B_{\alpha \ell}'(x_*')} |\widetilde{H}|^2\,\mathrm{d}x' + (\alpha \ell)^2 \sup_{B_{\alpha \ell}'(x_*')} |\nabla'\cdot \widetilde{H}|^2 
		\lesssim \alpha^{-d} \dashint_{Q_{\ell}^{*,(k)}(a)'} |\widetilde{H}|^2\,\mathrm{d}x' + (\alpha \ell)^2,
	\end{align*}
	as long as $\alpha$ is small enough so that $B_{\alpha \ell}'(x_*')\subset Q_{\ell}^{*,(k)}(a)'$. By construction of $H$, we then have 
	\begin{align*}
		|H(x_*')| 
		\lesssim \alpha^{-d} \dashint_{Q_{\ell}(a)'} |H|^2\,\mathrm{d}x' + (\alpha \ell)^2   
		\lesssim \alpha^{-d} \ttheta t \dashint_{Q^*_{\ell,t}(a)} (h-\overline{h}_t)^2\,\mathrm{d}x + \alpha^2 \ell^2,
	\end{align*}
	and we can optimise in $\alpha$ to obtain 
	\begin{align*}
	\alpha = \left(\frac{\ttau t}{\ell^2} \dashint_{Q^*_{\ell,t}(a)} ( h - \overline{h}_t)^2\,\mathrm{d}x \right)^{\frac{1}{d+2}},
	\end{align*}
	hence
	\begin{align*}
		|H(x_*')|^2 \lesssim \ell^2 \left(\frac{\ttheta t}{\ell^2} \dashint_{Q^*_{\ell,t}(a)} ( h - \overline{h}_t)^2\,\mathrm{d}x \right)^{\frac{2}{d+2}},
	\end{align*}
	and by taking the supremum over all $x_*'\in Q^*_{\rho}(a)'$ we obtain 
	\begin{align*}
		\sup_{x_*' \in Q^*_{\rho}(a)'} |H(x_*')| \lesssim \ell^2 \left(\theta^{\frac{3}{2}} f(Q^*_{\ell,t}(a))\right)^{\frac{2}{d+2}}. 
	\end{align*}
	The rest of the proof now proceeds as the proof of Lemma~\ref{lem:onestep-2}. 
\end{proof}

\subsubsection{The iterations}\label{sec:iterations}
We are now in the position to iterate our one-step improvements. 

\begin{proposition}[Iteration -- Version A]\label{prop:iteration-A}
	There exist universal constants $\epsilon\in (0,1)$ and $c_{LT}\geq 1$ (depending only on the dimension $d$) such that the following holds: if $f_0(Q_{L,T})\leq \epsilon$ and $\ell\leq \frac{L}{4}$ is such that $\ell \geq c_{LT} T^{\frac{2}{3}}$, then 
	\begin{align}\label{eq:bound-iteration}
		f_0(Q_{\ell, T}(a)) \lesssim f_0(Q_{L,T}) + \frac{T^{\frac{4}{3}}}{\ell^2}
	\end{align}
	for any $a\in \RR^d \times \{0\}$ such that $Q_{\ell, T}(a) \subseteq Q_{\frac{3}{4}L,T}$. 
\end{proposition}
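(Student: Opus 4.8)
The plan is to iterate the one-step improvement of Lemma~\ref{lem:onestep-1} down the geometric sequence of widths $\ell_M \coloneqq \theta^M L$ for a fixed, sufficiently small $\theta$ (it will turn out that $\theta \le \tfrac15$ suffices), keeping the height $T$ fixed throughout. So first I would fix such a $\theta$, let $\epsilon = \epsilon(\theta) > 0$ be the threshold produced by Lemma~\ref{lem:onestep-1} and $C_\theta$ the implicit constant in \eqref{eq:osi-f-1}. As in the remark following that lemma, I choose an auxiliary constant $\tilde c$ so large that $C_\theta\,\tilde c^{-2} \le \tfrac{1-\theta}{2}\epsilon$; this guarantees that the condition $f_0 \le \epsilon$ is \emph{preserved} under a one-step improvement applied at any width $\ge \tilde c\,T^{2/3}$, since then $\theta\epsilon + C_\theta\tfrac{T^{4/3}}{\ell^2} \le \theta\epsilon + \tfrac{1-\theta}{2}\epsilon = \tfrac{1+\theta}{2}\epsilon \le \epsilon$. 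Finally I set $c_{LT} \coloneqq \tilde c/\theta$. Note that Lemma~\ref{lem:onestep-1} is covariant under horizontal translations (the functional and the boundary conditions $m^B=m^T\equiv0$ are, and a minimiser in $Q_{L,T}$ is locally minimising in every sub-cuboid), so it may be applied with the parent cuboid centred at any $b'\in\RR^d\times\{0\}$.

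\textbf{The induction.} The heart of the argument is the following claim, proved by induction on $M\ge0$: for every dyadic width $\ell_M$ with $\ell_M > \tilde c\,T^{2/3}$ and every $b\in\RR^d\times\{0\}$ with $|b| \le r_M \coloneqq (1-2\theta)\tfrac{1-\theta^M}{1-\theta}L$ one has $f_0(Q_{\ell_M,T}(b)) \le \epsilon$ and $f_0(Q_{\ell_M,T}(b)) \lesssim f_0(Q_{L,T}) + \tfrac{T^{4/3}}{\ell_M^2}$, with a universal implicit constant. The base case $M=0$ is trivial ($r_0=0$, $b=0$, $Q_{\ell_0,T}(0)=Q_{L,T}$). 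For the inductive step, given $|b|\le r_{M+1}$ one takes $b'$ to be the nearest point to $b$ in the ball $\{|b'|\le r_M\}$; by the telescoping identity $r_{M+1}-r_M = (1-2\theta)\theta^M L = (1-2\theta)\ell_M$ one has $|b-b'|\le(1-2\theta)\ell_M$, so Lemma~\ref{lem:onestep-1}, recentred at $b'$ (with $Q_{\ell_M,T}(b')\subseteq Q_{L,T}$, since $r_M+\ell_M < \tfrac{1-2\theta}{1-\theta}L+\theta L = \bigl(1-\tfrac{\theta^2}{1-\theta}\bigr)L < L$ for $M\ge1$, and $=Q_{L,T}$ for $M=0$) and the induction hypothesis $f_0(Q_{\ell_M,T}(b'))\le\epsilon$, yield $f_0(Q_{\ell_{M+1},T}(b)) \le \theta f_0(Q_{\ell_M,T}(b')) + C_\theta\tfrac{T^{4/3}}{\ell_M^2}$. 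The smallness $\le\epsilon$ then follows from the preservation estimate of the previous paragraph (using $\ell_M\ge\ell_{M+1}/\theta>\tilde c\,T^{2/3}$), and propagating the bound $\lesssim f_0(Q_{L,T})+T^{4/3}/\ell_M^2$ through this contraction — using $\ell_M^{-2}=\theta^2\ell_{M+1}^{-2}$, so that the error does not accumulate and the implicit constant stays universal — gives the claim at width $\ell_{M+1}$.

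\textbf{General widths.} To treat a general $\ell$ with $c_{LT}\,T^{2/3}\le\ell\le\tfrac L4$ and a centre $a$ with $Q_{\ell,T}(a)\subseteq Q_{3L/4,T}$, I would let $s=\ell_M$ be the largest dyadic width $\le\ell$; then $s\in(\theta\ell,\ell]$, hence $s>\theta c_{LT}T^{2/3}=\tilde c\,T^{2/3}$, and $M\ge1$. Cover $Q_{\ell,T}(a)$ by $J\lesssim(\ell/s)^d$ sub-cuboids $Q_{s,T}(b_i)\subseteq Q_{\ell,T}(a)\subseteq Q_{3L/4,T}$, so that $|b_i|\le\tfrac34 L - s$. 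The key elementary inequality is $r_M\ge\tfrac34 L-\theta^M L$, which holds for all $M\ge0$ precisely because it rearranges to $\tfrac14(1-5\theta)\ge-\theta^{M+1}$, true when $\theta\le\tfrac15$. Hence $|b_i|\le r_M$ and the induction applies to each $Q_{s,T}(b_i)$. Since $Q\mapsto|Q|\,e(Q)$ is subadditive over covers (the energy integrand is nonnegative), $f_0(Q_{\ell,T}(a)) \le \tfrac{s^{d+2}}{\ell^{d+2}}\sum_i f_0(Q_{s,T}(b_i)) \lesssim \tfrac{s^{d+2}}{\ell^{d+2}}\,J\,\bigl(f_0(Q_{L,T})+\tfrac{T^{4/3}}{s^2}\bigr) \lesssim \tfrac{s^2}{\ell^2}\bigl(f_0(Q_{L,T})+\tfrac{T^{4/3}}{s^2}\bigr) \lesssim f_0(Q_{L,T})+\tfrac{T^{4/3}}{\ell^2}$, which is \eqref{eq:bound-iteration} (the threshold $\epsilon$ in the statement being the $\epsilon(\theta)$ above).

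\textbf{Main obstacle.} I expect the only delicate point to be the geometric bookkeeping just described: making the intermediate centres drift far enough to reach the prescribed target while still respecting the admissibility constraint $|b-b'|\le(1-2\theta)\ell$ of Lemma~\ref{lem:onestep-1}. This is exactly what forces $\theta$ small — so that the total ``reach'' $\tfrac{1-2\theta}{1-\theta}L$ of the iteration dominates the maximal admissible displacement $\tfrac34 L$ of a centre — and it is the reason the statement restricts to $\ell\le\tfrac L4$ and to cuboids inside $Q_{3L/4,T}$; the need to handle non-dyadic $\ell$ by a covering, and the precise relation between $c_{LT}$ and the smallness-preservation constant, are the two other places requiring a little care. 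Everything else is a routine Campanato-type contraction.
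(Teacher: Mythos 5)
Your proof is correct and follows the same Campanato-type iteration of the one-step improvement (Lemma~\ref{lem:onestep-1}) along a geometric sequence of widths, followed by a treatment of non-dyadic $\ell$. The one genuine difference from the paper's argument, and in my view a useful improvement, is your explicit inductive bookkeeping of the admissible recentering at each step via the radius $r_M=(1-2\theta)\frac{1-\theta^M}{1-\theta}L$. The paper instead dispatches the general centre $a$ with a one-line remark (``W.l.o.g.\ $a=0$; the general case can be obtained by applying Lemma~\ref{lem:onestep-1} once with $a$ \dots\ and then re-applying it for the fixed centre $a$'') while fixing $\theta\in(0,\tfrac14]$ and running the whole iteration at the origin. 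If one reads that remark literally, a single recentering step with $\theta\le\frac14$ only reaches $|a|\le(1-2\theta)L\le\frac{L}{2}$, and even drifting the centre at every step only accumulates to $\frac{1-2\theta}{1-\theta}L\le\frac23L$; neither strictly covers every $a$ with $Q_{\ell,T}(a)\subseteq Q_{\frac34L,T}$. Your computation shows that $\theta\le\frac15$ is exactly what is needed for the accumulated drift $\frac{1-2\theta}{1-\theta}L$ to reach $\frac34L$, so you have caught and repaired a genuine (if harmless) looseness: the factor $\frac34$ is indeed arbitrary — the paper itself observes in the proof of Theorem~\ref{thm:localbounds} that it ``can actually be replaced by any factor $\gamma<1$'' — so with the paper's $\theta\le\frac14$ one would simply replace $\frac34$ by $\frac23$, but as stated your choice $\theta\le\frac15$ matches the constant $\frac34$ cleanly.

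Beyond that, the two remaining deviations are both fine but minor. For the contraction itself the paper expands the geometric sum explicitly (the estimate \eqref{eq:induction-f0}), whereas you close it recursively by propagating the invariant $f_0\le C_*(f_0(Q_{L,T})+T^{4/3}/\ell_M^2)$, with $C_*$ chosen so that $(\theta C_*+C_\theta)\theta^2\le C_*$; these are equivalent. For non-dyadic widths the paper uses the simple inclusion $f_0(Q_{\ell,T})\le\theta^{-(d+2)}f_0(Q_{\ell_N,T})$ when $\ell\in(\ell_{N+1},\ell_N)$ (both centred at the same point), whereas you tile $Q_{\ell,T}(a)$ by sub-cuboids of the next smaller dyadic width and use subadditivity of $|Q|\,e(Q)$; both give the right power $(\tilde\ell/\ell)^0$ up to a universal constant, and your covering in fact mirrors the paper's own covering strategy in the outer loop of the proof of Theorem~\ref{thm:localbounds}. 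I do not see any gap in your argument.
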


\begin{remark}
	The choice $\ell \leq \frac{L}{4}$ and $a$ such that $Q_{\ell, T}(a) \subseteq Q_{\frac{3}{4}L,T}$, in particular $|a|\leq \frac{L}{2}$, is arbitrary at this point. Since the iteration is based on the \emph{interior} one-step improvement Lemma~\ref{lem:onestep-1}, we just have to make sure to have some positive distance to the boundary $\Gamma_{L,T}$.
\end{remark}

\begin{proof}W.l.o.g.\ let $a=0$ and fix $\theta\in(0,\frac{1}{4}]$. The general case can be obtained by applying Lemma~\ref{lem:onestep-1} once with $a$ such that $Q_{\ell, T}(a) \subseteq Q_{\frac{3}{4}L,T}$ and then re-applying it for the fixed centre $a$.
\begin{enumerate}[label=\textsc{\bf Step \arabic*},leftmargin=0pt,labelsep=*,itemindent=*,itemsep=10pt,topsep=10pt]	
	\item \label{step:ell-N} \emph{(Proof for cuboids with geometrically related side lengths)}. We first prove the bound \eqref{eq:bound-iteration} for $\ell = \ell_N\coloneqq\theta^N L$ for some $N\in\NN$ and then extend the result to arbitrary $\ell$ such that $c_{LT} T^{\frac{2}{3}} \leq \ell \leq L$. More precisely, we prove inductively that there exists a universal constant $C_0<\infty$ such that for $k=0,1, \ldots, N$ there holds\footnote{With the convention that a summation over the empty set is equal to zero.} 
	\begin{align}
		f_0(Q_{\ell_k, T}) &\leq \theta^k f_0(Q_{L,T}) + C_0 \frac{T^{\frac{4}{3}}}{L^2} \theta^{k-1} \sum_{0\leq j\leq k-1} \theta^{-3j} \label{eq:induction-f0}.
	\end{align}
	Note that the inequality holds trivially for $k=0$, so assume that \eqref{eq:induction-f0} holds true for all $1\leq k \leq K$ with $K\leq N-1$. 
	By the induction hypothesis for $k=K$, we have that
	\begin{align}
		f_0(Q_{\ell_K, T}) 
		&\leq \theta^K f_0(Q_{L,T}) + C_0 \frac{T^{\frac{4}{3}}}{L^2} \theta^{K-1} \sum_{0\leq j\leq K-1} \theta^{-3j} \label{eq:induction-f0-K-1}\\
		&= \theta^K f_0(Q_{L,T}) + C_0 \frac{T^{\frac{4}{3}}}{L^2} \theta^{-2(K-1)} \frac{1-\theta^{3K}}{1-\theta^3}  \nonumber\\
		&= \theta^K f_0(Q_{L,T}) + C_0 \frac{T^{\frac{4}{3}}}{\ell^2} \theta^{2(N-K+1)} \frac{1-\theta^{3K}}{1-\theta^3}.\nonumber 
	\end{align}
	Note that $\theta^{2(N-K+1)} \leq 1$ since $K \leq N-1$, hence, provided that $c_{LT}^2 \geq \frac{2 C_0 (1-\theta^3)}{\epsilon}$, we may bound 
	\begin{align*}
		f_0(Q_{\ell_K, T}) 
		&\leq \theta^K f_0(Q_{L,T}) + C_0 \frac{T^{\frac{4}{3}}}{\ell^2} \frac{1}{1-\theta^3}
		\leq \theta^K f_0(Q_{L,T}) + \frac{C_0}{c_{LT}^2} \frac{1}{1-\theta^3}
		\leq 2^{-K} \epsilon + \frac{\epsilon}{2}
		\leq \epsilon.
	\end{align*}
	We can therefore apply Lemma~\ref{lem:onestep-1} to infer that
	\begin{align*}
		f_0(Q_{\ell_{K+1},T}) 
		\leq \theta f_0(Q_{\ell_K, T}) + C_{\theta} \frac{T^{\frac{4}{3}}}{\ell_K^2} 
		= \theta f_0(Q_{\ell_K, T}) + \frac{C_{\theta}}{\theta^{2K}} \frac{T^{\frac{4}{3}}}{L^2}, 
	\end{align*}
	with $C_{\theta}$ the implicit constant in \eqref{eq:osi-f}. Using \eqref{eq:induction-f0-K-1}, we then obtain
	\begin{align*}
		f_0(Q_{\ell_{K+1}, T}) 
		&\leq \theta^{K+1} f_0(Q_{L,T}) + C_0 \frac{T^{\frac{4}{3}}}{L^2} \theta^K \sum_{0\leq j\leq K-1} \theta^{-3j} + C_{\theta} \theta^{-2K} \frac{T^{\frac{4}{3}}}{L^2}  \\
		&\leq \theta^{K+1} f_0(Q_{L,T}) + C_0 \frac{T^{\frac{4}{3}}}{L^2} \theta^K \sum_{0\leq j\leq K} \theta^{-3j},
	\end{align*}
	where the last inequality holds provided that the constant $C_0$ is chosen such that $C_0\geq C_{\theta}$.
	
	Finally, by taking $k=N$ in \eqref{eq:induction-f0}, we obtain
	\begin{align*}
		f_0(Q_{\ell, T}) 
		&\leq \theta^N f_0(Q_{L,T}) + C_0 \frac{T^{\frac{4}{3}}}{L^2} \theta^{N-1} \sum_{j=0}^{N-1} \theta^{-3j}
		= \theta^N f_0(Q_{L,T}) + C_0 \frac{T^{\frac{4}{3}}}{L^2} \theta^{-2(N-1)} \frac{1-\theta^{3N}}{1-\theta^3}\\
		&= \theta^N f_0(Q_{L,T}) + C_0 \frac{T^{\frac{4}{3}}}{\ell^2} \theta^{2} \frac{1-\theta^{3N}}{1-\theta^3}
		\lesssim f_0(Q_{L,T})+\frac{T^\frac43}{\ell^2}.
	\end{align*}

	\item \emph{(General $\ell\leq \frac{L}{4}$)}. If $\ell$ is not of the form in \ref{step:ell-N}, then there exists an $N\in\NN$ such that $\ell \in (\ell_{N+1}, \ell_N)$.
	But then we can bound 
	\begin{align*}
		f_0(Q_{\ell, T}) 
		&\leq \left(\frac{\ell_N}{\ell_{N+1}}\right)^{d+2} \frac{T^2}{\ell_N^2} \left(\dashint_{Q_{\ell_N, T}} |\nabla' m| +  \dashint_{Q_{\ell_N, T}} \frac{1}{2} h^2\,\mathrm{d}x \right)
		= \theta^{-(d+2)} f_0(Q_{\ell_N,T}),
	\end{align*}
	hence \eqref{eq:bound-iteration} follows from \ref{step:ell-N}. \qedhere	
\end{enumerate}	
\end{proof}

\begin{proposition}[Iteration -- Version B]\label{prop:iteration-B}
	There exist universal constants $\epsilon, \delta \in (0,1)$ and $c_{\ell t}\geq 1$ (depending only on the dimension $d$) such that the following holds: if $f(Q_{L,T})\leq \epsilon$, $n(Q_{L,T})\leq \delta$, and $\ell\leq \frac{L}{8}$, $t\leq T$ are such that $\ell t^{-\frac{2}{3}} = L T^{-\frac{2}{3}} \geq c_{\ell t}$, then 
	\begin{align}\label{eq:bounds-iteration-B}
		f(Q_{\ell, t}(a)) \lesssim f(Q_{L,T}) + \frac{T^{\frac{4}{3}}}{L^2}, \quad \text{and} \quad 
		n(Q_{\ell, t}(a)) \lesssim n(Q_{L,T}) + \left( f(Q_{L,T}) + \frac{T^{\frac{4}{3}}}{L^2} \right)^{\frac{1}{d+2}}
	\end{align}
	for any $a \in \RR^d \times \{0\}$ such that $Q_{\ell}'(a) \subseteq Q_{\frac{3}{4}L}'$.
\end{proposition}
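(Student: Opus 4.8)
The plan is to mirror the proof of Proposition~\ref{prop:iteration-A}, running a Campanato-type iteration driven by the one-step improvement of Lemma~\ref{lem:onestep-2}, but now monitoring the two coupled quantities $f$ and $n$ simultaneously. The structural advantage here is that Lemma~\ref{lem:onestep-2} contracts the horizontal and vertical sides with different exponents, so the aspect-ratio quantity $\ell\,t^{-2/3}$ is \emph{invariant} along the iteration; consequently the forcing term $t^{4/3}/\ell^2$ in \eqref{eq:osi-f} stays constant, equal to $E \coloneqq T^{4/3}/L^2$, which is small as soon as $c_{\ell t}$ is large, since $LT^{-2/3}\geq c_{\ell t}$ gives $E\leq c_{\ell t}^{-2}$. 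First I would fix $\theta\in\bigl(0,\tfrac{1}{12}\bigr]$ — small enough to accommodate a single off-centre step, see below — let $\epsilon$ and $\delta$ be the constants provided by Lemma~\ref{lem:onestep-2} and Remark~\ref{rem:onestep-2} for this $\theta$ with the choice $A=2C_n$ (so \eqref{eq:epsilon-delta-relation} holds), and take $c_{\ell t}$ to be the maximum of the constant $C_{\ell t}$ of Remark~\ref{rem:onestep-2} and a further dimensional constant large enough that the geometric series below close.

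\textbf{The double induction.} The core of the argument is an induction over the dyadic scales $\ell_k \coloneqq \theta^k L$, $t_k \coloneqq \theta^{3k/2} T$, which all share the aspect ratio $\ell_k t_k^{-2/3}=LT^{-2/3}\geq c_{\ell t}\geq C_{\ell t}$, so Lemma~\ref{lem:onestep-2} applies at every scale and, by Remark~\ref{rem:onestep-2}, the smallness conditions $f\leq\epsilon$, $n\leq\delta$ propagate from scale $k$ to scale $k+1$. Starting from $f(Q_{L,T})\leq\epsilon$, $n(Q_{L,T})\leq\delta$ and iterating \eqref{eq:osi-f} — whose right-hand side does not involve $n$ — gives
\begin{align*}
	f(Q_{\ell_k,t_k}(a)) \leq \theta^k f(Q_{L,T}) + C_f E \sum_{j=0}^{k-1}\theta^j \leq \theta^k f(Q_{L,T}) + \frac{C_f}{1-\theta}\,E .
\end{align*}
Inserting this uniform bound into \eqref{eq:osi-n} and using subadditivity of $x\mapsto x^{1/(d+2)}$ to sum the resulting geometric series (now with ratio $\theta^{1/2}$) yields
\begin{align*}
	n(Q_{\ell_k,t_k}(a)) \lesssim \theta^{k/2} n(Q_{L,T}) + \bigl(f(Q_{L,T})+E\bigr)^{\frac{1}{d+2}} .
\end{align*}
Here the very first application of Lemma~\ref{lem:onestep-2}, applied to the outer cube $Q_{L,T}$ with an off-centre sub-cube of centre $a$ satisfying $Q_\ell'(a)\subseteq Q_{\frac{3}{4}L}'$, is admissible precisely because $\theta\leq\tfrac{1}{12}$ forces $|a|_\infty\leq\tfrac{3}{4}L\leq(1-3\theta)L$; every subsequent step keeps the centre fixed at $a$, for which the displacement condition is trivially met. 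Taking $k=N$ establishes \eqref{eq:bounds-iteration-B} for $\ell=\ell_N$.

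\textbf{Extension to arbitrary $\ell$.} It remains to pass from the geometric scales $\ell_N$ to an arbitrary $\ell$ with $\ell\,t^{-2/3}=LT^{-2/3}$. Given such $\ell$, pick $N$ with $\ell\in(\ell_{N+1},\ell_N]$; the matched height then satisfies $t\in(t_{N+1},t_N]$, and since both cuboids are anchored at the bottom, $Q_{\ell,t}(a)\subseteq Q_{\ell_N,t_N}(a)$. For $f$, the monotonicity formula Lemma~\ref{lem:monotonicity} absorbs the change of shift $\overline h_t\to\overline h_{t_N}$, and the change of domain costs only a $\theta$-dependent — hence universal — factor $\theta^{-(d+2)}$. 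For $n$ the extension is slightly more delicate: writing $\int_0^t h = \tfrac{t}{t_N}\int_0^{t_N}h + \int_0^t(h-\overline h_{t_N})$, the first term is controlled by $n(Q_{\ell_N,t_N}(a))$, while the second, being a gradient field whose potential solves an elliptic equation with bounded right-hand side, is handled by interior elliptic regularity together with Lemma~\ref{lem:monotonicity}, exactly as in Step~5.B of the proof of Lemma~\ref{lem:onestep-2}. Combining with the two displays above proves \eqref{eq:bounds-iteration-B} in general.

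\textbf{Main obstacle.} The main difficulty I expect is the bookkeeping of the two \emph{coupled} recursions, and in particular checking that the smallness hypotheses $f\leq\epsilon$, $n\leq\delta$ survive at every scale: this is where the relation $\epsilon\leq\delta^{d+2}/A$ and the largeness of $c_{\ell t}$ (equivalently, the smallness of $E$) enter, via Remark~\ref{rem:onestep-2}. The recursion for $n$ is forced by $f^{1/(d+2)}$, so a priori the two quantities are entangled; what saves the argument is the triangular structure — the clean one-step bound \eqref{eq:osi-f} for $f$ does not see $n$ — so one closes the recursion for $f$ first and only then uses it to close the recursion for $n$. The only genuinely non-soft point beyond this is the extension of the $n$-bound to non-geometric scales, which, unlike the corresponding extension for $f$, cannot be obtained by a pure containment-plus-monotonicity argument and requires the interior elliptic estimate.
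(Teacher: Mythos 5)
Your proposal is correct and follows essentially the same route as the paper's proof: a Campanato-type iteration over the geometric scales $\ell_k=\theta^kL$, $t_k=\theta^{3k/2}T$ driven by the one-step improvement of Lemma~\ref{lem:onestep-2}, with the smallness hypotheses propagated via Remark~\ref{rem:onestep-2}, followed by an extension to non-geometric scales using the monotonicity formula (Lemma~\ref{lem:monotonicity}) for $f$ and the interior elliptic estimate for $n$. Your observation that the $f$-recursion is self-contained and can be closed before being fed into the $n$-recursion is a cleaner bookkeeping of what the paper does via its coupled double-sum induction hypothesis, but it is not a different method.
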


\begin{remark}
	As in the previous iteration, the choice $\ell \leq \frac{L}{8}$ and $a$ such that $Q_{\ell}'(a) \subseteq Q_{\frac{3}{4}L}'$ is arbitrary, as long as we make sure to stay away from the boundary $\Gamma_{L,T}$.
\end{remark}

\begin{proof} W.l.o.g.\ let $a=0$ and fix $\theta\in (0,\frac{1}{8}]$.
\begin{enumerate}[label=\textsc{\bf Step \arabic*},leftmargin=0pt,labelsep=*,itemindent=*,itemsep=10pt,topsep=10pt]	
	\item \label{step:iteration-geometric-l-t} \emph{(Proof for cuboids with geometrically related side lengths)}.  
	We let $\ell_k\coloneqq \theta^k L$, $t_k \coloneqq (\theta^{\frac{3}{2}})^k T$ for $k\in\NN_0$. We assume that $\ell = \ell_N$ and hence also $t = t_N$ for some $N\in\NN$, and extend the statement to arbitrary $\ell, t$ with fixed aspect ratio in \ref{step:iteration-general-l-t}. 
	Note that 
	\begin{align}\label{eq:L-T-rel}
		\frac{t_k^{\frac{4}{3}}}{\ell_k^2} = \frac{T^{\frac{4}{3}}}{L^2} \quad \text{for all} \quad k\in \NN. 
	\end{align}	
	In view of Lemma~\ref{lem:onestep-2} and Remark~\ref{rem:onestep-2}, we will prove inductively that, for $k=0,1, \ldots, N$, there holds\footnote{With the convention that a summation over the empty set is equal to zero.} 
	\begin{align}
		f(Q_{\ell_k, t_k}) &\leq \theta^k f(Q_{L,T}) + C \frac{T^{\frac{4}{3}}}{L^2} \sum_{0\leq j\leq k-1} \theta^j  \quad \text{and} \label{eq:induction-f}\\
		n(Q_{\ell_k, t_k}) &\leq \theta^{\frac{k}{2}} n(Q_{L,T}) + \widetilde{C} \sum_{0\leq j\leq k-1} \theta^{\frac{j}{2}} \left( \theta^{k-j-1} f(Q_{L,T}) + \frac{T^{\frac{4}{3}}}{L^2} \sum_{0\leq i\leq k-j-2} \theta^{i}\right)^{\frac{1}{d+2}}. \label{eq:induction-n}
	\end{align}

	Observe that the inequalities trivially hold for $k=0$. Assume that \eqref{eq:induction-f} and \eqref{eq:induction-n} hold true for $0\leq k \leq K$ with $K\leq N-1$.
	Let us first prove that \eqref{eq:induction-f} holds for $k=K+1$. By the assumption on level $K$, we have that 
	\begin{align}\label{eq:inductionK}
		f(Q_{\ell_K, t_K}) \leq \theta^K f(Q_{L,T}) + C \frac{T^{\frac{4}{3}}}{L^2} \sum_{j=0}^{K-1} \theta^j 
		\leq \theta^K f(Q_{L,T}) + \frac{C}{1-\theta} \frac{T^{\frac{4}{3}}}{L^2} \leq \epsilon,
	\end{align}
	provided that the constant in $L\gg T^{\frac{2}{3}}$ is chosen large enough. We can therefore apply Lemma~\ref{lem:onestep-2} to infer
	\begin{align*}
		f(Q_{\ell_{K+1},t_{K+1}}) \leq \theta f(Q_{\ell_K, t_K}) + C_{\theta,f} \frac{t_K^{\frac{4}{3}}}{\ell_K^2} 
	\end{align*}
	with $C_{\theta,f}$ the implicit constant in \eqref{eq:osi-f}. Then, by inserting the first inequality in \eqref{eq:inductionK} and using \eqref{eq:L-T-rel}, we obtain
	\begin{align*}
		f(Q_{\ell_{K+1}, t_{K+1}}) \leq \theta^{K+1} f(Q_{L,T}) + C \frac{T^{\frac{4}{3}}}{L^2} \sum_{j=1}^K \theta^j + C_{\theta,f} \frac{T^{\frac{4}{3}}}{L^2} 
		\leq \theta^{K+1} f(Q_{L,T}) + C \frac{T^{\frac{4}{3}}}{L^2} \sum_{j=0}^K \theta^j,
	\end{align*}
	where the last inequality holds provided that the constant $C$ is chosen such that $C\geq C_{\theta,f}$. 
	
	Let us now prove that \eqref{eq:induction-n} holds for $k=K+1$. As before, we can estimate
	\begin{align*}
		n(Q_{\ell_K, t_K}) &\leq \theta^{\frac{K}{2}} n(Q_{L,T}) + \widetilde{C} \sum_{j=0}^{K-1} \theta^{\frac{j}{2}} \left( \theta^{K-j-1} f(Q_{L,T}) + \frac{T^{\frac{4}{3}}}{L^2} \sum_{0\leq i\leq K-j-2} \theta^{i}\right)^{\frac{1}{d+2}} \\
		&\leq  \theta^{\frac{K}{2}} n(Q_{L,T}) + \widetilde{C}  \frac{\theta^{\frac{K-1}{d+2}}}{1-\theta^{\frac{d}{2(d+2)}}} f(Q_{L,T})^{\frac{1}{d+2}} + \frac{\widetilde{C}}{1-\theta^{\frac{1}{2}}} \left( \frac{T^{\frac{4}{3}}}{L^2} \frac{1}{1-\theta} \right)^{\frac{1}{d+2}},
	\end{align*}
	hence $n(Q_{\ell_K, t_K}) \leq \delta$ provided $\epsilon$ is chosen small enough (depending on $\theta$ and $\delta$ fixed) and the constant in $L\gg T^{\frac{2}{3}}$ is chosen large enough. We can therefore apply Lemma~\ref{lem:onestep-2} to infer
	\begin{align*}
		n(Q_{\ell_{K+1}, t_{K+1}}) \leq \theta^{\frac{1}{2}} n(Q_{\ell_K, t_K}) + C_{\theta,n} f(\ell_K, t_K)^{\frac{1}{d+2}}
	\end{align*}
	with $C_{\theta,n}$ the implicit constant in \eqref{eq:osi-n}.
	Combining with the induction hypotheses \eqref{eq:induction-n} and \eqref{eq:induction-f}, we are led to
	\begin{align*}
		n(Q_{\ell_{K+1}, t_{K+1}}) &\leq \theta^{\frac{1}{2}} \left(\theta^{\frac{K}{2}} n(Q_{L,T}) + \widetilde{C} \sum_{j=0}^{K-1} \theta^{\frac{j}{2}} \left( \theta^{K-j-1}f(Q_{L,T}) + \frac{T^{\frac{4}{3}}}{L^2} \sum_{0\leq i\leq K-j-2} \theta^i \right)^{\frac{1}{d+2}} \right) \\
		&\quad + C_{\theta,n} \left( \theta^{K} f(Q_{L,T}) + C \frac{T^{\frac{4}{3}}}{L^2} \sum_{i=0}^{K-1} \theta^i \right)^{\frac{1}{d+2}} \\
		&\leq \theta^{\frac{K+1}{2}} n(Q_{L,T}) + \widetilde{C} \sum_{j=1}^K \theta^{\frac{j}{2}} \left( \theta^{K-j} f(Q_{L,T}) + \frac{T^{\frac{4}{3}}}{L^2} \sum_{0\leq i\leq K-j-1} \theta^i \right)^{\frac{1}{d+2}} \\
		&\quad + C_{\theta,n} C \left( \theta^K f(Q_{L,T}) +  \frac{T^{\frac{4}{3}}}{L^2} \sum_{i=0}^{K-1} \theta^i \right)^{\frac{1}{d+2}} \\
		&\leq \theta^{\frac{K+1}{2}} n(Q_{L,T}) + \widetilde{C} \sum_{j=0}^{K} \theta^{\frac{j}{2}} \left( \theta^{K-j} f(Q_{L,T}) + \frac{T^{\frac{4}{3}}}{L^2} \sum_{0\leq i\leq K-j-1} \theta^i \right)^{\frac{1}{d+2}},
	\end{align*}
	provided we choose $\widetilde{C} \geq C_{\theta,n} C$. The result is thus proved.
	
	\item \label{step:iteration-general-l-t} \emph{(General $\ell, t$ with fixed aspect ratio)}. If $\ell,t$ are not of the form in \ref{step:iteration-geometric-l-t}, then there exists an $N\in\NN$ such that $\ell \in (\ell_{N+1}, \ell_N)$ and $t \in (t_{N+1},t_N)$.
	By Lemma~\ref{lem:monotonicity} we can estimate 
	\begin{align*}
		f(Q_{\ell, t}) &= \frac{t^2}{\ell^2} \dashint_{Q_{\ell, t}} |\nabla' m| + \frac{t^2}{\ell^2} \dashint_{Q_{\ell, t}} \frac{1}{2} |h-\overline{h}_t|^2\,\mathrm{d}x 
		\leq \frac{t_N^2}{\ell^2} \dashint_{Q_{\ell, t_N}} |\nabla' m| + \frac{t_N^2}{\ell^2} \dashint_{Q_{\ell, t_N}} \frac{1}{2} |h-\overline{h}_{t_N}|^2\,\mathrm{d}x \\
		&\leq \left(\frac{\ell_N}{\ell_{N+1}}\right)^{d+2} \frac{t_N^2}{\ell_N^2} \left(\dashint_{Q_{\ell_N, t_N}} |\nabla' m| +  \dashint_{Q_{\ell_N, t_N}} \frac{1}{2} |h-\overline{h}_{t_N}|^2\,\mathrm{d}x \right)
		= \theta^{-(d+2)} f(Q_{\ell_N,t_N}).
	\end{align*}
	Similarly, following the line of argument in \ref{step:one-step-improvement5B} of the proof of the one-step improvement, we may bound
	\begin{align*}
		n(Q_{\ell, t}) \lesssim \theta^{-1} \left(n(Q_{\ell_N, t_N}) + f(Q_{\ell_N, t_N})^{\frac{1}{d+2}}\right),
	\end{align*}
	hence \eqref{eq:bounds-iteration-B} follows from \ref{step:iteration-geometric-l-t} . \qedhere
\end{enumerate}
\end{proof}

\begin{proposition}[Iteration -- Version C at the boundary]\label{prop:iteration-C}
	There exist universal constants $\epsilon, \delta \in (0,1)$ and $c_{\ell t}\geq 1$ (depending only on the dimension $d$) such that the following holds: if $f(Q^*_{L,T}(a))\leq \epsilon$, $n(Q^*_{L,T}(a))\leq \delta$, for $a \in \Gamma_{L,T} \cap \{x_{d+1}=0\}$, and $\ell\leq L$, $t\leq T$ are such that $\ell t^{-\frac{2}{3}} = L T^{-\frac{2}{3}} \geq c_{\ell t}$, then 
	\begin{align*}
		f(Q^*_{\ell, t}(a)) \lesssim f(Q^*_{L,T}(a)) + \frac{T^{\frac{4}{3}}}{L^2}, \quad \text{and} \quad 
		n(Q^*_{\ell, t}(a)) \lesssim n(Q^*_{L,T}(a)) + \left( f(Q^*_{L,T}(a)) + \frac{T^{\frac{4}{3}}}{L^2} \right)^{\frac{1}{d+2}}.
	\end{align*}
\end{proposition}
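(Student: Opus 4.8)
The plan is to mirror the proof of Proposition~\ref{prop:iteration-B}, replacing the interior one-step improvement Lemma~\ref{lem:onestep-2} by its boundary counterpart Lemma~\ref{lem:onestep-3}, and the interior monotonicity formula Lemma~\ref{lem:monotonicity} by Lemma~\ref{lem:monotonicity2}. Since every cuboid $Q^*_{\ell,t}(a)$ appearing in the statement is centred at the \emph{same} point $a\in\Gamma_{L,T}\cap\{x_{d+1}=0\}$, we may fix $a$ once and for all; in contrast to Propositions~\ref{prop:iteration-A} and~\ref{prop:iteration-B} there is no geometric restriction of the type $Q_{\ell}'(a)\subseteq Q_{\frac34 L}'$ to keep track of, and the argument becomes a pure Campanato-type iteration with fixed centre. (Lemma~\ref{lem:onestep-3} is stated for zero-flux lateral boundary conditions, so we are automatically in that regime.) Without loss of generality we fix $\theta\in(0,\tfrac18]$, and the constants $\epsilon,\delta\in(0,1)$, $c_{\ell t}\geq1$ will be chosen at the very end.

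\textbf{Step 1 (geometrically related side lengths).} Set $\ell_k\coloneqq\theta^k L$ and $t_k\coloneqq(\theta^{3/2})^k T$ for $k\in\NN_0$, so that the anisotropic scale invariance $t_k^{4/3}\ell_k^{-2}=T^{4/3}L^{-2}$ holds for every $k$. Assuming first $\ell=\ell_N$ and $t=t_N$ for some $N\in\NN$, I would prove by induction on $k=0,1,\dots,N$ the two estimates
\begin{align*}
	f(Q^*_{\ell_k,t_k}(a)) &\leq \theta^k f(Q^*_{L,T}(a)) + C\,\frac{T^{4/3}}{L^2}\sum_{0\leq j\leq k-1}\theta^j, \\
	n(Q^*_{\ell_k,t_k}(a)) &\leq \theta^{k/2} n(Q^*_{L,T}(a)) + \widetilde C\sum_{0\leq j\leq k-1}\theta^{j/2}\Big(\theta^{k-j-1}f(Q^*_{L,T}(a)) + \frac{T^{4/3}}{L^2}\sum_{0\leq i\leq k-j-2}\theta^i\Big)^{\frac{1}{d+2}},
\end{align*}
in complete analogy with \eqref{eq:induction-f}--\eqref{eq:induction-n}. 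The case $k=0$ is trivial. For the inductive step, summing the geometric series in the induction hypotheses at level $K\leq N-1$ shows that $f(Q^*_{\ell_K,t_K}(a))\leq\epsilon$ and $n(Q^*_{\ell_K,t_K}(a))\leq\delta$, provided $\epsilon,\delta$ are small enough (depending on $\theta$, with $\epsilon$ also depending on $\delta$) and $c_{\ell t}$ is large enough that $T^{4/3}L^{-2}=(LT^{-2/3})^{-2}\leq c_{\ell t}^{-2}$ is as small as needed. One may then apply Lemma~\ref{lem:onestep-3} at scale $(\ell_K,t_K)$; inserting the induction hypothesis and using $t_K^{4/3}\ell_K^{-2}=T^{4/3}L^{-2}$ closes the induction at level $K+1$, after choosing $C\geq C_{\theta,f}$ and $\widetilde C\geq C_{\theta,n}C$, with $C_{\theta,f},C_{\theta,n}$ the implicit constants in Lemma~\ref{lem:onestep-3}. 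Taking $k=N$ and summing the remaining geometric series then yields the claimed bounds when $\ell=\ell_N$, $t=t_N$.

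\textbf{Step 2 (general aspect-ratio-preserving scales).} If $(\ell,t)$ is not of the above form, pick $N\in\NN$ with $\ell\in(\ell_{N+1},\ell_N)$ and $t\in(t_{N+1},t_N)$. Using the boundary monotonicity formula (Lemma~\ref{lem:monotonicity2}) to pass from height $t$ to height $t_N$, together with the inclusion $Q^*_{\ell,t}(a)\subseteq Q^*_{\ell_N,t_N}(a)$ and a straightforward volume comparison $|Q^*_{\ell_N,t_N}(a)|\lesssim\theta^{-(d+1)}|Q^*_{\ell_{N+1},t_N}(a)|$, one gets $f(Q^*_{\ell,t}(a))\lesssim\theta^{-(d+2)}f(Q^*_{\ell_N,t_N}(a))$; and arguing as in \ref{step:one-step-improvement5B} of the one-step improvement (invoking the reflection extension of the cumulated field across $\Gamma_{L,T}$ exactly as in the proof of Lemma~\ref{lem:onestep-3}) one gets $n(Q^*_{\ell,t}(a))\lesssim\theta^{-1}\big(n(Q^*_{\ell_N,t_N}(a))+f(Q^*_{\ell_N,t_N}(a))^{1/(d+2)}\big)$. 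Combining these two estimates with Step~1 completes the proof. The main obstacle has in fact already been overcome in Lemma~\ref{lem:onestep-3}: the boundary layer construction in the truncated cuboid $Q^*_{\rho,\ttheta}(a)$ and, above all, the iterated reflection extension of the cumulated field across the (up to $d$) faces of $\Gamma_{L,T}$ meeting $a$, which is what makes \emph{interior} elliptic regularity applicable up to the lateral boundary. Given that lemma, the only delicate point here is the order in which the constants are fixed — first $\theta$, then $\delta$ small (so the $n$-increment created by Lemma~\ref{lem:onestep-3} is controlled), then $\epsilon\leq\delta^{d+2}/A$ small (so $n$ stays below $\delta$ throughout the iteration, via \eqref{eq:epsilon-delta-relation}), and finally $c_{\ell t}$ large (so the inhomogeneous term $T^{4/3}L^{-2}$ is negligible) — exactly as in Remark~\ref{rem:onestep-2} and the proof of Proposition~\ref{prop:iteration-B}.
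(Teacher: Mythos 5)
Your proposal is correct and takes essentially the same approach as the paper, which states Proposition~\ref{prop:iteration-C} follows as a minor modification of Proposition~\ref{prop:iteration-B} with Lemma~\ref{lem:onestep-3} in place of Lemma~\ref{lem:onestep-2} (and, implicitly, Lemma~\ref{lem:monotonicity2} in place of Lemma~\ref{lem:monotonicity}). You have simply written out that modification in full, including the correct observation that the fixed centre $a\in\Gamma_{L,T}\cap\{x_{d+1}=0\}$ removes the safety-distance constraint present in Versions A and B, and the ordering of constants ($\theta$, then $\delta$, then $\epsilon$, then $c_{\ell t}$) matches the paper's scheme.
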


\begin{proof}
	The proof of Proposition~\ref{prop:iteration-C} is a minor modification of the proof of Proposition~\ref{prop:iteration-B}, using the boundary one-step improvement (Lemma~\ref{lem:onestep-3}) instead of the interior one-step improvement (Lemma~\ref{lem:onestep-2}). 
\end{proof}

\addtocontents{toc}{\protect\setcounter{tocdepth}{-1}}
\section*{Acknowledgements}
The authors would like to thank Felix Otto for bringing their attention to the topic and for helpful discussions.
We thank the referee for helpful remarks and suggestions.

This work was partially funded by ANID FONDECYT 11190130 and ANID FONDECYT 1231593.   

TR thanks the Pontifical Catholic University of Chile and CR thanks the Max Planck Institute for Mathematics in the Sciences for their support and warm hospitality. 

TR would like to thank the Isaac Newton Institute for Mathematical Sciences for support and hospitality during the program ``Optimal design of complex materials'' funded by EPSRC grant no EP/R014604/1 and the Hausdorff Research Institute for Mathematics for support and hospitality during the Trimester Program ``Mathematics for Complex Materials'' funded by DFG EXC-2047/1 – 390685813, where work on this paper was undertaken.
\addtocontents{toc}{\protect\setcounter{tocdepth}{2}}
\appendix
\section{A useful elliptic estimate}\label{sec:appendix}

\begin{lemma}\label{lem:elliptic-line}
	Let $Q' = (0,1)^d$, $f\in L^2(Q')$ and $H=\{a_1\} \times \bigtimes_{j=2}^{d} (a_j, b_j) \subset Q'$. Let $u$ be the unique solution of the elliptic Neumann problem
	\begin{equation}\label{eq:elliptic-hyperplane}
		\begin{array}{rcll}
		-\Delta' u &=& f - \left( \frac{1}{|H|}\int_{Q'} f\,\mathrm{d}x' \right) \mathcal{H}^{d-1}\lfloor_{H} & \mathrm{in}\; Q', \\
		-\nabla'u \cdot \nu' &=& 0 &\mathrm{on}\;\partial Q', 
		\end{array}
	\end{equation}
	with zero average $\int_{Q'} u\,\mathrm{d}x' = 0$. Then 
	\begin{align}\label{eq:elliptic-hyperplane-estimate}
		\int_{Q'} |\nabla'u|^2\,\mathrm{d}x' \lesssim \frac{1}{|H|} \int_{Q'} f^2\,\mathrm{d}x'.
	\end{align}
\end{lemma}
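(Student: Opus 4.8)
The statement is an elliptic estimate for a Neumann problem whose right-hand side is the difference of an $L^2$ bulk term and a (rescaled) surface measure supported on a hyperplane piece $H$. The natural approach is the standard energy method combined with a trace/interpolation estimate adapted to the dimension of $H$. Testing \eqref{eq:elliptic-hyperplane} with $u$ itself gives
\begin{align*}
	\int_{Q'} |\nabla'u|^2\,\mathrm{d}x' = \int_{Q'} f u\,\mathrm{d}x' - \frac{1}{|H|}\int_{Q'} f\,\mathrm{d}x' \int_{H} u\,\mathrm{d}\mathcal{H}^{d-1},
\end{align*}
so the task reduces to bounding the two terms on the right by $|H|^{-1}\|f\|_{L^2(Q')}^2$ plus a small multiple of $\|\nabla' u\|_{L^2(Q')}^2$ that can be absorbed. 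Since $\int_{Q'} u = 0$, the first term is handled by Cauchy--Schwarz and the Poincaré inequality: $\int_{Q'} f u \leq \|f\|_{L^2}\|u\|_{L^2} \lesssim \|f\|_{L^2}\|\nabla' u\|_{L^2}$, and then Young's inequality with a weight $\sim |H|$ to match the target scaling (note $|H|\leq 1$, so this is consistent).

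The main work is the surface term $\int_H u\,\mathrm{d}\mathcal{H}^{d-1}$. Here $H$ is $(d-1)$-dimensional, so the relevant tool is the continuity of the trace operator $H^1(Q') \to L^2(\partial Q')$ or, more precisely, a trace estimate onto the hyperplane $\{x_1 = a_1\}$ intersected with $Q'$, which gives $\|u\|_{L^2(H)} \lesssim \|u\|_{L^2(Q')}^{1/2}\|u\|_{H^1(Q')}^{1/2} \lesssim \|\nabla'u\|_{L^2(Q')}$ using again $\int u = 0$ and Poincaré. Then by Cauchy--Schwarz on $H$,
\begin{align*}
	\left| \frac{1}{|H|}\int_{Q'} f\,\mathrm{d}x' \int_H u\,\mathrm{d}\mathcal{H}^{d-1} \right|
	\lesssim \frac{1}{|H|} \|f\|_{L^1(Q')} |H|^{1/2} \|u\|_{L^2(H)}
	\lesssim |H|^{-1/2} \|f\|_{L^2(Q')} \|\nabla' u\|_{L^2(Q')},
\end{align*}
where $\|f\|_{L^1(Q')} \leq \|f\|_{L^2(Q')}$ since $|Q'|=1$. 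Applying Young's inequality to this product, with weight chosen so the $\|\nabla'u\|^2$ part is absorbed into the left-hand side, leaves exactly a term of order $|H|^{-1}\|f\|_{L^2(Q')}^2$, as desired.

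\textbf{Main obstacle.} The delicate point is that the trace constant must not depend badly on the geometry of $H$ — in particular not blow up as $H$ shrinks in its tangential directions or approaches $\partial Q'$. The cleanest way around this is to trace onto the full affine slice $\Sigma = \{x_1 = a_1\} \cap Q'$ (whose trace constant is universal, depending only on $d$) rather than onto $H$ itself, using that $H \subseteq \Sigma$ so $\|u\|_{L^2(H)} \leq \|u\|_{L^2(\Sigma)}$; the factor $|H|^{1/2}$ picked up from Cauchy--Schwarz on $H$ against the constant $|H|^{-1}\int f$ is precisely what produces the correct power of $|H|$. One should also double-check solvability and the meaning of \eqref{eq:elliptic-hyperplane}: the compatibility condition $\int_{Q'}\big(f - (|H|^{-1}\int f)\mathcal{H}^{d-1}\lfloor_H\big) = 0$ holds by construction since $\mathcal{H}^{d-1}(H) = |H|$, and the weak formulation makes sense because $\mathcal{H}^{d-1}\lfloor_H$ pairs continuously with $H^1(Q')$ by the same trace embedding — so uniqueness (given zero average) follows from Lax--Milgram. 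No step beyond these standard ingredients is needed.
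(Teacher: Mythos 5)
Your proof is correct, and it lands on the same key analytic estimate as the paper, namely that the surface measure $\mathcal{H}^{d-1}\lfloor_H$ acts on $H^1(Q')$ with norm $\lesssim |H|^{1/2}$ (a trace-type inequality with constant independent of the size and position of $H$). The route differs in two mild ways. First, the paper derives that estimate from scratch: it approximates $\mathcal{H}^{d-1}\lfloor_H$ by $\tfrac{1}{\epsilon}\chi_\epsilon$, integrates by parts in $x_1$ against the antiderivative $\xi_\epsilon$, and runs Cauchy--Schwarz, whereas you invoke the trace embedding $H^1(Q')\hookrightarrow L^2(\Sigma)$ onto the full affine slice $\Sigma=\{x_1=a_1\}\cap Q'$ and then Cauchy--Schwarz over $H\subset\Sigma$. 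Both are standard and both produce the factor $|H|^{1/2}$ with a universal constant; your observation that one should trace onto $\Sigma$ rather than $H$ to keep the constant universal is exactly the right point. Second, the paper packages the conclusion as a duality bound on the bilinear form $\int \nabla'u\cdot\nabla'\varphi$ followed by a supremum over test functions, while you test the weak formulation with $u$ itself and absorb via Poincaré and Young. The direct energy argument you use is arguably the cleaner way to close the estimate, since it sidesteps the question of whether the supremum over compactly supported test functions really recovers $\|\nabla' u\|_{L^2}$ for a Neumann solution — in your proof the admissibility of $u$ as test function is immediate. The remarks you add on solvability (the compatibility condition $\mathcal{H}^{d-1}(H)=|H|$) and on well-posedness via Lax--Milgram are correct and consistent with the paper.
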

While this result is rather standard, we give a proof for the convenience of the reader. 
\begin{proof}
	Note that the right-hand side of the PDE in \eqref{eq:elliptic-hyperplane} lies in the negative-order Sobolev space $H^{-1}(Q') \simeq H^1_0(Q')^*$. Indeed, let $\varphi \in \mathcal{C}_c^1(Q')$, then 
	\begin{align*}
		\int_{Q'} \varphi \,\mathrm{d}\mathcal{H}^{d-1}\lfloor_{H} = \lim_{\epsilon \downarrow 0} \frac{1}{\epsilon} \int_{Q'} \varphi \chi_{\epsilon}\,\mathrm{d}x',
	\end{align*}
	where $\chi_{\epsilon}(x')\coloneqq	\1_{(a_1, a_1+\epsilon)}(x_1) \prod_{j=2}^d \1_{(a_j, b_j)}(x_j)$.\footnote{W.l.o.g.\ we assume that $a_1<1$, for $a_1 = 1$ use $\chi_{\epsilon}(x')\coloneqq	\1_{(a_1-\epsilon, a_1)}(x_1) \prod_{j=2}^d \1_{(a_j, b_j)}(x_j)$.} 
	Define $\xi_{\epsilon}(x_1) \coloneqq \1_{(a_1, a_1+\epsilon)}(x_1) (x_1-a_1) + \1_{(a_1+\epsilon,1)}(x_1) \epsilon$, then $\1_{(a_1, a_1+\epsilon)}= \xi_{\epsilon}'$ and integration by parts yields
	\begin{align*}
		\frac{1}{\epsilon} \int_{Q'} \varphi \chi_{\epsilon}\,\mathrm{d}x'
		&= -\frac{1}{\epsilon} \int_{Q'} \partial_1 \varphi(x') \xi_{\epsilon}(x_1) \prod_{j=2}^d \1_{(a_j, b_j)}(x_j)\,\mathrm{d}x'.
	\end{align*}
	Hence, by the Cauchy-Schwarz inequality,
	\begin{align*}
		\left| \frac{1}{\epsilon} \int_{Q'} \varphi \chi_{\epsilon}\,\mathrm{d}x' \right|^2 
		&\leq \left( \frac{1}{\epsilon^2} \int_{Q'} \xi_{\epsilon}^2(x_1) \prod_{j=2}^d \1_{(a_j, b_j)}(x_j)\,\mathrm{d}x'  \right) \int_{Q'} |\partial_1 \varphi|^2\,\mathrm{d}x' \\
		&\leq \left( \prod_{j=2}^d (b_j-a_j) \frac{1}{\epsilon^2} \int_0^1 \xi_{\epsilon}(x_1)^2\,\mathrm{d}x_1\right) \int_{Q'} |\nabla' \varphi|^2\,\mathrm{d}x' \\
		&= |H| \left(1-a_1-\frac{2\epsilon}{3}\right) \int_{Q'} |\nabla' \varphi|^2\,\mathrm{d}x' 
		\leq |H| \int_{Q'} |\nabla' \varphi|^2\,\mathrm{d}x'.
	\end{align*}
	It follows that $\left|	\int_{Q'} \varphi \,\mathrm{d}\mathcal{H}^{d-1}\lfloor_{H}\right| \leq |H|^{\frac{1}{2}} \|\nabla'\varphi\|_{L^2(Q')}$ for all $\varphi \in C_c^1(Q')$. 
	
	We may now estimate 
	\begin{align*}
		\left| \int_{Q'} \nabla'u\cdot\nabla'\varphi \,\mathrm{d}x' \right| 
		&\stackrel{\eqref{eq:elliptic-hyperplane}}{=} \left| \int_{Q'} f \varphi \,\mathrm{d}x' -  \left( \frac{1}{|H|}\int_{Q'} f\,\mathrm{d}x' \right) \int_{Q'} \varphi \,\mathrm{d}\mathcal{H}^{d-1}\lfloor_{H} \right| \\
		&\leq \|f\|_{L^2(Q')} \|\varphi\|_{L^2(Q')} +  \left( \frac{1}{|H|}\int_{Q'} f\,\mathrm{d}x' \right) |H|^{\frac{1}{2}} \|\nabla' \varphi\|_{L^2(Q')} \\
		&\stackrel{\text{Poincaré}}{\lesssim} |H|^{-\frac{1}{2}} \|f\|_{L^2(Q')} \|\nabla'\varphi\|_{L^2(Q')}.
	\end{align*}
	Taking the supremum over all $\varphi \in C^1_c(Q')$ yields \eqref{eq:elliptic-hyperplane-estimate}.
\end{proof}

\bibliography{ReferencesMicromagnetics}
\bigskip
\end{document}